\def\Bast{\mathop{\mathlarger{\mathlarger{\mathlarger{\ast}}}}}
\begin{document}
\pagestyle{headings}
\title{Shift-minimal groups, fixed price 1, and the unique trace property}
\author{{\let\thefootnote\relax\footnote{{\bf{Keywords:}} amenable radical, Bernoulli shift, $C^*$-simple, cost, fixed price, invariant random subgroup, non-free action, reduced $C^*$-algebra, stabilizer, strongly ergodic, unique trace, weak containment}\let\thefootnote\relax\footnote{{\bf{MSC:}} 37A15, 37A20, 37A25, 37A50, 37A55, 43A07, 60B99.}}Robin D. Tucker-Drob\footnote{email:rtuckerd@caltech.edu}\\ Caltech\footnote{Department of Mathematics; California Institute of Technology; Pasadena, CA 91125}}
\date{December 21, 2012}
\begin{abstract}
A countable group $\Gamma$ is called shift-minimal if every non-trivial measure preserving action of $\Gamma$ weakly contained in the Bernoulli shift $\Gamma \cc ([0,1]^\Gamma , \lambda ^\Gamma )$ is free. We show that any group $\Gamma$ whose reduced $C^*$-algebra $C^*_r(\Gamma )$ admits a unique tracial state is shift-minimal, and that any group $\Gamma$ admitting a free measure preserving action of cost$>1$ contains a finite normal subgroup $N$ such that $\Gamma /N$ is shift-minimal. Any shift-minimal group in turn is shown to have trivial amenable radical. Recurrence arguments are used in studying invariant random subgroups of a wide variety of shift-minimal groups. We also examine continuity properties of cost in the context of infinitely generated groups and equivalence relations. A number of open questions are discussed which concern cost, shift-minimality, $C^*$-simplicity, and uniqueness of tracial state on $C^*_r(\Gamma )$.
\end{abstract}
\maketitle
\setcounter{tocdepth}{3}
\tableofcontents

\section{Introduction}

The \emph{Bernoulli shift} of a countable discrete group $\Gamma$, denoted by $\bm{s}_\Gamma$, is the measure preserving action $\Gamma \cc ^s ([0,1]^\Gamma ,\lambda ^\Gamma )$ (where $\lambda$ denotes Lebesgue measure on $[0,1]$) of $\Gamma$ given by
\[
(\gamma ^s \cdot f)(\delta ) = f(\gamma ^{-1}\delta )
\]
for $\gamma ,\delta \in \Gamma$ and $f\in [0,1]^\Gamma$. If $\Gamma$ is infinite, then the Bernoulli shift may be seen as the archetypal free measure preserving action of $\Gamma$. This point of view is supported by Ab\'{e}rt and Weiss's result \cite{AW11} that $\bm{s}_\Gamma$ is weakly contained in every free measure preserving action of $\Gamma$. Conversely, it is well known that any measure preserving action weakly containing a free action must itself be free. A measure preserving action is therefore free if and only if it exhibits approximate Bernoulli behavior.

Inverting our point of view, the approximation properties exhibited by $\bm{s}_\Gamma$ itself have been shown to reflect the group theoretic nature of $\Gamma$. One example of this is Schmidt's characterization \cite{Sc81} of amenable groups as exactly those groups $\Gamma$ for which $\bm{s}_\Gamma$ admits a non-trivial sequence of almost invariant sets. An equivalent formulation in the language of weak containment is that $\Gamma$ is amenable if and only if $\bm{s}_\Gamma$ weakly contains an action that is not ergodic. In addition, a direct consequence of Foreman and Weiss's work \cite{FW04} is that amenability of $\Gamma$ is equivalent to \emph{every} measure preserving action of $\Gamma$ being weakly contained in $\bm{s}_\Gamma$. That each of these properties of $\bm{s}_\Gamma$ is necessary for amenability of $\Gamma$ is essentially a consequence of the Ornstein-Weiss Theorem \cite{OW80}, while sufficiency of these properties may be reduced to the corresponding representation theoretic characterizations of amenability due to Hulanicki and Reiter (see \cite{Hu64, Hu66}, \cite[7.1.8]{Zi84}, \cite[Appendix G.3]{BHV08}): a group $\Gamma$ is amenable if and only if its left regular representation $\lambda _\Gamma$ weakly contains the trivial representation if and only if $\lambda _\Gamma$ weakly contains every unitary representation of $\Gamma$.

This paper investigates further the extent to which properties of a group may be detected by its Bernoulli action. Roughly speaking, it is observed that even when a group is non-amenable, the manifestation (or lack thereof) of certain behaviors in the Bernoulli shift has implications for the extent of that group's non-amenability. Central to this investigation is the following definition.

\begin{definition}\label{def:sm}
A countable group $\Gamma$ is called \emph{shift-minimal} if every non-trivial measure preserving action weakly contained in $\bm{s}_\Gamma$ is free.
\end{definition}

\noindent The reader is referred to \cite{Ke10} for background on weak containment of measure preserving actions. Note that by definition the trivial group $\{ e \}$ is shift-minimal.

Shift-minimality, as with the above-mentioned ergodic theoretic characterizations of amenability, takes its precedent in the theory of unitary representations of $\Gamma$. It is well known that $\Gamma$ is \emph{$C^*$-simple} (i.e., its reduced $C^*$-algebra $C^*_r(\Gamma )$ is simple) if and only if every non-zero unitary representation of $\Gamma$ weakly contained in the left-regular representation $\lambda _\Gamma$ is actually weakly equivalent to $\lambda_\Gamma$ \cite{Ha07}.  Using the Ab\'{e}rt-Weiss characterization of freeness it is apparent that $\Gamma$ is shift-minimal if and only if every non-trivial m.p.\ action of $\Gamma$ weakly contained in $\bm{s}_\Gamma$ is in fact weakly equivalent to $\bm{s}_\Gamma$.  Apart from analogy, the relationship between shift-minimality and $C^*$-simplicity in general is unclear. However, we show in Theorem \ref{thm:UTsm} that shift-minimality follows from a property closely related to $C^*$-simplicity. A group $\Gamma$ is said to have the \emph{unique trace property} if there is a unique tracial state on $C^*_r(\Gamma )$.

\begin{theorem}\label{thm:introUT}
Let $\Gamma$ be a countable group. If $\Gamma$ has the unique trace property then $\Gamma$ is shift-minimal.
\end{theorem}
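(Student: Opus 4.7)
Given an action $a\colon\Gamma\curvearrowright(X,\mu)$ weakly contained in $\bm{s}_\Gamma$, the plan is to associate to $a$ its \emph{stabilizer character}
\[
\phi_a(\gamma)\;:=\;\mu\bigl(\mathrm{Fix}_a(\gamma)\bigr),
\]
show that $\phi_a$ extends to a tracial state on $C^*_r(\Gamma)$, and then invoke unique trace to conclude $\phi_a(\gamma)=\delta_{\gamma,e}$; this last identity says precisely that $a$ is essentially free.

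First I would verify that $\phi_a$ is conjugation-invariant, normalized, and positive-definite. Conjugation-invariance is immediate from $\Gamma$-invariance of $\mu$; positive-definiteness reduces, after exchanging sum and integral, to
\[
\sum_{i,j} c_i\bar c_j\,\mathbf{1}[\gamma_i^{-1}x=\gamma_j^{-1}x]\;=\;\sum_{y}\Bigl|\sum_{i\,:\,\gamma_i^{-1}x=y}c_i\Bigr|^2\;\ge\;0.
\]
Next I would reduce to the case that $(X,\mu)$ is diffuse. Unique trace forces $\Gamma$ to be non-amenable (or trivial), whence $\bm{s}_\Gamma$ is strongly ergodic, and strong ergodicity passes downward under $\prec$, so $a$ is ergodic. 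An ergodic action on a purely atomic probability space is a transitive action on a finite set, whose Koopman representation factors through a finite quotient $\Gamma\to Q$; weak containment in $\lambda_\Gamma$ combined with unique trace forces the resulting character to agree with $\dim\cdot\delta_e$, which on the kernel of $\Gamma\to Q$ reads $\dim\cdot\mathbf{1}_{\{e\}}$, forcing that kernel to be trivial and hence $\Gamma$ finite---contradicting non-amenability.

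The crux is to show $\phi_a$ is weakly contained in $\lambda_\Gamma$ as a positive-definite function. For a finite measurable partition $\mathcal P=\{A_1,\ldots,A_n\}$ of $X$, set $\psi_\mathcal P(\gamma):=\sum_i\mu(A_i\cap\gamma^{-1}A_i)$; a short monotonicity check gives $\psi_\mathcal P\searrow\phi_a$ pointwise along any refining sequence separating points. Decomposing $\mathbf 1_{A_i}=\mu(A_i)\mathbf 1+\eta_i$ with $\eta_i\in L^2_0(X,\mu)$,
\[
\psi_\mathcal P(\gamma)\;=\;\sum_i\mu(A_i)^2\;+\;\sum_i\bigl\langle\kappa_a^0(\gamma)\eta_i,\eta_i\bigr\rangle,
\]
where $\kappa_a^0$ denotes the Koopman representation on $L^2_0$. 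In the diffuse case, $\sum_i\mu(A_i)^2\le\max_i\mu(A_i)\to 0$ as $\mathcal P$ refines, while the second sum is a diagonal matrix coefficient of a finite direct sum of copies of $\kappa_a^0$. Since $a\prec\bm{s}_\Gamma$ gives $\kappa_a^0\prec\kappa_{\bm{s}_\Gamma}^0\cong\bigoplus_\infty\lambda_\Gamma$, passing to the pointwise limit puts $\phi_a$ in the pointwise closure of matrix coefficients of $\lambda_\Gamma$; hence $\phi_a$ extends to a state on $C^*_r(\Gamma)$, conjugation-invariance promotes it to a trace, and unique trace forces $\phi_a(\gamma)=\delta_{\gamma,e}$.

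I expect the main technical difficulty to be this third step. The ``trivial representation'' contribution $\sum_i\mu(A_i)^2\cdot\mathbf{1}_\Gamma$ is not weakly contained in $\lambda_\Gamma$ for non-amenable $\Gamma$ (Hulanicki), so one genuinely needs the diffuseness reduction of the previous step to ensure this piece vanishes in the limit; without it, the limiting character would be an honest convex combination of $\mathbf 1_\Gamma$ and a $\lambda_\Gamma$-coefficient, and the weak containment in $\lambda_\Gamma$ would fail.
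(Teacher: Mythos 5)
Your proposal is correct, and although it shares the paper's overall skeleton --- both arguments hinge on showing that $\varphi (\gamma ) = \mu (\mathrm{Fix}^a(\gamma ))$ is a conjugation-invariant positive definite function extending to a tracial state on $C^*_r(\Gamma )$, and then invoking uniqueness of the trace --- the route you take to the extension is genuinely different. The paper (Theorem \ref{thm:UTUIRS} combined with Corollary \ref{thm:UTsm}) first proves that almost every stabilizer of a non-trivial NA-ergodic action is \emph{amenable} (Lemma \ref{lem:am}, Theorem \ref{thm:NAerg}), and then realizes $\varphi = \varphi _{\theta _{\bm{a}}}$ as a diagonal coefficient of the direct integral $\int ^{\oplus}_H \lambda _{\Gamma /H}\, d\theta _{\bm{a}}$, which is weakly contained in $\lambda _\Gamma$ precisely because each $\lambda _{\Gamma /H}$ is for amenable $H$. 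You bypass the stabilizer analysis entirely and use only temperedness, $\kappa ^{\bm{a}}_0\prec \lambda _\Gamma$ (which does follow from $\bm{a}\prec \bm{s}_\Gamma$, cf.\ Remark \ref{rem:temp}), approximating $\varphi$ by the partition coefficients $\psi _{\mathcal{P}}$ and killing the trivial-representation contribution $\sum _i \mu (A_i)^2$ in the non-atomic limit; your separate disposal of the atomic case via the normalized character of the finite-dimensional $\kappa ^{\bm{a}}_0$ is also sound (the paper's route needs no such case distinction, since Theorem \ref{thm:NAerg} applies to arbitrary non-trivial NA-ergodic actions; alternatively one rules out atoms as in Proposition \ref{prop:minimal} by noting that $\bm{a}\times \bm{a}$ is ergodic). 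The trade-off: the paper's method yields the strictly stronger conclusions that a unique-trace group has \emph{no} non-trivial amenable invariant random subgroup whatsoever and that every non-trivial NA-ergodic action is free, whereas your argument applies only to tempered actions --- still enough for shift-minimality --- but in exchange it avoids all of the machinery of \S\ref{sec:NAerg} (the normalizer towers and the dichotomy of Theorem \ref{thm:NAerg}) and needs nothing beyond the Koopman representation.
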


In addition, a co-induction argument (Proposition \ref{prop:noamensub}) shows that shift-minimal groups have no non-trivial normal amenable subgroups, i.e., they have trivial amenable radical. This places shift-minimality squarely between two other properties whose general equivalence with $C^*$-simplicity remains an open problem. Indeed, it is open whether there are any general implications between $C^*$-simplicity and the unique trace property; in all concrete examples these two properties coincide. Furthermore, while the amenable radical of any $C^*$-simple group is known to be trivial \cite{PS79}, it is an open question -- asked explicitly by Bekka and de la Harpe \cite{BH00} -- whether conversely, a group which is not $C^*$-simple always contains a non-trivial normal amenable subgroup. For shift-minimality in place of $C^*$-simplicity, a stochastic version of this question is shown to have a positive answer (Theorem \ref{thm:smequiv}).

\begin{theorem}\label{thm:smiff}
A countable group $\Gamma$ is shift-minimal if and only if there is no non-trivial amenable invariant random subgroup of $\Gamma$ weakly contained in $\bm{s}_\Gamma$.
\end{theorem}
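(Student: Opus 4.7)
I would establish the two directions of the biconditional separately.

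\emph{Forward direction.} Assume $\Gamma$ is shift-minimal and let $\theta$ be an amenable IRS weakly contained in $\bm{s}_\Gamma$; the aim is to conclude $\theta = \delta_{\{e\}}$. Reading weak containment of $\theta$ as weak containment of the conjugation action $c_\theta \colon \Gamma \cc (\mathrm{Sub}(\Gamma),\theta)$ in $\bm{s}_\Gamma$, I split into cases. If $c_\theta$ is non-trivial, shift-minimality forces it to be free; but the stabilizer of $H$ under conjugation is its normalizer $N_\Gamma(H) \supseteq H$, so freeness yields $H = \{e\}$ for $\theta$-a.e.\ $H$. Otherwise $c_\theta$ is trivial, i.e.\ $\theta$ is supported on normal subgroups of $\Gamma$; together with amenability this puts $\theta$ on normal amenable subgroups, hence on the amenable radical $\mathrm{AR}(\Gamma)$, which is trivial by Proposition \ref{prop:noamensub}. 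Either way, $\theta = \delta_{\{e\}}$.

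\emph{Backward direction.} Assume no non-trivial amenable IRS is weakly contained in $\bm{s}_\Gamma$, and let $a \colon \Gamma \cc (X,\mu)$ be a non-trivial m.p.\ action with $a \prec \bm{s}_\Gamma$. The stabilizer map $\sigma_a \colon x \mapsto \mathrm{Stab}_a(x)$ is $\Gamma$-equivariant with respect to conjugation, so its pushforward $\theta_a = (\sigma_a)_*\mu$ is an IRS and the conjugation action on $(\mathrm{Sub}(\Gamma), \theta_a)$ is a factor of $a$, yielding $\theta_a \prec \bm{s}_\Gamma$. It then suffices to show $\theta_a$ is amenable, for the hypothesis would force $\theta_a = \delta_{\{e\}}$, which is essential freeness of $a$.

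\emph{Main obstacle.} The crucial step is the key lemma: if $a \prec \bm{s}_\Gamma$, then $\theta_a$-a.e.\ subgroup is amenable. I would prove this through the Koopman representation. Weak containment gives $\kappa_a^0 \prec \kappa_{\bm{s}_\Gamma}^0 \sim \lambda_\Gamma$, and restricting to $H \leq \Gamma$ yields $\kappa_a^0|_H \prec \lambda_\Gamma|_H \sim \lambda_H^{\oplus\infty}$. For $\theta_a$-a.e.\ $H$, one constructs almost $H$-invariant unit vectors in $L^2(X) \ominus \mathbb{C}$ by taking normalized indicators of the pullbacks $\sigma_a^{-1}(U_n)$ for shrinking basic neighborhoods $U_n$ of $H$ in $\mathrm{Sub}(\Gamma)$, chosen via an exhausting sequence of finite subsets symmetrically thickened by conjugation with larger and larger finite subsets of $H$; this presents $1_H$ as weakly contained in $\lambda_H^{\oplus\infty}$, so $H$ is amenable by the Hulanicki--Reiter criterion. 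The delicate point is the diagonal argument to secure approximate $H$-invariance of these vectors for $\theta_a$-generic $H$ when $\theta_a$ is diffuse, since then single-stabilizer fibers are null and the invariant directions must be produced by approximation rather than extracted directly.
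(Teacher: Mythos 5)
Your proposal is correct, and its overall architecture coincides with the paper's: the theorem is proved there as Theorem \ref{thm:smequiv}, whose (2)$\Rightarrow$(1) direction is exactly your forward argument (a non-point-mass amenable IRS weakly contained in $\bm{s}_\Gamma$ gives a non-trivial non-free action weakly contained in $\bm{s}_\Gamma$ because conjugation stabilizers are normalizers, while a point mass must sit at a normal amenable subgroup and is killed by Proposition \ref{prop:noamensub}), and whose converse passes, as you do, through the stabilizer distribution of a non-free action weakly contained in $\bm{s}_\Gamma$. Where you genuinely diverge is in the key lemma that almost every stabilizer of such an action is amenable. The paper gets this from NA-ergodicity (Example \ref{ex:NAerg}, Proposition \ref{prop:NAerg}, Lemma \ref{lem:am}): if $\nu(\mathrm{Fix}^a(C))>0$ and $\nu$ is not a point mass, then $\mathrm{Fix}^a(C)$ witnesses non-ergodicity of $\langle C\rangle$, so $\langle C\rangle$ is amenable, and a countable union over finite $C$ finishes; this is a purely measure-theoretic argument whose representation-theoretic content is hidden in the fact that strong ergodicity passes down under weak containment. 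You instead go through temperedness ($\kappa^{\bm{a}}_0\prec\lambda_\Gamma$, i.e.\ \cite[Proposition 10.5]{Ke10}) and Hulanicki's criterion applied to the restriction to $H$. Your route is viable, and in fact the step you flag as delicate is not: for $H$ in the (conull) support of $\theta_{\bm{a}}$ and $U_n=\{K\csuchthat K\cap F_n=H\cap F_n\}$, the pullback $A_n=\mathrm{stab}_a^{-1}(U_n)$ is pointwise fixed by every element of $H\cap F_n$, so the centered, normalized indicators are \emph{exactly} $(H\cap F_n)$-invariant unit vectors in $L^2_0$ and no diagonal or approximation argument is needed; the only degenerate case, $\mu(A_n)=1$ for all $n$, means $\theta_{\bm{a}}=\updelta_H$ with $H$ acting trivially, where $1_H\prec\kappa^{\bm{a}}_0|H\prec\lambda_H$ is immediate. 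What each approach buys: the paper's Lemma \ref{lem:am} applies to arbitrary NA-ergodic actions (not only those weakly contained in $\bm{s}_\Gamma$) and feeds into the finer structural Theorem \ref{thm:NAerg} and Corollary \ref{cor:notsm}, which the paper needs elsewhere; your argument is more self-contained for this one theorem and makes the link to the unique-trace circle of ideas (Hulanicki, weak containment in $\lambda_\Gamma$) explicit, but it yields only the amenability of stabilizers and not the normalizer-tower dichotomy.
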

Here an \emph{invariant random subgroup} (IRS) of $\Gamma$ is a Borel probability measure on the compact space $\mbox{Sub}_\Gamma$ of subgroups of $\Gamma$ that is invariant under the conjugation action $\Gamma \cc \mbox{Sub}_\Gamma$ of $\Gamma$.  It is called \emph{amenable} if it concentrates on the amenable subgroups of $\Gamma$. Invariant random subgroups generalize the notion of normal subgroups: if $N$ is a normal subgroup of $\Gamma$ then the Dirac measure $\updelta _N$ on $\mbox{Sub}_\Gamma$ is conjugation invariant. It is shown in \cite{AGV12} that the invariant random subgroups of $\Gamma$ are precisely those measures on $\mbox{Sub}_\Gamma$ that arise as the stabilizer distribution of some measure preserving action of $\Gamma$ (see \S\ref{subsec:IRS}).

Theorem \ref{thm:smiff} is not entirely satisfactory since it still seems possible that shift-minimality of $\Gamma$ is equivalent to $\Gamma$ having no non-trivial amenable invariant random subgroups whatsoever (see Question \ref{question:UIRS}).  In fact, the proof of Theorem \ref{thm:introUT} in $\S$\ref{sec:UT} shows that this possibly stronger property is a consequence of the unique trace property.

\begin{theorem}
Let $\Gamma$ be a countable group. If $\Gamma$ has the unique trace property then $\Gamma$ has no non-trivial amenable invariant random subgroups.
\end{theorem}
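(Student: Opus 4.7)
The plan is to attach, to each amenable IRS $\theta$, a tracial state on $C_r^*(\Gamma)$ and then apply the unique trace hypothesis. Concretely, I would define the function
\[
\varphi_\theta \colon \Gamma \to \mathbb{C}, \qquad \varphi_\theta(\gamma) \;=\; \theta\bigl(\{H \in \mathrm{Sub}_\Gamma : \gamma \in H\}\bigr),
\]
and aim to show that $\varphi_\theta$ is a conjugation-invariant positive-definite function that extends to a state on $C_r^*(\Gamma)$. Once this is done, $\varphi_\theta$ is a tracial state on $C_r^*(\Gamma)$, so the unique trace hypothesis forces $\varphi_\theta$ to equal the canonical trace $\gamma \mapsto \mathbf{1}_{\gamma = e}$. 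Reading off, $\theta(\{H : \gamma \in H\}) = 0$ for every $\gamma \ne e$, hence $\theta$ is concentrated on $\{e\}$.

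The first two steps are formal. Conjugation-invariance $\varphi_\theta(\alpha \gamma \alpha^{-1}) = \varphi_\theta(\gamma)$ is immediate from the fact that $\theta$ is $\Gamma$-invariant under conjugation $H \mapsto \alpha^{-1} H \alpha$. Positive-definiteness follows by writing $\varphi_\theta$ as the $\theta$-average of the indicator functions $\mathbf{1}_H$, each of which is the diagonal matrix coefficient $\gamma \mapsto \langle \lambda_{\Gamma/H}(\gamma)\updelta_H, \updelta_H\rangle$ of the quasi-regular representation on $\ell^2(\Gamma/H)$.

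The one real step is showing that $\varphi_\theta$ extends to a (necessarily unique) state on $C_r^*(\Gamma)$. Here is where amenability of $\theta$-almost every $H$ enters: by Hulanicki's theorem, $H$ amenable implies $1_H \prec \lambda_H$, so inducing yields
\[
\lambda_{\Gamma/H} \;=\; \operatorname{Ind}_H^\Gamma 1_H \;\prec\; \operatorname{Ind}_H^\Gamma \lambda_H \;=\; \lambda_\Gamma,
\]
which means $\mathbf{1}_H$ extends to a state $\psi_H$ on $C_r^*(\Gamma)$ of norm $1$. For $a = \sum_\gamma a(\gamma)\gamma \in \mathbb{C}[\Gamma]$, Borel measurability of $H \mapsto \psi_H(a)$ together with Fubini gives
\[
\Bigl|\sum_\gamma a(\gamma)\varphi_\theta(\gamma)\Bigr| \;=\; \Bigl|\int \psi_H(a)\, d\theta(H)\Bigr| \;\le\; \int \|a\|_{C_r^*(\Gamma)}\, d\theta(H) \;=\; \|a\|_{C_r^*(\Gamma)},
\]
so $\varphi_\theta$ extends by continuity. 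This is the step I expect to be the main technical point; it requires nothing more than checking measurability of $H \mapsto \psi_H$ so that the integration defining $\varphi_\theta$ can be pushed through to $C_r^*(\Gamma)$, and the Hulanicki induction step explaining why amenability is indispensable (without it one could only land in a larger $C^*$-algebra). Once the extension is established, the uniqueness of the trace closes out the argument as described above.
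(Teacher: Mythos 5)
Your proposal is correct and follows essentially the same route as the paper: the paper likewise defines $\varphi_\theta(\gamma)=\theta(\{H:\gamma\in H\})$, uses Hulanicki plus induction to get $\lambda_{\Gamma/H}\prec\lambda_\Gamma$ for amenable $H$, concludes that $\varphi_\theta$ extends to a tracial state on $C^*_r(\Gamma)$ distinct from the canonical trace when $\theta$ is non-trivial, and then invokes the unique trace hypothesis. The only (cosmetic) difference is that the paper packages the averaging as a direct integral representation $\lambda_\theta=\int^{\oplus}_H\lambda_{\Gamma/H}\,d\theta$ and proves $\lambda_\theta\prec\lambda_\Gamma$, whereas you integrate the states $\psi_H$ directly; these amount to the same estimate.
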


The known general implications among all of the notions introduced thus far are expressed in Figure \ref{fig:rel}. 
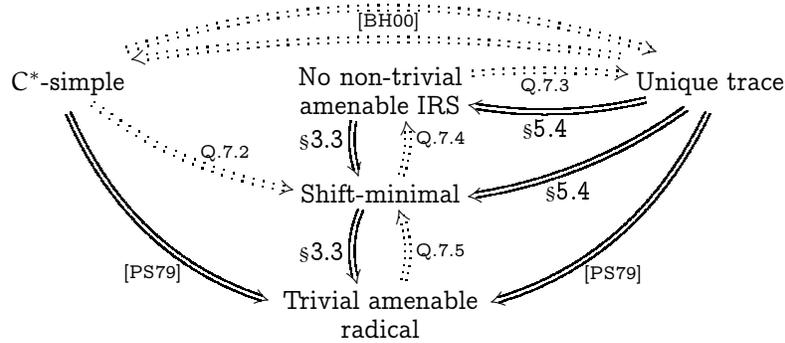
\begin{figure}[h!]\label{fig:rel}
\[
\xymatrix{
C^*\mbox{-simple}
    \ar @<-1ex> @/_2pc/ @{=>}[ddrr]_{\mbox{\scriptsize{\cite{PS79}}}}
    \ar @<1ex> @/^2pc/ @{:>}[rrrr]
    \ar @/_1pc/ @{:>}[drr]^{\mbox{\scriptsize{Q.\ref{Q:CS->SM}}}}
    &
    &
 \txt{No non-trivial \\ amenable IRS}
 \ar @<-.25ex> @/^/ @{:>}[rr]_{\mbox{\scriptsize{Q.\ref{Q:UIRS->UT}}}}
 \ar @<-1ex> @/_/ @{=>}[d]_{\S\ref{sec:airs}}
    &
    &
\mbox{Unique trace}
    \ar @<-1ex> @/_1pc/ @{:>}[llll]_{\mbox{\scriptsize{\cite{BH00}}}}
    \ar @<-.25ex>@/^1pc/ @{=>}[ll]^{\S\ref{sec:UT}}
    \ar @<.5ex> @/^1pc/ @{=>}[dll]^{\S\ref{sec:UT}}
    \ar @<1ex> @/^2pc/ @{=>}[ddll]^{\mbox{\scriptsize{\cite{PS79}}}}
    \\
  &
  &  \mbox{Shift-minimal}
  \ar @<-1ex> @/_/ @{=>}[d]_{\S\ref{sec:airs}}
  \ar @<-1ex> @/_/ @{:>}[u]_{\mbox{\scriptsize{Q.\ref{question:UIRS}}}}
  &
  &
  \\
  &
  &  \txt{Trivial amenable \\ radical}
  \ar @<-1ex> @/_/ @{:>}[u]_{\mbox{\scriptsize{Q.\ref{question:AReSM}}}}
  &
  &
}
\]
\caption{The solid lines indicate known implications and the dotted lines indicate open questions discussed in \S\ref{sec:Questions}. Any implication which is not addressed by the diagram is open in general. However, these properties all coincide for large classes of groups, e.g., linear groups (see \S\ref{sec:linear}).}
\end{figure}
%

Our starting point in studying shift-minimality is the observation that if $\Gamma \cc ^a (X,\mu )$ is a m.p.\ action that is weakly contained in $\bm{s}_\Gamma$ then every non-amenable subgroup of $\Gamma$ acts ergodically. We call this property of a m.p.\ action \emph{NA-ergodicity}. We show in Theorem \ref{thm:NAerg} that when a m.p.\ action of $\Gamma$ is NA-ergodic then the stabilizer of almost every point must be amenable.

\S\ref{sec:hered} deals with permanence properties of shift-minimality by examining situations in which freeness of a m.p.\ action $\Gamma \cc ^a (X,\mu )$ may be deduced from freeness of some acting subgroup. Many of the proofs in this section appeal to some form of the Poincar\'{e} Recurrence Theorem. 
%

A wide variety of groups are known to have the unique trace property and Theorem \ref{thm:introUT} shows that all such groups are shift-minimal. Among these are all non-abelian free groups (\cite{Pow75}), all Powers groups and weak Powers groups (\cite{Ha85}, \cite{BN88}), groups with property $\mbox{P}_{\mbox{\tiny{nai}}}$ \cite{BCH94}, all ICC relatively hyperbolic groups (\cite{AM07}), and all ICC groups with a minimal non-elementary convergence group action \cite{MOY11}. In \S\ref{sec:examples} we observe that all of these groups share a common paradoxicality property we call (BP), abstracted from M. Brin and G. Picioroaga's proof that all weak Powers groups contain a free group (see \cite[following Question 15]{Ha07}).  It is shown in Theorem \ref{thm:BPirs} that any non-trivial ergodic invariant random subgroup of a group with property (BP) must contain a non-abelian free group almost surely. Recurrence once again plays a key role in the proof.

\S\ref{sec:cost} studies the relationship between cost, weak containment, and invariant random subgroups. Kechris shows in \cite[Corollary 10.14]{Ke10} that if $\bm{a}$ and $\bm{b}$ are free measure preserving actions of a finitely generated group $\Gamma$ then $\bm{a}\prec \bm{b}$ implies $C(\bm{b})\leq C(\bm{a})$ where $C(\bm{a})$ denotes the cost of $\bm{a}$ (i.e., the cost of the orbit equivalence relation generated by $\bm{a}$). This is deduced from the stronger fact \cite[Theorem 10.13]{Ke10} that the cost function $C: \mbox{FR}(\Gamma ,X,\mu ) \ra \R$, $\bm{a}\mapsto C(\bm{a})$, is upper semi-continuous for finitely generated $\Gamma$. In \S\ref{sec:infgen} we obtain a generalization which holds for arbitrary countable groups (Theorem \ref{thm:KecOpen} below). The consequences of this generalization are most naturally stated in terms of an invariant we call \emph{pseudocost}.

If $E$ is a m.p.\ countable Borel equivalence relation on $(X,\mu )$ then the pseudocost of $E$ is defined as $PC_\mu (E) := \inf _{(E_n)}\liminf _n C_\mu (E_n)$, where $(E_n)_{n\in \N}$ ranges over all increasing sequences $E_0\subseteq E_1\subseteq \cdots$, of Borel subequivalence relations of $E$ such that $\bigcup _n E_n =E$. The pseudocost of an action and of a group is then defined in analogy with cost (see Definition \ref{def:pseudo}). It is immediate that $PC_\mu (E)\leq C_\mu (E)$, and while the pseudocost and cost coincide in most cases, including whenever $E$ is treeable or whenever $C_\mu (E)<\infty$ (Corollary \ref{cor:PC=C}), it is unclear whether equality holds in general.

One of the main motivations for introducing pseudocost is the following useful continuity property (Corollary \ref{cor:AFrPC}):

\begin{theorem}\label{thm:weakconPC}
Let $\bm{a} = \Gamma \cc ^a (X,\mu )$ and $\bm{b} =\Gamma \cc ^b(Y,\nu )$ be measure preserving actions of a countable group $\Gamma$. Assume that $\bm{a}$ is free. If $\bm{a}\prec \bm{b}$ then $PC(\bm{b})\leq PC(\bm{a})$.
\end{theorem}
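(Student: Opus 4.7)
The plan is to deduce this from the generalization of Kechris's upper semi-continuity theorem established earlier as Theorem \ref{thm:KecOpen}, applied to an exhausting sequence of sub-equivalence relations of $E^X_{\bm{a}}$ that nearly witnesses $PC(\bm{a})$. The point of pseudocost (as opposed to cost) is precisely that a result of this kind is invariant under the approximation errors introduced by passing through weak containment.

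Fix $\epsilon>0$ and, by the definition of $PC(\bm{a})$, choose an increasing sequence $E_0\subseteq E_1\subseteq \cdots$ of Borel sub-equivalence relations of $E^X_{\bm{a}}$ with $\bigcup_n E_n = E^X_{\bm{a}}$ and $\liminf_n C_\mu(E_n)\leq PC(\bm{a})+\epsilon$. Because $\bm{a}$ is free, for each $\gamma\in\Gamma$ the set $A_{n,\gamma}=\{x:(x,\gamma x)\in E_n\}$ is Borel, and the partial isomorphisms $\{x\mapsto \gamma x\restriction A_{n,\gamma}:\gamma\in\Gamma\}$ form a graphing of $E_n$. Taking, for each $n$, a near-optimal graphing of $E_n$ and truncating to a finite sub-graphing, then diagonalizing, I may assume (at the expense of summable losses in cost) that each $E_n$ is finitely generated by partial isomorphisms of the above form; replacing $E_n$ by $E_0\vee\cdots\vee E_n$ preserves the increasing exhausting property and the bound on $\liminf C_\mu(E_n)$.

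Now apply Theorem \ref{thm:KecOpen} to each $E_n$. The weak containment $\bm{a}\prec\bm{b}$, together with freeness of $\bm{a}$, permits transfer of the generating partial isomorphisms of $E_n$ to corresponding partial isomorphisms in $E^Y_{\bm{b}}$, producing a sub-equivalence relation $F_n\subseteq E^Y_{\bm{b}}$ with $C_\nu(F_n)\leq C_\mu(E_n)+2^{-n}$. After replacing $F_n$ by $F_0\vee\cdots\vee F_n$ the $F_n$ form an increasing sequence in $E^Y_{\bm{b}}$. Since $\bigcup_n E_n = E^X_{\bm{a}}$ and $\bm{a}$ is free, for every $\gamma\in\Gamma$ the sets $A_{n,\gamma}$ exhaust $X$ modulo null sets as $n\to\infty$, and the same exhaustion is inherited by the $\bm{b}$-side partial isomorphisms produced by the transfer; thus $\bigcup_n F_n = E^Y_{\bm{b}}$. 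Consequently $PC(\bm{b})\leq \liminf_n C_\nu(F_n)\leq PC(\bm{a})+\epsilon$, and letting $\epsilon\to 0$ gives the desired inequality.

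The main obstacle is the simultaneous preservation of the two features required of the $F_n$ under the transfer: the cost bound $C_\nu(F_n)\leq C_\mu(E_n)+\epsilon_n$ and the exhaustion $\bigcup_n F_n = E^Y_{\bm{b}}$. The cost bound is the content of Theorem \ref{thm:KecOpen} (in the spirit of \cite[Theorem 10.13]{Ke10}, now for arbitrary countable $\Gamma$), while exhaustion is forced by the freeness of $\bm{a}$, which guarantees that the finitely-supported graphings on the $\bm{a}$-side genuinely cover every $\gamma\in\Gamma$. Working with pseudocost rather than cost is essential: liminf along an exhausting sequence tolerates the $2^{-n}$-type errors that inevitably arise in weak-containment arguments for infinitely generated groups.
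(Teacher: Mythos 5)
Your overall strategy --- pushing an exhausting sequence that witnesses $PC(\bm{a})$ across the weak containment via Theorem \ref{thm:KecOpen} --- is the right instinct and is morally what the paper does, but two of your steps have genuine gaps, and they are exactly the points where the paper's intermediate machinery (Lemma \ref{lem:open}, Lemma \ref{lem:exhaust}, Corollary \ref{cor:PCdef}) earns its keep. First, the transfer step. Weak containment only lets you copy a finite family $\{A_\delta\}_{\delta\in Q}$ of subsets of $X$ to subsets $\{B_\delta\}_{\delta\in Q}$ of $Y$ so that the measures of finitely many prescribed Boolean combinations of translates approximately match. This controls $C_\nu(\Phi^{b,B})=\sum_\delta\nu(B_\delta)$, but it says nothing directly about which pairs $(y,\gamma^b y)$ lie in $E_{\Phi^{b,B}}$; your sentence ``the same exhaustion is inherited by the $\bm{b}$-side partial isomorphisms, thus $\bigcup_nF_n=E_b$'' is precisely the assertion that needs proof. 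The paper obtains it by (i) replacing the infinitary condition $(x,\gamma^ax)\in E_n$ with the finitary condition $d_{\Phi^{a,A}}(x,\gamma^ax)\leq M$ up to an error set of measure $<\epsilon$ (Lemma \ref{lem:open}, (1)$\Rightarrow$(3)); (ii) observing that, because $\bm{a}$ is free, the set where $d_{\Phi^{a,A}}(x,\gamma^ax)> M$ is exactly an intersection of complements of domains of finitely many compositions, each a Boolean combination of translates of the $A_\delta$, so its measure varies continuously under the transfer (this is the actual content of the proof of Theorem \ref{thm:KecOpen}); and (iii) adding the error set $\{y : d_{\Phi^{b,B}}(y,\gamma^by)>M\}$ back into the graphing so as to force $E_{b\resto\langle F\rangle}\subseteq F_n$ exactly, at additional cost $<\epsilon$ (Lemma \ref{lem:open}, (3)$\Rightarrow$(2)). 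Without step (iii), each $F_n$ can miss $(y,\gamma^by)$ on a set of positive measure, and nothing forces these exceptional sets to shrink to a null set as $n\to\infty$, so $\bigcup_nF_n$ may be a proper subrelation of $E_b$.

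Second, the joins. Replacing $F_n$ by $F_0\vee\cdots\vee F_n$ (and likewise $E_n$ by $E_0\vee\cdots\vee E_n$ after truncating the graphings) does not preserve the cost bound: cost is not monotone under inclusion of equivalence relations, and all one can say is $C_\nu(F_0\vee\cdots\vee F_n)\leq\sum_{k\leq n}C_\nu(F_k)$, which destroys $\liminf_nC_\nu(F_n)\leq PC(\bm{a})+\epsilon$ (the partial sums grow like $n\cdot PC(\bm{a})$). The paper sidesteps this by never joining relations of comparable cost: in Lemma \ref{lem:exhaust} ((4)$\Rightarrow$(2)) the increasing sequence is built inductively so that $E_{b\resto\langle Q_n\rangle}\subseteq E_n\subseteq E_{b\resto\langle Q_{n+1}\rangle}$, whence monotonicity is automatic and each term's cost comes from a fresh application of Lemma \ref{lem:open}; the only joins taken are with auxiliary graphings of cost $<\epsilon/2$. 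Once these two points are repaired, the cleanest packaging is the paper's: $PC(\bm{a})\leq r$ if and only if $\bm{a}\in A_{F,r+\epsilon}$ for every finite $F$ and $\epsilon>0$ (Corollary \ref{cor:PCdef}); the set $A_{F,r+\epsilon}$ is isomorphism-invariant and, by Theorem \ref{thm:KecOpen}, contains a neighborhood of the free action $\bm{a}$; and $\bm{a}\prec\bm{b}$ places an isomorphic copy of $\bm{b}$ in every such neighborhood, so $\bm{b}\in A_{F,r+\epsilon}$ and hence $PC(\bm{b})\leq r$.
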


Combining Theorem \ref{thm:weakconPC} and \cite[Theorem 1]{AW11} it follows that, among all free m.p.\ actions of $\Gamma$, the Bernoulli shift $\bm{s}_\Gamma$ has the maximum pseudocost. Since pseudocost and cost coincide for m.p.\ actions of finitely generated groups, this generalizes the result of \cite{AW11} that $\bm{s}_\Gamma$ has the greatest cost among free actions of a finitely generated group $\Gamma$. In Corollary \ref{cor:weakcon} we use Theorem \ref{thm:weakconPC} to deduce general consequences for cost:

\begin{theorem}
Let $\bm{a}$ and $\bm{b}$ be m.p.\ actions of a countably infinite group $\Gamma$. Assume that $\bm{a}$ is free and $\bm{a}\prec \bm{b}$.
\begin{enumerate}
\item If $C (\bm{b})<\infty$ then $C (\bm{b})\leq C (\bm{a})$.
\item If $E_b$ is treeable then $C(\bm{b})\leq C(\bm{a})$.
\item If $C(\bm{a})=1$ then $C(\bm{b})=1$.
\end{enumerate}
\end{theorem}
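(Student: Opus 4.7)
The plan is to deduce all three statements from Theorem~\ref{thm:weakconPC}, using the criteria of Corollary~\ref{cor:PC=C} under which pseudocost and cost coincide. Two easy general facts are needed as well: first, as noted in the introduction, weak containment transmits freeness, so if $\bm{a}$ is free and $\bm{a}\prec\bm{b}$ then $\bm{b}$ is itself free; second, any free measure preserving action of an infinite group has cost at least $1$ (Gaboriau). In each case, Theorem~\ref{thm:weakconPC} supplies the basic estimate $PC(\bm{b})\leq PC(\bm{a})\leq C(\bm{a})$, and the real work is to upgrade pseudocost to cost on the side dictated by the hypothesis.

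Items (1) and (2) are then immediate. For (1), the hypothesis $C(\bm{b})<\infty$ triggers Corollary~\ref{cor:PC=C} to give $PC(\bm{b})=C(\bm{b})$, so $C(\bm{b})=PC(\bm{b})\leq PC(\bm{a})\leq C(\bm{a})$. For (2), treeability of $E_b$ triggers the other clause of Corollary~\ref{cor:PC=C} to give $PC(\bm{b})=C(\bm{b})$, and the same chain closes.

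For (3), the hypothesis $C(\bm{a})=1$ combined with Theorem~\ref{thm:weakconPC} yields $PC(\bm{b})\leq PC(\bm{a})\leq 1$. Since $\bm{a}\prec\bm{b}$ and $\bm{a}$ is free, $\bm{b}$ is free, and because $\Gamma$ is infinite the lower bound $C(\bm{b})\geq 1$ holds. The main obstacle is now to convert $PC(\bm{b})\leq 1$ into $C(\bm{b})\leq 1$; this is exactly where the distinction between cost and pseudocost bites, since we are not assuming a priori that $C(\bm{b})$ is finite or that $E_b$ is treeable. I would argue directly from the definition of pseudocost: one obtains an increasing sequence $(E_n)$ of Borel subequivalence relations of $E_b$ with $\bigcup_n E_n=E_b$ and $\liminf_n C_\mu(E_n)\leq 1$. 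For each $\varepsilon>0$ one can then pass to a subsequence along which $C_\mu(E_n)\leq 1+\varepsilon$, so that Gaboriau's lower semi-continuity of cost under increasing unions of subequivalence relations of uniformly bounded cost (which is precisely what underlies the $C_\mu(E)<\infty$ case of Corollary~\ref{cor:PC=C}) yields $C_\mu(E_b)\leq 1+\varepsilon$. Letting $\varepsilon\to 0$ gives $C(\bm{b})\leq 1$, and together with $C(\bm{b})\geq 1$ this forces $C(\bm{b})=1$.
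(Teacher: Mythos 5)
Your treatment of (1) and (2) is correct and is essentially identical to the paper's argument: Theorem \ref{thm:weakconPC} (i.e.\ Corollary \ref{cor:AFrPC}) gives $PC(\bm{b})\leq PC(\bm{a})\leq C(\bm{a})$, and clauses (1) and (2) of Corollary \ref{cor:PC=C} convert $PC(\bm{b})$ back into $C(\bm{b})$ under the respective hypotheses.

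The gap is in the last step of (3). You fix $\varepsilon>0$, extract a subexhaustion of $E_b$ all of whose terms have cost at most $1+\varepsilon$, and then invoke ``Gaboriau's lower semi-continuity of cost under increasing unions of subequivalence relations of uniformly bounded cost'' to conclude $C_\mu(E_b)\leq 1+\varepsilon$. No such result is available: the statement underlying the $C_\mu(E)<\infty$ case of Corollary \ref{cor:PC=C} is Proposition \ref{prop:unions}(1), which takes $C_\mu(E)<\infty$ as a \emph{hypothesis on the limit relation} and only then concludes $C_\mu(E)\leq\liminf_n C_\mu(E_n)$; you do not know in advance that $C_\mu(E_b)<\infty$, which is the whole difficulty. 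Indeed, whether an increasing union of subequivalence relations of uniformly bounded cost can have infinite cost is exactly the open Question \ref{Q:unions} (equivalently, whether $PC_\mu < C_\mu$ can occur), so for fixed $\varepsilon>0$ the intermediate inequality $C_\mu(E_b)\leq 1+\varepsilon$ is unjustified and the limit $\varepsilon\to 0$ never gets started. What rescues the argument is that the relevant bound is exactly $1$: since $\bm{b}$ is free and $\Gamma$ is infinite, $E_b$ is aperiodic, so $PC(\bm{b})\geq 1$ and hence $PC(\bm{b})=1$; by Corollary \ref{cor:attain} the infimum defining pseudocost is attained, giving an exhaustion with $\lim_n C_\mu(E_n)=1$, and then the special cost-one theorem (Proposition \ref{prop:unions}(3), equivalently Corollary \ref{cor:PC=C}(3), which rests on \cite[22.1 and 23.5]{KM05}) yields $C_\mu(E_b)=1$. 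That is a genuinely different theorem from finite-cost semicontinuity, and it is the one the paper uses; with that substitution your proof of (3) closes.
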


This leads to a characterization of countable groups with fixed price $1$ as exactly those groups whose Bernoulli shift has cost $1$. This characterization was previously shown for finitely generated groups in \cite{AW11}.

\begin{theorem}
Let $\Gamma$ be a countable group. The following are equivalent:
\begin{enumerate}
\item[(1)] $\Gamma$ has fixed price $1$
\item[(2)] $C(\bm{s}_\Gamma ) = 1$
\item[(3)] $C(\bm{a}) = 1$ for some m.p.\ action $\bm{a}$ weakly equivalent to $\bm{s}_\Gamma$.
\item[(4)] $PC(\bm{a})=1$ for some m.p.\ action $\bm{a}$ weakly equivalent to $\bm{s}_\Gamma$.
\item[(5)] $\Gamma$ is infinite and $C(\bm{a})\leq 1$ for some non-trivial m.p.\ action $\bm{a}$ weakly contained in $\bm{s}_\Gamma$.
\end{enumerate}
\end{theorem}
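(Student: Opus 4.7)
The plan is to run the cycle $(1)\Rightarrow(2)\Rightarrow(3)\Rightarrow(4)\Rightarrow(5)\Rightarrow(1)$, using Theorem~\ref{thm:weakconPC} (pseudocost under weak containment of a free action), its Corollary~\ref{cor:weakcon}(3) (``$C(\bm{a})=1$ propagates up weak containment''), the Ab\'{e}rt-Weiss theorem that $\bm{s}_\Gamma\prec\bm{c}$ for every free m.p.\ action $\bm{c}$, Corollary~\ref{cor:PC=C} ($C<\infty\Rightarrow PC=C$), and Theorem~\ref{thm:NAerg} (stabilizers are amenable a.e.\ when the action is weakly contained in $\bm{s}_\Gamma$). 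A preliminary observation is that each of (2)--(5) forces $\Gamma$ infinite, since every m.p.\ action of a finite $\Gamma$ has cost at most $1-1/|\Gamma|<1$.

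The easy arrows: (1)$\Rightarrow$(2) since $\bm{s}_\Gamma$ is free; (2)$\Rightarrow$(3) by taking $\bm{a}=\bm{s}_\Gamma$; (3)$\Rightarrow$(4) because $\bm{a}\sim\bm{s}_\Gamma$ makes $\bm{a}$ free (it weakly contains the free $\bm{s}_\Gamma$), so $C(\bm{a})=1<\infty$ forces $PC(\bm{a})=1$ by Corollary~\ref{cor:PC=C}. For (4)$\Rightarrow$(5), take $\bm{a}$ from (4): it is free, $\bm{a}\prec\bm{s}_\Gamma$, and Theorem~\ref{thm:weakconPC} combined with $PC\geq 1$ for free actions of infinite groups gives $PC(\bm{s}_\Gamma)=PC(\bm{a})=1$. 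Unpacking the definition of pseudocost yields $E_n\uparrow E_{\bm{a}}$ with $C(E_n)\to 1$, and a Gaboriau-style upper semi-continuity of cost under increasing unions of finite-cost subequivalence relations then produces $C(\bm{a})\leq 1$, so $\bm{a}$ itself witnesses (5).

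The hard step is (5)$\Rightarrow$(1). Given $\Gamma$ infinite and non-trivial $\bm{a}\prec\bm{s}_\Gamma$ with $C(\bm{a})\leq 1$. In the clean case where $\bm{a}$ is free, the standard lower bound gives $C(\bm{a})=1$, and for any free $\bm{c}$ Ab\'{e}rt-Weiss yields $\bm{s}_\Gamma\prec\bm{c}$ hence $\bm{a}\prec\bm{c}$; Corollary~\ref{cor:weakcon}(3) then forces $C(\bm{c})=1$, giving fixed price $1$. When $\bm{a}$ is not free, Theorem~\ref{thm:NAerg} shows its stabilizer IRS is supported on amenable subgroups, and I would upgrade $\bm{a}$ to the diagonal product $\tilde{\bm{a}}:=\bm{a}\times\bm{s}_\Gamma$, which is free (as $\bm{s}_\Gamma$ is), weakly contained in $\bm{s}_\Gamma\times\bm{s}_\Gamma\cong\bm{s}_\Gamma$, and hence weakly equivalent to $\bm{s}_\Gamma$ by Ab\'{e}rt-Weiss---reducing to the free case provided $C(\tilde{\bm{a}})\leq 1$. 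This last bound is the decisive obstacle. My approach is a fibered-graphing argument: given a graphing $\Phi$ of $E_{\bm{a}}$ and a Borel selection $\gamma_\phi$ with $\phi(x)=\gamma_\phi(x)\cdot x$, lift to $X\times[0,1]^\Gamma$ via $\tilde{\phi}(x,y):=(\phi(x),\gamma_\phi(x)\cdot y)$; the lift has the same cost but only generates the ``coherent'' subequivalence of $E_{\tilde{\bm{a}}}$. The residual fiberwise relation is, orbit by orbit, isomorphic to the Bernoulli action of the amenable stabilizer $H_x$ on $[0,1]^{H_x}$, which has fixed price $1$, so it can be absorbed at arbitrarily small pseudocost increment against the orbit measure; passing through pseudocost and invoking Corollary~\ref{cor:PC=C} delivers $C(\tilde{\bm{a}})\leq 1$, closing the cycle.
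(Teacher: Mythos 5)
Your skeleton matches the paper's (this is Corollary \ref{cor:FP1}): the easy arrows, the reduction of the free case of (5)$\Rightarrow$(1) to Ab\'{e}rt--Weiss plus Corollary \ref{cor:weakcon}(3), and the passage between cost and pseudocost via Corollary \ref{cor:PC=C} are all correct and essentially what is in the text. The gap is in the non-free case of (5)$\Rightarrow$(1), in two places. First, Theorem \ref{thm:NAerg} only gives \emph{amenable} stabilizers; you need them to be \emph{infinite} for ``the Bernoulli action of $H_x$ has fixed price $1$'' to be true (a finite group has fixed price $1-1/|H|$, and the fiberwise relation is then not aperiodic and contributes a positive cost that cannot be absorbed). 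The paper obtains the dichotomy ``either $\theta_{\bm{a}}=\updelta_N$ for a finite normal $N$, or $\theta_{\bm{a}}$ concentrates on infinite subgroups'' from weak mixing of $\bm{\theta}_{\bm{a}}$ (using strong ergodicity of $\bm{s}_\Gamma$ for non-amenable $\Gamma$; amenable $\Gamma$ is dispatched separately at the outset), and the finite-normal case is then handled by an entirely different argument: pass to a transversal of the $N$-action, descend to free actions of $\Gamma/N$, apply Corollary \ref{cor:weakcon} there, and lift back via \cite[Theorem 25.1]{KM05}. Your proposal is silent on this case.

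Second, even when the stabilizers are infinite amenable, your absorption mechanism runs in the wrong direction. The residual fiberwise relation $F$ (orbitwise the orbit equivalence relation of the free action of the infinite amenable group $\Gamma_x$ on the Bernoulli factor) is aperiodic and hyperfinite, so its cost is exactly $1$; it cannot be ``absorbed at arbitrarily small pseudocost increment,'' and adding it to your lifted graphing of cost $\approx C(\bm{a})\leq 1$ only yields $C(\tilde{\bm{a}})\leq 2$ by subadditivity. The correct argument (Lemma \ref{lem:cost1}) is the reverse: $F$ is a \emph{normal}, aperiodic subequivalence relation of $E_{\tilde{a}}$ (normality coming from the equivariance $\Gamma_{\gamma x}=\gamma\Gamma_x\gamma^{-1}$) with $C(F)=\int_x C(F_x)\, d\mu =1$ by \cite[Proposition 18.4]{KM05}, and \cite[24.10]{KM05} then lets one generate the rest of $E_{\tilde{a}}$ over $F$ at arbitrarily small additional cost, giving $C(E_{\tilde{a}})\leq C(F)=1$. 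Note that on this branch the hypothesis $C(\bm{a})\leq 1$ --- and hence your lifted graphing --- plays no role at all; it is only needed in the free and finite-normal-stabilizer branches.
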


We use this characterization to obtain a new class of shift-minimal groups in \S\ref{sec:fp1sm}. In what follows, $\mbox{AR}_\Gamma$ denotes the amenable radical of $\Gamma$ (see Appendix \ref{app:ar}). Gaboriau \cite[Theorem 3]{Ga00} showed that if $\Gamma$ does not have fixed price $1$ then $\mbox{AR}_\Gamma$ is finite. We now have:

\begin{theorem}\label{thm:fpcost>1}
Let $\Gamma$ be a countable group that does not have fixed price $1$. Then $\mbox{\emph{AR}}_\Gamma$ is finite and $\Gamma /\mbox{\emph{AR}}_\Gamma$ is shift-minimal.
\end{theorem}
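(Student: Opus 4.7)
The first assertion is immediate: by Gaboriau's theorem quoted just above, $\Gamma$ not having fixed price~$1$ forces $N := \mathrm{AR}_\Gamma$ to be finite. The substantive content is showing $\Gamma/N$ is shift-minimal.

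My first step is a reduction to the case $\mathrm{AR}_\Gamma = \{e\}$. Any normal amenable subgroup of $\Gamma/N$ pulls back to a normal subgroup of $\Gamma$ that is amenable-by-$N$, hence amenable, hence contained in $N$; so $\mathrm{AR}_{\Gamma/N}$ is trivial. Moreover, since $N$ is finite normal, a standard cost comparison between free actions of $\Gamma/N$ and their lifts to $\Gamma$ (e.g.\ $\bar{\bm c} \circ \pi$ diagonally producted with $\bm s_\Gamma$) shows that $\Gamma$ has fixed price~$1$ iff $\Gamma/N$ does; in particular $\Gamma/N$ also fails fixed price~$1$. Hence it suffices to prove the following special case: \emph{any group $\Lambda$ with $\mathrm{AR}_\Lambda = \{e\}$ that does not have fixed price~$1$ is shift-minimal}, and apply it to $\Lambda := \Gamma/N$.

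Assume $\Lambda$ satisfies these hypotheses but is not shift-minimal. Theorem~\ref{thm:smiff} then yields a non-trivial amenable IRS of $\Lambda$ weakly contained in $\bm s_\Lambda$, which I realize as the stabilizer distribution of some $\bm a \prec \bm s_\Lambda$ on $(X,\mu)$ with amenable, non-trivial stabilizers a.s. Form the diagonal product $\bm b := \bm a \times \bm s_\Lambda$, which is free (via the $\bm s_\Lambda$-factor), satisfies $\bm b \prec \bm s_\Lambda$, and by Ab\'ert--Weiss is weakly equivalent to $\bm s_\Lambda$. The heart of the argument is to show $PC(\bm b) \leq 1$: on each fiber $\{x\} \times [0,1]^\Lambda$ the amenable group $\mathrm{Stab}_{\bm a}(x)$ acts via Bernoulli shift, generating a hyperfinite subrelation of $E_{\bm b}$ of cost~$1$ (on the set where stabilizers are infinite); combining this hyperfinite subrelation with the pseudocost machinery of \S\ref{sec:cost} and an appropriate exhaustion gives $PC(\bm b) \leq 1$. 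By item~(4) of the fixed-price-$1$ characterization above, this forces $\Lambda$ to have fixed price~$1$, contradicting the hypothesis.

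The main obstacle is precisely the pseudocost bound $PC(\bm b) \leq 1$: one must construct an increasing sequence of Borel subequivalence relations exhausting $E_{\bm b}$ whose costs converge to~$1$, delicately blending the hyperfinite stabilizer subrelation with the rest of the $\Lambda$-action. In particular one has to handle the case where stabilizers are a.s.\ finite, in which the Bernoulli factor must be exploited (together with the fact that $\Lambda$ is necessarily infinite) to manufacture the infinite amenable pieces carrying cost~$1$. I expect this to be the core technical lemma proved in \S\ref{sec:fp1sm}.
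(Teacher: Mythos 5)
Your skeleton is the paper's own: reduce to $\Lambda := \Gamma/\mbox{AR}_\Gamma$, which has trivial amenable radical and still fails fixed price $1$; assume $\Lambda$ is not shift-minimal and extract an amenable invariant random subgroup $\theta\prec\bm{s}_\Lambda$; product with $\bm{s}_\Lambda$ to obtain a free action weakly equivalent to $\bm{s}_\Lambda$ of cost $1$; conclude fixed price $1$ from Corollary \ref{cor:FP1}, a contradiction. This is exactly Theorem \ref{thm:smfp1}, Corollary \ref{cor:equivFP1}, and Lemma \ref{lem:cost1}, and your identification of the product-with-the-shift lemma as the technical core is correct.

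The genuine gap is your treatment of the case where the stabilizers are a.s.\ finite, and the fix you propose would fail. If $\theta$ concentrates on finite subgroups, the subrelation of $E_{\bm b}$ generated by the stabilizers acting on the Bernoulli coordinate is a \emph{finite} equivalence relation, so it supplies no aperiodic normal subrelation of cost $1$; and the Bernoulli factor cannot "manufacture infinite amenable pieces carrying cost $1$," because $\Lambda$ is non-amenable (an infinite amenable group has fixed price $1$) and by Corollary \ref{cor:AFrPC} the shift $\bm{s}_\Lambda$ has \emph{maximal} pseudocost among free actions of $\Lambda$, which exceeds $1$ by hypothesis. The correct resolution is that this case simply does not occur: $\bm{s}_\Lambda$ is strongly ergodic, so $\bm{\theta}$ is weakly mixing, the conjugation-invariant event $\{H : |H|<\infty\}$ is null or conull, and in the conull case $\theta$ would be a point mass at a finite normal subgroup, hence trivial since $\mbox{AR}_\Lambda=\{e\}$ — contradicting non-triviality. (The paper avoids the issue altogether by invoking Corollary \ref{cor:notsm} rather than Theorem \ref{thm:smequiv}: it yields either an infinite normal amenable subgroup or an \emph{infinitely generated} — hence infinite — amenable IRS.) Two smaller repairs: the Ab\'ert--Glasner--Vir\'ag realization theorem gives no control on weak containment, so you cannot assume $\theta$ is the stabilizer distribution of some $\bm{a}\prec\bm{s}_\Lambda$; work instead with the conjugation action of $\Lambda$ on $(\mbox{Sub}_\Lambda,\theta)$ itself, as Lemma \ref{lem:cost1} does. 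And the inequality $C(\bm{\theta}\times\bm{s}_\Lambda)\leq 1$ is not obtained from pseudocost exhaustions but from the fact that the subrelation in question is an aperiodic \emph{normal} subequivalence relation of cost $1$, via \cite[24.10]{KM05} and \cite[Proposition 18.4]{KM05}.
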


In Theorem \ref{thm:gencost1} of \S\ref{sec:costIRS} it is shown that if the hypothesis of Theorem \ref{thm:fpcost>1} is strengthened to $C(\Gamma )>1$, i.e., if \emph{all} free m.p.\ actions of $\Gamma$ have cost ${}>1$, then the conclusion may be strengthened considerably. The following is an analogue of Bergeron and Gaboriau's result \cite[\S 5]{BG05} (see also \cite[Corollary 1.6]{ST10}) in which the statement is shown to hold for the first $\ell ^2$-Betti number in place of cost.

\begin{theorem}\label{thm:BGintro}
Suppose that $C(\Gamma )>1$. Let $\Gamma \cc ^a (X,\mu )$ be an ergodic measure-preserving action of $\Gamma$ on a non-atomic probability space. Then exactly one of the following holds:
\begin{enumerate}
\item Almost all stabilizers are finite;
\item Almost every stabilizer has infinite cost, i.e., $C(\Gamma _x) = \infty$ almost surely.
\end{enumerate}
In particular, $\mbox{\emph{AR}}_\Gamma$ is finite and $\Gamma /\mbox{\emph{AR}}_\Gamma$ has no non-trivial amenable invariant random subgroups.
\end{theorem}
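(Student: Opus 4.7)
The plan splits into establishing the dichotomy and then deducing the ``in particular'' clause. For the dichotomy, the initial reduction is ergodic: the two predicates ``$\Gamma_x$ is finite'' and ``$C(\Gamma_x)<\infty$'' are conjugation-invariant on subgroups, so the corresponding subsets of $X$ are $\Gamma$-invariant Borel sets, and by ergodicity of $\bm{a}$ each has $\mu$-measure $0$ or $1$. If the first set has full measure we are in case (1). Otherwise $\Gamma_x$ is infinite almost surely, and the task becomes $C(\Gamma_x)=\infty$ almost surely, which I attack by contradiction: assume $C(\Gamma_x)<\infty$ almost surely.

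To derive the contradiction I would form the diagonal action $\bm{a}\times\bm{s}_\Gamma$ on $X\times [0,1]^\Gamma$. This is free because $\bm{s}_\Gamma$ is, hence $C(\bm{a}\times\bm{s}_\Gamma)\geq C(\Gamma)>1$, and on a free action pseudocost coincides with cost (Corollary~\ref{cor:PC=C} handles the finite-cost case, and otherwise both are $\infty$), so $PC(\bm{a}\times\bm{s}_\Gamma)>1$. On the other hand, over a.e.\ $x$ the stabilizer $\Gamma_x$ acts freely on $[0,1]^\Gamma$ and the restricted action has cost at most $C(\Gamma_x)<\infty$; moreover the infiniteness of $\Gamma_x$ allows a pulled-back graphing of $E_{\bm{a}}$ to be absorbed into the fiberwise $\Gamma_x$-graphing by adjoining generators to an exhausting sequence of finitely generated subgroups of $\Gamma_x$. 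Assembling these measurably over $x$ and invoking Theorem~\ref{thm:weakconPC} together with the continuity machinery of \S\ref{sec:cost} yields $PC(\bm{a}\times\bm{s}_\Gamma)\leq 1$, the desired contradiction.

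For the ``in particular'' clause, finiteness of $\mathrm{AR}_\Gamma$ is given directly by Gaboriau's theorem cited just before the statement, since $C(\Gamma)>1$ precludes fixed price $1$. To show $\Gamma/\mathrm{AR}_\Gamma$ admits no non-trivial amenable IRS, let $\theta$ be such an IRS, lift it to a $\Gamma$-IRS (with stabilizers containing $\mathrm{AR}_\Gamma$), realize it as the stabilizer distribution of some ergodic m.p.\ action of $\Gamma$ via the AGV correspondence, and apply the dichotomy. Amenability of $\theta$ forces $C(\Gamma_x)\leq 1$, so alternative (2) is ruled out and the stabilizers are finite a.s.; hence $\theta$ is supported on finite subgroups of $\Gamma/\mathrm{AR}_\Gamma$. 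A normal-core argument then finishes: the normal core of any $H$ in the support is a finite amenable normal subgroup of $\Gamma/\mathrm{AR}_\Gamma$ and therefore trivial (the amenable radical of $\Gamma/\mathrm{AR}_\Gamma$ vanishes by definition of $\mathrm{AR}_\Gamma$), so any non-trivial $H$ in the support has infinite conjugacy class. A recurrence argument on the conjugation action on $\mathrm{Sub}_{\Gamma/\mathrm{AR}_\Gamma}$, combined with non-amenability of $\Gamma/\mathrm{AR}_\Gamma$ (which prevents any co-amenability between a finite subgroup and its normalizer), excludes invariant probability measures on such orbits and forces $\theta=\delta_{\{e\}}$.

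The main obstacle is the cost upper bound in the contradiction step. The hypothesis $C(\Gamma_x)<\infty$ is only pointwise, and the stabilizers depend measurably on $x$, so graphings of the fiberwise actions must be assembled measurably while simultaneously absorbing an $E_{\bm{a}}$-graphing; this is precisely the situation for which the pseudocost formalism of \S\ref{sec:cost}, and in particular Theorem~\ref{thm:weakconPC}, was developed.
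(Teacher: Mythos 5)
Your overall architecture (ergodic reduction to a dichotomy, contradiction by exhibiting a cost-$1$ free action, then Gaboriau's theorem plus a recurrence/ICC argument for the ``in particular'' clause) matches the paper's, and your treatment of the final clause is essentially sound. But the central step of the contradiction has a genuine gap. You claim that the restriction of $\bm{s}_\Gamma$ to $\Gamma_x$ ``has cost at most $C(\Gamma_x)<\infty$.'' This is backwards: $\bm{s}_\Gamma\resto\Gamma_x\cong\bm{s}_{\Gamma_x,\Gamma}$ factors onto (indeed contains as a factor) $\bm{s}_{\Gamma_x}$, and by Theorem \ref{thm:AW} the Bernoulli shift is weakly contained in \emph{every} free action, so by Corollary \ref{cor:AFrPC} it has the \emph{maximal} pseudocost among free actions of $\Gamma_x$ --- the cost of a group is an infimum that need not be attained by the Bernoulli action. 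Whether $C(H)<\infty$ even implies $C(\bm{s}_H)<\infty$ is precisely the open Question \ref{Q:fincost}. Since the finiteness of the cost of the fiberwise equivalence relation is exactly what drives the compression argument, you cannot use the ambient Bernoulli fibers. The paper's proof of Theorem \ref{thm:gencost1} gets around this by showing that $\{(H,\bm{a})\csuchthat \bm{a}\mbox{ free},\ C(\bm{a})<r\}$ is analytic (Lemma \ref{lem:meas}) and then \emph{measurably selecting}, for a.e.\ $H$, a free action $\bm{a}_H$ of $H$ with $C(\bm{a}_H)<r$; these are assembled into a single free $\Gamma$-action by an explicit cocycle co-induction over the IRS, joined relatively independently with the $\theta$-random Bernoulli shift. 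None of this selection/co-induction machinery appears in your proposal, and it is the technical heart of the theorem.

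Two further points. First, the ``absorption'' you describe (adjoining generators along an exhaustion of $\Gamma_x$ and invoking Theorem \ref{thm:weakconPC}) is not how the conclusion is reached, and weak containment plays no role in this proof: the paper shows the fiberwise relation $E$ is a \emph{normal}, aperiodic subequivalence relation of $E_a$ of finite cost (Lemma \ref{lem:normsub-eq}), compresses a graphing of $E$ into a complete section of small measure obtained from an aperiodic $T$ in the full group of the random-Bernoulli factor, verifies $E\subseteq E\resto Y\vee E_{\Phi^{a,A}}$ with $C_\eta(E\resto Y\vee E_{\Phi^{a,A}})\leq 1+\epsilon$, and concludes $C_\eta(E_a)\leq 1+\epsilon$ from \cite[24.10]{KM05}. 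Your sketch gives no substitute for the normality and complete-section steps. Second, your assertion that for free actions ``pseudocost coincides with cost\dots and otherwise both are $\infty$'' is unjustified (that is Question \ref{Q:unions}); fortunately the implication you actually need, $C>1\Rightarrow PC>1$, follows from Corollary \ref{cor:PC=C}.(3), but as written the claim should be removed.
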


The analysis of pseudocost in \S\ref{sec:infgen} is used in \S\ref{sec:generic} to study the cost of generic actions in the Polish space $A(\Gamma ,X,\mu )$ of measure preserving actions of $\Gamma$. For any group $\Gamma$ there is a comeager subset of $A(\Gamma ,X,\mu )$, consisting of free actions, on which the cost function $C:A(\Gamma ,X,\mu ) \ra \R\cup \{\infty \}$ takes a constant value $C_{\mbox{\tiny{gen}}}(\Gamma )\in \R$ \cite{Ke10}. Likewise, the pseudocost function $\bm{a}\mapsto PC(\bm{a})$ must be constant on a comeager set of free actions, and we denote this constant value by $PC_{\mbox{\tiny{gen}}}(\Gamma )$. Kechris shows in \cite{Ke10} that $C_{\mbox{\tiny{gen}}}(\Gamma ) =C(\Gamma )$ for finitely generated $\Gamma$ and Problem 10.11 of \cite{Ke10} asks whether $C_{\mbox{\tiny{gen}}}(\Gamma ) =C(\Gamma )$ in general. The following is proved in Corollaries \ref{cor:Gdelta} and \ref{cor:Cgen}.

\begin{theorem}
Let $\Gamma$ be a countably infinite group. Then
\begin{enumerate}
\item The set $\{ \bm{a}\in \mbox{A}(\Gamma ,X,\mu )\csuchthat \bm{a}\mbox{ is free and }PC(\bm{a})=PC(\Gamma ) \}$ is dense $G_\delta$ in $A(\Gamma ,X,\mu )$.
\item $PC_{\mbox{\tiny{gen}}}(\Gamma )=PC(\Gamma )$.
\item Either $C_{\mbox{\tiny{gen}}}(\Gamma ) = C(\Gamma )$ or $C_{\mbox{\tiny{gen}}}(\Gamma )= \infty$.
\item If $PC(\Gamma ) =1$ then $C_{\mbox{\tiny{gen}}}(\Gamma ) = C(\Gamma ) =1$.
\item The set
\begin{align*}
\big\{\bm{b}\in A(\Gamma ,X,\mu ) \csuchthat &\bm{b}\mbox{ is free and }\exists\mbox{aperiodic Borel subequivalence relations }\\
&E_0\subseteq E_1\subseteq E_2\subseteq \cdots \mbox{ of } E_b, \mbox{ with } E_b = \bigcup _n E_n \mbox{ and }\lim _n C_\mu (E_n)= C(\Gamma ) \big\}
\end{align*}
is dense $G_\delta$ in $A(\Gamma ,X,\mu )$.
\item If all free actions of $\Gamma$ have finite cost then $\{ \bm{b}\in A(\Gamma ,X,\mu )\csuchthat \bm{b}\mbox{ is free and }C(\bm{b})=C(\Gamma ) \}$ is dense $G_\delta$ in $A(\Gamma ,X,\mu )$.
\end{enumerate}
\end{theorem}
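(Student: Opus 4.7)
The main technical content is in part~(1); parts~(2)--(6) follow from (1) together with Corollary~\ref{cor:PC=C} and the fixed-price characterization preceding the statement. Since $\Gamma$ is countably infinite, the free actions $F$ form a dense $G_\delta$ in $A(\Gamma, X, \mu)$, and all genericity claims can be made within $F$.

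For part~(1), since $PC(\bm{a}) \geq PC(\Gamma)$ trivially for $\bm{a} \in F$, it suffices to show that $\{PC < c\} \cap F$ is dense open for every rational $c > PC(\Gamma)$ and then apply Baire category. Theorem~\ref{thm:KecOpen}, the infinitely-generated extension of Kechris's upper semi-continuity of cost, yields openness of $\{PC < c\}$. For density, fix $\bm{a}^* \in F$ with $PC(\bm{a}^*) < c$ (which exists because $PC(\Gamma)$ is an infimum). By Theorem~\ref{thm:weakconPC}, every free $\bm{b}$ with $\bm{a}^* \prec \bm{b}$ satisfies $PC(\bm{b}) \leq PC(\bm{a}^*) < c$, so it suffices to produce, in each basic neighborhood $U$ of an arbitrary $\bm{b}_0$ determined by a finite partition $(A_i)$ of $X$, an action $\bm{b} \in U$ with $\bm{a}^* \prec \bm{b}$. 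I would take $\bm{b}$ to be the product $\bm{b}_0 \times \bm{a}^*$ on $X \times X_{\bm{a}^*}$, transported to $A(\Gamma, X, \mu)$ via a measure-isomorphism $\phi \colon X \times X_{\bm{a}^*} \to X$ chosen so that $\phi(A_i \times X_{\bm{a}^*}) = A_i$ (possible because the measures match). Then the matrix coefficients of $\bm{b}$ on $(A_i)$ coincide exactly with those of $\bm{b}_0$, so $\bm{b} \in U$; and $\bm{a}^* \prec \bm{b}$ because $\bm{a}^*$ is a factor of $\bm{b}$ via the second projection.

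Part~(2) is immediate from (1). For~(3), either $C_{\mbox{\tiny{gen}}}(\Gamma) = \infty$ or $C_{\mbox{\tiny{gen}}}(\Gamma) < \infty$; in the latter case Corollary~\ref{cor:PC=C} gives $C(\bm{b}) = PC(\bm{b})$ generically, and combining with~(2) forces $C_{\mbox{\tiny{gen}}} = PC(\Gamma) = C(\Gamma)$ (using the elementary equality $PC(\Gamma) = C(\Gamma)$ whenever $C(\Gamma)$ is finite). For~(4), if $PC(\Gamma) = 1$ the fixed-price characterization yields fixed price~$1$ (condition~(4) of that theorem is verified via~(2) together with a weak-equivalence argument involving $\bm{s}_\Gamma$), so $C_{\mbox{\tiny{gen}}} = C(\Gamma) = 1$. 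Part~(5) mirrors~(1): when $C(\Gamma) < \infty$ one takes $E_n = E_b$ invoking~(6); when $C(\Gamma) = \infty$, one constructs $E_n \nearrow E_b$ with $C(E_n) \to \infty$ via initial segments of a generating graphing of $E_b$. Part~(6) follows from~(1) and Corollary~\ref{cor:PC=C}: finite cost throughout $F$ forces $PC = C$ there, so the sets in (1) and (6) coincide.

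The main obstacle is the density step in part~(1); the product-and-reparametrization construction handles it once Theorem~\ref{thm:KecOpen} is available for the $G_\delta$ structure. A secondary subtlety is verifying condition~(4) of the fixed-price characterization in part~(4), which requires producing an action of pseudocost~$1$ lying in the weak-equivalence class of $\bm{s}_\Gamma$.
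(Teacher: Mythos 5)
Your overall architecture for part (1) matches the paper's: realize the set as a countable intersection of sets that are relatively open in $\mbox{FR}(\Gamma ,X,\mu )$ by Theorem \ref{thm:KecOpen}, then prove density and apply Baire category. Where you genuinely diverge is the density step. The paper observes (Proposition \ref{prop:analogue}) that any free action with dense conjugacy class weakly contains every action, hence has pseudocost equal to $PC(\Gamma )$ by Corollary \ref{cor:AFrPC}, and such actions already form a dense $G_\delta$. You instead approximate an arbitrary $\bm{b}_0$ by a reparametrized copy of $\bm{b}_0\times \bm{a}^*$ where $PC(\bm{a}^*)<c$; since $\bm{a}^*$ is a free factor of the product, Corollary \ref{cor:AFrPC} gives $PC(\bm{b}_0\times\bm{a}^*)\leq PC(\bm{a}^*)<c$, and your choice of isomorphism $\phi$ with $\phi (A_i\times X_{\bm{a}^*})=A_i$ makes the matrix coefficients on the $A_i$ agree exactly with those of $\bm{b}_0$. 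This is correct and essentially self-contained (it avoids invoking the existence of dense-conjugacy-class actions), at the cost of redoing by hand the computation underlying \cite[Proposition 10.1]{Ke10}. One imprecision: Theorem \ref{thm:KecOpen} does \emph{not} give openness of $\{ \bm{a}\csuchthat PC(\bm{a})<c\}$; what it gives is relative openness of each $A_{F,c}(\Gamma ,X,\mu )\cap \mbox{FR}(\Gamma ,X,\mu )$, and $PC$ is not obviously upper semicontinuous (that would require controlling all finite $F$ simultaneously). You should instead write the target set, via Corollary \ref{cor:PCdef}, as $\bigcap _{F,n} A_{F,PC(\Gamma )+1/n}\cap \mbox{FR}(\Gamma ,X,\mu )$; your product construction still proves density of each term, since $\{ PC<c \}\cap \mbox{FR}(\Gamma ,X,\mu )\subseteq A_{F,c}$.

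The genuine gap is in part (4). You route the argument through the fixed price $1$ characterization (Corollary \ref{cor:FP1}) and acknowledge that you cannot verify its condition (4), namely producing an action of pseudocost $1$ weakly \emph{equivalent} to $\bm{s}_\Gamma$. This cannot be repaired with the tools at hand: by Corollary \ref{cor:AFrPC} and Theorem \ref{thm:AW}, $\bm{s}_\Gamma$ has the \emph{maximal} pseudocost among free actions, so knowing that the \emph{infimum} $PC(\Gamma )$ equals $1$ says nothing about $PC(\bm{s}_\Gamma )$; the implication "$PC(\Gamma )=1\Rightarrow$ fixed price $1$" is essentially an instance of the fixed price problem and is not claimed in the paper. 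Fortunately the detour is unnecessary: by your part (1) the generic free action has $PC(\bm{a})=1$, Corollary \ref{cor:PC=C}(3) converts this to $C(\bm{a})=1$, so $C_{\mbox{\tiny{gen}}}(\Gamma )=1$ and $1\leq C(\Gamma )\leq C_{\mbox{\tiny{gen}}}(\Gamma )=1$. Similarly in part (5) your two cases need adjusting: when $C(\Gamma )=\infty$ every free action has infinite cost and the constant exhaustion $E_n=E_b$ already works (initial segments of a graphing need not have costs tending to $\infty$), while when $C(\Gamma )<\infty$ the clean route is $C(\Gamma )=PC(\Gamma )$ (Proposition \ref{prop:analogue}) plus Corollary \ref{cor:attain}, which identifies the set in (5) with the set in (1) and in particular yields the $G_\delta$ structure, not merely comeagerness.
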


The only possible exception to the equality $C_{\mbox{\tiny{gen}}}(\Gamma )= C(\Gamma )$ would be a group $\Gamma$ with $C(\Gamma ) < \infty$ such that the set $\{ \bm{a}\in A(\Gamma ,X,\mu ) \csuchthat \bm{a}\mbox{ is free, }C(\bm{a}) = \infty\mbox{ and }E_a \mbox{ is not treeable}\}$ comeager in $A(\Gamma ,X,\mu )$.

A number of questions are discussed in \S\ref{sec:Questions}. The paper ends with two appendices, the first clarifying the relationship between invariant random subgroups and subequivalence relations. The second contains relevant results about the amenable radical of a countable group.

{\bf Acknowledgments}. I would like to thank Miklos Ab\'{e}rt, Lewis Bowen, Clinton Conley, Yair Glasner, Alexander Kechris, and Jesse Peterson for valuable comments and suggestions. The research of the author was partially supported by NSF Grant DMS-0968710.

\section{Preliminaries}

Throughout, $\Gamma$ denotes a countable discrete group. The identity element of $\Gamma$ is denoted by $e_\Gamma$, or simply $e$ when $\Gamma$ is clear from the context. All countable groups are assumed to be equipped with the discrete topology; a \emph{countable group} always refers to a countable discrete group.

\subsection{Group theory}
\noindent {\bf Subgroups.} Let $\Delta$ and $\Gamma$ be countable groups. We write $\Delta \leq \Gamma$ to denote that $\Delta$ is a subgroup of $\Gamma$ and we write $\Delta \triangleleft \Gamma$ to denote that $\Delta$ is a normal subgroup of $\Gamma$. The index of a subgroup $\Delta\leq \Gamma$ is denoted by $[\Gamma : \Delta ]$. The \emph{trivial} subgroup of $\Gamma$ is the subgroup $\{ e_\Gamma \}$ that contains only the identity element. For a subset $S\subseteq \Gamma$ we let $\langle S\rangle$ denote the subgroup generated by $S$. A group that is not finitely generated will be called \emph{infinitely generated}.

\medskip

\noindent {\bf Centralizers and normalizers.} Let $S$ be any subset of $\Gamma$ and let $H$ be a subgroup of $\Gamma$. The \emph{centralizer} of $S$ in $H$ is the set
\[
C_H (S) = \{ h \in H \csuchthat \forall s\in S \ (hs h^{-1} = s ) \}
\]
and the \emph{normalizer} of $S$ in $H$ is the set
\[
N_H (S) = \{ h \in H\csuchthat hSh^{-1} = S \} .
\]
Then $C_H (S)$ and $N_H (S)$ are subgroups of $H$ with $C_H (S)\triangleleft N_H (S)$. Clearly $C_H(S)= C_\Gamma (S) \cap H$ and $N_H(S) = N_\Gamma (S)\cap H$. The group $C_\Gamma (\Gamma )$ is called the \emph{center} of $\Gamma$ and is denoted by $Z(\Gamma )$. We say that a subset $T$ of $\Gamma$ \emph{normalizes} $S$ if $T\subseteq N_\Gamma (S)$. We call a subgroup $H$ \emph{self-normalizing} in $\Gamma$ if $H= N_\Gamma (H)$.

\medskip

\noindent {\bf Infinite conjugacy class (ICC) groups.} A group $\Gamma$ is called \emph{ICC} if every $\gamma \in \Gamma \setminus \{ e \}$ has an infinite conjugacy class. This is equivalent to $C_\Gamma (\gamma )$ having infinite index in $\Gamma$ for all $\gamma \neq e$. Thus, according to our definition, the trivial group $\{ e \}$ is ICC.

\medskip

\noindent {\bf The Amenable Radical.} We let $\mbox{AR}_\Gamma$ denote the \emph{amenable radical} of $\Gamma$. See Appendix \ref{app:ar} below.

\subsection{Ergodic theory}

\noindent {\bf Measure preserving actions.} A \emph{measure preserving (m.p.) action} of $\Gamma$ is a triple $(\Gamma , a , (X,\mu ))$, which we write as $\Gamma \cc ^ a (X,\mu )$, where $(X,\mu )$ is a standard probability space (possibly with atoms) and $a:\Gamma\times X\ra X$ is a Borel action of $\Gamma$ on $X$ that preserves the probability measure $\mu$. For $(\gamma ,x)\in \Gamma \times X$ we let $\gamma ^ax$ denote the image $a(\gamma ,x)$ of the pair $(\gamma ,x)$ under $a$. We write $\bm{a}$ for $\Gamma \cc ^a (X,\mu )$ when $\Gamma$ and $(X,\mu )$ are clear from the context. A measure preserving action of $\Gamma$ will also be called a \emph{$\Gamma$-system} or simply a \emph{system} when $\Gamma$ is understood.

For the rest of this subsection let $\bm{a}=\Gamma \cc ^a (X,\mu )$ and let $\bm{b}= \Gamma \cc ^b(Y,\nu )$.

\medskip

\noindent {\bf Isomorphism and factors.} If $\varphi : (X,\mu ) \ra Y$ is a measurable map then we let $\varphi _*\mu$ denote the pushforward measure on $Y$ given by $\varphi _*\mu (B) = \mu (\varphi ^{-1}(B))$ for $B\subseteq Y$ Borel. We say that $\bm{b}$ is a \emph{factor} of $\bm{a}$ (or that $\bm{a}$ \emph{factors onto} $\bm{b}$), written $\bm{b}\sqsubseteq \bm{a}$, if there exists a measurable map $\pi : X \ra Y$ with $\pi _*\mu = \nu$ and such that for each $\gamma \in \Gamma$ the equality $\pi (\gamma ^a x )= \gamma ^b\pi (x)$ holds for $\mu$-almost every $x\in X$. Such a map $\pi$ is called a \emph{factor map} from $\bm{a}$ to $\bm{b}$. The factor map $\pi$ is called an \emph{isomorphism} from $\bm{a}$ to $\bm{b}$ if there exists a co-null subset of $X$ on which $\pi$ is injective. We say that $\bm{a}$ and $\bm{b}$ are \emph{isomorphic}, written $\bm{a}\cong \bm{b}$, if there exists some isomorphism from $\bm{a}$ to $\bm{b}$.

\medskip

\noindent {\bf Weak containment of m.p.\ actions.} We write $\bm{a}\prec \bm{b}$ to denote that $\bm{a}$ is weakly contained in $\bm{b}$ and we write $\bm{a}\sim \bm{b}$ to denote that $\bm{a}$ and $\bm{b}$ are weakly equivalent. The reader is referred to \cite{Ke10} for background on weak containment of measure preserving actions.

\medskip

\noindent {\bf Product of actions.} The \emph{product} of $\bm{a}$ and $\bm{b}$ is the m.p.\ action $\bm{a}\times \bm{b} = \Gamma \cc ^{a\times b}(X\times Y,\mu \times \nu )$ where $\gamma ^{a\times b}(x,y)= (\gamma ^a x, \gamma ^b y )$ for each $\gamma \in \Gamma$ and $(x,y)\in X\times Y$.

\medskip

\noindent {\bf Bernoulli shifts.} Let $\Gamma \times T\ra T$, $(\gamma ,t)\mapsto \gamma \cdot t$ be an action of $\Gamma$ on a countable set $T$. The \emph{generalized Bernoulli shift} corresponding to this action is the system $\bm{s}_{\Gamma ,T} =\Gamma \cc ^s ([0,1]^T, \lambda ^T )$, where $\lambda$ is Lebesgue measure and where the action $s$ is given by $(\gamma ^sf)(t) = f(\gamma ^{-1}\cdot t)$ for $\gamma \in \Gamma$, $f\in [0,1]^T$, $t\in T$. We write $\bm{s}_\Gamma$ for $\bm{s}_{\Gamma ,\Gamma}$ when the action of $\Gamma$ on itself is by left translation. The system $\bm{s}_\Gamma$ is called the \emph{Bernoulli shift} of $\Gamma$.

\medskip

\noindent {\bf The trivial system.} We call $\bm{a}=\Gamma \cc ^a (X,\mu )$ \emph{trivial} if $\mu$ is a point mass.  Otherwise, $\bm{a}$ is called \emph{non-trivial}. Up to isomorphism, each group $\Gamma$ has a unique trivial measure preserving action, which we denote by $\bm{i}_\Gamma$ or simply $\bm{i}$ when $\Gamma$ is clear.

\medskip

\noindent {\bf Identity systems.} Let $\bm{\iota}_{\Gamma ,\mu} = \Gamma \cc ^{\iota} (X,\mu )$ denote the \emph{identity system} of $\Gamma$ on $(X,\mu )$ given by $\gamma ^{\iota} = \mbox{id}_X$ for all $\gamma \in \Gamma$. We write $\bm{\iota}_{\mu}$ when $\Gamma$ is clear. Thus if $\mu$ is a point mass then $\bm{\iota}_\mu \cong \bm{i}$.

\medskip

\noindent {\bf Strong ergodicity.} A system $\bm{a}$ is called \emph{strongly ergodic} if it is ergodic and does not weakly contain the identity system $\bm{\iota}_{\Gamma ,\lambda}$ on $([0,1],\lambda )$.

\medskip

\noindent {\bf Fixed point sets and free actions.} For a subset $C\subseteq \Gamma$ we let
\[
\mbox{Fix}^b(C) = \{ y\in Y \csuchthat \forall \gamma \in C \ \, \gamma ^by =y \} .
\]
We write $\mbox{Fix} ^b(\gamma )$ for $\mbox{Fix} ^b(\{ \gamma \} )$. The \emph{kernel} of the system $\bm{b}$ is the set $\mbox{ker}(\bm{b}) = \{ \gamma \in \Gamma \csuchthat \nu (\mbox{Fix}^b(\gamma ) ) = 1 \}$. It is clear that $\mbox{ker}(\bm{b})$ is a normal subgroup of $\Gamma$. The system $\bm{b}$ is called \emph{faithful} if $\mbox{ker}(\bm{b}) = \{ e \}$, i.e., $\nu (\mbox{Fix}^b(\gamma ))<1$ for each $\gamma \in \Gamma \setminus \{ e\}$. The system $\bm{b}$ is called \emph{(essentially) free} if the stabilizer of $\nu$-almost every point is trivial, i.e., $\nu (\mbox{Fix}^b(\gamma )) = 0$ for each $\gamma \in \Gamma \setminus \{ e \}$.
%

\subsection{Invariant random subgroups}\label{subsec:IRS}

\noindent {\bf The space of subgroups.} We let $\mbox{Sub}_\Gamma \subseteq 2^\Gamma$ denote the compact space of all subgroup of $\Gamma$ and we let $c:\Gamma \times \mbox{Sub}_\Gamma \ra \mbox{Sub}_\Gamma$ be the continuous action of $\Gamma$ on $\mbox{Sub}_\Gamma$ by conjugation.

\medskip

\noindent {\bf Invariant random subgroups.} An \emph{invariant random subgroup} (IRS) of $\Gamma$ is a conjugation-invariant Borel probability measures on $\mbox{Sub}_\Gamma$. The point mass $\updelta _N$ at a normal subgroup $N$ of $\Gamma$ is an example of an invariant random subgroup. Let $\mbox{IRS}_\Gamma$ denote the space of invariant random subgroups of $\Gamma$. We associate to each $\theta \in \mbox{IRS}_{\Gamma}$ the measure preserving action $\Gamma \cc ^c (\mbox{Sub}_\Gamma , \theta )$. We also denote this system by $\bm{\theta}$.

\medskip

\noindent {\bf Stabilizer distributions.} Each measure preserving action $\bm{b} = \Gamma \cc ^b (Y,\nu )$ of $\Gamma$ gives rise to and invariant random subgroup $\theta _{\bm{b}}$ of $\Gamma$ as follows. The \emph{stabilizer} of a point $y\in Y$ under the action $b$ is the subgroup $\Gamma _y$ of $\Gamma$ defined by
\[
\Gamma _y = \{ \gamma \in \Gamma \csuchthat \gamma ^b y = y \} .
\]
The group $\Gamma _y$ of course depends on the action $b$. The \emph{stabilizer map} associated to $b$ is the map $\mbox{stab}_b : Y \ra \mbox{Sub}_\Gamma$ given by $\mbox{stab}_b(y) = \Gamma _y$. The \emph{stabilizer distribution} of $\bm{b}$, which we denote by $\theta _{\bm{b}}$ or $\mbox{type}({\bm{b}})$, is the measure $(\mbox{stab}_b)_*\nu$ on $\mbox{Sub}_\Gamma$. It is clear that $\theta _{\bm{b}}$ is an invariant random subgroup of $\Gamma$.  In \cite{AGV12} it is shown that for any invariant random subgroup $\theta$ of $\Gamma$, there exists a m.p.\ action $\bm{b}$ of $\Gamma$ such that $\theta _{\bm{b}} = \theta$. Moreover, if $\bm{\theta}$ is ergodic then $\bm{b}$ can be taken to be ergodic as well. See \cite{CP12}.

\medskip

\noindent {\bf Group theoretic properties of invariant random subgroups.} Let $\theta$ be an invariant random subgroup of $\Gamma$. We say that a given property $\Es{P}$ of subgroups of $\Gamma$ holds for $\theta$ if $\Es{P}$ holds almost everywhere. For example, $\theta$ is called \emph{amenable} (or \emph{infinite index}) if $\theta$ concentrates on the amenable (respectively, infinite index) subgroups of $\Gamma$.

\medskip

\noindent {\bf The trivial IRS.} By the \emph{trivial} invariant random subgroup we mean the point mass at the trivial subgroup $\{ e \}$ of $\Gamma$. We write $\updelta _e$ instead of $\updelta _{\{ e \}}$ for the trivial invariant random subgroup. An invariant random subgroup not equal to $\updelta _e$ is called \emph{non-trivial}.

\begin{remark}
We will often abuse terminology and confuse an invariant random subgroup $\theta$ with the measure preserving action $\bm{\theta} = \Gamma \cc ^c (\mbox{Sub}_\Gamma ,\theta )$ it defines, stating, for example, that $\theta$ is ergodic or is weakly contained in $\bm{s}_\Gamma$ to mean that $\bm{\theta}$ is ergodic or is weakly contained in $\bm{s}_\Gamma$. When there is a potential for ambiguity we will make sure to distinguish between an invariant random subgroup and the measure preserving system to which it gives rise. We emphasise that "$\theta$ is non-trivial" will always mean that $\theta$ is not equal to the trivial IRS $\updelta _e$, whereas "$\bm{\theta}$ is non-trivial" will always mean that $\theta$ is not a point mass (at \emph{any} subgroup).
%
\end{remark}

\section{Shift-minimality}\label{sec:smdef}

\subsection{Seven characterizations of shift-minimality}

It will be useful to record here the main theorem of \cite{AW11} which was already mentioned several times in the introduction.

\begin{theorem}[\cite{AW11}]\label{thm:AW}
Let $\Gamma$ be a countably infinite group. Then the Bernoulli shift $\bm{s}_\Gamma$ is weakly contained in every free measure preserving action of $\Gamma$.
\end{theorem}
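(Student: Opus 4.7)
The plan is to unpack $\bm{s}_\Gamma \prec \bm{a}$ in terms of approximating finite joint distributions of partitions, and then to construct the approximating partition of $X$ inside the free action $\bm{a} = \Gamma \cc ^a(X,\mu )$ via a randomization argument that leverages freeness. Using the standard partition characterization of weak containment, the statement reduces to showing that for every finite $F\subseteq \Gamma$ containing $e$, every Borel partition $(A_j)_{j\leq k}$ of $[0,1]$, and every $\epsilon > 0$, there is a Borel partition $(B_j)_{j\leq k}$ of $X$ with
\[
\Big| \mu \Big( \bigcap _{\gamma \in F}(\gamma ^a)^{-1}B_{j_\gamma }\Big) - \prod _{\gamma \in F}\lambda (A_{j_\gamma })\Big| < \epsilon
\]
for every tuple $(j_\gamma )_{\gamma \in F}\in \{1,\dots ,k\}^F$. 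The decisive feature is that the right-hand side is a product, reflecting the i.i.d.\ behavior of the coordinate projections of $[0,1]^\Gamma$.

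The central step is therefore to build a Borel $\phi : X\to [0,1]$ whose $F$-translates $\{ \phi \circ (\gamma ^a)^{-1}\}_{\gamma \in F}$ have joint law within $\epsilon$ of $\lambda ^F$; then $B_j := \phi ^{-1}(A_j)$ solves the reduction. My plan is a randomization argument. Freeness of $\bm{a}$ guarantees that for any finite $E \supseteq FF^{-1}$ the set $\{ x \csuchthat \gamma ^a x \neq \delta ^a x \text{ for distinct }\gamma ,\delta \in E \}$ is co-null, so a Borel maximality argument produces a Borel $K\subseteq X$ on which the $E$-translates are pairwise disjoint, with $E^{-1}K$ covering $X$ up to a small residual. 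Let $\tau : E^{-1}K \to K$ be the ``nearest-center'' map that assigns to each $x\in E^{-1}K$ the unique $k\in K$ with $x\in E^{-1}k$. Assign an independent uniform $[0,1]$ label $\xi (k)$ to each $k\in K$ and set $\phi _\xi (x) := \xi (\tau (x))$ on $E^{-1}K$ (extended arbitrarily elsewhere). Generic $x$ has its $F$-translates landing in distinct tiles $E^{-1}k$, so those translates receive independent labels; a Fubini computation gives $\E[\cdot]$-joint distribution close to $\lambda ^F$, and a Chebyshev-type concentration bound on the variance over $\xi$ extracts a deterministic realization with the desired approximate independence.

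The principal obstacle I anticipate is the absence of Rokhlin-type tilings for non-amenable $\Gamma$: one cannot hope in general for $E^{-1}K$ to cover all of $X$, so there will be a residual region where the propagation scheme is ambiguous, and distinct tiles may share boundary points that introduce correlations among the $F$-translates. I plan to address this by iterating over a nested sequence of marker sets $K_n$ with arbitrarily small coverage deficit $\delta _n\downarrow 0$ and combining the resulting labelings dyadically, so that boundary contributions vanish in the limit. The hypothesis that $\Gamma$ is infinite is essential here, since it allows $E$ to be chosen arbitrarily large while still producing sparse Borel sections that meet every orbit. Quantitatively balancing the sparsity of $K$ against the independence required on $F$-windows, and the extraction of a deterministic Borel $\phi$ from the random one, is where the bulk of the technical work will reside.
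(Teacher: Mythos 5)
You should first note that the paper does not prove Theorem \ref{thm:AW}: it is quoted from \cite{AW11}, so the comparison below is with the argument there. Your opening reduction---matching the joint law of the $F$-translates of a single observable---is the right formulation, but the construction you propose for the observable has a fatal structural gap, in fact two. First, the tiling you need does not exist. A family of pairwise disjoint translates $E^{-1}k$, $k\in K$, covering $X$ up to a small residual is a Rokhlin--Ornstein--Weiss tower, and the availability of such towers for large $E$ is an amenability phenomenon: already for $\Gamma$ a free group and $E$ a ball of radius $r$, \emph{any} disjoint family of translates of $E^{-1}$ covers a proportion of each orbit tending to $0$ as $r\to\infty$, so the coverage deficit cannot be made small and iterating over nested $K_n$ with $\delta_n\downarrow 0$ does not repair this. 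Second, even granting a tiling, assigning one label per tile makes $\phi_\xi$ constant on each tile $E^{-1}k$; for the $F$-translates of a typical $x$ to receive \emph{independent} labels they must land in $|F|$ distinct tiles, which requires each tile to contain no two $F^{-1}F$-related points---exactly the opposite of what an almost-covering by disjoint translates of a fixed finite set forces. (In the amenable case, where the tiling does exist, $E$ must be F\o lner-like, and then almost every $F$-window lies inside a single tile and the translates are perfectly correlated rather than independent.) The claim that "generic $x$ has its $F$-translates landing in distinct tiles" is the crux and it is unjustified and generally false. Finally, assigning an independent uniform label $\xi(k)$ to each $k$ in a set $K$ of positive measure is a continuum of independent random variables; no jointly measurable version of such a field exists, so the Fubini and Chebyshev steps you invoke are not actually available.

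The mechanism in \cite{AW11} is dual to yours: instead of making the observable constant on large tiles, one makes it maximally spread out across each window. Using freeness one produces a finite Borel coloring $P:X\to\{1,\dots ,N\}$ such that for a.e.\ $x$ the points $(\gamma ^a)^{-1}x$, $\gamma \in F$, receive pairwise distinct colors (a proper Borel coloring of the bounded-degree graph generated by $F^{-1}F$, refined so that each color class has measure $O(1/N)$), and then randomizes over the \emph{finitely many} maps $c:\{1,\dots ,N\}\to \{1,\dots ,n\}$, setting $f_c=c\circ P$. For fixed $x$ the values $f_c((\gamma ^a)^{-1}x)$ are genuinely i.i.d.\ as functions of the random $c$, so the expected joint distribution is exactly the product measure; the variance is controlled because the set of pairs $(x,y)$ whose $F$-windows share a color has measure at most $|F|^2\max _i \mu (P^{-1}(i))$, and a Chebyshev argument then extracts a single deterministic $c$. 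This replaces your uncountable randomization by a finite one and replaces the nonexistent tiling by a proper coloring, which every free action of any countable group admits. If you want to salvage your write-up, that is the substitution to make.
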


We let $\Aut (X,\mu )$ denote the Polish group of measure preserving transformations of $(X,\mu )$, and we let $A(\Gamma ,X,\mu )$ denote the Polish space of measure preserving actions of $\Gamma$ on the measure space $(X,\mu )$. See \cite{Ke10} for information on these two spaces.
In the following proposition, let $[\bm{a}]$ denote the weak equivalence class of a measure preserving action $\bm{a}$ of $\Gamma$. Denote by $\bm{s}_{\Gamma ,2}$ the full $2$-shift of $\Gamma$, i.e., the shift action of $\Gamma$ on $( 2^\Gamma , \rho ^\Gamma )$ where we identify $2$ with $\{ 0,1\}$ and where $\rho ( \{ 0 \} ) = \rho (\{ 1 \} ) = 1/2$.
%
%
%
%
\begin{proposition}\label{prop:minimal}
Let $\Gamma$ be a countable group and let $(X,\mu )$ be a standard non-atomic probability space. Then the following are equivalent.
\begin{enumerate}
\item $\Gamma$ is shift-minimal, i.e., every non-trivial m.p.\ action weakly contained in $\bm{s}_\Gamma$ is free.
\item Every non-trivial m.p.\ action weakly contained in $\bm{s}_{\Gamma ,2}$ is free.
\item Among non-trivial m.p.\ actions of $\Gamma$, $[\bm{s}_{\Gamma ,2}]$ is minimal with respect to weak containment.
\item Either $\Gamma = \{ e \}$ or, among non-trivial m.p.\ actions of $\Gamma$, $[\bm{s}_\Gamma ]$ is minimal with respect to weak containment.
\item 
    Among non-atomic m.p.\ actions of $\Gamma$, $[\bm{s}_\Gamma ]$ is minimal with respect to weak containment.
\item The conjugation action of the Polish group $\mbox{\emph{Aut}}(X,\mu  )$ on the Polish space $A_{\bm{s}} (\Gamma ,X,\mu  ) = \{ \bm{a} \in A(\Gamma ,X,\mu  ) \csuchthat \bm{a} \prec \bm{s}_\Gamma \}$ is minimal, i.e., every orbit is dense.
\item For some (equivalently: every) non-principal ultrafilter $\mc{U}$ on the the natural numbers $\N$, every non-trivial factor of the ultrapower $(\bm{s}_\Gamma )_{\mc{U}}$ is free.
\end{enumerate}
\end{proposition}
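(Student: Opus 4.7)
The plan rests on two pillars: the Ab\'{e}rt--Weiss theorem (Theorem \ref{thm:AW}), which gives $\bm{s}_\Gamma \prec \bm{a}$ for every free m.p.\ action $\bm{a}$ of an infinite $\Gamma$, and the standard fact from \cite{Ke10} that all non-trivial Bernoulli shifts of $\Gamma$ are weakly equivalent, so in particular $\bm{s}_{\Gamma,2}\sim \bm{s}_\Gamma$. Combined these yield the following key tautology for infinite $\Gamma$: whenever $\bm{a}\prec \bm{s}_\Gamma$ is free one has $\bm{a}\sim \bm{s}_\Gamma$, because weak containment into a free action preserves freeness and Ab\'{e}rt--Weiss gives the reverse containment. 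The case $\Gamma=\{e\}$ is handled separately at each step and is either vacuous or trivial, so throughout the rest of the plan I assume $\Gamma$ is infinite.

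The equivalence $(1)\Leftrightarrow (2)$ is immediate from $\bm{s}_\Gamma \sim \bm{s}_{\Gamma,2}$, since the classes $\{\bm{a}:\bm{a}\prec\bm{s}_\Gamma\}$ and $\{\bm{a}:\bm{a}\prec\bm{s}_{\Gamma,2}\}$ coincide. Next, $(1)\Leftrightarrow (4)$ and $(2)\Leftrightarrow (3)$ are parallel and both follow from the key tautology: if $\bm{a}$ is non-trivial and $\bm{a}\prec \bm{s}_\Gamma$, then (1) gives $\bm{a}$ free and so $\bm{a}\sim\bm{s}_\Gamma$ by Ab\'{e}rt--Weiss, showing minimality; conversely minimality implies $\bm{s}_\Gamma\prec \bm{a}$, and then $\bm{a}$ inherits freeness from $\bm{s}_\Gamma$. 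For $(1)\Leftrightarrow (5)$, one direction is trivial. For the other, given a non-trivial $\bm{a}\prec \bm{s}_\Gamma$ that may be atomic, I pass to $\bm{a}\times \bm{\iota}_{[0,1],\lambda}$, which is non-atomic, still weakly contained in $\bm{s}_\Gamma\times \bm{s}_\Gamma\sim \bm{s}_\Gamma$, and hence weakly equivalent to $\bm{s}_\Gamma$ by (5); freeness of the product action then forces $\nu(\mathrm{Fix}^a(\gamma))=0$ for each $\gamma\ne e$, so $\bm{a}$ itself is free.

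For $(5)\Leftrightarrow (6)$, I invoke the standard result from \cite{Ke10} that for non-atomic $(X,\mu)$ the closure of the conjugation orbit of $\bm{a}\in A(\Gamma,X,\mu)$ equals $\{\bm{b}\in A(\Gamma,X,\mu):\bm{b}\prec\bm{a}\}$; in particular the orbit closure of $\bm{s}_\Gamma$ is $A_{\bm{s}}(\Gamma,X,\mu)$. Minimality of the conjugation action on $A_{\bm{s}}(\Gamma,X,\mu)$ therefore translates to the statement that every $\bm{a}\in A_{\bm{s}}(\Gamma,X,\mu)$ satisfies $\bm{s}_\Gamma \prec \bm{a}$, i.e.\ $\bm{a}\sim\bm{s}_\Gamma$; since every non-atomic action of $\Gamma$ is isomorphic to one in $A(\Gamma,X,\mu)$, this is exactly (5).

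Finally, $(1)\Leftrightarrow (7)$ is extracted from the ultrapower characterization of weak containment: for any non-principal $\mc{U}$ on $\N$, $\bm{a}\prec \bm{s}_\Gamma$ iff $\bm{a}$ is isomorphic to a factor of $(\bm{s}_\Gamma)_{\mc{U}}$ (after restricting to a separable factor), so (1) rephrases as (7) and in particular the condition is ultrafilter-independent. The main obstacle will be a careful statement of this ultrapower characterization (and of the orbit-closure description used for $(5)\Leftrightarrow (6)$); these are standard but must be cited precisely from \cite{Ke10} together with the Conley--Kechris machinery for ultrapowers, and I would expect to devote a paragraph to recalling the exact form needed so that the passage between ``factor of $(\bm{s}_\Gamma)_{\mc{U}}$'' and ``weakly contained in $\bm{s}_\Gamma$'' respects non-triviality and freeness.
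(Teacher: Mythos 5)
Your overall architecture matches the paper's: dispose of degenerate groups, then for infinite $\Gamma$ run everything through the Ab\'{e}rt--Weiss theorem together with the fact that an action weakly containing a free action is free; the cycle among (1)--(4), the use of \cite[Proposition 10.1]{Ke10} to translate (6) into a statement about weak containment, and the citation of the ultrapower characterization for (7) are all essentially what the paper does. However, there is a genuine gap in your argument for $(5)\Rightarrow(1)$, i.e.\ in getting from minimality back to shift-minimality when the given non-trivial action $\bm{a}\prec\bm{s}_\Gamma$ may be atomic.

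You pass to $\bm{a}\times\bm{\iota}_{\Gamma ,\lambda}$ and assert it is weakly contained in $\bm{s}_\Gamma\times\bm{s}_\Gamma\sim\bm{s}_\Gamma$. That containment requires $\bm{\iota}_{\Gamma ,\lambda}\prec\bm{s}_\Gamma$, which by Schmidt's theorem holds exactly when $\Gamma$ is amenable; for non-amenable $\Gamma$ the shift $\bm{s}_\Gamma$ is strongly ergodic and does not weakly contain the identity system, so $\bm{a}\times\bm{\iota}_{\Gamma,\lambda}$ need not lie in $\{\bm{b}:\bm{b}\prec\bm{s}_\Gamma\}$ and (5) cannot be applied to it. This is precisely the case that matters: if $\Gamma$ is infinite amenable then $\bm{\iota}_{\Gamma,\lambda}$ itself witnesses the failure of (5), so under hypothesis (5) an infinite $\Gamma$ is automatically non-amenable and your product is out of reach of the hypothesis. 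The paper's way around this is to product $\bm{a}$ with \emph{itself}: $\bm{a}\times\bm{a}\prec\bm{s}_\Gamma\times\bm{s}_\Gamma\cong\bm{s}_\Gamma$ is automatic, strong ergodicity of $\bm{a}\times\bm{a}$ (inherited under weak containment from the strongly ergodic $\bm{s}_\Gamma$) gives ergodicity, and ergodicity of the square forces the underlying space of $\bm{a}$ to be non-atomic unless $\bm{a}$ is trivial; one then applies the minimality hypothesis to $\bm{a}$ itself rather than to an auxiliary product. (Note that producting with $\bm{s}_\Gamma$ instead would not help either, since $\bm{a}\times\bm{s}_\Gamma$ is free for trivial reasons and its freeness says nothing about $\bm{a}$.) A second, smaller omission: you treat $\Gamma=\{e\}$ and then ``assume $\Gamma$ is infinite,'' but finite non-trivial $\Gamma$ is never addressed; there Ab\'{e}rt--Weiss is unavailable, and one must check directly, as the paper does, that all of (1)--(6) fail because $\bm{s}_\Gamma$ and $\bm{s}_{\Gamma ,2}$ factor onto non-trivial, non-free identity systems.
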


\begin{proof}
The equivalence (7)$\IFF$(1) follows from \cite[Theorem 1]{CKT-D12}. For the remaining equivalences, first note that if $\Gamma$ is a finite group then $\bm{s}_\Gamma$ factors onto $\bm{\iota}_{\mu}$, so if $\Gamma\neq \{ e \}$ then $\Gamma$ does not satisfy (1), (4), (5) or (6). In addition, for $\Gamma\neq \{ e \}$ finite, $\bm{s}_{\Gamma ,2}$ factors onto a non-trivial identity system, which shows that $\Gamma$ does not satisfy (2) or (3) either. This shows that the trivial group $\Gamma = \{ e \}$ is the only finite group that satisfies any of the properties (1)-(6), and it is clear the trivial group satisfies all of these properties. We may therefore assume for the rest of the proof that $\Gamma$ is infinite.

(1)$\Ra$(2): This implication is clear since $\bm{s}_{\Gamma ,2}$ is a factor of $\bm{s}_\Gamma$.

(2)$\Ra$(3): Suppose that (2) holds. By hypothesis any $\bm{a}$ weakly contained in $\bm{s}_{\Gamma ,2}$ is free and thus weakly contains $\bm{s}_{\Gamma}$ by Theorem \ref{thm:AW}. (3) follows since $\bm{s}_{\Gamma 2}$ is a factor of $\bm{s}_\Gamma$.

(3)$\Ra$(4): Since we are assuming $\Gamma$ is infinite, Theorem \ref{thm:AW} implies $[\bm{s}_{\Gamma}]= [\bm{s}_{\Gamma ,2}]$, and this implication follows. 
%
(4)$\Ra$(5) is clear.

(5)$\Ra$(6): Suppose (5) holds. By \cite[Proposition 10.1]{Ke10} the $\Aut (X,\mu )$-orbit closure of any $\bm{a}\in  A(\Gamma ,X,\mu )$ is equal to $\{ \bm{b}\in A (\Gamma ,X,\mu )\csuchthat \bm{b}\prec \bm{a} \}$. Thus, if $\bm{a}$ is weakly equivalent to $\bm{s}_\Gamma$, then the orbit of $\bm{a}$ is dense in $A_{\bm{s}}(\Gamma ,X,\mu )$. Since $[\bm{s}_\Gamma ]$ is minimal with respect to weak containment, every element of $A_{\bm{s}}(\Gamma ,X,\mu )$ is weakly equivalent to $\bm{s}_\Gamma$, so has dense orbit in $A_{\bm{s}}(\Gamma ,X,\mu )$. Thus, the action $\Aut (X,\mu ) \cc A_{\bm{s}}(\Gamma ,X,\mu )$ is minimal.

(6)$\Ra$(1): Suppose that every $\bm{a}\in A_{\bm{s}}(\Gamma ,X,\mu )$ has dense orbit. If $\bm{\iota}_\mu \in A_{\bm{s}}(\Gamma ,X,\mu )$ then, since $\bm{\iota}_\mu$ is a fixed point for the $\Aut (X,\mu )$ action, $\bm{\iota}_\mu = \bm{s}_\Gamma$ and thus $\Gamma = \{ e \}$. Otherwise, if $\bm{\iota}_\mu \not\prec \bm{s}_\Gamma$ then the system $\bm{s}_\Gamma$ is strongly ergodic and the group $\Gamma$ is therefore non-amenable. Let $\bm{b} =\Gamma \cc ^b(Y,\nu )$ be any non-trivial m.p.\ action of $\Gamma$ weakly contained in $\bm{s}_\Gamma $. Then $\bm{b}\times \bm{b}$ is weakly contained in $\bm{s}_\Gamma \times \bm{s}_\Gamma \cong \bm{s}_\Gamma$ and therefore $\bm{b}\times \bm{b}$ is strongly ergodic since strong ergodicity is downward closed under weak containment (see e.g., \cite[Proposition 5.6]{CKT-D12}). In particular $\bm{b}\times \bm{b}$ is ergodic and it follows that the probability space $(Y,\nu )$ is non-atomic. The action $\bm{b}$ is then isomorphic to some action $\bm{a}$ on the non-atomic space $(X,\mu )$, and $\bm{a} \in A_{\bm{s}}(\Gamma ,X,\mu )$ since $\bm{b}\prec \bm{s}_\Gamma$. By hypothesis $\bm{a}$ has dense orbit in $A_{\bm{s}}(\Gamma ,X,\mu )$ so that $\bm{s}_\Gamma \sim \bm{a}$ by \cite[Proposition 10.1]{Ke10} and hence $\bm{a}$ is free, and thus $\bm{b}$ is free as well. 
\qedhere
\end{proof}

Two more characterizations of shift-minimality are given in terms of amenable invariant random subgroups in Theorem \ref{thm:smequiv} below.
\subsection{NA-ergodicity}\label{sec:NAerg}

\begin{definition}
Let $\bm{a}$ be a measure preserving action of a countable group $\Gamma$. We say that $\bm{a}$ is \emph{NA-ergodic} if the restriction of $\bm{a}$ to every non-amenable subgroup of $\Gamma$ is ergodic. We say that $\bm{a}$ is \emph{strongly NA-ergodic} if the restriction of $\bm{a}$ to every non-amenable subgroup of $\Gamma$ is strongly ergodic.
\end{definition}

\begin{example}\label{ex:NAerg}
The central example of an NA-ergodic (and in fact, strongly NA-ergodic) action is the Bernoulli shift action $\bm{s}_\Gamma$; if $H\leq \Gamma$ is non-amenable then $\bm{s}_\Gamma |H \cong \bm{s}_H$ is strongly ergodic. More generally, if $\Gamma$ acts on a countable set $T$ and the stabilizer of every $t\in T$ is amenable then the generalized Bernoulli shift $\bm{s}_{T} = \Gamma \cc ^{s_T} ([0,1]^T , \lambda ^T )$ is strongly NA-ergodic (see e.g., \cite{KT09}).
\end{example}

\begin{example}\label{ex:NAergSL2}
The action $\mbox{SL}_2(\Z ) \cc (\T ^2 ,\lambda ^2 )$ by matrix multiplication, where $\lambda ^2$ is Haar measure on $\T ^2$, is another example of a strongly NA-ergodic action. A proof of this is given in \cite[5.(B)]{Ke07}.
\end{example}

\begin{example}\label{ex:NAerg2}
I would like to thank L. Bowen for bringing my attention to this example. Let $\Gamma$ be a countable group and let $f$ be an element of the integral group ring $\Z \Gamma$. The left translation action of $\Gamma$ on the discrete abelian group $\Z\Gamma /{\Z \Gamma} f$ is by automorphisms, and this induces an action of $\Gamma$ by automorphisms on the dual group $\widehat{\Z \Gamma / {\Z \Gamma} f}$, which is a compact metrizable abelian group so that this action preserves normalized Haar measure $\mu _f$. Bowen has shown that if the function $f$ has an inverse in $\ell ^1 (\Gamma )$ then the system $\Gamma \cc ( \widehat{\Z \Gamma /{\Z \Gamma} f}  , \mu _f )$ is weakly contained in $\bm{s}_\Gamma$ and is therefore strongly NA-ergodic by Proposition \ref{prop:NAerg} (\cite[\S 5]{Bo11a}; note that the hypothesis that $\Gamma$ is residually finite is not used in that section so that this holds for arbitrary countable groups $\Gamma$).
\end{example}

\begin{remark} \label{rem:temp}
The actions from Examples \ref{ex:NAerg}, \ref{ex:NAergSL2}, and \ref{ex:NAerg2} share a common property: they are \emph{tempered} in the sense of \cite{Ke07}. A measure preserving action $\bm{a}=\Gamma \cc ^a (X,\mu )$ is called \emph{tempered} if the Koopman representation $\kappa ^{\bm{a}} _0$ on $L^2_0 (X,\mu ) = L^2(X,\mu ) \ominus \C 1_X$ is weakly contained in the regular representation $\lambda _\Gamma$ of $\Gamma$. Any tempered action $\bm{a}$ of a non-amenable group $\Gamma$ has \emph{stable spectral gap} in the sense of \cite{Pop08} (this means $\kappa ^{\bm{a}}_0\otimes \kappa ^{\bm{a}}_0$ does not weakly contain the trivial representation), and this implies in turn that the product action $\bm{a}\times \bm{b}$ is strongly ergodic relative to $\bm{b}$ for every measure preserving action $\bm{b}$ of $\Gamma$ (see \cite{Io10}). In particular (taking $\bm{b} = \bm{i}_\Gamma$) a tempered action $\bm{a}$ of a non-amenable group is itself strongly ergodic. Since the restriction of a tempered action to a subgroup is still tempered it follows that every tempered action is strongly NA-ergodic. In \cite{Ke07} it is shown that the converse holds for any action on a compact Polish group $G$ by automorphisms (such an action necessarily preserves Haar measure $\mu _G$):

\begin{theorem}[Theorem 4.6 of \cite{Ke07}]\label{thm:Kec05} Let $\Gamma$ be a countably infinite group acting by automorphisms on a compact Polish group $G$. Let $\hat{G}$ denote the (countable) set of all isomorphism classes of irreducible unitary representations of $G$ and let $\hat{G}_0 = \hat{G}\setminus \{ \hat{1}_G \}$. Then the following are equivalent:
\begin{enumerate}
\item The action $\Gamma \cc (G, \mu _G)$ is tempered;
\item Every stabilizer of the associated action of $\Gamma$ on $\hat{G}_0$ is amenable.
\item The action $\Gamma \cc (G, \mu _G)$ is NA-ergodic.
\item The action $\Gamma \cc (G, \mu _G)$ is strongly NA-ergodic.
\end{enumerate}
\end{theorem}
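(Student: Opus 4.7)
The plan is to run the cycle $(2)\Rightarrow (1)\Rightarrow (4)\Rightarrow (3)\Rightarrow (2)$, with the Peter--Weyl decomposition of $L^2_0(G,\mu_G)$ as the central structural input. Since $G$ is compact, I would begin by writing
$$L^2_0(G,\mu_G) \;=\; \bigoplus_{\pi \in \hat{G}_0} M_\pi \;=\; \bigoplus_{\mathcal{O}\in \Gamma \backslash \hat{G}_0}\ \bigoplus_{\pi'\in \mathcal{O}} M_{\pi'},$$
where $M_\pi$ is the finite-dimensional $\pi$-isotypical component. Because $\Gamma$ acts on $G$ by automorphisms it permutes the $M_\pi$ according to its action on $\hat{G}_0$, and for any $\Gamma$-orbit $\mathcal{O}$ with chosen representative $\pi$ and stabilizer $\Gamma_\pi$, the orbit summand is canonically identified with the induced representation $\mathrm{Ind}_{\Gamma_\pi}^\Gamma \sigma_\pi$, where $\sigma_\pi$ is the unitary representation of $\Gamma_\pi$ on $M_\pi$ built from Schur intertwiners $U_\gamma \in U(H_\pi)$ satisfying $\pi(\gamma^{-1}(g))=U_\gamma^*\pi(g)U_\gamma$.

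Given this decomposition, $(2)\Rightarrow (1)$ is short. If each $\Gamma_\pi$ is amenable, Hulanicki's theorem gives $\sigma_\pi \prec \lambda_{\Gamma_\pi}$, and since weak containment is preserved by induction one gets $\mathrm{Ind}_{\Gamma_\pi}^\Gamma \sigma_\pi \prec \mathrm{Ind}_{\Gamma_\pi}^\Gamma \lambda_{\Gamma_\pi} \cong \lambda_\Gamma$; summing over orbits yields $\kappa_0^{\bm{a}}\prec \lambda_\Gamma$, i.e., $\bm{a}$ is tempered. The implication $(1)\Rightarrow (4)$ is essentially Remark \ref{rem:temp}: for any non-amenable $H\le \Gamma$, the restriction $\kappa_0^{\bm{a}}|H$ is weakly contained in $\lambda_\Gamma|H \sim \lambda_H$, so $\bm{a}|H$ is a tempered action of the non-amenable group $H$, hence strongly ergodic. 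And $(4)\Rightarrow (3)$ is immediate from the definitions.

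The heart of the argument is $(3)\Rightarrow (2)$, which I would prove by contraposition. Assume some $\Gamma_\pi$ with $\pi \in \hat{G}_0$ is non-amenable. Under the $G$-equivariant Hilbert space identification $M_\pi \cong \mathrm{End}(H_\pi)$, a short computation on matrix coefficients shows that $\sigma_\pi(\gamma)$ acts by $T\mapsto U_\gamma T U_\gamma^*$; crucially, the scalar ambiguity in $U_\gamma$ cancels in the conjugation, so this is a genuine (non-projective) $\Gamma_\pi$-action. The identity operator $I_{H_\pi}$ is then manifestly $\Gamma_\pi$-invariant and corresponds under the identification to the non-zero character $\chi_\pi(g)=\mathrm{tr}(\pi(g))\in L^2_0(G,\mu_G)$. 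Thus $\Gamma_\pi$ is a non-amenable subgroup of $\Gamma$ possessing a non-trivial invariant vector in $L^2_0(G,\mu_G)$, so it fails to act ergodically on $(G,\mu_G)$, contradicting (3).

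The only genuine obstacle is the bookkeeping in $(3)\Rightarrow (2)$: one must pin down the $G$-equivariant identification $M_\pi \cong \mathrm{End}(H_\pi)$ precisely enough to verify both that $\sigma_\pi(\gamma)$ acts by conjugation by $U_\gamma$ and that $I_{H_\pi}$ pulls back to the character $\chi_\pi$. Once this is nailed down, the driving point -- invariance of the character under any automorphism preserving the isomorphism class of $\pi$ -- is a standard fact of representation theory, and the rest of the proof is a bookkeeping exercise in Peter--Weyl and induced representations.
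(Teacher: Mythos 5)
This statement is quoted in the paper as Theorem 4.6 of \cite{Ke07} and is not proved there, so there is no in-paper proof to compare against; your argument is, as far as I can tell, essentially the standard (and the original) one: Peter--Weyl splits $\kappa_0^{\bm{a}}$ on $L^2_0(G,\mu_G)$ into orbit summands $\mathrm{Ind}_{\Gamma_\pi}^\Gamma \sigma_\pi$, amenability of the stabilizers plus Hulanicki and $\mathrm{Ind}_{\Gamma_\pi}^\Gamma\lambda_{\Gamma_\pi}\cong\lambda_\Gamma$ gives temperedness, and temperedness gives strong NA-ergodicity by the spectral gap argument already recorded in Remark \ref{rem:temp}. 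The proof is correct.

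One small remark on the step you flag as the "only genuine obstacle": you do not need to pin down the $G$-equivariant identification $M_\pi\cong\mathrm{End}(H_\pi)$ at all for $(3)\Rightarrow(2)$. If $\gamma\in\Gamma_\pi$, choose a unitary $U_\gamma$ with $\pi(\gamma^{-1}(g))=U_\gamma^*\pi(g)U_\gamma$; then directly
$\chi_\pi(\gamma^{-1}(g))=\mathrm{tr}\bigl(U_\gamma^*\pi(g)U_\gamma\bigr)=\chi_\pi(g)$,
so the character $\chi_\pi$ is a non-zero $\Gamma_\pi$-invariant element of $L^2_0(G,\mu_G)$ (orthogonality of characters gives $\int\chi_\pi\,d\mu_G=0$ and $\chi_\pi(e)=\dim\pi\neq 0$), and a non-amenable $\Gamma_\pi$ fails to act ergodically. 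This makes the contrapositive a two-line computation and removes the only bookkeeping you were worried about.
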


Condition (2) of Theorem \ref{thm:Kec05} should be compared with part (ii) of Lemma \ref{lem:am} below, although Lemma \ref{lem:am} deals with general NA-ergodic actions. It follows from \cite[Proposition 10.5]{Ke10} that any measure preserving action weakly contained in $\bm{s}_\Gamma$ is tempered. I do not know however whether the converse holds, although Example \ref{ex:NAerg2} and Theorem \ref{thm:Kec05} suggest that this may be the case for actions by automorphisms on compact Polish groups.

\begin{question}\label{Q:CompactAut}
Let $\Gamma$ be a countable group acting by automorphisms on a compact Polish group $G$ and assume the action is tempered. Does it follow that the action is weakly contained in $\bm{s}_\Gamma$? As a special case, is it true that the action $\mbox{SL}_2(\Z ) \cc (\T ^2 ,\lambda ^2 )$ is weakly contained in $\bm{s}_{\mbox{\scriptsize{SL}}_2(\Z )}$?
\end{question}
\end{remark}

We now establish some properties of general NA-ergodic actions.

\begin{proposition}\label{prop:NAerg} Any factor of an NA-ergodic action is NA-ergodic. Any action weakly contained in a strongly NA-ergodic action is strongly NA-ergodic.
\end{proposition}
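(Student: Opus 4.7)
The plan is to handle the two assertions separately, in both cases reducing to general facts about ergodicity, strong ergodicity, and weak containment applied subgroup-by-subgroup.

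For the first assertion, fix a factor map $\pi : (X,\mu ) \to (Y,\nu )$ from $\bm{a}$ onto $\bm{b}$ and let $H \leq \Gamma$ be any non-amenable subgroup. Since $\pi$ intertwines the $\Gamma$-actions, it also intertwines their restrictions to $H$, so that $\bm{b}|H$ is a factor of $\bm{a}|H$. By hypothesis $\bm{a}|H$ is ergodic, and ergodicity is preserved under factors, so $\bm{b}|H$ is ergodic. This gives NA-ergodicity of $\bm{b}$.

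For the second assertion, assume $\bm{a}$ is strongly NA-ergodic and $\bm{b} \prec \bm{a}$. The key observation is that weak containment is preserved under restriction to subgroups: any finite subset $F \subseteq H$ is also a finite subset of $\Gamma$, so the defining approximation property for $\bm{b} \prec \bm{a}$ directly yields $\bm{b}|H \prec \bm{a}|H$ for every subgroup $H \leq \Gamma$. Now suppose $H$ is non-amenable; then $\bm{a}|H$ is strongly ergodic by hypothesis. Strong ergodicity is downward closed under weak containment (as noted after Example \ref{ex:NAerg2}, via \cite[Proposition 5.6]{CKT-D12}), so $\bm{b}|H$ is strongly ergodic as well. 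Since $H$ was arbitrary non-amenable, $\bm{b}$ is strongly NA-ergodic.

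Neither part presents a genuine obstacle: the content is entirely in packaging two known permanence properties (ergodicity under factors; strong ergodicity downward under weak containment) together with the trivial fact that both "factor of" and "weakly contained in" commute with passage to a subgroup. The only place worth being explicit is the subgroup-restriction step for weak containment, which is immediate from the definition but deserves a one-line justification in the write-up.
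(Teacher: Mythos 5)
Your proof is correct and follows the same route as the paper, which simply notes that the first statement is clear and the second follows from strong ergodicity being downward closed under weak containment (\cite[Proposition 5.6]{CKT-D12}); you have just spelled out the subgroup-restriction steps explicitly.
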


\begin{proof}
The first statement is clear and the second is a consequence of strong ergodicity being downward closed under weak containment (see \cite[Proposition 5.6]{CKT-D12}).
\end{proof}

Part (ii) of the following lemma is one of the key facts about NA-ergodicity. 
\begin{lemma}\label{lem:am}
Let $\bm{b} = \Gamma \cc ^b (Y,\nu )$ be any non-trivial NA-ergodic action of a countable group $\Gamma$.
\begin{enumerate}
\item[(i)] Suppose that $C \subseteq \Gamma$ is a subset of $\Gamma$ such that $\nu ( \{ y\in Y\csuchthat C\subseteq \Gamma _y \} ) > 0 $. Then the subgroup $\langle C \rangle$ generated by $C$ is amenable.
\item[(ii)] The stabilizer $\Gamma _y$ of $\nu$-almost every $y\in Y$ is amenable.
\end{enumerate}
\end{lemma}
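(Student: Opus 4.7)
The plan is to prove (i) from the non-triviality hypothesis on $\bm{b}$ combined with NA-ergodicity applied to the subgroup $\langle C \rangle$, and then to deduce (ii) from (i) by a standard reduction to countably many finitely generated subgroups.

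For (i), I would set $H = \langle C \rangle$ and $A = \mbox{Fix}^b(C)$. The natural opening observation is that $A = \mbox{Fix}^b(H)$, since a point fixed by every element of $C$ is automatically fixed by every word in $C \cup C^{-1}$. In particular $A$ is pointwise (a fortiori setwise) $H$-invariant, and $\nu(A) > 0$ by hypothesis. Suppose toward a contradiction that $H$ is non-amenable. Then NA-ergodicity of $\bm{b}$ forces $\bm{b}|H$ to be ergodic, so the $H$-invariant set $A$ must be $\nu$-conull, i.e., $\nu(A) = 1$. The mildly delicate point is what to do next: since $H$ fixes every point of the conull set $A$, \emph{every} Borel subset of $Y$ is $H$-invariant modulo $\nu$-null sets, and invoking ergodicity of $\bm{b}|H$ a second time against this stronger conclusion forces every Borel set to have $\nu$-measure $0$ or $1$. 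Hence $\nu$ is a point mass, contradicting the assumption that $\bm{b}$ is non-trivial.

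For (ii), I would combine (i) with the standard fact that amenability is preserved under directed unions, so a subgroup of $\Gamma$ is non-amenable if and only if it contains some finitely generated non-amenable subgroup. Hence
\[
\{ y \in Y : \Gamma _y \mbox{ is non-amenable} \} \;=\; \bigcup _F \{ y \in Y : F \subseteq \Gamma _y \},
\]
where $F$ ranges over the (countably many) finite subsets of $\Gamma$ with $\langle F \rangle$ non-amenable. Each set in this union is $\nu$-null by part (i), and the union is countable, so the left-hand side is $\nu$-null, as required. The only step I expect to require any real thought is the ``bootstrap'' inside (i) described above---using ergodicity of the same restriction twice, once to promote $\nu(A)>0$ to $\nu(A)=1$ and once more to collapse $\nu$ to a point mass; everything else, including (ii), is routine once (i) is in hand.
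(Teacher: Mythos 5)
Your proof is correct and follows essentially the same route as the paper: part (ii) is verbatim the paper's countable-union argument, and part (i) rests on the same observation that $\mbox{Fix}^b(C)$ is a positive-measure $\langle C\rangle$-invariant set which, together with non-triviality of $\nu$, contradicts ergodicity of $\bm{b}\resto\langle C\rangle$. The only cosmetic difference is that the paper applies ergodicity once, by choosing $B\subseteq \mbox{Fix}^b(C)$ with $0<\nu(B)<1$ directly (possible since $\nu$ is not a point mass), whereas you apply it twice via the bootstrap; both are fine.
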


\begin{proof}
We begin with part (i). The hypothesis tells us that $\nu (\mbox{Fix}^b (C)) >0$. Since $\nu$ is not a point mass there is some $B\subseteq \mbox{Fix}^b(C)$ with $0<\nu (B) <1$. Then $B$ witnesses that $\bm{b}\resto \langle C \rangle$ is not ergodic, so $\langle C\rangle$ is amenable by NA-ergodicity of $\bm{b}$.

For (ii), let $\Es{F}$ denote the collection of finite subsets $F$ of $\Gamma$ such that $\langle F \rangle$ is non-amenable and let $\mbox{NA} = \{ y\in Y\csuchthat \Gamma _y \mbox{ is non-amenable} \}$. Then
\[
\mbox{NA} = \bigcup _{F\in \Es{F}} \{ y\in Y \csuchthat F\subseteq \Gamma _y \}  .
\]
By part (i), $\nu (\{ y\in Y \csuchthat F\subseteq \Gamma _y \} ) = 0$ for each $F \in \Es{F}$. Since $\Es{F}$ is countable it follows that $\nu ( \mbox{NA} ) = 0$.
\qedhere[Lemma]
\end{proof}

The function $N:\mbox{Sub}_\Gamma \ra \mbox{Sub}_\Gamma$ sending a subgroup $H\leq \Gamma$ to its normalizer $N(H)$ in $\Gamma$ is equivariant for the conjugation action $\Gamma \cc ^c \mbox{Sub}_\Gamma$. In \cite[\S 2.4]{Ve12} Vershik examines the following transfinite iterations of this function.

\begin{definition}
Define $N^\alpha :\mbox{Sub}_\Gamma \ra \mbox{Sub}_\Gamma$ by transfinite induction on ordinals $\alpha$ as follows.
\begin{align*}
N^0(H) & =H,  \\
N^{\alpha +1}(H) &= N(N^{\alpha}(H)) \, \mbox{ is the normalizer of } N^\alpha (H)  \\
N^\lambda (H) &= \bigcup _{\alpha <\lambda} N^\alpha (H) \, \mbox{ when }\lambda\mbox{ is a limit ordinal.}
\end{align*}
\end{definition}

Each $N^\alpha$ is equivariant with respect to conjugation. 
%
For each $H$ the sets $H, N(H),\dots , N^\alpha (H), N^{\alpha +1}(H),\dots$ form an increasing ordinal-indexed sequence of subsets of $\Gamma$. The least ordinal $\alpha _H$ such that $N^{\alpha _H +1}(H)=N^{\alpha _H}(H)$ is therefore countable. If $\theta \in \mbox{IRS}_\Gamma$ then we let $\theta ^\alpha = (N^\alpha )_*\theta$ for each countable ordinal $\alpha <\omega _1$. The net $\{ \theta ^\alpha \} _{\alpha <\omega _1}$ is increasing in the sense of \cite[\S 3.5]{CP12} (see also the paragraphs preceding Theorem \ref{thm:IRPleq} below), so by \cite[Theorem 3.12]{CP12} there is a weak${}^*$-limit $\theta ^\infty$ such that $\theta ^\alpha \leq \theta ^\infty$ for all $\alpha$. Since $\mbox{IRS}_\Gamma$ is a second-countable topological space there is a countable ordinal $\alpha$ such that $\theta ^\beta = \theta ^\infty$ for all $\beta \geq \alpha$. Thus $N_*\theta ^\infty = \theta ^\infty$, and it follows from \cite[Proposition 4]{Ve12} that $\theta ^\infty$ concentrates on the self-normalizing subgroups of $\Gamma$.

\begin{theorem}\label{thm:NAerg}
Let $\bm{a}=  \Gamma \cc ^a (X, \nu )$ be a non-trivial measure preserving action of the countable group $\Gamma$. Suppose that $\bm{a}$ is NA-ergodic. Then the stabilizer $\Gamma _x$ of $\mu$-almost every $x\in X$ is amenable. In addition, at least one of the following is true:
\begin{itemize}
\item[(1)] There exists a normal amenable subgroup $N\triangleleft \Gamma$ such that the stabilizer of $\mu$-almost every $x\in X$ is contained in $N$.
\item[(2)] $\theta _{\bm{a}}^{\infty}$ is a non-atomic, self-normalizing, infinitely generated amenable invariant random subgroup, where $\theta _{\bm{a}}$ denotes the stabilizer distribution of $\bm{a}$.
\end{itemize}
\end{theorem}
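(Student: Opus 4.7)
The plan is that amenability of $\mu$-almost every stabilizer $\Gamma_x$ drops out immediately from Lemma~\ref{lem:am}(ii) applied to $\bm{a}$, and the dichotomy (1) vs.\ (2) I would prove by transfinite induction along the Vershik normalizer tower. First, the case $\Gamma$ amenable is trivial---take $N = \Gamma$ in (1)---so assume $\Gamma$ is non-amenable; NA-ergodicity of $\bm{a}$ then forces $\bm{a}$ to be ergodic, since $\Gamma$ itself is a non-amenable subgroup of $\Gamma$.

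The main step is the induction. For each ordinal $\alpha < \omega_1$ the map $\pi_\alpha(x) = N^\alpha(\Gamma_x)$ is $\Gamma$-equivariant, so the IRS-action $\theta^\alpha$ (in the sense of the excerpt's convention) is a factor of $\bm{a}$ and hence NA-ergodic by Proposition~\ref{prop:NAerg}. The base case $\theta^0 = \theta_{\bm{a}}$ is amenable by Lemma~\ref{lem:am}(ii). At the successor step I would split on whether $\theta^\beta$ is a point mass. If it is, then it must be concentrated on a normal subgroup $K$ of $\Gamma$, and $K$ is amenable by the induction hypothesis; since $\Gamma_x \subseteq N^\beta(\Gamma_x) = K$ for $\mu$-a.e.\ $x$, case (1) holds with $N = K$ and we are done. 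Otherwise $\theta^\beta$ is a non-trivial NA-ergodic system, and Lemma~\ref{lem:am}(ii) applied to it gives amenability of its conjugation-stabilizers $N_\Gamma(N^\beta(\Gamma_x)) = N^{\beta+1}(\Gamma_x)$, so $\theta^{\beta+1}$ is amenable. At a limit $\lambda < \omega_1$ the tower $\{N^\alpha(H)\}_{\alpha<\lambda}$ is a countable increasing chain of amenable subgroups, so $N^\lambda(H)$ is amenable. Thus either case (1) triggers at some ordinal, or $\theta^\alpha$ is amenable for every countable $\alpha$, and consequently $\theta^\infty$ is amenable.

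Supposing we are in the latter situation, the self-normalizing property of $\theta^\infty$ is Vershik's result cited in the excerpt. For non-atomicity, suppose $\theta^\infty$ had an atom and pick one of maximum mass $\epsilon > 0$; by summability only finitely many atoms have $\theta^\infty$-measure $\geq \epsilon$, and since conjugation preserves $\theta^\infty$ the $\Gamma$-orbit of any such atom $K$ is finite. Thus $[\Gamma : N_\Gamma(K)] < \infty$, and combined with $K$ self-normalizing this gives $[\Gamma : K] < \infty$; together with amenability of $K$ this forces $\Gamma$ amenable, contradicting our standing assumption. Hence $\theta^\infty$ is non-atomic, and since the finitely generated subgroups of $\Gamma$ form a countable subset of $\mbox{Sub}_\Gamma$ (each determined by a finite generating set), they carry $\theta^\infty$-measure zero, so $\theta^\infty$-a.e.\ subgroup is infinitely generated. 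I expect the main technical obstacle to be the successor step of the induction: one has to recognize that triviality vs.\ non-triviality of $\theta^\beta$ lines up exactly with the two alternatives of the conclusion, so that Lemma~\ref{lem:am}(ii) propagates amenability cleanly through the normalizer tower.
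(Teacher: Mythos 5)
Your proof is correct and follows essentially the same route as the paper: a transfinite induction along the Vershik normalizer tower, applying Lemma \ref{lem:am} to the NA-ergodic factor $\bm{\theta}^\beta$ at each non-degenerate stage, taking increasing unions of amenable groups at limits, and using countability of the finitely generated subgroups at the end. The only difference is bookkeeping: the paper splits on whether some $\theta^{\alpha}$ has an atom (and upgrades an atom to a point mass via ergodicity of finite-index restrictions), whereas you split on whether some $\theta^{\beta}$ is a point mass and then separately rule out atoms of $\theta^{\infty}$ by your self-normalizing/finite-index/amenability contradiction --- two equivalent ways of handling the same issue.
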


\begin{proof}
Let $\theta = \theta _{\bm{a}}$. It is enough to show that either (1) or (2) is true. We may assume that $\Gamma$ is non-amenable. There are two cases to consider.

{\bf Case 1:} There is some ordinal $\alpha$ such that the measure $\theta ^{\alpha}$ has an atom.  Let $\alpha _0$ be the least such ordinal. Then $\bm{\theta }^{\alpha _0}$ is NA-ergodic, being a factor of $\bm{a}$, and thus the restriction of $\bm{\theta}^{\alpha _0}$ to every finite index subgroup of $\Gamma$ is ergodic since $\Gamma$ is non-amenable.  Thus, $\theta ^{\alpha _0}$ having an atom implies that it is a point mass, so let $N\leq \Gamma$ be such that $\theta ^{\alpha _0} = \updelta _N$. Then $N$ is a normal subgroup of $\Gamma$ and we show that $N$ is amenable so that alternative (1) holds in this case. 
By definition of $\alpha _0$, $\bm{a}$ and each $\bm{\theta} ^{\alpha}$ for $\alpha <\alpha _0$ are non-trivial NA-ergodic actions. Lemma \ref{lem:am} then implies that the invariant random subgroups $\mbox{type}(\bm{a}) = \theta ^0$ and $\mbox{type}(\bm{\theta}^{\alpha}) = \theta ^{\alpha +1}$, for $\alpha <\alpha _0$, all concentrate on the amenable subgroups of $\Gamma$.  If $\alpha _0 =0$ or if $\alpha _0$ is a successor ordinal then we see immediately that $N$ is amenable. If $\alpha _0$ is a limit ordinal then $N$ is an increasing union of amenable groups and so is amenable in this case as well.

{\bf Case 2:} The other possibility is that $\theta ^{\infty}$ has no atoms. Thus $\bm{\theta }^\infty$ is a non-trivial NA-ergodic action with $\mbox{type}(\bm{\theta}^\infty )= N_*\theta ^\infty = \theta ^\infty$. This implies that $\theta ^\infty$ is amenable by Lemma \ref{lem:am}. Since $\theta ^\infty$ is non-atomic and there are only countably many finitely generated subgroups of $\Gamma$, $\theta ^\infty$ must concentrate on the infinitely-generated subgroups. This shows that (2) holds.
%
%
\end{proof}

\subsection{Amenable invariant random subgroups}\label{sec:airs}

We record a corollary of Theorem \ref{thm:NAerg} which will be used in the proof of our final characterization of shift-minimality.

\begin{corollary}\label{cor:notsm}
Any group $\Gamma$ that is not shift-minimal either has a non-trivial normal amenable subgroup $N$, or has a non-atomic, self-normalizing, infinitely-generated, amenable invariant random subgroup $\theta$ such that the action $\bm{\theta} = \Gamma \cc ^c (\mbox{\emph{Sub}}_\Gamma ,\theta )$ is weakly contained in $\bm{s}_\Gamma$. 
\end{corollary}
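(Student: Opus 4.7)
The plan is to deduce this corollary directly from Theorem \ref{thm:NAerg}, using the fact that actions weakly contained in $\bm{s}_\Gamma$ inherit NA-ergodicity from the Bernoulli shift. Concretely, since $\Gamma$ is not shift-minimal, by Definition \ref{def:sm} there exists a non-trivial m.p.\ action $\bm{a}=\Gamma \cc ^a(X,\mu)$ with $\bm{a}\prec \bm{s}_\Gamma$ which is not free. By Example \ref{ex:NAerg} the Bernoulli shift $\bm{s}_\Gamma$ is strongly NA-ergodic, so by Proposition \ref{prop:NAerg} the action $\bm{a}$ is strongly NA-ergodic, and in particular NA-ergodic. Theorem \ref{thm:NAerg} now applies to $\bm{a}$, producing one of two alternatives (1) or (2); the proof proceeds by showing that each of these translates into one of the two alternatives in the corollary.

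In the first case, there is a normal amenable subgroup $N\triangleleft \Gamma$ such that $\Gamma _x\subseteq N$ for $\mu$-a.e.\ $x$. Non-freeness of $\bm{a}$ yields some $\gamma \in \Gamma \setminus \{e\}$ with $\mu(\mbox{Fix}^a(\gamma))>0$; on the intersection of this set with the conull set where stabilizers lie in $N$, we pick an $x$ and conclude $\gamma \in \Gamma_x \subseteq N$, so $N$ is non-trivial. This yields the first alternative of the corollary.

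In the second case, $\theta^\infty := \theta_{\bm{a}}^\infty$ is a non-atomic, self-normalizing, infinitely-generated amenable IRS, and it only remains to check that the associated system $\bm{\theta}^\infty$ is weakly contained in $\bm{s}_\Gamma$. This is where the countability observations from the paragraph preceding Theorem \ref{thm:NAerg} are used: because $\mbox{Sub}_\Gamma$ is second countable, the increasing net $\{\theta^\alpha\}_{\alpha<\omega_1}$ stabilizes at some countable ordinal $\alpha_0$, so $\theta^\infty=\theta^{\alpha_0}$. Now the stabilizer map $\mbox{stab}_a:X\to \mbox{Sub}_\Gamma$ is an equivariant factor map from $\bm{a}$ onto $\bm{\theta}_{\bm{a}}=\bm{\theta}^0$, and each iterated normalizer $N^{\alpha_0}:\mbox{Sub}_\Gamma\to \mbox{Sub}_\Gamma$ is conjugation-equivariant, so the composition $N^{\alpha_0}\circ \mbox{stab}_a$ exhibits $\bm{\theta}^{\alpha_0}=\bm{\theta}^\infty$ as a factor of $\bm{a}$. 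Since factors preserve weak containment downward, $\bm{\theta}^\infty \prec \bm{a} \prec \bm{s}_\Gamma$, giving the second alternative.

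The only subtle point in the argument — and the step I would treat as the main obstacle before noticing the stabilization of the sequence $\{\theta^\alpha\}$ — is verifying $\bm{\theta}^\infty \prec \bm{s}_\Gamma$, since one might initially worry that $\theta^\infty$ is only a weak${}^*$-limit and that weak containment need not be preserved under such limits in general. The observation that $\theta^\infty$ is actually \emph{attained} at a countable ordinal $\alpha_0$ removes this concern entirely and reduces the weak containment claim to the standard fact that factors of weakly contained actions are weakly contained.
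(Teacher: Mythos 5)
Your proof is correct and follows essentially the same route as the paper: reduce to Theorem \ref{thm:NAerg} via the NA-ergodicity of actions weakly contained in $\bm{s}_\Gamma$, and match its two alternatives to those of the corollary. The extra detail you supply for $\bm{\theta}^\infty \prec \bm{s}_\Gamma$ (stabilization of $\{\theta^\alpha\}$ at a countable ordinal, so that $\bm{\theta}^\infty$ is a genuine factor of $\bm{a}$) is exactly the justification the paper leaves implicit when it says that taking $\theta = \theta_{\bm{a}}^\infty$ "works."
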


\begin{proof}
Let $\Gamma$ be a group that is not shift-minimal so that there exists some non-trivial $\bm{a}$ weakly contained in $\bm{s}_{\Gamma}$ which is not free. 
The action $\bm{a}$ is strongly NA-ergodic by \ref{ex:NAerg} and \ref{prop:NAerg}, so $\bm{a}$ satisfies the hypotheses of Theorem \ref{thm:NAerg}. If (1) of Theorem \ref{thm:NAerg} holds, say with witnessing normal amenable subgroup $N\leq \Gamma$, then $N$ is non-trivial since $\bm{a}$ is non-free. If alternative (2) of Theorem \ref{thm:NAerg} holds then taking $\theta = \theta _{\bm{a}}^\infty$ works.
\end{proof}

We also need

\begin{proposition}\label{prop:noamensub}
If $\Gamma$ is shift-minimal then $\Gamma$ has no non-trivial normal amenable subgroups.
\end{proposition}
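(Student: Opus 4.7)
The plan is proof by contradiction using a coinduction argument. Suppose $\Gamma$ is shift-minimal and contains a non-trivial normal amenable subgroup $N$. I will produce a non-trivial, non-free measure preserving action of $\Gamma$ that is weakly contained in $\bm{s}_\Gamma$, contradicting Definition \ref{def:sm}.

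The candidate action is the generalized Bernoulli shift $\bm{s}_{\Gamma,\Gamma/N} = \Gamma \cc^s ([0,1]^{\Gamma/N},\lambda^{\Gamma/N})$, where $\Gamma$ acts on $\Gamma/N$ by left translation. This action is visibly non-trivial (the measure $\lambda^{\Gamma/N}$ is non-atomic) and visibly non-free: since $N$ is normal, every $n\in N$ fixes every coset $\gamma N$, so $N\subseteq \ker(\bm{s}_{\Gamma,\Gamma/N})$, and $N\neq \{e\}$ by assumption. So the only substantive point is to verify $\bm{s}_{\Gamma,\Gamma/N}\prec \bm{s}_\Gamma$.

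For this I would use coinduction. Since $N$ is amenable, Schmidt's theorem gives that $\bm{s}_N$ is not strongly ergodic and therefore weakly contains the identity system $\bm{\iota}_{N,\lambda}$ on $([0,1],\lambda)$; that is, $\bm{\iota}_{N,\lambda}\prec \bm{s}_N$. Coinduction from $N$ to $\Gamma$ preserves weak containment (a standard fact about $\mathrm{CInd}_N^\Gamma$, e.g.\ Kechris \cite{Ke10}), so
\[
\mathrm{CInd}_N^\Gamma(\bm{\iota}_{N,\lambda}) \ \prec\ \mathrm{CInd}_N^\Gamma(\bm{s}_N).
\]
The right-hand side is (isomorphic to) $\bm{s}_\Gamma$: coinducing the $N$-Bernoulli shift on $[0,1]^N$ to $\Gamma$ gives the $\Gamma$-Bernoulli shift on $[0,1]^\Gamma$ after choosing a transversal for $N\backslash\Gamma$. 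The left-hand side is (isomorphic to) $\bm{s}_{\Gamma,\Gamma/N}$: coinducing a trivial $N$-action on $([0,1],\lambda)$ produces functions $\Gamma\to [0,1]$ that are constant on left cosets of $N$, i.e.\ functions $\Gamma/N\to[0,1]$, with $\Gamma$ acting by the generalized shift. Putting these identifications together yields $\bm{s}_{\Gamma,\Gamma/N}\prec \bm{s}_\Gamma$, as required.

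The main thing to be careful about is the two coinduction identifications, which are essentially bookkeeping with coset transversals but need to be spelled out so that the $\Gamma$-equivariance and the product measure structure are correctly matched. Once those identifications and the weak-containment-preserving property of $\mathrm{CInd}_N^\Gamma$ are invoked, the contradiction with shift-minimality is immediate and the proof is complete.
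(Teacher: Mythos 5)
Your argument is correct and is essentially identical to the paper's proof: both contradict shift-minimality by exhibiting $\bm{s}_{\Gamma,\Gamma/N}$ as a non-trivial, non-free action weakly contained in $\bm{s}_\Gamma$, via $\bm{\iota}_N\prec\bm{s}_N$ and the fact that coinduction preserves weak containment, with $\mathrm{CInd}_N^\Gamma(\bm{s}_N)\cong\bm{s}_\Gamma$. The only cosmetic difference is that the paper merely asserts $\bm{s}_{\Gamma,\Gamma/N}\prec\mathrm{CInd}_N^\Gamma(\bm{\iota}_N)$ where you claim the (also true) isomorphism.
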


\begin{proof}
Suppose that $\Gamma$ has a non-trivial normal amenable subgroup $N$. Amenability implies that $\bm{\iota}_N\prec \bm{s}_N$. Then since co-inducing preserves weak containment we have
\[
\bm{s}_{\Gamma ,\Gamma /N}\prec \mbox{CInd}_N^\Gamma (\bm{\iota}_N) \prec \mbox{CInd}_N^\Gamma (\bm{s}_N)\cong \bm{s}_\Gamma
\]
which shows that $\bm{s}_{\Gamma ,\Gamma /N}\prec \bm{s}_\Gamma$. The action $\bm{s}_{\Gamma ,\Gamma /N}$ is not free since $N\subseteq \mbox{ker}(\bm{s}_{\Gamma ,\Gamma /N})$. This shows that $\Gamma$ is not shift-minimal.
\end{proof}

The following immediately yields Theorem \ref{thm:smiff} from the introduction.

\begin{theorem}\label{thm:smequiv}
The following are equivalent for a countable group $\Gamma$:
\begin{enumerate}
\item[(1)] $\Gamma$ is not shift-minimal.
\item[(2)] There exists a non-trivial amenable invariant random subgroup $\theta$ of $\Gamma$ that is weakly contained in $\bm{s}_\Gamma$.
\item[(3)] Either $\mbox{\emph{AR}}_\Gamma$ is finite and non-trivial, or there exists an infinite amenable invariant random subgroup $\theta$ of $\Gamma$ that is weakly contained in $\bm{s}_\Gamma$.
\end{enumerate}
\end{theorem}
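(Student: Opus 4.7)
The plan is to establish the cycle (1) $\Ra$ (3) $\Ra$ (2) $\Ra$ (1), drawing on Corollary \ref{cor:notsm} and Proposition \ref{prop:noamensub} as the main inputs. The implication (1) $\Ra$ (3) uses Corollary \ref{cor:notsm}: its first alternative produces a non-trivial normal amenable subgroup, so $\mbox{AR}_\Gamma$ is non-trivial, and we either land in the first case of (3) when $\mbox{AR}_\Gamma$ is finite, or else $\updelta_{\mbox{AR}_\Gamma}$ is an infinite amenable IRS whose associated m.p.\ system is a one-point action and hence is weakly contained in $\bm{s}_\Gamma$. The second alternative of Corollary \ref{cor:notsm} directly provides an IRS supported on infinitely-generated (in particular infinite) amenable subgroups that is weakly contained in $\bm{s}_\Gamma$, which is exactly the second alternative of (3).

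The implication (3) $\Ra$ (2) is bookkeeping: $\updelta_{\mbox{AR}_\Gamma}$ in the first alternative is a non-trivial amenable IRS whose associated action is trivial (hence weakly contained in every m.p.\ action, in particular in $\bm{s}_\Gamma$), while the infinite amenable IRS of the second alternative is non-trivial as an IRS and weakly contained in $\bm{s}_\Gamma$ by hypothesis.

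For (2) $\Ra$ (1), let $\theta$ be a non-trivial amenable IRS weakly contained in $\bm{s}_\Gamma$. If $\theta = \updelta_N$ is a point mass, then conjugation-invariance forces $N \triangleleft \Gamma$, and non-triviality and amenability of $\theta$ make $N$ a non-trivial normal amenable subgroup, so the contrapositive of Proposition \ref{prop:noamensub} gives that $\Gamma$ is not shift-minimal. Otherwise $\bm{\theta}$ itself is a non-trivial m.p.\ action weakly contained in $\bm{s}_\Gamma$, and I show it is not free as follows. The stabilizer of $H$ under the conjugation action is $N_\Gamma(H) \supseteq H$, and $\theta \neq \updelta_{\{e\}}$ implies $\theta(\{H : H \neq \{e\}\}) > 0$. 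From the inclusion
\[
\{H : H \neq \{e\}\} \ \subseteq \ \bigcup_{\gamma \in \Gamma \setminus \{e\}} \{H : \gamma \in H\} \ \subseteq \ \bigcup_{\gamma \in \Gamma \setminus \{e\}} \mbox{Fix}^c(\gamma)
\]
and countable subadditivity some $\gamma \neq e$ satisfies $\theta(\mbox{Fix}^c(\gamma)) > 0$, so $\bm{\theta}$ is not free and $\Gamma$ is not shift-minimal. The main conceptual point is this case split, where point-mass IRSes must first be peeled off and handled via Proposition \ref{prop:noamensub} before the pigeonhole on normalizers can dispatch the remaining non-atomic part.
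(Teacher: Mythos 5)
Your proof is correct and follows essentially the same route as the paper: (1)$\Ra$(3) via Corollary \ref{cor:notsm} with the same case analysis on $\mbox{AR}_\Gamma$, (3)$\Ra$(2) as bookkeeping, and (2)$\Ra$(1) by splitting into the point-mass case (handled by Proposition \ref{prop:noamensub}) and the non-atomic case (where $\bm{\theta}$ is a non-trivial, non-free action weakly contained in $\bm{s}_\Gamma$). The only difference is that you spell out the non-freeness of $\bm{\theta}$ via the inclusion $\{H : \gamma \in H\} \subseteq \mbox{Fix}^c(\gamma)$ and countable subadditivity, which the paper leaves implicit.
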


\begin{proof}
(1)$\Ra$(3): Suppose that $\Gamma$ is not shift-minimal. If the second alternative of Corollary \ref{cor:notsm} holds then we are done. Otherwise, the first alternative holds and so $\mbox{AR}_\Gamma$ is non-trivial. If $\mbox{AR}_\Gamma$ is finite then (3) is immediate, and if $\mbox{AR}_\Gamma$ is infinite then the point-mass at $\mbox{AR}_\Gamma$ shows that (3) holds.

(3)$\Ra$(2) is clear. Now let $\theta$ be as in (2) and we will show that $\Gamma$ is not shift-minimal. If $\theta$ is a point mass, say at $H\in \mbox{Sub}(\Gamma )$, then $H$ is normal and by hypothesis $H$ is non-trivial and amenable so (1) then follows from Proposition \ref{prop:noamensub}. If $\theta$ is not a point mass then $\Gamma \cc ^c (\mbox{Sub}_\Gamma , \theta )$ is a non-trivial and non-free measure preserving action of $\Gamma$ that is weakly contained in $\bm{s}_\Gamma$. This action then witnesses that $\Gamma$ is not shift-minimal.
\end{proof}

Any group with no non-trivial normal amenable subgroups is ICC (see \cite[Appendix J]{Ha07} for a proof), so Proposition \ref{prop:noamensub} also shows

\begin{proposition}
Shift-minimal groups are \emph{ICC}.
\end{proposition}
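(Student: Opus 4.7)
The proposition is an immediate combination of Proposition \ref{prop:noamensub} (shift-minimal groups have no non-trivial normal amenable subgroups) with the purely group-theoretic fact that any countable group lacking non-trivial normal amenable subgroups is automatically ICC. The plan, therefore, is to invoke Proposition \ref{prop:noamensub} and then argue the contrapositive of the second implication: if $\Gamma$ fails to be ICC, then $\Gamma$ contains a non-trivial normal amenable subgroup.

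Suppose $\Gamma$ is shift-minimal but not ICC. Then by Proposition \ref{prop:noamensub}, $\Gamma$ has no non-trivial normal amenable subgroup, while by failure of ICC there is some $\gamma\in \Gamma\setminus\{e\}$ whose conjugacy class $\{g_1,\dots,g_n\}$ is finite. Let $N=\langle g_1,\dots,g_n\rangle$ be the normal closure of $\gamma$; this is non-trivial and normal in $\Gamma$, being stable under conjugation (which merely permutes the generators). It suffices to show $N$ is amenable.

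The key observation is that the centralizer $C=\bigcap_{i=1}^n C_\Gamma(g_i)$ is a finite intersection of finite-index subgroups (each $C_\Gamma(g_i)$ has index equal to the size of the conjugacy class of $g_i$ in $\Gamma$, which is finite), hence $C$ has finite index in $\Gamma$. By construction $C$ centralizes every generator of $N$, so $C\leq C_\Gamma(N)$, and therefore $C_\Gamma(N)$ has finite index in $\Gamma$. Consequently the center $Z(N)=N\cap C_\Gamma(N)$ has finite index in $N$, since $N/Z(N)$ embeds into $\Gamma/C_\Gamma(N)$.

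Thus $N$ is center-by-finite, and in particular virtually abelian (this is elementary, and also follows from Schur's theorem that the commutator subgroup of a center-by-finite group is finite). Virtually abelian groups are amenable, so $N$ is a non-trivial normal amenable subgroup of $\Gamma$, contradicting Proposition \ref{prop:noamensub}. The only non-routine step is the centralizer count, which is standard; no real obstacle is expected. Alternatively, one can simply cite \cite[Appendix J]{Ha07} as the author does for the statement that a group without non-trivial normal amenable subgroups is ICC.
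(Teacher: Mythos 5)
Your proof is correct and follows the same route as the paper: the paper simply combines Proposition \ref{prop:noamensub} with the fact that a group with no non-trivial normal amenable subgroups is ICC, citing \cite[Appendix J]{Ha07} for the latter. You supply a correct self-contained proof of that cited fact (the finite-conjugacy-class element generates a normal, center-by-finite, hence virtually abelian and amenable subgroup), which is a fine addition but not a different approach.
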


\section{Permanence properties}\label{sec:hered}

This section examines various circumstances in which shift-minimality is preserved. \S\ref{subsec:triv} establishes a lemma which will be used to show that, in many cases, shift-minimality passes to finite index subgroups. 

\subsection{Invariant random subgroups with trivial intersection}\label{subsec:triv} For each invariant random subgroup $\theta$ of $\Delta$ define the set
\[
P_\theta = \{ \delta \in \Delta \csuchthat \theta (\{ H\csuchthat \delta \in H \} )>0 \} .
\]
We say that two invariant random subgroups $\theta$ and $\rho$ \emph{intersect trivially} if $P_\theta \cap P_\rho = \{ e \}$. This notion comes from looking at freeness of a product action.

\begin{lemma}\label{lem:trivially}
If $\bm{a} = \Delta \cc ^a (X,\mu )$ and $\bm{b} =\Delta \cc ^b (Y,\nu )$ are measure preserving actions of $\Delta$ then $\bm{a}\times \bm{b}$ is free if and only if $\theta _{\bm{a}}$ and $\theta _{\bm{b}}$ intersect trivially.
\end{lemma}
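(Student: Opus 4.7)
The plan is to unwind both sides in terms of fixed-point sets and then compare. The key observation is that the stabilizer of a point under a product action is the intersection of stabilizers: $\Delta_{(x,y)} = \Delta_x \cap \Delta_y$ under $a\times b$. Hence $\bm{a}\times\bm{b}$ is free iff for every $\delta \in \Delta\setminus\{e\}$ we have $(\mu\times\nu)(\mathrm{Fix}^{a\times b}(\delta)) = 0$. Since $\mathrm{Fix}^{a\times b}(\delta) = \mathrm{Fix}^a(\delta)\times\mathrm{Fix}^b(\delta)$ by definition of the product action, Fubini gives
\[
(\mu\times\nu)(\mathrm{Fix}^{a\times b}(\delta)) = \mu(\mathrm{Fix}^a(\delta))\cdot\nu(\mathrm{Fix}^b(\delta)),
\]
which vanishes iff at least one of the two factors vanishes.

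Next I would translate fixed-point measures into statements about the stabilizer distributions. By definition, $\mathrm{stab}_a(x) \ni \delta$ iff $x\in \mathrm{Fix}^a(\delta)$, so pushing $\mu$ forward under $\mathrm{stab}_a$ yields
\[
\mu(\mathrm{Fix}^a(\delta)) = \theta_{\bm{a}}(\{H \in \mathrm{Sub}_\Delta : \delta \in H\}),
\]
and similarly for $\bm{b}$. Therefore $\delta \in P_{\theta_{\bm{a}}}$ is precisely the statement that $\mu(\mathrm{Fix}^a(\delta)) > 0$, and likewise for $\theta_{\bm{b}}$.

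Combining the two observations, $\bm{a}\times\bm{b}$ is free iff no $\delta\neq e$ belongs to both $P_{\theta_{\bm{a}}}$ and $P_{\theta_{\bm{b}}}$, i.e., iff $P_{\theta_{\bm{a}}} \cap P_{\theta_{\bm{b}}} \subseteq \{e\}$. Since $e$ lies in every subgroup, both sets contain $e$, so this inclusion is the equality $P_{\theta_{\bm{a}}} \cap P_{\theta_{\bm{b}}} = \{e\}$, which is the definition of trivial intersection. There is no real obstacle here; the proof is essentially bookkeeping, and the only point requiring any care is the routine verification of the Fubini step together with the identification $\mu(\mathrm{Fix}^a(\delta)) = \theta_{\bm{a}}(\{H : \delta \in H\})$.
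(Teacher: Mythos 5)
Your proof is correct and follows the same route as the paper: both reduce to the identity $\mathrm{Fix}^{a\times b}(\delta)=\mathrm{Fix}^a(\delta)\times\mathrm{Fix}^b(\delta)$ and the observation that a product set is null iff one factor is null, with the translation $\mu(\mathrm{Fix}^a(\delta))=\theta_{\bm{a}}(\{H:\delta\in H\})$ left implicit in the paper's ``the lemma easily follows.'' You have simply spelled out the bookkeeping the paper omits.
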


\begin{proof}
For each $\delta \in \Delta$ we have $\mbox{Fix}^{a\times b}(\delta ) =\mbox{Fix}^a(\delta )\times \mbox{Fix}^b(\delta )$, and this set is $(\mu \times \nu )$-null if and only if either $\mbox{Fix}^a(\delta )$ is $\mu$-null or $\mbox{Fix}^b(\delta )$ is $\nu$-null. The lemma easily follows.
\end{proof}

It is a straightforward group theoretic fact that if $L$ and $K$ are normal subgroups of $\Delta$ which intersect trivially then they commute. This generalizes to invariant random subgroups as follows.

\begin{lemma}\label{lem:FinIndCom}
Let $\Delta$ be a countable group. Let $\theta , \rho \in \mbox{\emph{IRS}}_\Delta$ and suppose that $\theta$ and $\rho$ intersect trivially. Suppose $L$ and $K$ are subgroups of $\Delta$ satisfying
\begin{align*}
\theta &( \{ H \in \mbox{\emph{Sub}}_\Delta \csuchthat L\leq H \} ) > \tfrac{1}{m} \\
\rho &( \{ H\in \mbox{\emph{Sub}}_\Delta \csuchthat K\leq H \} ) > \tfrac{1}{n}
\end{align*}
for some $n,m\in \N$. Then there exist commuting subgroups $L_0\leq L$ and $K_0\leq K$ with $[L:L_0] < n$ and $[K:K_0]<m$.
\end{lemma}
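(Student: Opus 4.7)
The approach is to pass from the given invariant random subgroups to measure preserving actions via their stabilizer distributions and then exploit freeness of the product action. By \cite{AGV12}, choose m.p.\ actions $\bm{a} = \Delta \cc ^a (X,\mu )$ and $\bm{b} = \Delta \cc ^b (Y,\nu )$ with $\theta _{\bm{a}} = \theta$ and $\theta _{\bm{b}} = \rho$. The trivial intersection hypothesis together with Lemma~\ref{lem:trivially} forces the diagonal action $\bm{a}\times \bm{b}$ to be free. Writing $F_L := \mbox{Fix}^a(L)$ and $F_K := \mbox{Fix}^b(K)$, the two measure bounds in the hypotheses become $\mu (F_L) > 1/m$ and $\nu (F_K) > 1/n$.

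The heart of the argument is the commutator computation
\[
\mbox{Fix}^{a\times b}([\ell , k]) \;\supseteq\; (F_L \cap k F_L) \times (F_K \cap \ell F_K),
\]
valid for every $\ell \in L$ and $k \in K$. Indeed, if $x\in F_L \cap k F_L$ then $\ell$ fixes $x$ and $\ell ^{-1}$ fixes $k^{-1}x$, so unwinding $[\ell ,k]=\ell k\ell ^{-1}k^{-1}$ in four steps shows $[\ell ,k]x=x$; the analogous verification for $y\in F_K \cap \ell F_K$ is symmetric. Now set
\[
L_0' := \{ \ell \in L \csuchthat \nu (F_K \cap \ell F_K) > 0 \} , \qquad K_0' := \{ k \in K \csuchthat \mu (F_L \cap k F_L) > 0 \} ;
\]
each of these sets contains $e$ and is closed under inversion (by measure preservation). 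For $\ell \in L_0'$ and $k\in K_0'$ the displayed fixed-point set has positive $\mu\times\nu$-measure, so freeness of $\bm{a}\times\bm{b}$ forces $[\ell ,k]=e$. Hence $L_0'$ and $K_0'$ commute elementwise, and the subgroups $L_0 := \langle L_0' \rangle \leq L$ and $K_0 := \langle K_0' \rangle \leq K$ therefore commute as subgroups.

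For the index bounds, suppose for contradiction that $[L:L_0] \geq n$ and pick representatives $\ell _1,\dots ,\ell _n \in L$ of distinct $L_0$-cosets. Then $\ell _i^{-1}\ell _j \notin L_0 \supseteq L_0'$ for $i\neq j$, so $\nu (\ell _i F_K \cap \ell _j F_K ) = \nu (F_K \cap \ell _i^{-1}\ell _j F_K) = 0$. The translates $\ell _1 F_K, \dots , \ell _n F_K$ are therefore pairwise $\nu$-disjoint of total measure exceeding $n\cdot (1/n)=1$, which is absurd; hence $[L:L_0]<n$. A symmetric argument using $K_0'$-cosets and translates of $F_L$ yields $[K:K_0]<m$. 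The main conceptual step is identifying the correct "approximately commuting" sets $L_0'$ and $K_0'$ cut out by positivity of translate-intersection; once freeness of the product action upgrades this to elementwise commutation, the index bounds fall out of an elementary pigeonhole on the translates.
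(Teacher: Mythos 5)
Your proof is correct. It has the same combinatorial core as the paper's argument but packages it in the language of actions rather than working intrinsically with the measures on $\mbox{Sub}_\Delta$. The paper defines $Q_L = \{ l\in L \csuchthat \langle lKl^{-1}\cup K\rangle \subseteq P_\rho \}$ (and symmetrically $Q_K$) and kills the commutator directly from the algebraic containment $[l,k]\in \langle lKl^{-1}\cup K\rangle \cap \langle kLk^{-1}\cup L\rangle \subseteq P_\rho \cap P_\theta = \{ e\}$; you instead realize $\theta$ and $\rho$ as stabilizer distributions via \cite{AGV12}, invoke Lemma \ref{lem:trivially} to make the product action free, and kill the commutator by exhibiting a positive-measure fixed-point set. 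Under the dictionary $\nu (F_K\cap \ell F_K) = \rho (\{ H \csuchthat \langle \ell K\ell ^{-1}\cup K\rangle \leq H \} )$, your set $L_0'$ is contained in the paper's $Q_L$ (your defining condition is a priori stronger when $K$ is infinite, since positive measure of the event $\langle \ell K\ell ^{-1}\cup K\rangle \leq H$ implies, but need not be implied by, each individual element lying in $P_\rho$), so you are bounding the index of a possibly smaller subgroup, which is formally a stronger conclusion; and your pigeonhole on $n$ pairwise disjoint translates of a set of measure $>1/n$ is literally the paper's recurrence step on the sets $A_i = \{ H \csuchthat l_iKl_i^{-1}\leq H \}$, transported by the stabilizer map. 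What your route buys is a transparent, geometric verification of the commutation step and a statement that sits naturally next to Lemma \ref{lem:trivially}; what it costs is the reliance on the external realization theorem of Ab\'{e}rt--Glasner--Virag, which the paper's intrinsic argument does not need.
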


\begin{proof}
Define the sets
\begin{align*}
Q_L &= \{ l\in L \csuchthat \langle lKl^{-1}\cup K\rangle \subseteq P_\rho \} \\
Q_K &= \{ k\in K \csuchthat \langle kLk^{-1}\cup L\rangle \subseteq P_\theta \} .
\end{align*}
If $l\in Q_L$ then for any $k\in K$ we have $lkl^{-1}k^{-1} \in \langle lKl^{-1}\cup K\rangle \subseteq P_\rho$.  Similarly, if $k\in Q_K$ then for any $l\in L$ we have $lkl^{-1}k^{-1} \in \langle kLk^{-1} \cup L\rangle \subseteq P_\theta$. Thus, if $l\in Q_L$ and $k\in Q_K$ then $lkl^{-1}k^{-1} \in P_\rho \cap P_\theta = \{ e \}$ and so $l$ and $k$ commute. It follows that the groups $L_0 = \langle Q_L\rangle \leq L$ and $K_0= \langle Q_K\rangle \leq K$ commute.

Suppose for contradiction that $[L:L_0]\geq n$ and let $l_0,\dots ,l_{n-1}$ be elements of distinct left cosets of $L_0$ in $L$, with $l_0= e$. For each $i<n$ let $A_i = \{ H\in \mbox{Sub}_\Delta \csuchthat l_iKl_i^{-1} \leq H \}$ so that $\rho (A_i ) = \rho (l_i^c \cdot A_0 ) = \rho (A_0) >\frac{1}{n}$ by hypothesis. There must be some $0\leq i<j<n$ with $\rho (A_i\cap A_j) > 0$. Let $l=l_j^{-1}l_i$. Then $\rho (l^c \cdot  A_0 \cap A_0 ) = \rho (A_i\cap A_j)>0$ and $l^c \cdot  A_0 \cap A_0$ consists of those $H\in\mbox{Sub}_\Delta$ such that $lKl^{-1} \cup K \leq H$. This shows that $\langle lKl^{-1}\cup K\rangle \subseteq P_\rho$ and thus $l\in Q_L\subseteq L_0$. But this contradicts that $l=l_j^{-1}l_i$ and $l_iL_0\neq l_jL_0$. Therefore $[L:L_0]< n$. Similarly, $[K:K_0]<m$. \qedhere[Lemma \ref{lem:FinIndCom}]
\end{proof}

\begin{theorem}\label{thm:NormCom}
Let $\theta , \rho \in \mbox{\emph{IRS}}_\Delta$, $L, K\leq \Delta$, and $n,m\in \N$ be as in Lemma \ref{lem:FinIndCom}, and assume in addition that $L$ and $K$ are finitely generated. Then there exist commuting subgroups $N_L$ and $N_K$, both normal in $\Delta$, with $[L:L\cap N_L]<\infty$ and $[K:K\cap N_K ]<\infty$.
\end{theorem}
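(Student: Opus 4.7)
The starting point is Lemma \ref{lem:FinIndCom}, which yields commuting subgroups $L_0\leq L$ and $K_0\leq K$ of finite index. The naive attempt is to set $N_L$ and $N_K$ equal to the normal closures of $L_0$ and $K_0$ in $\Delta$. These are normal in $\Delta$ and meet $L$, $K$ in finite index, but there is no reason a priori that they should commute: we would need $L_0$ to commute with $\gamma K_0 \gamma^{-1}$ for \emph{every} $\gamma\in\Delta$, whereas Lemma \ref{lem:FinIndCom} only delivered commutation with $K_0$ itself. Overcoming this is the main obstacle.

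The plan is to exploit the finite generation of $L$ and $K$ in order to make the "$L_0$" produced by Lemma \ref{lem:FinIndCom} canonical, independent of the choice of conjugate of $K$. Concretely, since $L$ is finitely generated there are only finitely many subgroups of $L$ of any given finite index, so the subgroup
\[
M_L := \bigcap \{ L' \leq L \csuchthat [L:L'] < n \}
\]
has finite index in $L$, and is characteristic in $L$ in the strong sense that every subgroup of $L$ of index strictly less than $n$ contains $M_L$. Define $M_K\leq K$ analogously using $m$ in place of $n$. Note that by Lemma \ref{lem:FinIndCom} applied to the original $L,K$, we have $M_L\leq L_0$ and $M_K\leq K_0$, so $M_L$ and $M_K$ commute.

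Next I would show, using the conjugation-invariance of $\rho$, that $M_L$ in fact commutes with \emph{every} conjugate $\gamma M_K\gamma^{-1}$. Fix $\gamma\in\Delta$. Because $\rho$ is an IRS,
\[
\rho(\{ H \csuchthat \gamma K\gamma^{-1}\leq H \}) = \rho(\{ H \csuchthat K\leq H \}) > \tfrac{1}{n},
\]
so Lemma \ref{lem:FinIndCom} applies with $\gamma K\gamma^{-1}$ in place of $K$: there exist commuting subgroups $L_0^\gamma \leq L$ and $K_0^\gamma \leq \gamma K\gamma^{-1}$ with $[L:L_0^\gamma]<n$ and $[\gamma K\gamma^{-1}:K_0^\gamma]<m$. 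The characteristic definition of $M_L$ forces $M_L\leq L_0^\gamma$, and likewise $\gamma M_K\gamma^{-1}$ (which is the intersection of all index-$<m$ subgroups of $\gamma K\gamma^{-1}$) is contained in $K_0^\gamma$. Since $L_0^\gamma$ and $K_0^\gamma$ commute, so do $M_L$ and $\gamma M_K\gamma^{-1}$.

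To finish, let $N_L$ and $N_K$ be the normal closures in $\Delta$ of $M_L$ and $M_K$ respectively. Both are normal in $\Delta$, and $L\cap N_L\supseteq M_L$, $K\cap N_K\supseteq M_K$ are finite index in $L$ and $K$. For commutation, it suffices to check that $\delta M_L\delta^{-1}$ commutes with $\eta M_K\eta^{-1}$ for all $\delta,\eta\in\Delta$; conjugating by $\delta^{-1}$ reduces this to $M_L$ commuting with $(\delta^{-1}\eta)M_K(\delta^{-1}\eta)^{-1}$, which is the statement established in the previous step. Thus $N_L$ and $N_K$ commute, completing the proof.
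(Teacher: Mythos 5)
Your proposal is correct and follows essentially the same route as the paper: your $M_L$ is exactly the paper's $L(n)$ (the intersection of all subgroups of $L$ of index $<n$, finite index by finite generation), you invoke the conjugation-invariance of the IRS to apply Lemma \ref{lem:FinIndCom} to conjugates, and you take normal closures at the end. The only cosmetic difference is that the paper conjugates both $L$ and $K$ simultaneously where you conjugate only $K$ and reduce the general case by a conjugation at the last step; these are the same argument.
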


\begin{proof}
For a subgroup $H\leq \Delta$ and $i\in \N$ let $H(i)$ be the intersection of all subgroups of $H$ of index strictly less than $i$. Then $L(n)$ is finite index in $L$, and $K(m)$ is finite index in $K$, since $L$ and $K$ are finitely generated. By Lemma \ref{lem:FinIndCom} $L(n)$ and $K(m)$ commute. For any $\gamma ,\delta \in \Delta$ the groups $\gamma L\gamma ^{-1}$ and $\delta K \delta ^{-1}$ satisfy the hypotheses of Lemma \ref{lem:FinIndCom} hence the groups $(\gamma L\gamma ^{-1})(n) =\gamma L(n)\gamma ^{-1}$ and $(\delta K\delta ^{-1})(m)=\delta K(m)\delta ^{-1}$ commute. It follows that the normal subgroups $N_L= \langle \bigcup _{\delta \in \Delta } \delta L(n)\delta ^{-1}\rangle$ and $N_K=\langle \bigcup _{\delta \in \Delta } \delta K(m)\delta ^{-1}\rangle$ satisfy the conclusion of the theorem.
\end{proof}

\subsection{Finite index subgroups}\label{sec:finind}
%
%

The following is an analogue of a theorem of \cite{Be91}, and its proof is essentially the same as \cite[Proposition 6]{BH00}.

\begin{proposition}\label{prop:norm}
Let $\bm{a}$ be a measure preserving action of a countable group $\Gamma$ and let $N$ be a normal subgroup of $\Gamma$.  If the restriction $\bm{a}\resto N$ of $\bm{a}$ to $N$ is free then $\mu (\mbox{\emph{Fix}}^a(\gamma )) = 0$ for any $\gamma \in \Gamma$ satisfying
\begin{equation}\label{eqn:infinite}
| \{ h\gamma h ^{-1}\csuchthat h\in N \} | =\infty .
\end{equation}
Thus, if \mbox{\emph{(\ref{eqn:infinite})}} holds for all $\gamma \not\in N$ then a m.p.\ action of $\Gamma$ is free if and only if its restriction to $N$ is free.
\end{proposition}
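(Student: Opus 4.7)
The plan is to prove the main assertion by contradiction. Suppose $\mu(\mbox{Fix}^a(\gamma)) = c > 0$, and let $\{h_i : i \in \N\} \subseteq N$ be such that the elements $\gamma_i := h_i \gamma h_i^{-1}$ are pairwise distinct (this is possible since the $N$-conjugacy class of $\gamma$ is infinite). Since $a$ is measure preserving and $h_i^a \cdot \mbox{Fix}^a(\gamma) = \mbox{Fix}^a(\gamma_i)$, each $\mbox{Fix}^a(\gamma_i)$ has measure exactly $c$. A soft pigeonhole argument -- if all pairwise intersections were null, then $\mu\!\left(\bigcup_{i \leq n} \mbox{Fix}^a(\gamma_i)\right) = nc$ would exceed $1$ for large $n$ -- forces $\mu(\mbox{Fix}^a(\gamma_i) \cap \mbox{Fix}^a(\gamma_j)) > 0$ for some $i \neq j$.

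Next I would use the inclusion $\mbox{Fix}^a(\gamma_i) \cap \mbox{Fix}^a(\gamma_j) \subseteq \mbox{Fix}^a(\gamma_i \gamma_j^{-1})$, which gives $\mu(\mbox{Fix}^a(\gamma_i \gamma_j^{-1})) > 0$. The key algebraic step, where normality of $N$ is essential, is to check that $\gamma_i \gamma_j^{-1} \in N \setminus \{e\}$. Non-triviality is clear since $\gamma_i \neq \gamma_j$. For membership in $N$, observe that $\gamma h_i^{-1} \gamma^{-1} \in N$ by normality, whence
\[
\gamma_i \gamma^{-1} \;=\; h_i \gamma h_i^{-1} \gamma^{-1} \;=\; h_i (\gamma h_i^{-1} \gamma^{-1}) \;\in\; N,
\]
and analogously $\gamma \gamma_j^{-1} \in N$, so $\gamma_i \gamma_j^{-1} = (\gamma_i \gamma^{-1})(\gamma \gamma_j^{-1}) \in N$. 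Thus $\gamma_i \gamma_j^{-1}$ is a non-identity element of $N$ with a fixed-point set of positive measure, contradicting freeness of $\bm{a}\resto N$.

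For the concluding equivalence, one direction is trivial: the restriction of a free action is free. For the other, assume $\bm{a}\resto N$ is free and that (\ref{eqn:infinite}) holds for all $\gamma \notin N$. For $\gamma \in N \setminus \{e\}$ freeness of $\bm{a}\resto N$ directly gives $\mu(\mbox{Fix}^a(\gamma)) = 0$, while for $\gamma \notin N$ the first part of the proposition applies. I do not expect any serious obstacle here: the only nontrivial point is the containment $\gamma_i \gamma_j^{-1} \in N$, which is a one-line computation once one remembers that normality of $N$ lets conjugates of $h_i^{-1}$ by $\gamma$ stay inside $N$.
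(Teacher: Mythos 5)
Your proof is correct and follows essentially the same route as the paper's: the paper invokes the Poincar\'{e} recurrence theorem to find $h_0,h_1\in N$ with $h_0\gamma h_0^{-1}\neq h_1\gamma h_1^{-1}$ and $\mu (h_0^a\cdot \mbox{Fix}^a(\gamma )\cap h_1^a\cdot \mbox{Fix}^a(\gamma ))>0$ (your pigeonhole count is exactly this), and then exhibits the non-identity element $\gamma ^{-1}h\gamma h^{-1}\in N$ with $h=h_1^{-1}h_0$, which is an $N$-conjugate of the inverse of your $\gamma _i\gamma _j^{-1}$. The only cosmetic difference is that the paper translates both sets back by $h_1$ before forming the commutator-type element, whereas you work with the two conjugates $\gamma _i,\gamma _j$ directly.
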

%

For example, it is shown in \cite{Be91} that (\ref{eqn:infinite}) holds for all $\gamma \not\in N$ whenever $C_\Gamma (N) = \{ e\}$ and $N$ is ICC.

\begin{proof}[Proof of Proposition \ref{prop:norm}]
Suppose $\gamma \in \Gamma \setminus \{ e\}$ is such that $\mu (\mbox{Fix}^a (\gamma )) > 0$ and $\{ h\gamma h ^{-1}\csuchthat h\in N\}$ is infinite. It suffices to show that $\bm{a}\resto N$ is not free. The Poincar\'{e} recurrence theorem implies that there exist $h_0,h_1\in N$ with $h_0\gamma h_0 ^{-1}\neq h_1\gamma h_1 ^{-1}$ and $\mu (h_0^a\cdot \mbox{Fix}^a(\gamma ) \cap h_1^a \cdot \mbox{Fix}^a(\gamma )) > 0$. Let $h= h_1^{-1}h_0$ so that $h \in N$ and $h\gamma h^{-1}\neq \gamma$. Since $\mbox{Fix}^a(\gamma ) = \mbox{Fix}^a(\gamma ^{-1})$ we have
\[
h^a\cdot \mbox{Fix}^a(\gamma )\cap \mbox{Fix}^a(\gamma ) = \mbox{Fix}^a(h\gamma h^{-1}) \cap \mbox{Fix}^a (\gamma ^{-1}) \subseteq \mbox{Fix}^a(\gamma ^{-1}h\gamma h ^{-1}),
\]
which implies $\mu (\mbox{Fix}^a (\gamma ^{-1}h\gamma h ^{-1})) >0$. This shows $\bm{a}\resto N$ is not free since $e\neq \gamma ^{-1}(h\gamma h ^{-1}) = (\gamma ^{-1}h\gamma )h^{-1}\in N$ by our choice of $h$.
\end{proof}

\begin{proposition}\label{prop:findfr}
Let $K$ be a finite index subgroup of a countable \emph{ICC} group $\Gamma$, and let $\bm{a}$ be a measure preserving action of $\Gamma$. If $\bm{a}\resto K$ is free, then $\bm{a}$ is free.
\end{proposition}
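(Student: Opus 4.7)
The plan is to reduce to Proposition \ref{prop:norm} via the normal core of $K$. Set $N = \bigcap_{g\in \Gamma} gKg^{-1}$, the largest normal subgroup of $\Gamma$ contained in $K$. Since $[\Gamma:K]<\infty$, standard arguments (e.g.\ the action of $\Gamma$ on $\Gamma/K$ by left translation) show that $[\Gamma:N]<\infty$ as well. Because $N\leq K$ and $\bm{a}\resto K$ is free, the further restriction $\bm{a}\resto N$ is free.

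Now I would verify the hypothesis of Proposition \ref{prop:norm}: for every $\gamma \in \Gamma \setminus N$ (in fact for every $\gamma\neq e$), the $N$-conjugacy class $\{ h\gamma h^{-1} : h\in N\}$ is infinite. The size of this set equals $[N:C_N(\gamma)]$, where $C_N(\gamma) = C_\Gamma(\gamma)\cap N$. Since $\Gamma$ is ICC, $[\Gamma : C_\Gamma(\gamma)]=\infty$ for all $\gamma\neq e$, and because $C_\Gamma(\gamma)\cap N \leq C_\Gamma(\gamma)$ we have $[\Gamma : C_\Gamma(\gamma)\cap N]=\infty$. Using $[\Gamma:N]<\infty$ and the multiplicativity of indices, $[N : C_N(\gamma)] = [\Gamma : C_\Gamma(\gamma)\cap N]/[\Gamma:N] = \infty$, which is exactly condition \eqref{eqn:infinite}.

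With both inputs in hand, Proposition \ref{prop:norm} applied to $N\triangleleft \Gamma$ yields that $\bm{a}$ is free if and only if $\bm{a}\resto N$ is free; since the latter holds, we conclude that $\bm{a}$ is free.

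There is no real obstacle here; the only delicate point is ensuring that one picks a normal subgroup of $\Gamma$ (not merely of $K$) that still lies inside $K$, which is precisely what the normal core provides. Everything else is a direct index computation together with the cited proposition.
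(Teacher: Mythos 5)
Your proof is correct and follows essentially the same route as the paper: pass to the normal core $N$ of $K$, use the ICC property to verify condition \eqref{eqn:infinite} for $N$, and apply Proposition \ref{prop:findfr}'s cited input, Proposition \ref{prop:norm}. Your index computation $[N:C_N(\gamma)]=\infty$ is just a slightly more explicit version of the paper's one-line observation that $C_\Gamma(\gamma)\cap N$ has infinite index in $N$.
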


\begin{proof}
Let $N = \bigcap _{\gamma \in \Gamma}\gamma K\gamma ^{-1}$ be the normal core of $K$ in $\Gamma$. Then $N$ is a normal finite index subgroup of $\Gamma$. Since $\Gamma$ is ICC, the group $C_\Gamma (\gamma )$ is infinite index in $\Gamma$ for any $\gamma \in \Gamma$, hence $C_\Gamma (\gamma ) \cap N$ is infinite index in $N$. In particular $\{ h\gamma h ^{-1} \csuchthat h\in N \}$ is infinite. If $\bm{a}$ is any m.p.\ action of $\Gamma$ whose restriction to $K$ is free, then the restriction of $\bm{a}$ to $N$ is free, so by Proposition \ref{prop:norm}, $\bm{a}$ is free.
\end{proof}

Proposition \ref{prop:findfr} can be used to characterize exactly when shift-minimality of $\Gamma$ may be deduced from shift-minimality of one of its finite index subgroups.

\begin{proposition}\label{prop:findex} Let $K$ be a finite index subgroup of the countable group $\Gamma$. Suppose that $K$ is shift-minimal. Then the following are equivalent.
\begin{enumerate}
\item $\Gamma$ is shift-minimal.
\item $\Gamma$ is \emph{ICC}.
\item $\Gamma$ has no non-trivial finite normal subgroups.
\item $C_\Gamma (N) = \{ e \}$ where $N=\bigcap _{\gamma \in \Gamma}\gamma K\gamma ^{-1}$.
\end{enumerate}
\end{proposition}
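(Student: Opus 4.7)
The plan is to prove the cycle $(1)\Rightarrow(2)\Rightarrow(3)\Rightarrow(4)\Rightarrow(1)$. The implication $(1)\Rightarrow(2)$ is just the preceding proposition (shift-minimal groups are ICC), applied to $\Gamma$. The implication $(2)\Rightarrow(3)$ is immediate, since every element of a finite normal subgroup has finite $\Gamma$-conjugacy class, contradicting ICC. The key technical tool for the remaining two implications is that shift-minimality of $K$, via Proposition \ref{prop:noamensub}, rules out every non-trivial amenable (in particular every finite or abelian) subgroup of $\Gamma$ that happens to be normal in $\Gamma$ and contained in $K$.

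For $(3)\Rightarrow(4)$, observe that $C_\Gamma(N)$ is normal in $\Gamma$, and $C_\Gamma(N)\cap N=Z(N)$ is characteristic in $N$, hence normal in $\Gamma$ and so normal in $K$; being abelian and sitting inside $K$, shift-minimality of $K$ forces $Z(N)=\{e\}$. Thus the restriction of the quotient map $\Gamma\to\Gamma/N$ to $C_\Gamma(N)$ is injective, so $C_\Gamma(N)$ is a finite normal subgroup of $\Gamma$, which by $(3)$ is trivial.

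For $(4)\Rightarrow(1)$, let $\bm{a}\prec\bm{s}_\Gamma$ be non-trivial. Since $K$ acts freely on $\Gamma$ by left translation with $[\Gamma:K]$ orbits, $\bm{s}_\Gamma\upharpoonright K\cong \bm{s}_K$, so $\bm{a}\upharpoonright K$ is a non-trivial $K$-action weakly contained in $\bm{s}_K$. Shift-minimality of $K$ makes $\bm{a}\upharpoonright K$ free; in particular $\bm{a}\upharpoonright N$ is free. By Proposition \ref{prop:norm} it suffices to show that every $\gamma\in\Gamma\setminus N$ has infinite $N$-conjugacy class. Suppose for contradiction some $\gamma\in\Gamma\setminus N$ does not, so that $C_\Gamma(\gamma)\cap N$ has finite index in $\Gamma$; let $\tilde{M}$ be its normal core in $\Gamma$, a finite-index normal subgroup of $\Gamma$ contained in $N$ and centralized by $\gamma$. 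Running the argument of $(3)\Rightarrow(4)$ with $\tilde{M}$ in place of $N$ (using that $Z(\tilde{M})$ is characteristic in $\tilde{M}$, hence normal in $\Gamma$ and normal in $K$, and abelian, whence trivial) shows that $F:=C_\Gamma(\tilde{M})$ embeds into the finite quotient $\Gamma/\tilde{M}$, so $F$ is finite and normal in $\Gamma$, and contains $\gamma\neq e$. Then $F\cap N$ is a finite normal subgroup of $\Gamma$ contained in $K$, so shift-minimality of $K$ forces $F\cap N=\{e\}$; since both $F$ and $N$ are normal in $\Gamma$, the commutator satisfies $[F,N]\subseteq F\cap N=\{e\}$, and therefore $F\leq C_\Gamma(N)=\{e\}$ by $(4)$, contradicting $e\neq \gamma\in F$.

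The main obstacle is precisely this last step: extracting from the supposed finiteness of a single $N$-conjugacy class a concrete non-trivial finite normal subgroup of $\Gamma$ whose existence contradicts the centralizer hypothesis $(4)$. The normal-core construction, combined with the repeated use of Proposition \ref{prop:noamensub} applied to $K$, is the device that turns the local finiteness into a global normal subgroup to which $(4)$ can be applied.
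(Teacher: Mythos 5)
Your proof is correct, but it takes a genuinely different route from the paper's. The paper disposes of the purely group-theoretic equivalences $(2)\Leftrightarrow(3)\Leftrightarrow(4)$ by citing Pr\'eaux (using only that $K$, and hence its normal core $N$, is ICC), and then closes the loop with $(2)\Rightarrow(1)$: ICC-ness of $\Gamma$ immediately gives that every non-identity element has infinite $N$-conjugacy class, so freeness of $\bm{a}$ follows from freeness of $\bm{a}$ restricted to $K$ via Proposition \ref{prop:findfr} (i.e.\ Proposition \ref{prop:norm}). You instead prove the cycle $(1)\Rightarrow(2)\Rightarrow(3)\Rightarrow(4)\Rightarrow(1)$ entirely in-house: your $(3)\Rightarrow(4)$ kills $Z(N)$ by feeding it to Proposition \ref{prop:noamensub} as an abelian normal subgroup of $K$ and then embeds $C_\Gamma(N)$ into the finite quotient $\Gamma/N$; and your $(4)\Rightarrow(1)$ replaces the one-line appeal to ICC with a normal-core argument that converts a hypothetical finite $N$-conjugacy class of some $\gamma\notin N$ into a non-trivial finite normal subgroup $F\le C_\Gamma(N)$, contradicting $(4)$. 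What your approach buys is self-containedness --- no external citation and no detour through the ICC property of $\Gamma$ --- at the cost of a longer final implication; the paper's route is shorter because starting from $(2)$ rather than $(4)$ makes the hypothesis of Proposition \ref{prop:norm} immediate. All the steps check out: in particular each of the subgroups you hand to Proposition \ref{prop:noamensub} ($Z(N)$, $Z(\tilde M)$, and $F\cap N$) really is an amenable normal subgroup of $K$, being normal in $\Gamma$ and contained in $N\le K$, and the commuting argument $[F,N]\subseteq F\cap N=\{e\}$ for the two normal subgroups $F$ and $N$ is the right way to land $F$ inside $C_\Gamma(N)$.
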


\begin{proof}
Since $K$ is shift-minimal, it is also ICC by Proposition \ref{prop:noamensub}. The equivalence of (2), (3), and (4) then follows from \cite[Proposition 6.3]{Pr\'{e}12}. It remains to show that (2)$\Ra$(1). Suppose that $\Gamma$ is ICC and that $\bm{a}\prec \bm{s}_\Gamma$ is non-trivial. Then $\bm{a}\resto K\prec \bm{s}_K$, so $\bm{a}\resto K$ is free by shift-minimality of $K$, and therefore $\bm{a}$ itself is free by Proposition \ref{prop:findfr}.
\end{proof}

Proposition \ref{prop:findex} shows that, except for the obvious counterexamples, shift-minimality is inherited from a finite index subgroup. It seems likely that, conversely, shift-minimality passes from a group to each of its finite index subgroups. By Proposition \ref{prop:findex} to show this it would be enough to show that shift-minimality passes to finite index \emph{normal} subgroups (see the discussion following Question \ref{Q:fi} in \S 7). Theorem \ref{thm:NormCom} can be used to give a partial confirmation of this. Recall that a group is \emph{locally finite} if each of its finitely generated subgroups is finite.

\begin{theorem}\label{thm:nolocfin}
Let $N$ be a normal finite index subgroup of a shift-minimal group $\Gamma$. Suppose that $N$ has no infinite locally finite invariant random subgroups that are weakly contained in $\bm{s}_N$. Then $N$ is shift-minimal.
\end{theorem}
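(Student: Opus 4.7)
The plan is to argue by contradiction, assuming $N$ is not shift-minimal and applying Corollary~\ref{cor:notsm} to $N$ to split the analysis into two cases.

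In the first case, $N$ has a non-trivial normal amenable subgroup $M$. Fixing coset representatives $g_1=e,g_2,\ldots,g_k$ of $\Gamma/N$, I would form the normal closure $M_\Gamma := g_1Mg_1^{-1}\cdots g_kMg_k^{-1}$ of $M$ in $\Gamma$. Each $g_iMg_i^{-1}$ lies in $N$ (as $N\triangleleft\Gamma$) and is normal amenable in $N$ (conjugation by $g_i$ is an automorphism of $N$ carrying normal subgroups to normal subgroups), so their product is a normal amenable subgroup of $N$, using that amenability is closed under extensions. Because $\Gamma$-conjugation merely permutes the factors $\{g_iMg_i^{-1}\}$ (using $M\triangleleft N$), $M_\Gamma\triangleleft\Gamma$; and $M\subseteq M_\Gamma$ makes it non-trivial. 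This contradicts the shift-minimality of $\Gamma$ by Proposition~\ref{prop:noamensub}.

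In the second case, there is a non-atomic, self-normalizing, infinitely-generated amenable IRS $\theta$ of $N$ with $\bm{\theta}\prec\bm{s}_N$. Samples of $\theta$ are infinitely generated, hence infinite, so the hypothesis on $N$ forbids $\theta$ from being locally finite: with positive probability a sample $H$ contains an infinite finitely generated subgroup. Countable additivity then supplies a single finitely generated infinite $L\leq N$ with $p:=\theta(\{H:L\leq H\})>0$ and an $m\in\N$ with $p>1/m$; the group $L$ is amenable as a subgroup of the $\theta$-a.e.\ amenable $H$. I would then apply Theorem~\ref{thm:NormCom} to a pair of IRSs of $N$ (or of $\Gamma$) that intersect trivially and both charge subgroups containing $L$ or a conjugate of $L$: the natural candidates are $\theta$ itself and a translate $(g)_*\theta$ for some $g\in\Gamma\setminus N$, viewed as IRSs of $N$. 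The output is a pair of commuting normal subgroups $N_L,N_K$ of $N$, each containing a finite-index (hence infinite) subgroup of $L$ or $gLg^{-1}$; combining these with their $\Gamma$-conjugates should assemble into a non-trivial normal amenable subgroup of $\Gamma$, again contradicting shift-minimality via Proposition~\ref{prop:noamensub}.

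The main obstacle is the trivial-intersection hypothesis of Theorem~\ref{thm:NormCom}. The $\Gamma$-averaged IRS $\tilde{\theta}=\tfrac{1}{k}\sum_i(g_i)_*\theta$ is already $\Gamma$-invariant, so no non-trivial translate of it at the $\Gamma$-level can be found to pair against; the argument therefore has to be conducted at the $N$-level, and the key step will be to exhibit some $g\in\Gamma\setminus N$ for which $P_\theta\cap gP_\theta g^{-1}=\{e\}$, using the non-atomicity, self-normalizing property, and infinite generation of $\theta$ to ensure that the $g$-conjugate of the support of $\theta$ disperses sufficiently from the support itself. Once this is in place, promoting the resulting commuting normal subgroups $N_L,N_K\triangleleft N$ to normal subgroups of $\Gamma$ (by closing under the action of $\Gamma/N$) and verifying that $N_LN_K$ is amenable—likely by extracting from the commuting structure a centrally located finite-index subgroup and exploiting amenability of $L$—constitute the final technical hurdles of the proof.
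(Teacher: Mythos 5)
Your Case 1 is correct. The trouble is in Case 2, and it sits exactly where you flag it: there is no reason a $g\in\Gamma\setminus N$ with $P_\theta\cap gP_\theta g^{-1}=\{e\}$ should exist. Since $[\Gamma:N]<\infty$, the IRS $\theta$ may perfectly well already be invariant under conjugation by all of $\Gamma$ (and if not, its $\Gamma$-average is, so nothing structural rules this out), in which case $gP_\theta g^{-1}=P_\theta\neq\{e\}$ for every $g$; non-atomicity, self-normalization and infinite generation do nothing to "disperse" the support. The paper manufactures the trivially-intersecting pair by a different device that your outline is missing: co-induction. Given a non-trivial $\bm{a}\prec\bm{s}_N$, the co-induced action $\bm{c}=\mbox{CInd}_N^\Gamma(\bm{a})$ is weakly contained in $\bm{s}_\Gamma$, hence \emph{free} by shift-minimality of $\Gamma$, and $\bm{c}\resto N\cong\prod_{0\leq i<n}\bm{a}^{t_i}$. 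Freeness of this product is what yields, via Lemma \ref{lem:trivially}, that $\theta_{\bm{a}^{t_{j-1}}}$ and $\theta_{\bm{c}_j}$ intersect trivially, where $\bm{c}_j=\prod_{j\leq i<n}\bm{a}^{t_i}$; one then inducts on $j$, applying Theorem \ref{thm:compl} at each step to conclude that one of the two stabilizer IRSs is locally finite, hence trivial by the hypothesis together with $\mbox{AR}_N=\{e\}$. That co-induction step is the key idea, and without it the argument does not get off the ground.

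There is a second, independent problem with your endgame even if a trivially-intersecting pair were available. The subgroups $N_L,N_K$ produced by Theorem \ref{thm:NormCom} are normal closures of finite-index subgroups of $L$ and $K$, i.e.\ generated by \emph{all} conjugates of those subgroups, and there is no reason for them to be amenable just because $L$ is. Indeed, the paper's Theorem \ref{thm:compl} exploits precisely the opposite: since $\mbox{AR}_\Delta=\{e\}$ and $N_L,N_K$ are non-trivial and normal, they are \emph{non-amenable}, and the contradiction is then extracted from NA-ergodicity (the non-amenable $N_L$ acts ergodically and fixes the positive-measure set $\{H:k\in H\}$ for some $k\neq e$ commuting with it, forcing $\rho(\{H:k\in H\})=1$ and hence $k\in M_\rho$, an amenable normal subgroup that must be trivial). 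So the plan of "assembling a non-trivial normal amenable subgroup of $\Gamma$" from $N_L N_K$ cannot be made to work; the case has to close through the NA-ergodicity contradiction instead.
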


\begin{corollary}\label{cor:torfree}
Let $\Gamma$ be a shift-minimal group. Then every finite index subgroup of $\Gamma$ which is torsion-free is shift-minimal.
\end{corollary}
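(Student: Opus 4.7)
The plan is to reduce the statement to Theorem \ref{thm:nolocfin} applied to the normal core of the given finite index subgroup, and then climb back up via Proposition \ref{prop:findex}. Let $H$ be a torsion-free finite index subgroup of the shift-minimal group $\Gamma$, and let $N = \bigcap_{\gamma \in \Gamma} \gamma H \gamma^{-1}$ be the normal core of $H$ in $\Gamma$. Then $N$ is a normal finite index subgroup of $\Gamma$, and since $N \leq H$, the group $N$ is torsion-free.

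The first key step is to verify that $N$ has no infinite locally finite invariant random subgroups weakly contained in $\bm{s}_N$ (indeed, none at all). The point is purely group-theoretic: every subgroup of a torsion-free group is torsion-free, and a torsion-free locally finite group must be trivial. Hence any IRS $\theta$ of $N$ that concentrates on locally finite subgroups must concentrate on $\{e_N\}$, so $\theta = \updelta_{e}$, which is not infinite (i.e., does not concentrate on infinite subgroups). Theorem \ref{thm:nolocfin} therefore applies and yields that $N$ is shift-minimal.

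The second step climbs from $N$ up to $H$ using Proposition \ref{prop:findex}, applied with the roles of "$\Gamma$" and "$K$" in that proposition played by $H$ and $N$ respectively. Since $N$ is finite index in $H$ and shift-minimal, it suffices to check one of the equivalent conditions (1)--(4) of Proposition \ref{prop:findex} for the pair $(H, N)$. Condition (3) is immediate from the hypothesis: any finite normal subgroup of the torsion-free group $H$ must be trivial. Hence Proposition \ref{prop:findex} gives that $H$ is shift-minimal, completing the proof.

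The main substantive ingredient is the verification that the hypothesis of Theorem \ref{thm:nolocfin} is satisfied by $N$; this is where torsion-freeness is used, and it is precisely what makes the otherwise delicate "descent to finite index subgroups" direction of shift-minimality work in the torsion-free case. The climb back from $N$ to $H$ is then routine given Proposition \ref{prop:findex}.
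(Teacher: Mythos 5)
Your proof is correct and follows essentially the same route as the paper: pass to the normal core $N$, note it is torsion-free so Theorem \ref{thm:nolocfin} applies vacuously (a torsion-free locally finite group is trivial), and climb back up via Proposition \ref{prop:findex}. The only cosmetic difference is that you verify condition (3) of Proposition \ref{prop:findex} (no finite normal subgroups, from torsion-freeness of $H$) whereas the paper verifies condition (2) (ICC, inherited from $\Gamma$); both are equally immediate.
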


\begin{proof}[Proof of Corollary \ref{cor:torfree}]
Let $K$ be a torsion-free finite index subgroup of $\Gamma$. Note that $K$ is ICC since the ICC property passes to finite index subgroups. The group $N:=\bigcap _{\gamma \in \Gamma}\gamma K \gamma ^{-1}$ is finite index in $\Gamma$ and torsion-free, and it is moreover normal in $\Gamma$. By Theorem \ref{thm:nolocfin}, $N$ is shift-minimal, whence $K$ is shift-minimal by Proposition \ref{prop:findex}.
\end{proof}

Theorem \ref{thm:nolocfin} will follow from:

\begin{theorem}\label{thm:compl}
Let $\Delta$ be a countable group with $\mbox{\emph{AR}}_\Delta = \{ e \}$. Let $\theta$ and $\rho$ be invariant random subgroups of $\Delta$ which are not locally finite. Suppose that $\bm{\rho}$ is \emph{NA}-ergodic. Then $\theta$ and $\rho$ have non-trivial intersection.
\end{theorem}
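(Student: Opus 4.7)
The plan is to argue by contradiction, assuming $P_\theta \cap P_\rho = \{e\}$. First I would extract finitely generated infinite witnesses: the set of non-locally-finite subgroups of $\Delta$ decomposes as the countable union $\bigcup_L \{H : L \leq H\}$ over finitely generated infinite subgroups $L \leq \Delta$, so the non-locally-finiteness of $\theta$ and $\rho$ produces such subgroups $L, K \leq \Delta$ together with integers $m, n \geq 1$ satisfying $\theta(\{H : L \leq H\}) > 1/m$ and $\rho(\{H : K \leq H\}) > 1/n$. Theorem \ref{thm:NormCom} then furnishes commuting normal subgroups $N_L, N_K \triangleleft \Delta$ with $L \cap N_L$ of finite index in $L$ and $K \cap N_K$ of finite index in $K$; in particular both $N_L$ and $N_K$ are infinite.

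Next I would invoke $\mbox{AR}_\Delta = \{e\}$ in two ways. First, any non-trivial normal amenable subgroup of $\Delta$ is trivial, so $N_L$ and $N_K$ are both non-amenable. Second, $Z(N_L)$ is an abelian characteristic subgroup of $N_L$, hence a normal abelian subgroup of $\Delta$, so $Z(N_L) \leq \mbox{AR}_\Delta = \{e\}$; since $N_K \leq C_\Delta(N_L)$, this forces $N_L \cap N_K \leq N_L \cap C_\Delta(N_L) = Z(N_L) = \{e\}$.

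I would then apply NA-ergodicity of $\bm{\rho}$: since $N_L$ is non-amenable, its conjugation action on $(\mbox{Sub}_\Delta, \rho)$ is ergodic. The set $\{H : H \cap N_K \mbox{ is infinite}\}$ is $\Delta$-invariant (by normality of $N_K$) and of positive $\rho$-measure, because every $H$ containing the infinite group $K \cap N_K$ meets $N_K$ in an infinite subgroup. By $N_L$-ergodicity, this set has full $\rho$-measure, and countable subadditivity over $N_K$ produces some $\delta_0 \in N_K \setminus \{e\}$ with $\delta_0 \in P_\rho$. The same reasoning, using $\Delta$-invariance of $H \mapsto |H \cap N_L|$ and ergodicity, shows this cardinality is $\rho$-almost surely a fixed value $c_L \in \{1,2,\ldots,\infty\}$.

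The hard part will be to promote these facts into a genuine element of $P_\theta \cap P_\rho \setminus \{e\}$, yielding the desired contradiction. My approach would be to apply Theorem \ref{thm:NAerg} to $\bm{\rho}$: alternative (1) is ruled out by $\mbox{AR}_\Delta = \{e\}$ (any witnessing normal amenable subgroup would be trivial, forcing $\rho = \updelta_e$ contrary to non-locally-finiteness), so alternative (2) applies and $\rho^\infty$ is a non-atomic, self-normalizing, infinitely generated, amenable invariant random subgroup. The $\theta$-side provides matching rigidity: by conjugation-invariance of $P_\theta$, the entire $\Delta$-saturation of $L \cap N_L \setminus \{e\}$ lies in $P_\theta \cap N_L$. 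Confronting the large self-normalizing amenable structure furnished by $\rho^\infty$ with this conjugation-saturated subset of $P_\theta \cap N_L$, while tracking the dichotomy on $c_L$ (with the case $c_L = \infty$ yielding $P_\rho \cap N_L \neq \{e\}$ directly, and the cases $c_L < \infty$ handled by pushing forward $\rho$ under $H \mapsto H \cap N_K$ and re-iterating the ergodic analysis inside the normal subgroup $N_K$), should force an element of $P_\theta$ to coincide with an element of $P_\rho$, completing the proof.
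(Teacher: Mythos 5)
Your first half reproduces the paper's setup exactly: extracting finitely generated infinite $L$ and $K$ from non-local-finiteness of $\theta$ and $\rho$, invoking Theorem \ref{thm:NormCom} (which is where the trivial-intersection hypothesis is used) to obtain commuting normal subgroups $N_L$ and $N_K$ meeting $L$ and $K$ in finite index, and using $\mbox{AR}_\Delta =\{ e\}$ to conclude that both are non-amenable. The gap is the endgame. Your plan is to manufacture an element of $P_\theta \cap P_\rho \setminus \{ e\}$, but the candidates you have produced live in essentially disjoint locations: the elements of $P_\theta$ you exhibit lie in $N_L$ (the $\Delta$-saturation of $L\cap N_L\setminus \{ e\}$), your element $\delta _0$ of $P_\rho$ lies in $N_K$, and you yourself observe that $N_L\cap N_K\leq Z(N_L)=\{ e\}$. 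The appeal to Theorem \ref{thm:NAerg} and $\rho ^\infty$, culminating in ``should force an element of $P_\theta$ to coincide with an element of $P_\rho$,'' is not an argument; nothing in the structure you have assembled forces such a coincidence, and no contradiction is actually derived.

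The paper's contradiction is of a different kind: the trivial-intersection hypothesis is consumed entirely by Theorem \ref{thm:NormCom}, and one never finds a common element of $P_\theta$ and $P_\rho$. Fix any $k\in (K\cap N_K)\setminus \{ e\}$. Since $k\in K$, the set $\{ H \csuchthat k\in H\}$ has positive $\rho$-measure; since $N_L$ centralizes $N_K\ni k$, this set is invariant under conjugation by $N_L$; since $N_L$ is non-amenable and $\bm{\rho}$ is NA-ergodic, the set is therefore $\rho$-conull. Hence $k$ lies in $M_\rho = \{ \delta \in \Delta \csuchthat \rho (\{ H\csuchthat \delta \in H\} )=1\}$, a normal subgroup of $\Delta$ acting trivially under $\bm{\rho}$; NA-ergodicity forces $M_\rho$ to be amenable, so $M_\rho \leq \mbox{AR}_\Delta = \{ e\}$, contradicting $k\neq e$. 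This is the single missing idea: you need only that one non-identity element already known to lie in $P_\rho$ is centralized by a non-amenable normal subgroup, whence it belongs to almost every subgroup and hence to a normal amenable subgroup, which cannot happen when the amenable radical is trivial.
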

%

We first show how to deduce \ref{thm:nolocfin} from \ref{thm:compl}.

\begin{proof}[Proof of Theorem \ref{thm:nolocfin} from Theorem \ref{thm:compl}]
Let $\bm{a} = N \cc ^a (X,\mu )$ be a non-trivial m.p.\ action of $N$ weakly contained in $\bm{s}_N$. We will show that $\bm{a}$ is free.

The coinduced action $\bm{c}=\mbox{CInd}_N^\Gamma (\bm{a})$ is weakly contained in $\bm{s}_\Gamma$, so $\bm{c}$ is free by shift-minimality of $\Gamma$. Let $T= \{ t_0,\dots ,t_{n-1}\}$ be a transversal for the left cosets of $N$ in $\Gamma$. Then $\bm{c}\resto N \cong \prod _{0\leq i<n}\bm{a}^{t_i}$ where for $\bm{b}\in A(N ,X,\mu )$, $\bm{b}^t\in A(N,X,\mu )$ is given by $k^{b^t}= (t^{-1}kt)^b$ for each $k\in N$, $t\in T$  \cite[10.{\bf{(G)}}]{Ke10}. Observe that $\theta _{\bm{a}^t} = (\varphi _t)_*\theta _{\bm{a}}$ where $\varphi _t :\mbox{Sub}_N \ra \mbox{Sub}_N$ is the conjugation map $H\mapsto tHt^{-1}$. In particular, for each $t\in T$, $\bm{a}^t$ is free if and only if $\bm{a}$ is free. It is easy to see that $(\bm{s}_N )^t \cong \bm{s}_N$ for each $t\in T$, so it follows that $\bm{c}\resto N\cong \prod _{0\leq i<n}\bm{a}^{t_i} \prec \bm{s}_N$. For each $j<n$ let $\bm{c}_j= \prod _{j\leq i<n}\bm{a}^{t_i}$. We will show that $\bm{c}_j$ is free for all $0\leq j<n$, which will finish the proof since this will show that $\bm{c}_{n-1}=\bm{a}^{t_{n-1}}$ is free, whence $\bm{a}$ is free.

We know that $\bm{c}_0=\bm{c}\resto N$ is free. Assume for induction that $\bm{c}_{j-1}$ is free (where $j\geq 1$ is less than $n$) and we will show that $\bm{c}_j$ is free. Note the following:
\begin{itemize}
\item[(i)] $\theta _{\bm{a}^{t_{j-1}}}$ and $\theta _{\bm{c}_j}$ intersect trivially. This follows from Lemma \ref{lem:trivially} because $\bm{c}_{j-1}=\bm{a}^{t_{j-1}}\times \bm{c}_j$ is free.
\item[(ii)] Both $\bm{\theta}_{\bm{a}^{t_{j-1}}}$ and $\bm{\theta}_{\bm{c}_j}$ are NA-ergodic, since they are both weakly contained in $\bm{s}_N$.
\item[(iii)] $\mbox{AR}_N = \{ e \}$. This is because $\Gamma$ is shift-minimal, so that $\mbox{AR}_\Gamma = \{ e \}$ by Proposition \ref{prop:noamensub}, and $N$ is normal in $\Gamma$ so apply Proposition \ref{prop:ARbasic}.
\end{itemize}
Theorem \ref{thm:compl} along with (i), (ii), and (iii) imply that either $\theta _{\bm{a}^{t_{j-1}}}$ or $\theta _{\bm{c}_j}$ is locally finite. But $N$ has no infinite locally finite invariant random subgroups weakly contained in $\bm{s}_N$ by hypothesis, and since $\mbox{AR}_N = \{ e \}$, $N$ actually has no \emph{non-trivial} locally finite invariant random subgroups weakly contained in $\bm{s}_N$. It follows that either $\theta _{\bm{a}^{t_{j-1}}}$ or $\theta _{\bm{c}_j}$ is trivial. If $\theta _{\bm{c}_j}$ is trivial then $\bm{c}_j$ is free, which is what we wanted to show. If $\theta _{\bm{a}^{t_{j-1}}}$ is trivial then $\bm{a}^{t_{j-1}}$ is free, so $\bm{a}^{t_{i}}$ is free for all $i<n$, and therefore $\bm{c}_j$ is free all the same.
\end{proof}

\begin{proof}[Proof of Theorem \ref{thm:compl}]
Suppose toward a contradiction that $\theta$ and $\rho$ intersect trivially. By hypothesis $\theta$ is not locally finite, so the set of $H\in \mbox{Sub}_\Delta$ that contain an infinite finitely generated subgroup is $\theta$-non-null. As there are only countably many infinite finitely generated subgroups of $\Delta$, there must be at least one -- call it $L$ -- for which $\theta (\{ H\csuchthat L\subseteq H \} ) >0$. Similarly, there is an infinite finitely generated $K\leq \Delta$ with $\rho (\{ H\csuchthat K \leq H \} )>0$. Then $\theta$, $\rho$, $L$ and $K$ satisfy the hypotheses of Theorem \ref{thm:NormCom} (for some $n$ and $m$), so there exist normal subgroups $N_L, N_K\leq \Delta$ which commute, with $[L:L\cap N_L]<\infty$ and $[K:K\cap N_K]<\infty$. Since $L$ and $K$ are infinite, neither $N_L$ nor $N_K$ is trivial, and since $\mbox{AR}_\Delta = \{ e \}$, both $N_L$ and $N_K$ are non-amenable.

Pick some $k\neq e$ with $k\in K\cap N_K$. Since $k\in K$, the set $\{ H\csuchthat k\in H \}$ has positive $\rho$-measure, and it is $N_L$-invariant since $N_L$ commutes with $k$. NA-ergodicity of $\bm{\rho}$ and non-amenability of $N_L$ then imply that $\rho (\{ H\csuchthat k\in H\} )=1$. On the other hand, the set
\[
M_\rho = \{ \delta \in \Delta \csuchthat \rho (\{ H\csuchthat \delta \in H \} ) =1 \}
\]
is a normal subgroup of $\Delta$ which acts trivially under $\bm{\rho}$, so NA-ergodicity of $\bm{\rho}$ implies $M_\rho$ is amenable, and as $\mbox{AR}_\Delta = \{ e \}$, we actually have $M_\rho = \{ e \}$, which contradicts that $k\in M_\rho$.
\end{proof}

Question \ref{Q:fi} below asks whether a finite index subgroup of a shift-minimal group is always shift-minimal.

\subsection{Direct sums}

\begin{proposition}\label{prop:product}
Let $(\Gamma _i )_{i\in I}$ be a sequence of countable \emph{ICC} groups and let $\bm{a}$ be a measure preserving action of $\Gamma =\bigoplus _{i\in I} \Gamma _i$. If $\bm{a}\resto \Gamma _i$ is free for each $i\in I$ then $\bm{a}$ is free. In particular, the direct sum of shift-minimal groups is shift-minimal.
\end{proposition}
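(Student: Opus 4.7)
The plan is to prove freeness of $\bm{a}$ directly, via Proposition \ref{prop:norm} applied to each summand $\Gamma_i$, which is normal in $\Gamma = \bigoplus_{j\in I}\Gamma_j$ since in a direct sum every factor is normal. Fix $\gamma = (\gamma_j)_{j\in I} \in \Gamma \setminus \{e\}$ and pick any index $i\in I$ with $\gamma_i \neq e$. I would then split into two cases according to whether $\gamma \in \Gamma_i$.

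If $\gamma \in \Gamma_i$ (i.e.\ $\gamma_j = e$ for all $j \neq i$), then freeness of $\bm{a}\resto \Gamma_i$ immediately yields $\mu(\mbox{Fix}^a(\gamma)) = 0$. Otherwise, $\gamma \notin \Gamma_i$, and the $i$-th coordinate of $h\gamma h^{-1}$ for $h \in \Gamma_i$ equals $h\gamma_i h^{-1}$, which, by ICC of $\Gamma_i$ together with $\gamma_i \neq e$, ranges over an infinite set; hence $\{h\gamma h^{-1} \csuchthat h\in \Gamma_i\}$ is infinite. Proposition \ref{prop:norm} applied with $N = \Gamma_i$ then gives $\mu(\mbox{Fix}^a(\gamma)) = 0$. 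Since $\gamma \in \Gamma\setminus \{e\}$ was arbitrary, $\bm{a}$ is free.

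For the ``in particular'' claim, assume each $\Gamma_i$ is shift-minimal, noting that shift-minimal groups are ICC (so the first part of the proposition applies). Given any non-trivial $\bm{a}\prec \bm{s}_\Gamma$, restriction to $\Gamma_i$ gives $\bm{a}\resto \Gamma_i \prec \bm{s}_\Gamma \resto \Gamma_i$. Since $\Gamma_i$ acts freely on $\Gamma$ by left translation, choosing a transversal $T$ for the right cosets $\Gamma_i\backslash \Gamma$ identifies $\bm{s}_\Gamma \resto \Gamma_i$ with the Bernoulli shift of $\Gamma_i$ on base space $([0,1]^T,\lambda^T)$, which is standard-Borel isomorphic to $([0,1],\lambda)$; hence $\bm{s}_\Gamma \resto \Gamma_i \cong \bm{s}_{\Gamma_i}$ and so $\bm{a}\resto \Gamma_i \prec \bm{s}_{\Gamma_i}$. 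Since $\mu$ is not a point mass, $\bm{a}\resto \Gamma_i$ is itself non-trivial, and shift-minimality of $\Gamma_i$ gives freeness of $\bm{a}\resto \Gamma_i$. The first part of the proposition then yields freeness of $\bm{a}$.

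The only step requiring any thought is verifying that the $\Gamma_i$-conjugacy class of $\gamma$ is infinite in the case $\gamma \notin \Gamma_i$, but this is immediate from the ICC hypothesis on $\Gamma_i$ once one observes that the $i$-th coordinate of the conjugate already produces infinitely many distinct values. There is no serious obstacle beyond cleanly setting up this case split and appealing to Proposition \ref{prop:norm}.
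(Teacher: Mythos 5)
Your proof is correct and rests on the same mechanism as the paper's: the paper runs the Poincar\'{e} recurrence/commutator argument inline (producing a non-trivial element of some $\Gamma_i$ with non-null fixed-point set when $\bm{a}$ is not free), whereas you obtain the identical conclusion by citing Proposition \ref{prop:norm} with $N=\Gamma_i$ after observing that ICC makes the $\Gamma_i$-conjugacy class of any $\gamma$ with $\gamma_i\neq e$ infinite. This is just a cleaner packaging of the same idea, and your verification of the ``in particular'' clause (including $\bm{s}_\Gamma\resto\Gamma_i\cong\bm{s}_{\Gamma_i}$ and non-triviality of the restriction) is also fine.
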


\begin{proof}
We will show that if $\bm{a}$ is not free then $\bm{a}\resto \Gamma _i$ is not free for some $i\in I$. We give the proof for the case of the direct sum of two ICC groups -- say $\Gamma _1$ and $\Gamma _2$ -- since the proof for infinitely many groups is nearly identical. Let $\Gamma = \Gamma _1\times \Gamma _2$ and let $(\gamma ,\delta )\in \Gamma$ be such that $\mu \big( \mbox{Fix}^a((\gamma ,\delta ))\big) >0$ where $(\gamma ,\delta )\neq e_\Gamma$. Suppose that $\delta \neq e$ (the case where $\gamma \neq e$ is similar). Since $\Gamma _2$ is ICC we have that $C_{\Gamma _2}(\delta )$ is infinite index in $\Gamma _2$ so by Poincar\'{e} recurrence there exists $\alpha \in \Gamma _2$, $\alpha\not\in C_{\Gamma _2}(\delta )$ such that
\[
\mu \big( (e,\alpha )^a \cdot \mbox{Fix}^a ((\gamma ,\delta )) \cap \mbox{Fix}^a ((\gamma ,\delta ))\big) > 0 .
\]
Thus $\mu \big( \mbox{Fix}^a ( \langle (\gamma ,\alpha\delta \alpha ^{-1}), (\gamma ,\delta )\rangle )\big) >0$ and in particular $\mu \big( \mbox{Fix}^a ((e, \alpha \delta \alpha ^{-1}\delta ^{-1} ))\big) > 0$. Our choice of $\alpha$ implies that $\alpha\delta\alpha ^{-1}\delta ^{-1}\neq e$ and so $\bm{a}\resto \Gamma _2$ is non-free as was to be shown.
\end{proof}
\subsection{Other permanence properties}

\begin{proposition}\label{prop:CNicc}
Let $\bm{a}$ be a measure preserving action of $\Gamma $. Let $N$ be a normal subgroup of $\Gamma$. Suppose that both $N$ and $C_\Gamma (N)$ are \emph{ICC}. Suppose that $\bm{a}\resto N$ and $\bm{a}\resto C_\Gamma (N)$ are both free. Then $\bm{a}$ is free.
\end{proposition}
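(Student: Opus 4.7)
The plan is to form the subgroup $M = N\cdot C_\Gamma(N)$ of $\Gamma$, show that $\bm{a}\resto M$ is free by applying Proposition~\ref{prop:product}, and then deduce freeness of $\bm{a}$ itself by applying Proposition~\ref{prop:norm} --- together with the result of \cite{Be91} cited immediately after it --- to the normal subgroup $M$.

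First I would establish that $M$ is an internal direct product. The two factors commute by definition of the centralizer, and they intersect trivially since $N\cap C_\Gamma(N) = Z(N) = \{e\}$ by ICC of $N$. Hence $M \cong N\times C_\Gamma(N)$. The subgroup $M$ is normal in $\Gamma$ because both $N$ and $C_\Gamma(N)$ are normal; it is ICC as a direct product of ICC groups; and by Proposition~\ref{prop:product} applied to the free restrictions $\bm{a}\resto N$ and $\bm{a}\resto C_\Gamma(N)$, the restriction $\bm{a}\resto M$ is free.

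Next I would verify that $C_\Gamma(M) = \{e\}$: any $g$ centralizing $M$ must centralize both $N$ and $C_\Gamma(N)$, so $g\in C_\Gamma(N)\cap C_\Gamma(C_\Gamma(N)) = Z(C_\Gamma(N))$, which is trivial because $C_\Gamma(N)$ is ICC. With $M$ now exhibited as a normal ICC subgroup of $\Gamma$ having trivial centralizer, the result of \cite{Be91} cited after Proposition~\ref{prop:norm} applies to $M$ and yields that the $M$-conjugacy class $\{h\gamma h^{-1}\csuchthat h\in M\}$ is infinite for every $\gamma\in \Gamma\setminus M$. Proposition~\ref{prop:norm} applied to $M$ then gives $\mu(\mbox{Fix}^a(\gamma))=0$ for every such $\gamma$, while freeness of $\bm{a}\resto M$ disposes of $\gamma\in M\setminus\{e\}$; together these show $\bm{a}$ is free.

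The one step requiring care is the simultaneous verification of the three conditions on $M$ (normality in $\Gamma$, ICC, and $C_\Gamma(M)=\{e\}$) that unlock the Becker-type criterion --- each of these uses the ICC hypothesis on either $N$ or $C_\Gamma(N)$ in an essential way. Once this bookkeeping is done, the rest reduces to a clean application of the machinery already developed in Propositions~\ref{prop:norm} and~\ref{prop:product}.
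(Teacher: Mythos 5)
Your proof is correct and follows essentially the same route as the paper: form $K = N\cdot C_\Gamma(N)$, note it is a normal ICC internal direct product on which the action is free by Proposition~\ref{prop:product}, verify $C_\Gamma(K)=\{e\}$ via $Z(C_\Gamma(N))=\{e\}$, and conclude with Proposition~\ref{prop:norm} together with the criterion from \cite{Be91}. Your write-up merely makes explicit two points the paper leaves terse, namely that $N\cap C_\Gamma(N)=Z(N)=\{e\}$ follows from ICC of $N$, and that the \cite{Be91} criterion is what licenses the final application of Proposition~\ref{prop:norm}.
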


\begin{proof}
Let $K=C_\Gamma (N)N$. Then $K$ is normal in $\Gamma$ since both $N$ and $C_\Gamma (N)$ are normal. By hypothesis $C_\Gamma (N)\cap N = \{ e \}$ so $K\cong C_\Gamma (N)\times N$. It follows that $K$ is ICC, being a product of ICC groups. Proposition \ref{prop:product} then implies that $\bm{a}\resto K$ is free. Since $C_\Gamma (K)\leq C_\Gamma (C_\Gamma (N)) \cap C_\Gamma (N) = Z(C_\Gamma (N)) = \{ e \}$, Proposition \ref{prop:norm} implies that $\bm{a}$ is free.
\end{proof}

\begin{definition}\label{def:almostasc}
A subgroup $H$ of $\Gamma$ is called \emph{almost ascendant} in $\Gamma$ if there exists a well-ordered increasing sequence $\{ H_\alpha \} _{\alpha \leq \lambda}$ of subgroups of $\Gamma$, indexed by some countable ordinal $\lambda$, such that
\begin{enumerate}
\item[(i)] $H= H_0$ and $H_\lambda = \Gamma$.
\item[(ii)] For each $\alpha <\lambda$, either $H_{\alpha}$ is a normal subgroup of $H_{\alpha +1}$ or $H_\alpha$ is a finite index subgroup of $H_{\alpha +1}$.
\item[(iii)] $H_\beta = \bigcup _{\alpha < \beta}H_\alpha$ whenever $\beta$ is a limit ordinal.
\end{enumerate}
We call $\{ H_\alpha \} _{\alpha \leq \lambda}$ an \emph{almost ascendant series} for $H$ in $\Gamma$. If $H$ is almost ascendant in $\Gamma$ and if there exists an almost ascendant series $\{ H_\alpha \} _{\alpha \leq \lambda}$ for $H$ in $\Gamma$ such that $H_\alpha$ is normal in $H_{\alpha +1}$ for all $\alpha <\lambda$ then we say that $H$ is \emph{ascendant} in $\Gamma$ and we call $\{ H_\alpha \} _{\alpha \leq \lambda}$ an \emph{ascendant series} for $H$ in $\Gamma$.
\end{definition}

\begin{proposition}\label{prop:ICCalmostasc} Let $\bm{a}=\Gamma \cc ^a (X,\mu )$ be a measure preserving action of $\Gamma$.
\begin{enumerate}
\item Suppose that $L$ is an almost ascendant subgroup of $\Gamma$ that is \emph{ICC} and satisfies $C_\Gamma (L) = \{ e \}$. Then $\bm{a}$ is free if and only if $\bm{a}\resto L$ is free. Thus, if $L$ is shift-minimal then so is $\Gamma$.
\item Suppose that $L$ is an ascendant subgroup of $\Gamma$ such that $\mbox{\emph{AR}}_L = \mbox{\emph{AR}}_{C_\Gamma (L)} = \{ e \}$. Then $\bm{a}$ is free if and only if both $\bm{a}\resto L$ and $\bm{a}\resto C_\Gamma (L)$ are free.
\end{enumerate}
\end{proposition}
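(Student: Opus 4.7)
The plan is to prove both parts by transfinite induction along the given series $\{L_\alpha\}_{\alpha\le\lambda}$ with $L_0=L$ and $L_\lambda=\Gamma$, maintaining the freeness of $\bm{a}\resto L_\alpha$ at every stage. The base case is immediate from the hypotheses on $L$; at a limit ordinal $\beta$, every $\gamma\in L_\beta\setminus\{e\}$ already lies in some $L_\alpha$ with $\alpha<\beta$, so $\mu(\mbox{Fix}^a(\gamma))=0$ by induction. Setting $\alpha=\lambda$ then gives $\bm{a}$ free. The shift-minimality consequence in part (1) is then automatic: if $\bm{a}\prec\bm{s}_\Gamma$ is non-trivial, then the non-trivial action $\bm{a}\resto L$ is weakly contained in $\bm{s}_L$ and therefore free by shift-minimality of $L$, making $\bm{a}$ free as well.

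For part (1), I strengthen the invariant by also requiring at each stage that $L_\alpha$ is ICC; note that $C_\Gamma(L_\alpha)\subseteq C_\Gamma(L)=\{e\}$ descends for free from monotonicity of centralizers. At a normal successor $L_\alpha\triangleleft L_{\alpha+1}$, the conditions that $L_\alpha$ is ICC and $C_{L_{\alpha+1}}(L_\alpha)=\{e\}$ trigger B\'eguerie's criterion (recorded right after Proposition \ref{prop:norm}), which verifies (\ref{eqn:infinite}) for every $\gamma\in L_{\alpha+1}\setminus L_\alpha$; Proposition \ref{prop:norm} then lifts freeness from $\bm{a}\resto L_\alpha$ to $\bm{a}\resto L_{\alpha+1}$, and the same criterion shows $L_{\alpha+1}$ remains ICC. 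At a finite-index successor I pass to the normal core $N$ of $L_\alpha$ in $L_{\alpha+1}$: as a finite-index normal subgroup of the ICC group $L_\alpha$, $N$ is itself ICC (since $[L_\alpha:C_{L_\alpha}(n)]=\infty$ together with $[L_\alpha:N]<\infty$ forces $[N:C_N(n)]=\infty$), so the Pr\'eaux-type equivalence built into Proposition \ref{prop:findex} reduces ICC of $L_{\alpha+1}$ to verifying $C_{L_{\alpha+1}}(N)=\{e\}$, which is extracted from ICC of $L_\alpha$ (ruling out non-trivial finite normal subgroups in the chain) together with $C_\Gamma(L)=\{e\}$. Once $L_{\alpha+1}$ is known to be ICC, Proposition \ref{prop:findfr} lifts freeness from $\bm{a}\resto L_\alpha$ to $\bm{a}\resto L_{\alpha+1}$.

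For part (2), the ascendant series has only normal steps, and the engine at each successor $L_\alpha\triangleleft L_{\alpha+1}$ is Proposition \ref{prop:CNicc}, applied inside $L_{\alpha+1}$ with $N=L_\alpha$. The inductive invariant is that $L_\alpha$ and $C_{L_{\alpha+1}}(L_\alpha)$ are both ICC and that $\bm{a}\resto L_\alpha$ and $\bm{a}\resto C_{L_{\alpha+1}}(L_\alpha)$ are both free. Freeness of the centralizer restriction descends from freeness of $\bm{a}\resto C_\Gamma(L)$ via the chain $C_{L_{\alpha+1}}(L_\alpha)\subseteq C_\Gamma(L_\alpha)\subseteq C_\Gamma(L)$, and the ICC conditions are maintained along the series using the permanence properties of the amenable radical collected in Appendix \ref{app:ar}, starting from $\mbox{AR}_L=\mbox{AR}_{C_\Gamma(L)}=\{e\}$. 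The main obstacle throughout the whole argument is the finite-index successor case of part (1): ICC is not automatically preserved by a finite-index non-normal extension, and the normal-core detour combined with Pr\'eaux's criterion is essential in order to carry ICC through that step and thereby make Proposition \ref{prop:findfr} applicable.
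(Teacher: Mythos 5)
Your overall architecture---transfinite induction along the series, freeness automatic at limit stages---is the same as the paper's, and your part (2) is essentially the paper's argument: Theorem \ref{thm:ascsubgrp} and Corollary \ref{cor:aaAR} give $\mbox{AR}_{L_\alpha}=\mbox{AR}_{C_{L_{\alpha+1}}(L_\alpha)}=\{e\}$ (hence ICC), freeness of $\bm{a}\resto C_{L_{\alpha+1}}(L_\alpha)$ descends from the inclusion $C_{L_{\alpha+1}}(L_\alpha)\leq C_\Gamma(L_\alpha)\leq C_\Gamma(L)$, and Proposition \ref{prop:CNicc} drives each successor step. The normal-step portion of part (1) is also fine, though the criterion you invoke is due to B\`edos \cite{Be91}, not ``B\'eguerie''; the paper runs this step through Proposition \ref{prop:CNicc} or \ref{prop:norm}, which amounts to the same thing.

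The genuine gap is exactly where you locate the main difficulty: the finite-index successor step of part (1). You reduce ICC of $L_{\alpha+1}$ to showing $C_{L_{\alpha+1}}(N)=\{e\}$ for $N$ the normal core of $L_\alpha$ in $L_{\alpha+1}$, and then assert that this ``is extracted from ICC of $L_\alpha$ together with $C_\Gamma(L)=\{e\}$.'' No argument is given, and none can be. Take $\Gamma=\Z/3\rtimes_\phi \F _2$ with $\phi(a)$ the inversion automorphism of $\Z /3$ and $\phi(b)$ trivial, and let $L$ be the canonical copy of $\F _2$. Then $L$ is ICC, $[\Gamma:L]=3$, and a direct computation gives $C_\Gamma(L)=\{e\}$, so all hypotheses of part (1) hold for the length-one almost ascendant series $L\leq\Gamma$. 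Yet $\Gamma$ is not ICC (the conjugacy class of a generator of $\Z/3$ is $\{(1,e),(2,e)\}$), one checks $C_\Gamma(N)\neq\{e\}$ since the normal core $N$ lies in $\ker\phi$ and is therefore centralized by $\Z /3$, and---worse---pulling back a free action of $\Gamma/(\Z/3)\cong\F _2$ along the quotient map produces an action of $\Gamma$ that is free on $L$ but has $\Z/3$ in its kernel. So not only does the ICC induction break at a finite-index step; the statement being proved fails there. (The paper's own proof is equally terse at this point---it simply asserts that ``by transfinite induction each $L_\alpha$ is ICC''---and the same example applies to it. The proposition is safe when the series is ascendant, i.e., has only normal steps, or if one additionally assumes that each $L_{\alpha+1}$ has no non-trivial finite normal subgroup, which by Proposition \ref{prop:findex} restores ICC and makes Proposition \ref{prop:findfr} applicable.) Your instinct that the finite-index case is the essential obstacle was correct; the fix is not a cleverer derivation of $C_{L_{\alpha+1}}(N)=\{e\}$ but an added hypothesis.
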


\begin{proof}
(1): Assume that $\bm{a}\resto L$ is free. Let $\{ L_\alpha \} _{\alpha \leq \lambda}$ be an almost ascendant series for $L$ in $\Gamma$. Then $C_\Gamma (L_\alpha )= \{ e \}$ for all $\alpha \leq \lambda$. By transfinite induction each $L_\alpha$ is ICC. Another induction shows that each $\bm{a}\resto L_\alpha$ is free: this is clear for limit $\alpha$, and at successors, $L_\alpha$ is either normal or finite index in $L_{\alpha +1}$, so assuming $\bm{a}\resto L_\alpha$ is free it follows that $\bm{a}\resto L_{\alpha +1}$ is free by applying either Proposition \ref{prop:CNicc} (Proposition \ref{prop:norm} also works) or Proposition \ref{prop:findfr}.

If now $L$ is shift-minimal and $\bm{a}$ is a non-trivial m.p.\ action of $\Gamma$ with $\bm{a}\prec \bm{s}_\Gamma$ then $\bm{a}\resto L\prec \bm{s}_L$ so that $\bm{a}\resto L$ is free and thus $\bm{a}$ is free.

(2): Assume that both $\bm{a}\resto L$ and $\bm{a}\resto C_\Gamma (L)$ are free. Let $\{ L_\alpha \} _{\alpha \leq \lambda}$ be an ascendant series for $L$ in $\Gamma$. Theorem \ref{thm:ascsubgrp} implies that $\mbox{AR}_{L_\alpha} = \mbox{AR}_{C_\Gamma (L_\alpha )} = \{ e \}$ for all $\alpha \leq \lambda$. For each $\alpha \leq \lambda$ we have
\[
\{ e \} = \mbox{AR}_{C_\Gamma (L_\alpha )}\cap L_{\alpha +1} = \mbox{AR}_{C_\Gamma (L_\alpha )}\cap C_{L_{\alpha +1}}(L_\alpha ) = \mbox{AR}_{C_{L_{\alpha +1}}(L_\alpha )}
\]
where the last equality follows from Corollary \ref{cor:aaAR} since the series $\{ C_{L_\beta}(L_\alpha ) \} _{\beta\leq \lambda}$ is ascendant in $C_\Gamma (L_\alpha )$. It is clear that $C_{L_{\alpha +1}}(L_\alpha ) \leq C_\Gamma (L)$, so by hypothesis $\bm{a}\resto C_{L_{\alpha+1}}(L_\alpha )$ is free for all $\alpha \leq \lambda$. We now show by transfinite induction on $\alpha\leq \lambda$ that $\bm{a}\resto L_\alpha$ is free. The induction is clear at limit stages. At successor stages, if we assume for induction that $\bm{a}\resto L_\alpha$ is free then all the hypotheses of Proposition \ref{prop:CNicc} hold and it follows that $\bm{a}\resto L_{\alpha +1}$ is free.
\end{proof}

\begin{proposition}\label{prop:NAnorm} Let $\bm{a}=\Gamma \cc ^a (X,\mu )$ be a measure preserving action of $\Gamma$. Let $K=\mbox{\emph{ker}}(\bm{a})$.
\begin{enumerate}
\item Suppose that there exists a normal subgroup $N$ of $\Gamma$ such that $\bm{a}\resto N$ is free and such that every finite index subgroups of $N$ acts ergodically. Then $\Gamma _x = K$ almost surely.
\item Suppose that $\bm{a}$ is NA-ergodic and there exists a non-amenable normal subgroup $N$ of $\Gamma$ such that $\bm{a}\resto N$ is free. Then $K$ is amenable and $\Gamma _x = K$ almost surely.
\end{enumerate}
\end{proposition}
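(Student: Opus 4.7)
The plan is to adapt the Poincar\'{e} recurrence argument from Proposition \ref{prop:norm} --- using freeness of $\bm{a}\resto N$ and normality of $N$ to force the $N$-conjugacy class of any ``almost-fixed'' element $\gamma \in \Gamma$ to be finite --- and then to invoke ergodicity of large subgroups of $N$ to pin down $\mu (\mbox{Fix}^a(\gamma ))$. Throughout, write $K=\mbox{ker}(\bm{a})$, and note that $K\subseteq \Gamma _x$ for \emph{every} $x\in X$, so only the reverse containment $\Gamma _x\subseteq K$ a.s.\ requires work; equivalently, it suffices to show that $\mu (\mbox{Fix}^a(\gamma ))>0$ implies $\gamma\in K$ for every $\gamma \in \Gamma$.

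For (1), fix $\gamma \in \Gamma$ with $\mu (\mbox{Fix}^a(\gamma ))>0$. First I would claim the $N$-conjugacy class $\{ h\gamma h^{-1}\csuchthat h\in N\}$ is finite. Otherwise Poincar\'{e} recurrence produces $h_0,h_1\in N$ with $h_0\gamma h_0^{-1}\neq h_1\gamma h_1^{-1}$ and $\mu (h_0^a\cdot \mbox{Fix}^a(\gamma )\cap h_1^a\cdot \mbox{Fix}^a(\gamma ))>0$; setting $h=h_1^{-1}h_0$, the same computation as in Proposition \ref{prop:norm} yields $e\neq \gamma ^{-1}h\gamma h^{-1}\in N$ (normality gives $\gamma ^{-1}h\gamma \in N$, and $h\gamma h^{-1}\neq \gamma$ gives non-triviality) with positive-measure fixed set, contradicting freeness of $\bm{a}\resto N$. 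Hence $C_N(\gamma )$ has finite index in $N$. Since $\mbox{Fix}^a(\gamma )$ is manifestly $C_N(\gamma )$-invariant and $C_N(\gamma )$ acts ergodically by hypothesis, $\mu (\mbox{Fix}^a(\gamma ))=1$, i.e., $\gamma \in K$.

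For (2), amenability of $K$ is essentially immediate: the hypotheses force $\bm{a}$ to be non-trivial (else $\bm{a}\resto N$ free would give $N=\{e\}$, contradicting non-amenability), so any set of intermediate $\mu$-measure is $K$-invariant and witnesses non-ergodicity of $\bm{a}\resto K$, whence NA-ergodicity of $\bm{a}$ forces $K$ to be amenable. To deduce $\Gamma _x=K$ a.s., I would reduce to (1): any finite-index subgroup of the non-amenable group $N$ is itself non-amenable, and therefore acts ergodically on $(X,\mu )$ by NA-ergodicity of $\bm{a}$, so the hypotheses of (1) are met.

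There is no substantive obstacle --- the argument is a routine two-step refinement of Proposition \ref{prop:norm}. The only bookkeeping is verifying that the element produced by the recurrence genuinely lies in $N\setminus \{e\}$ (using normality and distinctness of the two conjugates) and that the trivial case of $\bm{a}$ in (2) is excluded by the hypotheses.
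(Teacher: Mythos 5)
Your proof is correct and takes essentially the same route as the paper's: part (1) reduces via Proposition \ref{prop:norm} (whose recurrence argument you re-run inline rather than cite) to the case where the $N$-conjugacy class of $\gamma$ is finite, so that $C_N(\gamma )=C_\Gamma (\gamma )\cap N$ is finite index in $N$, acts ergodically, and leaves $\mbox{Fix}^a(\gamma )$ invariant; part (2) gets amenability of $K$ from NA-ergodicity and non-triviality of $\bm{a}$, then reduces to (1) since finite index subgroups of the non-amenable $N$ are non-amenable and hence act ergodically. Your explicit justification that $\bm{a}$ is non-trivial in (2) is a detail the paper leaves implicit, but the arguments are otherwise identical.
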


\begin{proof}
We begin with (1). Note that, by Proposition \ref{prop:norm}, if $\gamma \in \Gamma$ is such that the set $\{ h\gamma h^{-1}\csuchthat h\in N \}$ is infinite, then $\mu (\mbox{Fix}^a(\gamma ))= 0$.  It therefore suffices to show that if $\mu (\mbox{Fix}^a(\gamma ))>0$ and $\{ h\gamma h^{-1}\csuchthat h\in N \}$ is finite, then $\gamma \in K$. This set being finite means that the group $H = C_\Gamma (\gamma ) \cap N$ is finite index in $N$, so $\bm{a}\resto H$ is ergodic by hypothesis. Since $H\leq C_\Gamma (\gamma )$, the set $\mbox{Fix}^a(\gamma )$ is $\bm{a}\resto H$-invariant, so if it is non-null then it must be conull, i.e., $\gamma \in K$, by ergodicity.

For (2), amenability of $K$ is immediate since $\bm{a}$ is non-trivial and NA-ergodic. NA-ergodicity also implies that every finite index subgroup of $N$ acts ergodically, so (1) applies and we are done.
\end{proof}

The following Corollary replaces the hypothesis in Proposition \ref{prop:ICCalmostasc}.(1) that $C_\Gamma (L)= \{ e \}$ with the hypotheses that $\mbox{AR}_\Gamma = \{ e \}$ and $\bm{a}$ is NA-ergodic.

\begin{corollary}\label{cor:NAalmostasc}
Suppose $\mbox{\emph{AR}}_\Gamma = \{ e \}$. Let $\bm{a}$ be any NA-ergodic action of $\Gamma$ and suppose that there exists a non-trivial almost ascendant subgroup $L$ of $\Gamma$ such that the restriction $\bm{a}\resto L$ of $\bm{a}$ to $L$ is free, then $\bm{a}$ itself is free.
\end{corollary}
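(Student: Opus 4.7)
The plan is to prove by transfinite induction on $\alpha \leq \lambda$ that $\bm{a}\resto L_\alpha$ is free, where $\{L_\alpha\}_{\alpha \leq \lambda}$ is a fixed almost ascendant series from $L$ to $\Gamma$; taking $\alpha = \lambda$ then yields that $\bm{a}$ itself is free. First I would dispose of two preliminary observations. Since $L$ is non-trivial and $\bm{a}\resto L$ is free, the action $\bm{a}$ is itself non-trivial, so Lemma \ref{lem:am}(ii) gives that $\Gamma_x$ is amenable for $\mu$-almost every $x$. Hence $\mbox{ker}(\bm{a}) \leq \Gamma_x$ is an amenable normal subgroup of $\Gamma$, so $\mbox{ker}(\bm{a}) \leq \mbox{AR}_\Gamma = \{e\}$ and $\bm{a}$ is faithful.

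Next I would establish that every $L_\alpha$ is non-amenable. The sub-series $\{L_\beta\}_{\beta \geq \alpha}$ exhibits $L_\alpha$ as almost ascendant in $\Gamma$, and an auxiliary transfinite induction shows that any amenable almost ascendant subgroup of $\Gamma$ must lie in $\mbox{AR}_\Gamma = \{e\}$: for a normal successor step this uses that $\mbox{AR}$ is a characteristic subgroup (so amenable normal subgroups of $L_\beta$ remain amenable normal in $L_{\beta+1}$); for a finite-index step one invokes that amenability is inherited by finite-index supergroups; and at limits one uses that a union of amenable subgroups along a well-ordered chain is amenable (being a local property). So an amenable $L_\alpha$ would be trivial, contradicting $L_\alpha \supseteq L \neq \{e\}$.

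With these in hand, the base case is the hypothesis and the limit case is routine (any $\gamma \in L_\beta \setminus \{e\}$ lies in some $L_\alpha$ with $\alpha < \beta$, where the IH applies). For the successor step $\alpha+1$, I would first reduce the finite-index case $[L_{\alpha+1}:L_\alpha]<\infty$ to the normal case by replacing $L_\alpha$ with its normal core $N \triangleleft L_{\alpha+1}$: this $N$ has finite index in $L_\alpha$ (hence is non-amenable) and acts freely as a subgroup of $L_\alpha$. So it suffices to treat the case $L_\alpha \triangleleft L_{\alpha+1}$ with $L_\alpha$ non-amenable and $\bm{a}\resto L_\alpha$ free. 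Every non-amenable subgroup of $L_{\alpha+1}$ is non-amenable in $\Gamma$ and hence ergodic for $\bm{a}$, so $\bm{a}\resto L_{\alpha+1}$ is NA-ergodic as an $L_{\alpha+1}$-action. Proposition \ref{prop:NAnorm}(2), applied to $\bm{a}\resto L_{\alpha+1}$ with $L_\alpha$ as the non-amenable normal subgroup acting freely, then yields that the stabilizer $(L_{\alpha+1})_x$ equals $\mbox{ker}(\bm{a}\resto L_{\alpha+1}) = L_{\alpha+1} \cap \mbox{ker}(\bm{a}) = \{e\}$ almost surely, closing the induction.

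The main obstacle I expect is the auxiliary claim that amenable almost ascendant subgroups of a group with trivial amenable radical are themselves trivial; this is exactly what allows Proposition \ref{prop:NAnorm}(2) to be applied at every successor stage, and it is what substitutes for the ICC and trivial-centralizer hypotheses used in Proposition \ref{prop:ICCalmostasc}(1).
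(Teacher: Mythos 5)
Your proof is correct and follows essentially the same route as the paper: a transfinite induction along the almost ascendant series, with the key input that $\mbox{AR}_\Gamma = \{e\}$ forces every $L_\alpha$ to be non-amenable with trivial amenable radical (this is exactly Corollary \ref{cor:aaAR}, which the paper cites rather than re-derives), and Proposition \ref{prop:NAnorm}.(2) driving the normal successor steps. The only cosmetic difference is at finite-index successor steps, where the paper invokes Proposition \ref{prop:findfr} (using that $\mbox{AR}_{L_{\alpha+1}}=\{e\}$ makes $L_{\alpha+1}$ ICC) while you pass to the normal core and reuse Proposition \ref{prop:NAnorm}.(2); both work.
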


\begin{proof}
Let $\{ L_\alpha \} _{\alpha \leq \lambda}$ be an almost ascendant series for $L$ in $\Gamma$. Since $\mbox{AR}_\Gamma = \{ e \}$, Corollary \ref{cor:aaAR} implies that $\mbox{AR}_{L_\alpha }= \{ e \}$ for each $\alpha \leq \lambda$. Suppose for induction that we have shown that $\bm{a}\resto L_\alpha$ is free for all $\alpha <\beta$. If $\beta$ is a limit then $L_\beta = \bigcup _{\alpha <\beta} L_\alpha$ so $\bm{a}\resto L_\beta$ is free as well. If $\beta =\alpha +1$ is a successor then $\bm{a}\resto L_\alpha$ is free and $L_\alpha$ is either finite index or normal in $L_\beta$. If $L_\alpha$ is finite index in $L_\beta$ then $\bm{a}\resto L_\beta$ is free by Proposition \ref{prop:findfr}. If $L_\alpha$ is normal in $L_\beta$ then $\bm{a}\resto L_\beta$ is free by Proposition \ref{prop:NAnorm}.(2). It follows by induction that $\bm{a}\resto \Gamma$ is free.
\end{proof}

\begin{corollary}\label{cor:NAcor}$\ $
\begin{enumerate}
\item Let $\Gamma$ be a countable group with $\mbox{\emph{AR}}_\Gamma = \{ e\}$. If $\Gamma$ contains a shift-minimal almost ascendant subgroup $L$ then $\Gamma$ is itself shift-minimal.
\item Suppose that $\Gamma$ is a countable group containing an ascendant subgroup $L$ such that $L$ is shift-minimal and $\mbox{\emph{AR}}_{C_\Gamma (L)}=\{ e \}$. Then $\Gamma$ is shift-minimal. In particular, if both $L$ and $C_\Gamma (L)$ are shift-minimal then so is $\Gamma$.
\end{enumerate}
\end{corollary}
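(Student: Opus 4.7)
The plan is to reduce both parts to the earlier results Corollary~\ref{cor:NAalmostasc} and Proposition~\ref{prop:ICCalmostasc}.(2), respectively, after a common setup. In each part I may assume $L$ is non-trivial, since otherwise the hypothesis conveys no information about $\Gamma$. Shift-minimality of $L$ gives $\mbox{AR}_L = \{e\}$ by Proposition~\ref{prop:noamensub}, so in particular $L$ is non-amenable. Fix any non-trivial $\bm{a} = \Gamma \cc^a (X,\mu)$ with $\bm{a} \prec \bm{s}_\Gamma$. Example~\ref{ex:NAerg} combined with Proposition~\ref{prop:NAerg} says that $\bm{a}$ is (strongly) NA-ergodic. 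The restriction $\bm{a}\resto L$ is non-trivial and weakly contained in $\bm{s}_\Gamma \resto L \cong \bm{s}_L$ (the identification from Example~\ref{ex:NAerg}), so shift-minimality of $L$ forces $\bm{a}\resto L$ to be free.

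For part (1), the hypothesis $\mbox{AR}_\Gamma = \{e\}$ is directly given, so Corollary~\ref{cor:NAalmostasc}, applied to the NA-ergodic action $\bm{a}$ and the non-trivial almost ascendant subgroup $L$ with $\bm{a}\resto L$ free, immediately yields that $\bm{a}$ itself is free. Thus every non-trivial $\bm{a}\prec \bm{s}_\Gamma$ is free, i.e., $\Gamma$ is shift-minimal.

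For part (2), I plan to first promote freeness from $L$ to $C_\Gamma(L)$, after which Proposition~\ref{prop:ICCalmostasc}.(2) (whose hypotheses $\mbox{AR}_L = \mbox{AR}_{C_\Gamma(L)} = \{e\}$ are given) delivers freeness of $\bm{a}$. For the promotion, fix $\gamma \in C_\Gamma(L)\setminus \{e\}$. Because $\gamma$ centralizes $L$, the set $\mbox{Fix}^a(\gamma)$ is $L$-invariant; NA-ergodicity of $\bm{a}$ together with non-amenability of $L$ therefore forces $\mu(\mbox{Fix}^a(\gamma)) \in \{0,1\}$, so either $\mu(\mbox{Fix}^a(\gamma)) = 0$ or $\gamma \in \mbox{ker}(\bm{a})$. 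Now $\mbox{ker}(\bm{a})$ sits inside $\Gamma_x$ for $\mu$-almost every $x$, and by Lemma~\ref{lem:am}.(ii) these stabilizers are amenable a.s., so $\mbox{ker}(\bm{a})$ is amenable; as it is normal in $\Gamma$, its intersection with $C_\Gamma(L)$ is a normal amenable subgroup of $C_\Gamma(L)$, and hence trivial by the hypothesis $\mbox{AR}_{C_\Gamma(L)}=\{e\}$. Consequently $\mu(\mbox{Fix}^a(\gamma))=0$ for every $\gamma \in C_\Gamma(L)\setminus \{e\}$, and $\bm{a}\resto C_\Gamma(L)$ is free. Proposition~\ref{prop:ICCalmostasc}.(2) then concludes that $\bm{a}$ is free, proving shift-minimality of $\Gamma$. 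The ``in particular'' clause is immediate: if $C_\Gamma(L)$ is itself shift-minimal, then $\mbox{AR}_{C_\Gamma(L)}=\{e\}$ by Proposition~\ref{prop:noamensub}.

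The only step that is not a direct citation is the centralizer bootstrap in part (2), which combines NA-ergodicity, amenability of the kernel (via Lemma~\ref{lem:am}.(ii)), and the triviality of $\mbox{AR}_{C_\Gamma(L)}$ to promote freeness of $\bm{a}\resto L$ to freeness of $\bm{a}\resto C_\Gamma(L)$.
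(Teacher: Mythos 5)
Your proof of part (1) is correct and is exactly the paper's argument: restrict to $L$, use shift-minimality of $L$ to get freeness of $\bm{a}\resto L$, and invoke Corollary~\ref{cor:NAalmostasc}. (Both you and the paper implicitly need $L$ non-trivial here, since Corollary~\ref{cor:NAalmostasc} requires it; your explicit reduction to that case is the right reading of the statement.) For part (2) you take a genuinely different route. The paper disposes of (2) in one line by citing Theorem~\ref{thm:ascsubgrp} from Appendix~B, which says that for ascendant $L$ the conditions $\mbox{AR}_L=\mbox{AR}_{C_\Gamma(L)}=\{e\}$ force $\mbox{AR}_\Gamma=\{e\}$, so that (2) becomes a special case of (1). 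You instead keep the action in view throughout: you prove directly that $\bm{a}\resto C_\Gamma(L)$ is free via the centralizer bootstrap --- $\mbox{Fix}^a(\gamma)$ is $L$-invariant for $\gamma\in C_\Gamma(L)$, NA-ergodicity plus non-amenability of $L$ makes it null or conull, and the conull case is excluded because $\mbox{ker}(\bm{a})$ is amenable (Lemma~\ref{lem:am}) and meets $C_\Gamma(L)$ in a normal amenable subgroup, hence trivially --- and then you apply Proposition~\ref{prop:ICCalmostasc}.(2). Your bootstrap is correct and is a nice self-contained piece of ergodic reasoning that sidesteps any top-level appeal to the amenable-radical computation of Theorem~\ref{thm:ascsubgrp}; the trade-off is that your route is longer, and the appendix machinery is not fully avoided anyway, since the proof of Proposition~\ref{prop:ICCalmostasc}.(2) that you cite itself relies on Theorem~\ref{thm:ascsubgrp}. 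The paper's reduction is more economical; yours makes explicit \emph{why} freeness propagates to the centralizer, which is the content that Theorem~\ref{thm:ascsubgrp} packages at the level of groups rather than actions.
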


\begin{proof}
Starting with (1), let $L$ be a shift-minimal almost ascendant subgroup of $\Gamma$. Let $\bm{a}$ be a non-trivial measure preserving action of $\Gamma$ weakly contained in $\bm{s}_\Gamma$. Then $\bm{a}$ is NA-ergodic and $\bm{a}\resto L$ is free, so $\bm{a}$ is free by Corollary \ref{cor:NAalmostasc}. Statement (2) is a special case of (1) since Theorem \ref{thm:ascsubgrp} shows that $\mbox{AR}_\Gamma = \{ e \}$. 
\end{proof}

\section{Examples of shift-minimal groups} \label{sec:examples}

Theorem \ref{thm:UTsm} below shows that if the reduced $C^*$-algebra of a countable group $\Gamma$ admits a unique tracial state then $\Gamma$ is shift-minimal. We can also often gain more specific information by giving direct ergodic theoretic proofs of shift-minimality. These proofs often rely on an appeal to some form of the Poincar\'{e} recurrence theorem (several proofs of which may be found in \cite{Ber96}).

\subsection{Free groups}

Since the argument is quite short it seems helpful to present a direct argument that free groups are shift-minimal.

\begin{theorem}\label{thm:freegroup}
Let $\Gamma$ be a non-abelian free group.
\begin{enumerate}
\item[(i)] If $\bm{a}=\Gamma \cc ^a (X,\mu )$ is any non-trivial measure preserving action of $\Gamma$ which is NA-ergodic then $\bm{a}$ is free.
\item[(ii)] $\Gamma$ is shift-minimal.
\end{enumerate}
\end{theorem}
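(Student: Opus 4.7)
The plan is to apply Theorem \ref{thm:NAerg} to part (i), and then derive part (ii) as a straightforward consequence using the fact that the Bernoulli shift is strongly NA-ergodic.

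For part (i), let $\bm{a}$ be a non-trivial NA-ergodic action of the non-abelian free group $\Gamma$. First I would record two group-theoretic facts about $\Gamma$ that will do all the work. Fact one: the amenable radical of $\Gamma$ is trivial, since any non-trivial normal subgroup of a non-abelian free group contains a non-abelian free subgroup (e.g.\ by looking at the derived subgroup, or using Nielsen--Schreier) and so is non-amenable. Fact two: every subgroup of $\Gamma$ is free (Nielsen--Schreier), and a free group is amenable if and only if it has rank at most one, so every amenable subgroup of $\Gamma$ is cyclic, hence in particular finitely generated.

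Now Theorem \ref{thm:NAerg} gives that the stabilizers are almost surely amenable and that at least one of its two alternatives holds. Under alternative (1) there is a normal amenable subgroup $N\triangleleft \Gamma$ containing almost every stabilizer; by fact one above $N = \{e\}$, so $\bm{a}$ is free. Under alternative (2) the IRS $\theta _{\bm{a}}^{\infty}$ is amenable and concentrates on infinitely generated subgroups of $\Gamma$; but fact two above rules this out, since no subgroup of $\Gamma$ is both amenable and infinitely generated. Hence only alternative (1) can occur, and $\bm{a}$ is free.

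For part (ii), suppose $\bm{a}\prec \bm{s}_\Gamma$ is non-trivial. By Example \ref{ex:NAerg} the Bernoulli shift $\bm{s}_\Gamma$ is strongly NA-ergodic, so Proposition \ref{prop:NAerg} shows $\bm{a}$ is strongly NA-ergodic, and in particular NA-ergodic. Then (i) applies to give that $\bm{a}$ is free, which is precisely shift-minimality of $\Gamma$.

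I don't expect any serious obstacle: the real content is imported from Theorem \ref{thm:NAerg}, and the only thing that must be checked by hand is that the two alternatives of that theorem both collapse for free groups, which is immediate from the structure of subgroups of a free group. If one wanted a genuinely self-contained argument avoiding Theorem \ref{thm:NAerg}, the main issue would be ruling out non-trivial amenable IRSs directly, which would require reproving (a special case of) Lemma \ref{lem:am} together with the normalizer-tower analysis preceding Theorem \ref{thm:NAerg}; this is what the theorem is doing for us.
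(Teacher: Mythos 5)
Your proof is correct, but it takes a different route from the one the paper presents as its primary argument --- and, as it happens, your route is precisely the alternative proof that the author sketches in the remark immediately following Theorem \ref{thm:freegroup}. The paper's own proof of (i) is a short, self-contained recurrence argument: assuming $\mu (\mbox{Fix}^a(\gamma ))>0$ for some $\gamma \neq e$, one picks $\delta \notin \langle \gamma \rangle$, applies Poincar\'{e} recurrence to find $n>0$ with $\mu (\delta ^n\cdot \mbox{Fix}^a(\gamma )\cap \mbox{Fix}^a(\gamma ))>0$, and observes that $\gamma$ and $\delta ^n\gamma\delta ^{-n}$ generate a free (hence non-amenable) subgroup fixing a set of positive measure, so the action cannot be NA-ergodic. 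That argument needs none of the machinery behind Theorem \ref{thm:NAerg} (the normalizer tower, weak${}^*$ limits of IRSs), and it produces an explicit non-amenable subgroup witnessing failure of ergodicity. Your argument instead imports Theorem \ref{thm:NAerg} wholesale and collapses its two alternatives using the structure of subgroups of a free group: alternative (1) dies because $\mbox{AR}_\Gamma$ is trivial, and alternative (2) dies because every amenable subgroup of a free group is cyclic, hence finitely generated (the paper's remark phrases this as the free group having only countably many amenable subgroups, so a non-atomic measure cannot concentrate on them --- either observation suffices). Both are valid; yours is shorter modulo the big theorem and generalizes to any group with trivial amenable radical and no infinitely generated amenable subgroups, while the paper's is elementary and self-contained. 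One small correction to your closing paragraph: a self-contained argument does not require reproving Lemma \ref{lem:am} or the normalizer-tower analysis --- the recurrence-plus-ping-pong argument above does the job directly. Part (ii) you handle exactly as the paper does, via Example \ref{ex:NAerg} and Proposition \ref{prop:NAerg}.
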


\begin{proof}
For (i) we show that non-free actions of $\Gamma$ are never NA-ergodic. Suppose that $\bm{a}$ is non-free so that $\mu (\mbox{Fix}^a (\gamma )) > 0$ for some $\gamma \in \Gamma - \{ e \}$. Fix any $\delta \in \Gamma -\langle \gamma \rangle$. By the Poincar\'{e} recurrence theorem there exists an $n>0$ with $\mu (\delta ^n\cdot \mbox{Fix}^a(\gamma ) \cap \mbox{Fix}^a(\gamma ) ) >0$. The group $H$ generated by $\delta ^n \gamma \delta ^{-n}$ and $\gamma$ is free on these elements and $\alpha ^a \cdot x= x$ for every $\alpha \in H$ and $x\in \delta ^n \cdot \mbox{Fix}^a(\gamma ) \cap \mbox{Fix}^a(\gamma )$. In particular $\bm{a}\resto H$ is not ergodic, whence $\bm{a}$ cannot be NA-ergodic.

Statement (ii) now follows since any non-trivial action weakly contained in $\bm{s}_\Gamma$ is strongly NA-ergodic, hence free by (i).
\end{proof}

Another proof of part (i) of Theorem \ref{thm:freegroup} follows from Theorem \ref{thm:NAerg} (see also \cite[Lemma 24]{AGV12}). Indeed, alternative (2) of Theorem \ref{thm:NAerg} can never hold since a non-abelian free group has only countably many amenable subgroups. So if $\bm{a}$ is any non-trivial NA-ergodic action of a non-abelian free group $\Gamma$ then (1) of Theorem \ref{thm:NAerg} holds, and so $\bm{a}$ is free since the only normal amenable subgroup of $\Gamma$ is the trivial group $N = \{ e \}$.

\subsection{Property (BP)}

\begin{definition}
Let $\Gamma$ be a countable group.
\begin{enumerate}
\item $\Gamma$ is said to be a \emph{Powers group} (\cite{Ha85}) if $\Gamma \neq \{ e \}$ and for every finite subset $F\subseteq \Gamma \setminus \{ e \}$ and every integer $N>0$ there exists a partition $\Gamma = C\sqcup D$ and elements $\alpha _1,\dots ,\alpha _N\in \Gamma$ such that
\begin{align*}
\gamma C\cap C &=\emptyset \ \mbox{ for all }\gamma \in F \\
\alpha _jD\cap \alpha _kD &= \emptyset \ \mbox{ for all }j,k \in \{ 1,\dots ,N \} , \ j\neq k .
\end{align*}
$\Gamma$ is said to be a \emph{weak Powers group} (\cite{BN88}) if $\Gamma$ satisfies all instances of the Powers property with $F$ ranging over finite subsets of mutually conjugate elements of $\Gamma \setminus \{ e \}$. We define $\Gamma$ to be a \emph{weak${}^*$ Powers group} if $\Gamma$ satisfies all instances of the Powers property with $F$ ranging over singletons in $\Gamma \setminus \{ e \}$.

\item $\Gamma$ has \emph{property $\mbox{P}_{\mbox{\tiny{nai}}}$} (\cite{BCH94}) if for any finite subset $F$ of $\Gamma$ there exists an element $\alpha \in \Gamma$ of infinite order such that for each $\gamma \in F$, the canonical homomorphism from the free product $\langle \gamma \rangle \ast \langle \alpha \rangle$ onto the subgroup $\langle \gamma ,\alpha \rangle$ of $\Gamma$ generated by $\gamma$ and $\alpha$ is an isomorphism.

    If $\Gamma$ satisfies the defining property of $\mbox{P}_{\mbox{\tiny{nai}}}$ but with $F$ only ranging over singletons, then we say that $\Gamma$ has \emph{property $\mbox{P}_{\mbox{\tiny{nai}}}^*$}.

\item $\Gamma$ is said to have property (PH) (\cite{Pro93}) if for all nonempty finite $F\subseteq \Gamma \setminus \{ e \}$ there exists some ordering $F= \{ \gamma _1,\dots ,\gamma _m \}$ of $F$ along with an increasing sequence $e\in Q_1\subseteq \cdots \subseteq Q_m$ of subsets of $\Gamma$ such that for all $i\leq m$, all nonempty finite $M\subseteq Q_i$ and all $n> 0$ we may find $\alpha _1,\dots ,\alpha _n \in Q_i$ and $T_1,\dots ,T_n$ pairwise disjoint such that
\[
(\alpha _j  \delta )\gamma _i (\alpha _j \delta ) ^{-1} (\Gamma \setminus T_j) \subseteq T_j
\]
for all $\delta \in M$ and $1\leq j\leq n$.
\end{enumerate}
\end{definition}

Examples of groups with these properties may be found in \cite{AM07, HP11, MOY11, PT11} along with the references given in the above definitions. For our purposes, what is important is a common consequence of these properties.

\begin{definition}
A countable group $\Gamma$ is said to have property (BP) if for all $\gamma \in \Gamma \setminus \{ e \}$ and $n\geq 2$ there exists $\alpha _1,\dots ,\alpha _n\in \Gamma$, a subgroup $H\leq \Gamma$, and pairwise disjoint subsets $T_1,\dots ,T_n\subseteq H$ such that
\[
\alpha _j \gamma \alpha _j ^{-1} (H\setminus T_j ) \subseteq T_j
\]
for all $j=1,\dots ,n$.
\end{definition}
%
Note that when $\gamma$, $H$, $\alpha _1,\dots , \alpha _n$, and $T_1,\dots ,T_n$ are as above, then $\alpha _j\gamma \alpha _j ^{-1} \in H$ and $T_j\neq \emptyset$ for all $j\leq n$.

We show in Theorem \ref{thm:BPirs} that groups with property (BP) satisfy a strong form of shift-minimality. The definition of property (BP) (as well as its name) is motivated by an argument of M. Brin and G. Picioroaga showing that all weak Powers groups contain a free group. Their proof appears in \cite{Ha07} (see the remark following Question 15 in that paper), though we also present a version of their proof in Theorem \ref{thm:BP} since we will need it for Theorem \ref{thm:BPirs}. 

\begin{theorem}[Brin, Picioroaga \cite{Ha07}]
\label{thm:BP}
$\ $
\begin{enumerate}
\item[(1)] All weak${}^*$ Powers groups have property \emph{(BP)}.
\item[(2)] Property $\mbox{\emph{P}}_{\mbox{\tiny{\emph{nai}}}}^*$ implies property \emph{(BP)}.
\item[(3)] Property \emph{(PH)} implies property \emph{(BP)}.
\item[(4)] Groups with property \emph{(BP)} contain a free group.
\end{enumerate}
\end{theorem}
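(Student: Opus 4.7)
My plan is to establish (1)--(3) by directly producing the ping-pong data required by property (BP), and to prove (4) by applying the classical ping-pong lemma to carefully chosen words in the conjugates $\beta_j := \alpha_j \gamma \alpha_j^{-1}$.

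For (1), take $H = \Gamma$ and apply the weak${}^*$ Powers property to $F = \{\gamma\}$ with the prescribed $n$, obtaining a partition $\Gamma = C \sqcup D$ and elements $\alpha_1,\dots,\alpha_n$ with $\gamma C \cap C = \emptyset$ (so $\gamma C \subseteq D$) and $\alpha_j D$ pairwise disjoint. Setting $T_j := \alpha_j D$ one has $\Gamma \setminus T_j = \alpha_j C$, and
\[
\alpha_j \gamma \alpha_j^{-1}(\Gamma \setminus T_j) = \alpha_j \gamma C \subseteq \alpha_j D = T_j,
\]
which is (BP). For (2), apply property $\mbox{P}^*_{\mbox{\tiny{nai}}}$ to obtain an infinite-order $\alpha$ with $\langle \gamma,\alpha\rangle\cong \langle \gamma\rangle \ast \langle\alpha\rangle$; set $H := \langle \gamma,\alpha\rangle$, let $D\subseteq H$ consist of the reduced words that are either trivial or begin with a nonzero power of $\gamma$, and let $C := H \setminus D$. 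A direct inspection of the free-product normal form gives $\gamma C \subseteq D$ and pairwise disjointness of the sets $\alpha^j D$ for distinct $j$, so taking $\alpha_j := \alpha^j$ and $T_j := \alpha^j D$ works just as in (1). For (3), apply property (PH) with $F = \{\gamma\}$, $i = 1$, and $M = \{e\}$ (which is allowed since $e \in Q_1$ by definition): this directly provides $\alpha_1,\dots,\alpha_n \in Q_1$ and pairwise disjoint $T_1,\dots,T_n$ satisfying (BP) with $H := \Gamma$.

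For (4), fix $\gamma \in \Gamma \setminus \{e\}$ and apply (BP) with $n = 4$, obtaining $H \leq \Gamma$, elements $\alpha_1,\dots,\alpha_4$, and pairwise disjoint $T_1,T_2,T_3,T_4\subseteq H$. Write $\beta_j := \alpha_j \gamma \alpha_j^{-1}$. Since the four $T_i$ are disjoint, each is a proper nonempty subset of $H$; the inclusion $\beta_j(H\setminus T_j)\subseteq T_j\subseteq H$ applied to any $h \in H \setminus T_j$ then forces $\beta_j \in H$. Moreover, if $y\in H\setminus T_j$ satisfied $\beta_j^{-1}y\notin T_j$, then $y = \beta_j(\beta_j^{-1}y)\in \beta_j(H \setminus T_j) \subseteq T_j$, a contradiction, so $\beta_j^{-1}(H \setminus T_j) \subseteq T_j$ as well. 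Now set
\[
g_1 := \beta_1 \beta_2^{-1}, \quad g_2 := \beta_3 \beta_4^{-1}, \quad A_1 := T_1 \cup T_2, \quad A_2 := T_3 \cup T_4 .
\]
Using disjointness of the $T_i$ together with the two-sided inclusions for each $\beta_j$, direct computation yields $g_1(T_1)\subseteq T_1$, $g_1^{-1}(T_2)\subseteq T_2$, and $g_1^{\pm 1}(A_2)\subseteq A_1$; an immediate induction then gives $g_1^k(A_2)\subseteq A_1$ for every nonzero integer $k$. The symmetric statements hold for $g_2$ with the roles of $A_1,A_2$ reversed. Since $A_1$ and $A_2$ are disjoint and nonempty, the classical ping-pong lemma implies that $g_1$ and $g_2$ freely generate a rank-$2$ free subgroup of $\Gamma$.

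The main obstacle lies in step (4): the (BP) hypothesis is one-sided, constraining only the action of $\beta_j$ on $H \setminus T_j$, whereas standard ping-pong requires two-sided contraction of the generators onto disjoint attractors. The key observation that unlocks the argument is that $\beta_j^{-1}$ automatically inherits the same inclusion; this promotes the ``commutator-like'' products $g_i = \beta_{2i-1}\beta_{2i}^{-1}$ to genuine north-south dynamics on $A_1$ and $A_2$, after which ping-pong concludes. By contrast, (1)--(3) are essentially unwindings of their hypotheses once the correct partition of $H$ is chosen; the only subtlety is the use of the free-product normal form in (2) to verify the disjointness of the translates $\alpha^j D$.
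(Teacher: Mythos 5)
Your proof is correct and follows essentially the same route as the paper: parts (1) and (3) are the paper's arguments verbatim, part (2) is the paper's free-product normal-form argument repackaged via the partition $C\sqcup D$ of $H$, and part (4) is precisely the Brin--Picioroaga ping-pong lemma, with the generators $\beta_1\beta_2^{-1},\ \beta_3\beta_4^{-1}$ in place of the paper's $x_1x_2,\ x_3x_4$ --- an immaterial change. One small wording fix: nonemptiness of each $T_j$ follows from the inclusion $\beta_j(H\setminus T_j)\subseteq T_j$ (an empty $T_j$ would force $H=\emptyset$), not from the disjointness of the $T_i$ alone.
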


\begin{proof}
(1): given $\gamma \in \Gamma\setminus \{ e \}$ and $n\geq 1$ by the weak${}^*$ Powers property there exists $\alpha _1 ,\dots , \alpha _n$ and a partition $\Gamma = C\sqcup D$ of $\Gamma$ with $\gamma C\cap C = \emptyset$ and $\alpha _iD\cap \alpha _j D = \emptyset$ for all $1\leq i,j \leq n$, $i\neq j$. Take $H=\Gamma$ and for each $1\leq j \leq n$ let $T_j = \alpha _j D$ so that the sets $T_1,\dots ,T_n$ are pairwise disjoint and
\[
\alpha _j\gamma \alpha _j ^{-1} (\Gamma \setminus T_j ) = \alpha _j \gamma (\Gamma \setminus D) = \alpha _j \gamma C \subseteq \alpha _j(\Gamma \setminus C) = \alpha _j D = T_j
\]
thus verifying (BP).

(2): Let $\gamma \in \Gamma\setminus \{ e \}$. By property $\mbox{P}_{\mbox{\tiny{nai}}}^*$ there exists an element $\alpha\in \Gamma$ of infinite order such that the subgroup $H = \langle \gamma , \alpha \rangle$ of $\Gamma$ is canonically isomorphic to the free product $\langle \gamma \rangle \ast \langle \alpha \rangle$. Let $T_n$ denote the set of elements of $H$ whose reduced expression starts with $\alpha ^n\gamma ^k$ for some $k\in \Z$ with $\gamma ^k\neq e$. Then the sets $T_n$, $n\in \N$, are pairwise disjoint and $\alpha ^n\gamma \alpha ^{-n}(H\setminus T_n) \subseteq T_n$.

(3): 
Assume that $\Gamma$ has property (PH) and fix any $\gamma \in \Gamma \setminus \{ e \}$ and $n\geq 1$ toward the aim of verifying property (BP). Taking $F = \{ \gamma \}$ we obtain a set $Q = Q_1 \subseteq \Gamma$ from the above definition of (PH) with $e\in Q$. Taking $M = \{ e \}$, the defining property of $Q$ produces $\alpha _1,\dots ,\alpha _n \in Q$ and pairwise disjoint $T_1,\dots ,T_n\subseteq \Gamma$ with
\[
\alpha _j \gamma \alpha _j ^{-1} (\Gamma \setminus T_j)\subseteq T_j ,
\]
so taking $H=\Gamma$ confirms this instance of property (BP).

Statement (4) is a consequence of the following Lemma, which will be used in Theorem \ref{thm:BPirs} below.

\begin{lemma}[Brin, Picioroaga]\label{lem:BPfgrp}
Suppose that $x_1,\dots x_4$ are elements of a group $H$ and that $T_1,\dots ,T_4$ are pairwise disjoint subsets of $H$ such that
\[
x_j (H\setminus T_j ) \subseteq T_j
\]
for each $j \in \{ 1,\dots ,4 \}$. Then the group elements $u=x_1x_2$ and $v=x_3x_4$ freely generate a non-abelian free subgroup of $H$.
\end{lemma}

\begin{proof}[Proof of Lemma \ref{lem:BPfgrp}]
The hypothesis $x_j (H\setminus T_j ) \subseteq T_j$ implies that also $x_j ^{-1}(H \setminus T_j ) \subseteq T_j$. 
For distinct $i,j \in \{ 1,\dots ,4\}$ it then follows that
\begin{align*}
x_ix_j (H\setminus T_j) &\subseteq x_i T_j \subseteq x_i(H\setminus T_i) \subseteq T_i \\
\mbox{ and } \
(x_ix_j)^{-1}(H\setminus T_i) &\subseteq x_j^{-1}T_i \subseteq x_j^{-1}(H\setminus T_j) \subseteq T_j
\end{align*}
so for $u=x_1x_2$ and $v=x_3x_4$ we have
\[
\begin{array}{cc}
u(H\setminus T_2) \subseteq T_1 & u^{-1}(H\setminus T_1) \subseteq T_2 \\
v(H\setminus T_4) \subseteq T_3 & v^{-1}(H\setminus T_3) \subseteq T_4 .
\end{array}
\]
A ping pong argument now shows that $u$ and $v$ freely generate a non-abelian free subgroup of $H$.
\qedhere[Lemma \ref{lem:BPfgrp}]
\end{proof}

If now $\Gamma$ has property (BP) then taking any $\gamma \in \Gamma \setminus \{ e \}$ and $n=4$ we obtain $\alpha _1,\dots ,\alpha _4\in \Gamma$, $H\leq \Gamma$ and $T_1,\dots ,T_4 \subseteq H$ as in the definition of property (BP). Lemma \ref{lem:BPfgrp} now applies with $x_j = \alpha _j \gamma \alpha _j ^{-1}$ for $j\in \{ 1,\dots ,4 \}$.
\qedhere[Theorem \ref{thm:BP}]
\end{proof}

Lemma \ref{lem:BPfgrp} can be used to show that any non-trivial ergodic invariant random subgroup of a group with property (BP) contains a free group. 

\begin{theorem}\label{thm:BPirs}
Let $\Gamma$ have property \emph{(BP)} and let $\bm{a}= \Gamma \cc ^a (Y,\nu )$ be an ergodic measure preserving action of $\Gamma$. Suppose that $\bm{a}$ is non-free. Then the stabilizer of $\nu$-almost every $y\in Y$ contains a non-abelian free group. In particular, all groups with property \emph{(BP)} are shift-minimal.
\end{theorem}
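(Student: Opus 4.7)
My plan is: assume $\bm{a}$ is non-free, use property (BP) plus a measure-theoretic pigeonhole step to find a positive-measure subset of $Y$ on which four specific conjugates of a common non-trivial element all fix the point, then invoke Lemma \ref{lem:BPfgrp} to package those conjugates as generators of a non-abelian free subgroup of the stabilizer; ergodicity then upgrades this to almost every point. The shift-minimality clause will follow by a short contradiction with Lemma \ref{lem:am}(ii).

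First I pick $\gamma \in \Gamma \setminus \{e\}$ with $c := \nu(\mbox{Fix}^a(\gamma)) > 0$, which exists since $\bm{a}$ is non-free. Next I apply property (BP) to this $\gamma$ with parameter $n > 3/c$, obtaining $\alpha_1, \dots, \alpha_n \in \Gamma$, a subgroup $H \leq \Gamma$, and pairwise disjoint $T_1, \dots, T_n \subseteq H$ such that $x_j(H \setminus T_j) \subseteq T_j$, where $x_j := \alpha_j \gamma \alpha_j^{-1}$. Writing $F_j := \mbox{Fix}^a(x_j) = \alpha_j^a \cdot \mbox{Fix}^a(\gamma)$, measure preservation gives $\nu(F_j) = c$ for every $j$, so
\[
\int_Y \sum_{j=1}^n \mathbf{1}_{F_j} \, d\nu \;=\; nc \;>\; 3.
\]
Hence the set where the integrand is $\geq 4$ has positive measure. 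Since this set is the finite union, over four-element subsets $S \subseteq \{1, \dots, n\}$, of the intersections $\bigcap_{j \in S} F_j$, some specific $S = \{j_1, j_2, j_3, j_4\}$ satisfies $\nu(F_{j_1} \cap F_{j_2} \cap F_{j_3} \cap F_{j_4}) > 0$. Lemma \ref{lem:BPfgrp} applied to $x_{j_1}, \dots, x_{j_4}$ and $T_{j_1}, \dots, T_{j_4}$ will then produce a non-abelian free subgroup $F := \langle x_{j_1} x_{j_2}, \, x_{j_3} x_{j_4}\rangle \leq \Gamma$, and every $y$ in the four-fold intersection has all four $x_{j_k}$ in $\Gamma_y$, hence $F \leq \Gamma_y$. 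The Borel set $B := \{y : \Gamma_y \text{ contains a non-abelian free group}\}$ is $\bm{a}$-invariant because $\Gamma_{\delta^a y} = \delta \Gamma_y \delta^{-1}$, so ergodicity of $\bm{a}$ then promotes $\nu(B) > 0$ to $\nu(B) = 1$.

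For the shift-minimality claim, I take a non-trivial $\bm{a} \prec \bm{s}_\Gamma$. Theorem \ref{thm:BP}(4) makes $\Gamma$ non-amenable, while Example \ref{ex:NAerg} combined with Proposition \ref{prop:NAerg} makes $\bm{a}$ strongly NA-ergodic, and in particular ergodic. Were $\bm{a}$ non-free, the first part of the theorem would place a non-abelian free subgroup in almost every stabilizer, contradicting the amenability of almost every stabilizer guaranteed by Lemma \ref{lem:am}(ii); so $\bm{a}$ is free. The hard step will be the transition from (BP)'s purely group-theoretic output---pairwise disjoint $T_j \subseteq H$---to the dynamical conclusion that four of the $F_j$'s meet in positive measure in $Y$; the pigeonhole argument above performs this transition by trading the total mass $nc > 3$ for joint four-fold coverage, which is precisely the input needed by Lemma \ref{lem:BPfgrp}.
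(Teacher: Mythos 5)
Your proposal is correct and follows essentially the same route as the paper: choose $\gamma$ with $\nu(\mbox{Fix}^a(\gamma))>0$, use (BP) to produce the conjugates $x_j$ and disjoint sets $T_j$, find four indices whose fixed-point sets have positive-measure intersection, apply Lemma \ref{lem:BPfgrp}, and finish by ergodicity and Lemma \ref{lem:am}. The only cosmetic difference is that where the paper invokes the Poincar\'{e} recurrence theorem as a black box to get the four-fold intersection, you write out its standard first-moment proof ($\int \sum_j \mathbf{1}_{F_j}\,d\nu = nc > 3$ forces a positive-measure set covered at least four times), which is the same argument made explicit.
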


\begin{proof}
Since $\bm{a}$ is non-free there exists an element $\gamma \in \Gamma\setminus \{ e \}$ such that $\nu (A) =r >0$ where $A = \mbox{Fix}^a(\gamma )$. By the Poincar\'{e} recurrence theorem, for all large enough $n$ (depending on $r$), if $A_1,\dots ,A_n \subseteq Y$ is any sequences of measurable subsets of $Y$ each of measure $r$, then there exist distinct $i_1,\dots ,i_4\leq n$ with $\nu (A_{i_1}\cap A_{i_2}\cap A_{i_3} \cap A_{i_4}) > 0$. Pick such an $n$ with $n\geq 4$. By property (BP) there exists $\alpha _1,\dots ,\alpha _n \in \Gamma$, $H\leq \Gamma$, and pairwise disjoint $T_1,\dots ,T_n \subseteq H$ such that $\alpha _i \gamma \alpha _i ^{-1} (H \setminus T_i ) \subseteq T_i$ for all $i\in \{ 1,\dots ,n \}$. By our choice of $n$ there must exist distinct $i_1,\dots ,i_4 \leq n$ such that
\begin{equation}\label{eqn:4sets}
\nu (\alpha _{i_1} ^aA \cap \alpha _{i_2} ^a A \cap \alpha _{i_3} ^a A \cap \alpha _{i_4}^a A ) >0.
\end{equation}
For $j=1,\dots ,4$ let $x_j = \alpha _{i_j}\gamma \alpha _{i_j}^{-1}$. Lemma \ref{lem:BPfgrp} (applied to $x_1,\dots x_4$ and $T_1,\dots T_4$) shows that $\langle x_1,\dots ,x_4 \rangle$ contains a free group. Additionally, (\ref{eqn:4sets}) shows that $\nu (\mbox{Fix}^a(\langle x_1,\dots ,x_4\rangle ))>0$ since
\[
\mbox{Fix}^a(\langle x_1,\dots ,x_4\rangle ) \supseteq \bigcap _{j=1}^4 \mbox{Fix}^a( x_i ) = \bigcap _{j=1}^4 \alpha _{i_j}^aA .
\]
The event that $\Gamma _y$ contains a free group is therefore non-null. This event is also $\bm{a}$-invariant, so ergodicity now implies that almost every stabilizer contains a free group.

If now $\bm{b}$ is any non-trivial measure preserving action of $\Gamma$ weakly contained in $\bm{s}_\Gamma$ then $\bm{b}$ is ergodic and by Lemma \ref{lem:am} almost every stabilizer is amenable hence does not contain a free group. Then $\bm{b}$ is essentially free by what we have already shown. Therefore $\Gamma$ is shift-minimal.
\end{proof}

In \cite{Be91} B\`{e}dos defines a group $\Gamma$ to be an \emph{ultraweak Powers group} if it has a normal subgroup $N$ that is a weak Powers group such that $C_\Gamma (N)= \{ e \}$. Let us say that $\Gamma$ is an \emph{ultraweak${}^*$ Powers group} if it has a normal subgroup $N$ that is an weak${}^*$ Powers group such that $C_\Gamma (N)= \{ e \}$.

\begin{theorem}
Let $\Gamma$ be a countable group.
\begin{enumerate}
\item Suppose that $\Gamma$ contains an almost ascendant subgroup $L$ with property \emph{(BP)} such that $C_\Gamma (L) = \{ e \}$. Then for every ergodic m.p.\ action $\bm{a}=\Gamma \cc ^a (X,\mu )$ of $\Gamma$, either $\bm{a}$ is free or $\Gamma _x \cap L$ contains a non-abelian free group almost surely.

\item Suppose that $\Gamma$ contains an ascendant subgroup $L$ such that both $L$ and $C_\Gamma (L)$ have property \emph{(BP)}. Then for every ergodic m.p.\ action $\bm{a}=\Gamma \cc ^a (X,\mu )$ of $\Gamma$, either $\bm{a}$ is free, $\Gamma _x \cap L$ contains a non-abelian free group almost surely, or $\Gamma _x \cap C_\Gamma (L)$ contains a non-abelian free group almost surely.

\item Every non-trivial ergodic invariant random subgroup of an ultraweak${}^*$-Powers group contains a non-abelian free group almost surely.
\end{enumerate}
\end{theorem}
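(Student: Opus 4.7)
The plan is to prove (3) as a corollary of (1), prove (1) by transfinite induction on the almost ascendant series with base case Theorem \ref{thm:BPirs}, and prove (2) in parallel using Proposition \ref{prop:ICCalmostasc}(2) in place of (1).

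For (3), let $N \triangleleft \Gamma$ be a weak${}^*$ Powers group with $C_\Gamma(N) = \{e\}$, as provided by the ultraweak${}^*$ Powers hypothesis. By Theorem \ref{thm:BP}(1), $N$ has property (BP); $N$ is also normal (hence almost ascendant), and its centralizer in $\Gamma$ is trivial, so $N$ satisfies the hypotheses of (1). Given a non-trivial ergodic invariant random subgroup $\theta$ of $\Gamma$, use the CP12 realization (cited in \S\ref{subsec:IRS}) to produce an ergodic m.p.\ action $\bm{a}$ with $\theta _{\bm{a}} = \theta$; non-triviality of $\theta$ forces $\bm{a}$ to be non-free, so by (1) $\mu$-a.e.\ $\Gamma _y \cap N$ contains a non-abelian free group, whence so does $\Gamma _y$, which is exactly the conclusion $\theta$-almost surely.

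For (1), the starting observation is that $L$ is shift-minimal (Theorem \ref{thm:BPirs}) hence ICC, and combined with $C_\Gamma(L) = \{e\}$ this places us in the setting of Proposition \ref{prop:ICCalmostasc}(1), so if $\bm{a}$ is non-free then $\bm{a}\resto L$ is non-free. Picking $\gamma _0 \in L\setminus \{e\}$ with $\mu(\mathrm{Fix}^a(\gamma _0))>0$ and running the Brin--Picioroaga recurrence argument of Theorem \ref{thm:BPirs} \emph{inside} $L$ (using (BP) of $L$, Poincar\'{e} recurrence, and Lemma \ref{lem:BPfgrp}) produces a positive-measure set on which $L\cap \Gamma _y$ contains a non-abelian free group. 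I then induct transfinitely on an almost ascendant series $\{L_\alpha\}_{\alpha\leq\lambda}$ for $L$ in $\Gamma$, proving the stronger statement that every ergodic non-free m.p.\ action $\bm{b}$ of $L_\alpha$ has $\mu_{\bm{b}}$-a.e.\ stabilizer $(L_\alpha)_y$ meeting $L$ in a free-group-containing subgroup. The base case is Theorem \ref{thm:BPirs}. At a normal successor $L_\alpha \triangleleft L_{\alpha+1}$, the $L_{\alpha+1}/L_\alpha$-action on the $L_\alpha$-ergodic decomposition is ergodic, producing a 0/1 law on whether $L_\alpha$-components are free: the all-free alternative forces $\bm{b}$ to be free via Proposition \ref{prop:norm} (B\`{e}dos' sufficient condition applies, since $C_{L_{\alpha+1}}(L_\alpha)\subseteq C_\Gamma(L)=\{e\}$ and $L_\alpha$ is ICC, which follows inductively from Corollary \ref{cor:NAcor}(1) applied to the initial segment of the series), and the all-non-free alternative reduces to the induction hypothesis on each component. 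Finite-index successors are handled identically via Proposition \ref{prop:findfr}. At a limit $\beta$, the BP recurrence argument applied inside $L\subseteq L_\beta$ again gives positive measure, and one upgrades to full measure by combining $L_\beta$-ergodicity of $\bm{b}$ with the observation that the complementary event $\{y : L\cap (L_\beta)_y\text{ contains no free group}\}$ is absorbed into the transfinite closure of the non-free-component sets built up through the series.

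For (2), the same skeleton works: Proposition \ref{prop:ICCalmostasc}(2) — applicable because $L$ and $C_\Gamma(L)$ are both shift-minimal with trivial amenable radical by Theorem \ref{thm:BPirs} — converts non-freeness of $\bm{a}$ into non-freeness of at least one of $\bm{a}\resto L$ or $\bm{a}\resto C_\Gamma (L)$; whichever restriction is non-free, apply the transfinite induction of (1) along an ascendant series joining that subgroup to $\Gamma$ to produce the corresponding alternative $\Gamma_x\cap L$ or $\Gamma_x\cap C_\Gamma(L)$ containing a free group a.s. The main obstacle in both (1) and (2) is the limit step of the transfinite induction: the event $E=\{y : L\cap \Gamma_y \text{ contains a free group}\}$ is only $N_\Gamma(L)$-invariant (not $\Gamma$-invariant), so converting the positive-measure conclusion of the BP argument into a full-measure conclusion requires carefully marrying the $L_\alpha$-ergodic decompositions along the series with the fact, extracted via the pointwise content of the Poincar\'{e} recurrence step in Proposition \ref{prop:norm}, that $L$-stabilizers cannot be trivial on a positive-measure set unless $\bm{a}$ is globally free.
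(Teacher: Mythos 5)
Your treatment of (3) and your opening reduction for (1) --- $L$ has (BP), hence is ICC, and $C_\Gamma (L)=\{ e\}$, so Proposition \ref{prop:ICCalmostasc}(1) converts non-freeness of $\bm{a}$ into non-freeness of $\bm{a}\resto L$ --- coincide with the paper's. After that point the arguments diverge, and yours has a genuine gap. The paper's route is short: pass to the ergodic decomposition $\mu =\int _z\mu _z\, d\eta (z)$ of $\bm{a}\resto L$; on an $\eta$-non-null set of components $L$ acts ergodically and non-freely, so Theorem \ref{thm:BPirs}, applied as a black box to $L\cc (X,\mu _z)$, gives $\mu _z(\{ x\csuchthat \Gamma _x\cap L\mbox{ contains a non-abelian free group}\} )=1$ there; hence that event is $\mu$-non-null, and the paper then asserts it is $\Gamma$-invariant (conjugates of free groups are free) and concludes by ergodicity of the $\Gamma$-action $\bm{a}$. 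You replace this last step with a transfinite induction along the almost ascendant series, precisely because you distrust the invariance claim (you note the event is manifestly only $N_\Gamma (L)$-invariant). That is a legitimate thing to worry about, and it is exactly the point the paper disposes of in one line --- but your substitute is not a proof. The limit step is handled by saying the bad set is "absorbed into the transfinite closure of the non-free-component sets," which names no argument; the successor steps tacitly assume $L_{\alpha +1}$ acts on the ergodic decomposition of $\bm{b}\resto L_\alpha$, which is fine when $L_\alpha \triangleleft L_{\alpha +1}$ but requires passing to the normal core when $L_\alpha$ is merely of finite index; and the "stronger statement" you induct on is never pinned down well enough to check at limits. So the upgrade from positive measure to full measure --- the entire content of the theorem beyond Theorem \ref{thm:BPirs} --- is missing as written.

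Two smaller points. In (2) you propose to run the induction "along an ascendant series joining that subgroup to $\Gamma$," but nothing in the hypotheses makes $C_\Gamma (L)$ ascendant in $\Gamma$, so that branch of your argument does not even get started; the paper again just uses the ergodic decomposition of whichever restriction is non-free together with invariance of the resulting event and $\Gamma$-ergodicity of $\bm{a}$. And re-running the Brin--Picioroaga recurrence argument inside $L$ on $(X,\mu )$ to produce the positive-measure set is correct but unnecessary: since $\bm{a}\resto L$ need not be ergodic anyway, the clean move is the one the paper makes, namely to decompose $\bm{a}\resto L$ into ergodic components first and quote Theorem \ref{thm:BPirs} on each non-free component.
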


\begin{proof}
(1) Since $L$ has property (BP) it is ICC, so if $\bm{a}\resto L$ is free then $\bm{a}$ itself is free by part (1) of Proposition \ref{prop:ICCalmostasc}. Suppose then that $\bm{a}\resto L$ is non-free. Let $\pi : (X,\mu )\ra (Z,\eta )$ be the ergodic decomposition map for $\bm{a}\resto L$ and let $\mu = \int _z \mu _z \, d\eta (z)$ be the disintegration of $\mu$ with respect to $\eta$. Since $\bm{a}\resto L$ is non-free then the set $A\subseteq Z$, consisting of of all $z\in Z$ such that $L\cc ^a (X,\mu _z)$ is non-free, is $\eta$-non-null. If $z\in A$ then $\mu _{z} (\{ x\csuchthat L_x \mbox{ contains a non-abelian free group}\} ) = 1$ by Theorem \ref{thm:BPirs}. The event that $L_x$ contains a non-abelian free group is therefore $\mu$-non-null. This event is $\Gamma$-invariant (a subgroup contains a free group if and only if any of its conjugates contains one), so ergodicity implies that $L_x$ contains a free group almost surely. Since $L_x = \Gamma _x \cap L$ we are done.

The proof of (2) is similar, using part (2) of Proposition \ref{prop:ICCalmostasc}. (3) is immediate from (1) and the definitions.
\end{proof}

We note also that (BP) is preserved by extensions.

\begin{proposition}
Let $N$ be a normal subgroup of $\Gamma$. If $N$ and $\Gamma /N$ both have property \emph{(BP)} then $\Gamma$ also has property \emph{(BP)}.
\end{proposition}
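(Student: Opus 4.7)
The plan is to verify (BP) for $\Gamma$ by a case split on whether the given element $\gamma\in\Gamma\setminus\{e\}$ lies in $N$ or not, using (BP) for $N$ in the first case and (BP) for $\Gamma/N$, together with a lifting argument, in the second.

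Fix $\gamma\in\Gamma\setminus\{e\}$ and $n\geq 2$. In the case $\gamma\in N$, apply (BP) in $N$ to $\gamma$ and $n$: this produces $\alpha_1,\dots,\alpha_n\in N$, a subgroup $H\leq N$, and pairwise disjoint $T_1,\dots,T_n\subseteq H$ with $\alpha_j\gamma\alpha_j^{-1}(H\setminus T_j)\subseteq T_j$ for all $j$. Since $H\leq N\leq\Gamma$ and $\alpha_j\in\Gamma$, exactly this data witnesses (BP) for $\Gamma$; nothing more is needed.

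In the case $\gamma\notin N$, let $\pi:\Gamma\to\Gamma/N$ be the quotient map, so that $\pi(\gamma)\neq e$ in $\Gamma/N$. Apply (BP) in $\Gamma/N$ to $\pi(\gamma)$ and $n$ to obtain $\bar\alpha_1,\dots,\bar\alpha_n\in\Gamma/N$, a subgroup $\bar H\leq\Gamma/N$, and pairwise disjoint $\bar T_1,\dots,\bar T_n\subseteq\bar H$ with $\bar\alpha_j\pi(\gamma)\bar\alpha_j^{-1}(\bar H\setminus\bar T_j)\subseteq\bar T_j$. Now lift: pick any $\alpha_j\in\pi^{-1}(\bar\alpha_j)$, and set $H:=\pi^{-1}(\bar H)$ and $T_j:=\pi^{-1}(\bar T_j)$. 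Then $H$ is a subgroup of $\Gamma$ (as the preimage of a subgroup under a homomorphism), $T_1,\dots,T_n$ are pairwise disjoint (being preimages of pairwise disjoint sets), and each $T_j\subseteq H$ since $\bar T_j\subseteq\bar H$.

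It remains to check the containment $\alpha_j\gamma\alpha_j^{-1}(H\setminus T_j)\subseteq T_j$. Given $h\in H\setminus T_j$, one has $\pi(h)\in\bar H\setminus\bar T_j$, so
\[
\pi(\alpha_j\gamma\alpha_j^{-1}h)=\bar\alpha_j\pi(\gamma)\bar\alpha_j^{-1}\pi(h)\in\bar T_j,
\]
which means $\alpha_j\gamma\alpha_j^{-1}h\in\pi^{-1}(\bar T_j)=T_j$, as required. This completes the verification of (BP) for $\Gamma$. The argument is essentially a routine lifting check; no genuine obstacle arises, and in particular one does not need the two instances of (BP) to interact — the sole mildly delicate point is to take $H$ as the \emph{full} preimage $\pi^{-1}(\bar H)$ so that $H$ is a subgroup of $\Gamma$ and the pairwise disjointness of the $T_j$ inside $H$ is inherited from that of the $\bar T_j$ inside $\bar H$.
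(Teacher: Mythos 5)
Your proof is correct and follows essentially the same route as the paper: the case $\gamma\in N$ is handled verbatim by the (BP) data for $N$, and for $\gamma\notin N$ the paper likewise lifts the (BP) data from $\Gamma/N$ by taking coset representatives for the $\bar\alpha_j$, the full preimage of $\bar H$ as the subgroup, and the unions of the cosets in $\bar T_j$ (i.e.\ $\pi^{-1}(\bar T_j)$) as the disjoint sets. Your explicit verification of the containment $\alpha_j\gamma\alpha_j^{-1}(H\setminus T_j)\subseteq T_j$ just spells out what the paper leaves to the reader.
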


\begin{proof}
Let $\gamma \in \Gamma \setminus \{ e \}$ and $n\geq 1$ be given.

If $\gamma \in N$ then property (BP) for $N$ implies that there exists $\alpha _1,\dots ,\alpha _n\in N$, $H\leq N$ and pairwise disjoint $T_1,\dots ,T_n\subseteq H$ as in the definition of (BP) for $N$. These also satisfy this instance of property (BP) for $\Gamma$.

If $\gamma \not \in N$ then the image of $\gamma$ in $\Gamma /N$ is not the identity element so property (BP) for $\Gamma /N$ implies that there exist cosets $\alpha _1 N ,\cdots \alpha _n N \in \Gamma /N$, a subgroup $K\leq \Gamma$ containing $N$, and pairwise disjoint $T_1,\dots ,T_n \subseteq K/N$ as in the definition of (BP) for $\Gamma /N$. Then $\alpha _1,\dots ,\alpha _n$, $K$, and the sets $T_i' =\bigcup T_i$, $i=1,\dots ,M$, verify this instance of property (BP) for $\Gamma$.
\end{proof}

\begin{remark}
If a group $\Gamma$ has property (BP) then it has the unique trace property. A quick proof of this follows \cite{BCH94}. The proof of this is almost exactly as in \cite[Lemma 2.2]{BCH94} with just a minor adjustment to the first part of their proof which we now describe. One first shows for any $\gamma \in \Gamma \setminus \{ e \}$ and any $n\geq 2$, if $\alpha _1,\dots ,\alpha _n$, $H$, and $T_1,\dots ,T_n$ are as in the definition of (BP) then for all $z= (z_1,\dots ,z_n ) \in \C ^n$ we have
\begin{equation}\label{eqn:BCH}
\big|\big| \sum _{j=1}^n z_j \lambda _\Gamma (\alpha _j \gamma \alpha _j ^{-1}) \big|\big| \leq 2||z||_2 .
\end{equation}
Let $x_j=\alpha _j \gamma \alpha _j ^{-1}$ so that $x_j \in H$ and $x_j(H\setminus T_j)\subseteq T_j$ for all $j=1,\dots ,n$. Let $1_A$ denote the indicator function of a subset $A\subseteq H$. For $f,g\in \ell ^2 (H)$ we then have
\begin{align*}
|\langle \lambda _H(x_j)f,g\rangle | &\leq |\langle \lambda _H(x_j)(1_{T_j}f),g\rangle |
        + |\langle \lambda _H(x_j)(1_{H\setminus T_j}f),g\rangle | \\
    &=    |\langle \lambda _H(x_j)(1_{T_j}f),g\rangle |
        + |\langle 1_{x_j(H\setminus T_j)}\lambda _H(x_j)(f),1_{T_j}g\rangle | \leq ||1_{T_j}f||\, ||g|| + ||f|| \, ||1_{T_j}g||.
\end{align*}
The remainder of the proof of (\ref{eqn:BCH}) now proceeds as in \cite[Lemma 2.2]{BCH94} using that the $T_j$ are pairwise disjoint. It now follows as in the paragraph following \cite[Definition 1]{BCH94} that $C^*_r(\Gamma )$ has a unique tracial state.
\end{remark}
\subsection{Linear groups}\label{sec:linear}
In the case that $\Gamma$ is a countable linear group, a theorem of Y. Glasner \cite{Gl12} shows that the existence of a non-trivial normal amenable subgroup is the only obstruction to shift-minimality: Glasner shows that every amenable invariant random subgroup of a linear group $\Gamma$ must concentrate on the subgroups of the amenable radical of $\Gamma$. Along with Proposition \ref{prop:noamensub} this implies that a countable linear group $\Gamma$ is shift-minimal if and only if $\Gamma$ contains no non-trivial normal amenable subgroups. Another way to deduce these results is to use Theorem \ref{thm:UTUIRS} below along with the following Theorem of Poznansky.

\begin{theorem}[Theorem 1.1 of \cite{Poz09}]\label{thm:Poz09}
Let $\Gamma$ be a countable linear group. Then the following are equivalent
\begin{enumerate}
\item[(1)] $\Gamma$ is $C^*$-simple.
\item[(2)] $\Gamma$ has the unique trace property.
\item[(3)] $\Gamma$ contains no non-trivial normal amenable subgroups, i.e., $\mbox{\emph{AR}}_\Gamma = \{ e \}$.
\end{enumerate}
\end{theorem}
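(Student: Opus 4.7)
The implications (1)$\Rightarrow$(3) and (2)$\Rightarrow$(3) are the Paschke--Salinas theorem \cite{PS79} and hold for arbitrary countable groups, so the substance of the statement lies in (3)$\Rightarrow$(1),(2). My plan is to prove the stronger assertion that every countable linear group $\Gamma$ with $\mbox{AR}_\Gamma = \{ e\}$ has property $\mbox{P}_{\mbox{\tiny{nai}}}$. Granting this, $\mbox{P}_{\mbox{\tiny{nai}}}$ implies property (BP) (Theorem \ref{thm:BP}(2)), which by the remark following Lemma \ref{lem:BPfgrp} yields the unique trace property, while a direct Powers-style calculation along the lines of \cite{BCH94} then also yields $C^*$-simplicity.

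Fix a faithful embedding $\Gamma \hookrightarrow \mathrm{GL}_n(k)$ and let $G$ denote the Zariski closure of $\Gamma$. Let $R$ be the solvable radical of the identity component $G^0$; since $R$ is characteristic in $G^0$ and $G^0$ is normal and of finite index in $G$, the subgroup $\Gamma \cap R$ is solvable, hence amenable, and it is normalized by the finite-index subgroup $\Gamma _0 := \Gamma \cap G^0$. A short normal-core computation using $\mbox{AR}_\Gamma = \{ e\}$ forces $\Gamma \cap R = \{ e \}$, so after composing with $G \twoheadrightarrow G/R$ I may assume $G^0$ is semisimple and that $\Gamma _0$ is Zariski dense in $G^0$ (Borel density). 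Given a finite $F \subseteq \Gamma _0\setminus\{e\}$, I would then construct the required $\alpha$ by a Tits-style ping-pong argument: working over an appropriate completion of $k$ at a place where $G^0$ has non-compact points (guaranteed by semisimplicity), the set of proximal elements whose attracting and repelling fixed flags avoid those of every $\gamma \in F$ is a nonempty Zariski-open subset of $G^0$, so Zariski density produces such an $\alpha$ inside $\Gamma _0$. Passing to a sufficiently large power $\alpha ^N$, disjoint ping-pong tables on the flag variety yield $\langle \gamma \rangle \ast \langle \alpha ^N\rangle \cong \langle \gamma , \alpha ^N\rangle$ for each $\gamma \in F$, which is $\mbox{P}_{\mbox{\tiny{nai}}}$ for $\Gamma _0$. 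One then transfers $\mbox{P}_{\mbox{\tiny{nai}}}$ from $\Gamma _0$ to $\Gamma$ by a finite-index argument in the spirit of \S\ref{sec:finind}.

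The main obstacle will be the simultaneous-ping-pong step: finding a single $\alpha$ that plays ping-pong with every element of $F$ at once, and doing so inside $\Gamma$ rather than merely inside $G^0(k)$. This is straightforward when $k$ admits an Archimedean place on which $G^0$ is non-compact, but becomes delicate in positive characteristic or in the totally anisotropic case, where one must work with dynamics on a Bruhat--Tits building and invoke strong approximation to realize proximal elements inside the discrete subgroup $\Gamma _0$. A secondary subtlety is verifying that the eventual $\alpha$ has infinite order (automatic once proximality is established) and that the finite-index passage back to $\Gamma$ preserves $\mbox{P}_{\mbox{\tiny{nai}}}$ without additional hypotheses beyond $\mbox{AR}_\Gamma =\{e\}$.
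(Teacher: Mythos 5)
This statement is not proved in the paper at all: it is quoted verbatim as Theorem 1.1 of Poznansky \cite{Poz09}, so there is no internal argument to compare yours against. Judged on its own terms, your proposal is an outline of the standard strategy rather than a proof, and the gap you flag yourself is exactly where the entire content of Poznansky's theorem lives. Reducing to a Zariski-dense subgroup $\Gamma _0$ of a semisimple group (your normal-core computation via Lemma \ref{lem:anorm} and Proposition \ref{prop:ARbasic}.(2) is fine) is the easy part; the hard part is producing, inside the \emph{discrete} group $\Gamma _0$, a single infinite-order element $\alpha$ that is simultaneously proximal and in general position with respect to every element of a given finite set $F$, uniformly over all characteristics, all local fields, and all (possibly anisotropic) simple factors. "Zariski density produces such an $\alpha$" is not a one-line consequence of the relevant set being Zariski open: Zariski density of $\Gamma _0$ only guarantees that $\Gamma _0$ meets nonempty Zariski-open sets, but you still must verify nonemptiness of that set (which requires choosing places where each factor is isotropic and controlling all factors at once), and you must arrange the quantitative contraction needed for ping-pong, which is a metric, not Zariski, condition. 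This is precisely the delicate part of \cite{Poz09} and of the earlier work of Bekka--Cowling--de la Harpe \cite{BCH94}, and deferring it means the theorem has not been proved.

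A secondary, fixable issue: your final step transfers $\mbox{P}_{\mbox{\tiny{nai}}}$ from $\Gamma _0$ back to $\Gamma$ "in the spirit of \S\ref{sec:finind}," but that section concerns freeness of actions and shift-minimality, not $\mbox{P}_{\mbox{\tiny{nai}}}$; an element $\alpha \in \Gamma _0$ playing ping-pong with $F\subseteq \Gamma _0$ need not do so with elements of $\Gamma \setminus \Gamma _0$, so $\mbox{P}_{\mbox{\tiny{nai}}}$ does not obviously ascend along finite-index inclusions. You should instead ascend at the level of the conclusions: pass to the normal core $N$ of $\Gamma _0$, note that $\mbox{AR}_\Gamma = \{ e\}$ forces $\Gamma$ to be ICC, and invoke the finite-index permanence results for $C^*$-simplicity and the unique trace property from \cite{BH00} (in the style of B\`{e}dos \cite{Be91}). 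With that rerouting the architecture of your argument is sound, but the simultaneous-proximality step must actually be carried out before this constitutes a proof.
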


\begin{corollary}\label{cor:linear}
Let $\Gamma$ be a countable linear group. The properties (1), (2), and (3) of Theorem \ref{thm:Poz09} are equivalent to each of the following properties:
\begin{enumerate}
\item[(4)] $\Gamma$ is shift-minimal.
\item[(5)] $\Gamma$ has no non-trivial amenable invariant random subgroups.
\end{enumerate}
\end{corollary}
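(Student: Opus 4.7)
The plan is to chain together results already established in the paper with the three-way equivalence from Poznansky's theorem, closing a cycle of implications that connects the two new properties (4) and (5) to conditions (1)--(3). Throughout $\Gamma$ denotes a countable linear group.

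First I would dispose of (5)$\Rightarrow$(3): if $N \triangleleft \Gamma$ is a non-trivial normal amenable subgroup then the Dirac measure $\updelta_N \in \mbox{IRS}_\Gamma$ is a non-trivial amenable invariant random subgroup, so (5) fails. Contrapositively (5)$\Rightarrow$(3). Similarly (4)$\Rightarrow$(3) is immediate from Proposition \ref{prop:noamensub}, which states that any shift-minimal group has trivial amenable radical.

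Next I would close the cycle by deriving (4) and (5) from the equivalent conditions (1)--(3). By Theorem \ref{thm:Poz09} (Poznansky), each of (1)--(3) is equivalent to the unique trace property (2). Now Theorem \ref{thm:introUT} of the present paper asserts that the unique trace property implies shift-minimality, giving (2)$\Rightarrow$(4). For (5), the paper also records (stated right before the proof of Theorem \ref{thm:smiff}) that any group with the unique trace property has no non-trivial amenable invariant random subgroups; this gives (2)$\Rightarrow$(5).

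Putting the arrows together, we have the cycle
\[
(2) \Longrightarrow (4) \Longrightarrow (3) \Longrightarrow (2)
\quad\text{and}\quad
(2) \Longrightarrow (5) \Longrightarrow (3) \Longrightarrow (2),
\]
where the return arrow $(3)\Rightarrow (2)$ is Theorem \ref{thm:Poz09}. Combined with the already known equivalence $(1)\Leftrightarrow (2)\Leftrightarrow (3)$, this establishes that all five conditions are equivalent. There is no real obstacle here: the corollary is essentially a bookkeeping consequence of Poznansky's theorem together with the general implications (unique trace $\Rightarrow$ shift-minimal $\Rightarrow$ trivial amenable radical) and (unique trace $\Rightarrow$ no non-trivial amenable IRS $\Rightarrow$ trivial amenable radical) proved earlier in the paper. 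The only subtlety worth flagging is that the implication (2)$\Rightarrow$(5) relies on the stronger form of Theorem \ref{thm:introUT} (concerning all amenable IRSs, not just those weakly contained in $\bm{s}_\Gamma$), which is why the corollary obtains the stronger conclusion (5) rather than just (4).
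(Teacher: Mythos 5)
Your proof is correct and follows essentially the same route as the paper: both reduce the corollary to Poznansky's theorem together with the implications (unique trace $\Rightarrow$ no non-trivial amenable IRS, unique trace $\Rightarrow$ shift-minimal, shift-minimal $\Rightarrow$ trivial amenable radical) established earlier in the paper. The only cosmetic difference is that the paper closes a single cycle $(2)\Rightarrow(5)\Rightarrow(4)\Rightarrow(3)\Rightarrow(2)$, using Corollary \ref{cor:notsm} for the step $(5)\Rightarrow(4)$, whereas you close two cycles through $(3)$ and thereby avoid needing $(5)\Rightarrow(4)$ at all; both arrangements are valid.
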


\begin{proof}
The implication (2)$\Ra$(5) follows from Theorem \ref{thm:UTUIRS}, the implication (5)$\Ra$(4) is Corollary \ref{cor:notsm}, and (4)$\Ra$(3) follows from Proposition \ref{prop:noamensub}. The remaining implications follow from Poznansky's Theorem \ref{thm:Poz09}.
\end{proof}
%
\subsection{Unique tracial state on $C^*_r(\Gamma)$}\label{sec:UT}
We write $C^*_r (\Gamma )$ for the reduced $C^*$-algebra of $\Gamma$. This is the $C^*$-algebra generated by $\{ \lambda _\Gamma (\gamma ) \csuchthat \gamma \in \Gamma \}$ in $\Es{B}(\ell ^2 (\Gamma ))$, where $\lambda _\Gamma$ denotes the left regular representation of $\Gamma$. Let $1 _e \in \ell ^2 (\Gamma )$ denote the indicator function of $\{ e \}$. We obtain a tracial state $\tau _\Gamma$, called the \emph{canonical trace} on $C^*_r(\Gamma )$, given by $\tau _\Gamma (a) = \langle a(1_e),1_e\rangle$.

Let $\rho$ be a probability measure on $\mbox{Sub}_\Gamma$ and define the function $\varphi _\rho \in \ell ^\infty (\Gamma )$ by
\[
\varphi _\rho (\gamma ) = \rho ( \{ H \csuchthat \gamma \in H \} ) .
\]
It is shown in \cite{IKT09} (see also Theorem \ref{thm:IKT}) and \cite{Ve11} that $\varphi _\rho$ is a positive definite function on $\Gamma$. It will be useful here to identify $\varphi _\rho$ as the diagonal matrix coefficient of a specific unitary representation of $\Gamma$ described below.

Consider the field of Hilbert spaces $\{ \ell ^2(\Gamma /H ) \csuchthat H\in \mbox{Sub}_\Gamma \}$.  For $\gamma \in \Gamma$ denote by $x ^\gamma \in \prod _H \ell ^2(\Gamma /H)$ the vector field $x ^\gamma _H = 1_{\gamma H}$ where $1_{ \gamma H }\in \ell ^2(\Gamma /H )$ is the indicator function of the singleton set $\{ \gamma H \} \subseteq \Gamma /H$.  Then $\{ x ^\gamma \}_{\gamma \in \Gamma}$ determines a fundamental family of measurable vector fields and we let $\Es{H}_\rho = \int ^{\oplus}_H \ell ^2 (\Gamma /H )\, d\rho$ denote the corresponding Hilbert space consisting of all square integrable measurable vector fields. The inner product on $\Es{H}_\rho$ is given by $\langle x ,y \rangle = \int _H \langle x_H,y_H\rangle _{\ell ^2 (\Gamma /H)} \, d\rho$. Define the unitary representation $\lambda _{\rho}$ of $\Gamma$ on $\Es{H}_\rho$ by
\[
\lambda _{\rho } = \int ^{\oplus} _H \lambda _{\Gamma /H} \, d\rho ,
\]
i.e., $\lambda _{\rho }(\gamma )(x)_H = \lambda _{\Gamma /H}(\gamma )(x_H)$, where $\lambda _{\Gamma /H}$ denotes the quasi-regular representation of $\Gamma$ on $\ell ^2 (\Gamma /H )$. We then have
\begin{align*}
\langle \lambda _{\rho} (\gamma ) (x^e) , x^e \rangle
    &=\int _H \langle \lambda _{\rho} (\gamma )(x^e)_H, x^e_H \rangle _{\ell ^2(\Gamma /H)} \, d\rho \\
    &=\int _H \langle \lambda _{\Gamma /H}(\gamma )(1_{H}), 1_H \rangle _{\ell ^2(\Gamma /H)} \, d\rho
    = \rho ( \{ H \csuchthat \gamma \in H \} ) = \varphi _\rho (\gamma ) .
\end{align*}
We have shown the following.

\begin{proposition}\label{prop:measPD}
$(\Es{H} _\rho , \lambda _\rho , x^e )$ is the \emph{GNS} triple associated with the positive definite function $\varphi _\rho$ on $\Gamma$.
\end{proposition}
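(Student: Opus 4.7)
The triple $(\mathcal{H}_\rho, \lambda_\rho, x^e)$ is, up to unitary equivalence, the GNS triple of $\varphi_\rho$ if and only if it enjoys two properties: the matrix-coefficient identity $\varphi_\rho(\gamma) = \langle \lambda_\rho(\gamma) x^e, x^e\rangle$, and cyclicity of $x^e$ for $\lambda_\rho$. The former is carried out in the display immediately preceding the proposition: unfolding the direct-integral inner product fiber by fiber, inside each fiber one has
\[
\langle \lambda_{\Gamma/H}(\gamma) 1_H, 1_H\rangle_{\ell^2(\Gamma/H)} = \langle 1_{\gamma H}, 1_H\rangle_{\ell^2(\Gamma/H)} = \mathbf{1}_{[\gamma \in H]},
\]
and integration against $\rho$ yields $\rho(\{H : \gamma \in H\}) = \varphi_\rho(\gamma)$. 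So nothing new is required on that point.

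The bulk of the plan is therefore cyclicity of $x^e$. The identity $\lambda_{\Gamma/H}(\gamma) 1_{\delta H} = 1_{\gamma \delta H}$ gives the intertwining relation $\lambda_\rho(\gamma) x^\delta = x^{\gamma\delta}$; in particular $\lambda_\rho(\gamma) x^e = x^\gamma$, so cyclicity of $x^e$ is equivalent to totality of $\{x^\gamma : \gamma \in \Gamma\}$ in $\mathcal{H}_\rho$. Combined with the matrix-coefficient identity (which gives $\langle x^\gamma, x^\delta\rangle_{\mathcal{H}_\rho} = \varphi_\rho(\delta^{-1}\gamma)$), this shows at the very least that the assignment $v_\gamma \mapsto x^\gamma$ extends to a $\Gamma$-equivariant isometry from the abstract GNS Hilbert space for $\varphi_\rho$ onto the closed linear span $K := \overline{\mathrm{span}}\{x^\gamma : \gamma \in \Gamma\} \subseteq \mathcal{H}_\rho$. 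Thus the cyclic subrepresentation $(K, \lambda_\rho\restriction K, x^e)$ is canonically the GNS triple; what remains is to show $K = \mathcal{H}_\rho$.

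For the equality $K = \mathcal{H}_\rho$, I would use that by construction the fundamental family $\{x^\gamma\}_{\gamma \in \Gamma}$ generates the measurable structure on the field $\{\ell^2(\Gamma/H)\}_H$: by the standard theory of direct integrals, any $y \in \mathcal{H}_\rho$ is an $L^2(\rho)$-limit of measurable linear combinations $H \mapsto \sum_{\gamma \in F} c_\gamma(H)\, x^\gamma_H$ with scalar Borel coefficients $c_\gamma : \mathrm{Sub}_\Gamma \to \mathbb{C}$. Since the Borel $\sigma$-algebra on $\mathrm{Sub}_\Gamma$ is generated by the cylinder sets $\{H : \delta \in H\}$, a monotone-class/approximation argument reduces the problem to showing that vector fields of the form $\mathbf{1}_{\{H : \delta \in H\}}\cdot x^\gamma$ lie in $K$. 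On the set $\{H : \delta \in H\}$ one has $x^\delta_H = 1_{\delta H} = 1_H = x^e_H$, so $x^\delta - x^e$ vanishes exactly on this set, and such indicator-type cutoffs can be realized inside $K$ using the bounded Borel functional calculus applied to the self-adjoint operator on $K$ whose matrix entries are precisely the cylinder-set measures $\varphi_\rho(\cdot)$.

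The main obstacle is this last step: legitimately passing from scalar measurable coefficients to constant coefficients inside the direct integral, i.e., showing that every measurable multiplier by a cylinder indicator can be approximated in operator norm on $K$ by $\Gamma$-algebraic combinations of $\lambda_\rho(\gamma)$. The whole argument boils down to verifying that the von Neumann algebra generated by $\lambda_\rho(\Gamma)$ contains enough multiplication operators by characteristic functions of the base, which is the natural and slightly delicate point to execute carefully; everything else is just bookkeeping of the direct-integral construction.
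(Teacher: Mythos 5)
You are right that the display preceding the proposition already contains the matrix-coefficient identity, and in fact that display \emph{is} the paper's entire proof: the author writes "We have shown the following" immediately after computing $\langle\lambda_\rho(\gamma)(x^e),x^e\rangle=\varphi_\rho(\gamma)$, and cyclicity of $x^e$ is never addressed. So your instinct that the literal GNS-triple claim requires an additional cyclicity argument is a genuine observation about something the paper glosses over. The part of your argument that is correct and complete is the identification of the \emph{cyclic subrepresentation}: since $\langle x^\gamma,x^\delta\rangle_{\mathcal{H}_\rho}=\varphi_\rho(\delta^{-1}\gamma)$ and $\lambda_\rho(\gamma)x^\delta=x^{\gamma\delta}$, the assignment $v_\gamma\mapsto x^\gamma$ extends to a $\Gamma$-equivariant isometry of the abstract GNS space onto $K=\overline{\mathrm{span}}\{x^\gamma : \gamma\in\Gamma\}$ carrying the GNS cyclic vector to $x^e$. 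That is also all the paper ever uses: in Theorem \ref{thm:UTUIRS} one only needs that $\varphi_\rho$ is a diagonal matrix coefficient of $\lambda_\rho$, so that its GNS representation is a subrepresentation of $\lambda_\rho$ and hence weakly contained in $\lambda_\Gamma$ once $\lambda_\rho\prec\lambda_\Gamma$ is established.

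The final step of your plan, $K=\mathcal{H}_\rho$, cannot be carried out, because it is false in general. Take $\Gamma=\Z$ and $\rho=\frac12(\delta_{2\Z}+\delta_{4\Z})$, which is conjugation-invariant since $\Z$ is abelian. Then $\mathcal{H}_\rho=\ell^2(\Z/2\Z)\oplus\ell^2(\Z/4\Z)$ is $6$-dimensional, while $x^n=(1_{n+2\Z},1_{n+4\Z})$ depends only on $n\bmod 4$, so $K$ is $4$-dimensional; for instance $(1_{2\Z},0)\notin K$, since any linear combination of the $x^n$ with vanishing $\ell^2(\Z/4\Z)$-component must have all coefficients zero. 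The same example defeats the mechanism you flagged as delicate: the von Neumann algebra generated by $\lambda_\rho(\Z)$ is generated by the single unitary $\lambda_\rho(1)$, whose eigenvalue-$1$ eigenspace is spanned by one vector from each summand, so every operator in that algebra acts as a scalar there and the algebra contains no projection cutting down to a single fiber — the multiplication operators by cylinder indicators are simply not in the weak closure of the group algebra. The honest statement of the proposition is therefore that $(K,\lambda_\rho|_K,x^e)$ is the GNS triple of $\varphi_\rho$, equivalently that $\varphi_\rho$ is realized as a diagonal matrix coefficient of $\lambda_\rho$; your proof should stop at the isometry onto $K$ and delete the attempt to prove $K=\mathcal{H}_\rho$.
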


It is clear that if $\rho$ is conjugation invariant (i.e., if $\rho$ is an invariant random subgroup) then $\varphi _\rho$ will be constant on each conjugacy class of $\Gamma$.

\begin{lemma}\label{lem:amenWC}
If $H$ is an amenable subgroup of $\Gamma$ then $\lambda _{\Gamma /H}$ is weakly contained in $\lambda _\Gamma$. Thus, for all $f\in \ell ^1(\Gamma )$ we have $||\lambda _{\Gamma /H} (f)|| \leq ||\lambda _\Gamma (f) ||$.
\end{lemma}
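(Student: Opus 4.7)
The plan is to realize $\lambda_{\Gamma/H}$ as an induced representation and then exploit amenability of $H$ through Hulanicki's theorem. Specifically, I would first observe that the quasi-regular representation $\lambda_{\Gamma/H}$ on $\ell^2(\Gamma/H)$ is unitarily equivalent to $\mathrm{Ind}_H^\Gamma(1_H)$, the induction to $\Gamma$ of the trivial one-dimensional representation $1_H$ of $H$. This is essentially a tautology once one writes down the models: functions $\Gamma/H \to \mathbb{C}$ correspond to $H$-equivariant functions on $\Gamma$ valued in the trivial $H$-module.

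Next I would use the Hulanicki--Reiter characterization of amenability mentioned in the introduction: $H$ is amenable if and only if $1_H \prec \lambda_H$. Since induction $\mathrm{Ind}_H^\Gamma$ is continuous for weak containment (a well-known general fact about induced representations), this yields
\[
\lambda_{\Gamma/H} \;\cong\; \mathrm{Ind}_H^\Gamma(1_H) \;\prec\; \mathrm{Ind}_H^\Gamma(\lambda_H).
\]
Then I would invoke the standard identity $\mathrm{Ind}_H^\Gamma(\lambda_H) \cong \lambda_\Gamma$ (inducing the regular representation of a subgroup gives the regular representation of the whole group), concluding that $\lambda_{\Gamma/H} \prec \lambda_\Gamma$.

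The norm inequality follows immediately from weak containment: if $\pi \prec \rho$ are unitary representations of $\Gamma$ then $\|\pi(f)\| \leq \|\rho(f)\|$ for every $f \in \ell^1(\Gamma)$, because weak containment means that every diagonal matrix coefficient of $\pi$ is a limit (uniformly on finite sets) of convex combinations of diagonal matrix coefficients of $\rho$, which forces the $C^*$-seminorm defined by $\pi$ to be dominated by that defined by $\rho$.

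No step should present a serious obstacle; the only mild subtlety is making the identifications $\lambda_{\Gamma/H} \cong \mathrm{Ind}_H^\Gamma(1_H)$ and $\mathrm{Ind}_H^\Gamma(\lambda_H) \cong \lambda_\Gamma$ precise, but both are classical and can simply be cited (e.g.\ from \cite{BHV08}). The proof is essentially three lines once these ingredients are assembled.
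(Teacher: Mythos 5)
Your proposal is correct and follows exactly the paper's own argument: amenability gives $1_H \prec \lambda_H$ by the Hulanicki--Reiter characterization, continuity of induction under weak containment yields $\lambda_{\Gamma/H} \cong \mathrm{Ind}_H^\Gamma(1_H) \prec \mathrm{Ind}_H^\Gamma(\lambda_H) \cong \lambda_\Gamma$, and the norm inequality on $\ell^1(\Gamma)$ is the standard consequence of weak containment. The paper cites the same ingredients from \cite{BHV08} (Theorems G.3.2, F.3.5, and F.4.4), so there is nothing to add.
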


\begin{proof}
$H$ being amenable implies that the trivial one dimensional representation $1_H$ of $H$ is weakly contained in the left regular representation $\lambda _H$ of $H$ (\cite[Theorem G.3.2]{BHV08}). Thus by \cite[Theorem F.3.5]{BHV08} we have $\lambda _{\Gamma /H} \cong \mbox{Ind}_H^\Gamma (1_H) \prec \mbox{Ind}_H^\Gamma (\lambda _H) \cong \lambda _\Gamma$. The second statement follows immediately from \cite[F.4.4]{BHV08}.
\end{proof}

\begin{theorem}\label{thm:UTUIRS}
If $\rho$ is any measure on $\mbox{\emph{Sub}}_\Gamma$ concentrating on the amenable subgroups then $\lambda _\rho$ is weakly contained in the left regular representation $\lambda _\Gamma$ of $\Gamma$.

Therefore, if $\theta$ is an amenable invariant random subgroup of $\Gamma$ then $\varphi _\theta$ extends to a tracial state on $C^*_r(\Gamma )$ which is distinct from the canonical trace $\tau _\Gamma$ whenever $\theta$ is non-trivial.
\end{theorem}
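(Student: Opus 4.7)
The plan is to prove the first statement via a direct-integral norm estimate, and then to derive the second statement as a straightforward consequence using Proposition \ref{prop:measPD}.

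For the first statement, by Fell's criterion it suffices to show that $\|\lambda_\rho(f)\| \leq \|\lambda_\Gamma(f)\|$ for every $f \in \ell^1(\Gamma)$. Since $\lambda_\rho = \int^\oplus_H \lambda_{\Gamma/H}\, d\rho(H)$ is a direct integral of the representations $\lambda_{\Gamma/H}$, a standard fact about direct integrals of representations (see e.g.\ Dixmier) gives
\[
\|\lambda_\rho(f)\| = \mbox{ess sup}_{H \sim \rho}\, \|\lambda_{\Gamma/H}(f)\|.
\]
By hypothesis $\rho$ concentrates on amenable subgroups, so Lemma \ref{lem:amenWC} yields $\|\lambda_{\Gamma/H}(f)\| \leq \|\lambda_\Gamma(f)\|$ for $\rho$-a.e.\ $H$, which gives the desired bound and hence $\lambda_\rho \prec \lambda_\Gamma$.

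For the second statement, let $\theta$ be an amenable IRS. By Proposition \ref{prop:measPD}, $\varphi_\theta(\gamma) = \langle \lambda_\theta(\gamma) x^e, x^e\rangle$, and the first statement gives $\lambda_\theta \prec \lambda_\Gamma$. Hence the functional on $\C\Gamma$ determined by $\varphi_\theta$ is bounded by $\|\cdot\|_{C^*_r(\Gamma)}$ and extends to a state $\tau_\theta$ on $C^*_r(\Gamma)$; positivity is automatic from positive-definiteness of $\varphi_\theta$. Conjugation invariance of $\theta$ forces $\varphi_\theta$ to be a class function, $\varphi_\theta(\delta\gamma\delta^{-1}) = \varphi_\theta(\gamma)$, and setting $\alpha = \gamma\delta$ so that $\delta\gamma = \delta\alpha\delta^{-1}$ gives $\varphi_\theta(\gamma\delta) = \varphi_\theta(\delta\gamma)$, confirming traciality on $\C\Gamma$ and hence on $C^*_r(\Gamma)$.

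Finally, to distinguish $\tau_\theta$ from the canonical trace $\tau_\Gamma$ when $\theta$ is non-trivial, observe that $\theta \neq \updelta_e$ implies $\theta(\{H \csuchthat H \neq \{e\}\}) > 0$; since this set is the countable union $\bigcup_{\gamma \neq e}\{H \csuchthat \gamma \in H\}$, there exists some $\gamma \neq e$ with $\varphi_\theta(\gamma) = \theta(\{H \csuchthat \gamma \in H\}) > 0$, while $\tau_\Gamma(\lambda_\Gamma(\gamma)) = 0$. The only step that could be viewed as a possible obstacle is justifying the direct-integral norm identity, but this is entirely standard in the theory of direct integrals of unitary representations and poses no real difficulty.
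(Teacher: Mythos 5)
Your proof is correct and follows essentially the same route as the paper: both reduce weak containment to the norm estimate $\|\lambda_\rho(f)\|\leq\|\lambda_\Gamma(f)\|$ for $f\in\ell^1(\Gamma)$ via \cite[F.4.4]{BHV08} and then invoke Lemma \ref{lem:amenWC}, the only difference being that you cite the standard decomposable-operator bound $\|\lambda_\rho(f)\|\leq \mbox{ess sup}_H\|\lambda_{\Gamma/H}(f)\|$ where the paper proves the needed inequality directly by Cauchy--Schwarz. The second half (traciality from conjugation invariance and distinctness from $\tau_\Gamma$ via some $\gamma\neq e$ with $\varphi_\theta(\gamma)>0$) matches the paper's argument.
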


\begin{proof}
By \cite[F.4.4]{BHV08} to show that $\lambda _\rho \prec \lambda _\Gamma$ it suffices to show that $||\lambda _\rho (f) || \leq ||\lambda _\Gamma (f)||$ for all $f\in \ell ^1 (\Gamma )$. Using that $\rho$ concentrates on the amenable subgroups and Lemma \ref{lem:amenWC} we have for $f\in \ell ^1(\Gamma )$ and $x,y\in \Es{H} _\rho$
\begin{align*}
|\langle \lambda _\rho (f)x ,y \rangle |
    &= \big| \int _H \langle \lambda _{\Gamma /H}(f)(x_H),y_H \rangle _{\ell ^2(\Gamma /H)}\, d\rho \big| \\
    &\leq \int _H ||\lambda _{\Gamma /H}(f)|| \, ||x_H|| \, ||y_H || \, d\rho \\
    &\leq ||\lambda _\Gamma (f)|| \int _H || x_H || \, ||y_H || \, d\rho \\
    &\leq ||\lambda _\Gamma (f)||\,  ||x||\, ||y||
\end{align*}
from which we conclude that $||\lambda _\rho (f)|| \leq ||\lambda _\Gamma (f)||$.

Suppose now $\theta$ is an amenable invariant random subgroup of $\Gamma$. Since $\lambda _\theta$ is weakly contained in $\lambda _\Gamma$, $\lambda _\theta$ extends to a representation of $C^*_r(\Gamma )$ and $\varphi _\theta $ extends to a state on $C^*_r(\Gamma )$ via $a\mapsto \langle \lambda _\theta (a)(x^e), x^e \rangle$. Since $\varphi _\theta$ is conjugation invariant this is a tracial state. If $\theta$ is non-trivial then there is some $\gamma \in \Gamma\setminus \{ e \}$ with $\varphi _\theta (\gamma ) = \theta (\{ H \csuchthat \gamma  \in H \} )>0$ showing that this is distinct from the canonical trace.
\end{proof}

\begin{corollary}\label{thm:UTsm}
Let $\Gamma$ be a countable group with the unique trace property. 
Then $\Gamma$ has no non-trivial amenable invariant random subgroups. It follows that every non-trivial NA-ergodic action of $\Gamma$ is free and $\Gamma$ is shift-minimal.
\end{corollary}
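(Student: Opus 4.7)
The plan is to derive the corollary as a direct consequence of Theorem \ref{thm:UTUIRS}, with only small additional arguments needed for the NA-ergodic and shift-minimality statements.

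First I would establish the headline claim: if $\Gamma$ has the unique trace property, then $\Gamma$ has no non-trivial amenable IRS. This is a contrapositive: suppose toward a contradiction that $\theta$ is a non-trivial amenable invariant random subgroup of $\Gamma$. By Theorem \ref{thm:UTUIRS}, the positive definite function $\varphi_\theta(\gamma) = \theta(\{H : \gamma \in H\})$ extends to a tracial state on $C^*_r(\Gamma)$ that is distinct from the canonical trace $\tau_\Gamma$. This contradicts the unique trace property, so no such $\theta$ can exist.

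Next I would handle the NA-ergodic consequence. Let $\bm{a} = \Gamma \cc^a (X,\mu)$ be a non-trivial NA-ergodic action. By Lemma \ref{lem:am}(ii), the stabilizer of $\mu$-almost every point is amenable, so the stabilizer distribution $\theta_{\bm{a}}$ is an amenable IRS of $\Gamma$. By the paragraph above, $\theta_{\bm{a}}$ must be trivial, i.e., $\theta_{\bm{a}} = \updelta_e$, which means $\Gamma_x = \{e\}$ almost surely and thus $\bm{a}$ is free.

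Finally, for shift-minimality, let $\bm{a}$ be any non-trivial m.p.\ action weakly contained in $\bm{s}_\Gamma$. Since the Bernoulli shift is strongly NA-ergodic (Example \ref{ex:NAerg}) and strong NA-ergodicity passes to weakly contained actions (Proposition \ref{prop:NAerg}), the action $\bm{a}$ is strongly NA-ergodic, hence NA-ergodic. By the previous step $\bm{a}$ is free, so $\Gamma$ is shift-minimal.

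There is essentially no obstacle here: the content is all in Theorem \ref{thm:UTUIRS}, which was proved just above, together with Lemma \ref{lem:am}(ii). The only minor subtlety is remembering that "non-trivial IRS" means not equal to $\updelta_e$ (so that triviality of $\theta_{\bm{a}}$ indeed gives freeness), and that weak containment in $\bm{s}_\Gamma$ transfers strong NA-ergodicity, not merely NA-ergodicity, to $\bm{a}$.
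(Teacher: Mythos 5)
Your proposal is correct and follows essentially the same route as the paper: the first claim is exactly Theorem \ref{thm:UTUIRS}, freeness of non-trivial NA-ergodic actions comes from almost-sure amenability of stabilizers (you cite Lemma \ref{lem:am}(ii) where the paper cites the first assertion of Theorem \ref{thm:NAerg}, which is the same fact), and shift-minimality follows since actions weakly contained in $\bm{s}_\Gamma$ are NA-ergodic.
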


\begin{proof}
That $\Gamma$ has no non-trivial amenable invariant random subgroups follow from Theorem \ref{thm:UTUIRS}. If $\bm{a}$ is a non-trivial NA-ergodic action of $\Gamma$ then the invariant random subgroup $\theta _{\bm{a}}$ is amenable by Theorem \ref{thm:NAerg}, and thus $\theta _{\bm{a}} = \updelta _e$, i.e., $\bm{a}$ is free. Since every m.p.\ action weakly contained in $\bm{s}_\Gamma$ is NA-ergodic, $\Gamma$ is also shift-minimal.
\end{proof}

\begin{remark}
The positive definite function $\varphi _\theta$ associated to an invariant random subgroup $\theta$ is also realized in the Koopman representation $\kappa ^{\bm{s}_\theta}_0$ corresponding to the $\theta$-random Bernoulli shift $\bm{s}_{\theta ,\eta}$ of $\Gamma$ with a non-atomic base space $(Z,\eta )$ (see \cite{T-D12a} for the definition of the $\theta$-random Bernoulli shift). Indeed, take $Z=\R$ and take $\eta$ to be the standard Gaussian measure (with unit variance). Let $p_\gamma : \R ^{\leq \backslash \Gamma } \ra \R$ be the function $p_\gamma (f) = f(H_f\gamma )$. Then $p_\gamma \in L^2 _0 (\eta ^{\theta \backslash \Gamma} )$ and each $p_\gamma$ is a unit vector. In addition we have $\kappa ^{\bm{s}_{\theta ,\eta}}_0 (\gamma )(p_e)=p_\gamma$ and
\begin{equation}\label{eqn:gauss}
\langle p_\gamma , p_e \rangle = \int _H \int _{f\in \R ^{H\backslash \Gamma}} f(H\gamma )f(H)\, d\eta ^{H\backslash \Gamma} \, d\theta (H) =\int _H 1_{\{ H\csuchthat H\gamma =H\} } \, d\theta  = \varphi _\theta (\gamma )
\end{equation}
and so $(L^2_0(\eta ^{\theta \backslash \Gamma}), \kappa ^{\bm{s}_{\theta ,\eta}}_0 , p_e)$ is a triple realizing $\varphi _\theta$. 
%
\end{remark}
\section{Cost}\label{sec:cost}
\subsection{Notation and background} See \cite{Ga00} and \cite{KM05} for background on the theory of cost of equivalence relations and groups. We recall the basic definitions to establish notation and terminology. 
%
%

\begin{definition}\label{def:Lgraph} Let $(X,\mu )$ be a standard non-atomic probability space.
\begin{enumerate}
\item[(i)] By an \emph{L-graphing} on $(X,\mu )$ we mean a countable collection $\Phi = \{ \varphi _i:A_i \ra B_i\} _{i\in I}$ of partial Borel automorphism of $X$ that preserve the measure $\mu$. The \emph{cost} of the L-graphing $\Phi$ is given by
    \[
    C_\mu (\Phi ) = \sum _{i\in I}\mu (A_i) .
    \]
    In (ii)-(vi) below $\Phi$ denotes an L-graphing on $(X,\mu )$.

\item[(ii)] We denote by $\mc{G}_\Phi$ the graph on $X$ associated to $\Phi$, i.e., for $x,y\in X$, $(x,y)\in \mc{G}_\Phi$ if and only if $x\neq y$ and $\varphi ^{\pm 1}(x) =y$ for some $\varphi \in \Phi$. We let $d_\Phi : X\times X \ra \N \cup \{ \infty \}$ denote the graph distance corresponding to $\mc{G}_\Phi$, i.e., for $x,y\in X$,
\[
d_\Phi (x,y) =\inf \{ m\in \N \csuchthat \exists \varphi _0,\dots ,\varphi _{m-1} \in \Phi ^* \ (\varphi _{m-1} ^{\pm 1}\circ \cdots \circ \varphi _1 ^{\pm 1} \circ \varphi _0 ^{\pm 1} (x) = y ) \}
\]
where $\Phi ^* = \Phi \cup \{ \mbox{id}_X \}$ and $\mbox{id}_X:X \ra X$ is the identity map.

\item[(iii)] We let $E_\Phi$ denote the equivalence relation on $X$ generated by $\Phi$, i.e., $xE_\Phi y \IFF d_\Phi (x,y)<\infty$. Then $E_\Phi$ is a countable Borel equivalence relation that preserves the measure $\mu$.

\item[(iv)] Let $E$ be a measure preserving countable Borel equivalence relation on $(X, \mu )$. We say that $\Phi$ is an \emph{L-graphing of $E$} if there is a conull set $X_0\subseteq X$ such that $E_\Phi \resto X_0 = E\resto X_0$. This is equivalent to the condition that $[x]_{E_\Phi} = [x]_E$ for $\mu$-almost every $x\in X$. The \emph{cost} of $E$ is defined as
    \[
    C_\mu (E) = \inf \{ C_\mu (\Psi )\csuchthat \Psi \mbox{ is an L-graphing of }E\} .
    \]

\item[(v)] Let $\bm{a} = \Gamma \cc ^a (X,\mu )$ be a measure preserving action of $\Gamma$. Let $Q$ be a subset of $\Gamma$ and let $A : Q \ra \mbox{MALG}_\mu$ be a function assigning to each $\delta \in Q$ a measurable subset $A_\delta$ of $X$. Then $a$ and $A$ define an L-graphing $\Phi ^{a ,A}  = \{ \varphi ^{a, A}_\delta \csuchthat \delta \in Q \}$, where $\varphi ^{a,A} _\delta = \delta ^a \resto A_\delta$, i.e., $\mbox{dom}(\varphi ^{a,A}_\delta )= A_\delta$ and $\varphi ^{a,A}_\delta (x) = \delta ^a x$ for each $x\in A_\delta$. It is clear that $E_{\Phi ^{a,A}} \subseteq E_a$ and
\[
C_\mu (\Phi ^{a,A} ) = \sum _{\delta \in Q} \mu (A_\delta )
\]
so that $C_\mu (\Phi ^{a, A})$ only depends on the assignment $A$ and not on the action $a$.

\item[(vi)] As a converse to (v), whenever $E_\Phi \subseteq E_a$ we may find a function $A = A^{a,\Phi}  : \Gamma \ra \mbox{MALG}_\mu$ such that $\mc{G}_{\Phi ^{a, A}} = \mc{G}_{\Phi}$ and $C_\mu (\Phi ^{a, A }) \leq C_\mu (\Phi )$. Indeed, for each $\varphi \in \Phi$ there exists a measurable partition $X = \bigsqcup _{\delta \in \Gamma} A ^{a,\varphi} _\delta$ such that $\varphi \resto A^{a,\varphi} _\delta =\delta ^a \resto A^{a,\varphi }_\delta$. Then taking $A_\delta = \bigcup _{\varphi \in \Phi} A^{a,\varphi }_\delta$ works.
\end{enumerate}
\end{definition}

For a measure preserving action $\bm{a} = \Gamma \cc ^a (X,\mu )$ of $\Gamma$ denote by $E_a$ the orbit equivalence relation generated by $a$. The cost of $\bm{a}$ is defined by $C(\bm{a}) = C_\mu (E_a)$. Denote by $C(\Gamma )$ the cost of the group $\Gamma$, i.e., $C(\Gamma )$ is the infinimum of costs of free m.p.\ actions of $\Gamma$. 

By "subequivalence relation" we will always mean "Borel subequivalence relation."

\subsection{Cost and weak containment in infinitely generated groups}\label{sec:infgen}

Lemma \ref{lem:open} together with Theorem \ref{thm:KecOpen} provide a generalization of \cite[Theorem 10.13]{Ke10}. The purpose of Lemma \ref{lem:open} is to isolate versions of a few key observations from Kechris's proof.

\begin{lemma}\label{lem:open}
Let $F\subseteq \Gamma$ be finite and let $r\in \R \cup \{ \infty \}$. Then the following are equivalent for a measure preserving action $\bm{a} =\Gamma \cc ^a (X,\mu )$ of $\Gamma$:
\begin{enumerate}
\item There exists a sub-equivalence relation $E$ of $E_a$ such that $E_{a\resto \langle F\rangle}\subseteq E\subseteq E_a$ and $C_\mu (E) < r$.

\item There exists a finite $Q\subseteq \Gamma$ containing $F$ and a sub-equivalence relation $E$ of $E_a$ such that $E_{a\resto \langle F\rangle}\subseteq E\subseteq E_{a\resto\langle Q\rangle}$ and $C_\mu (E) < r$.

\item There exists a finite $Q\subseteq \Gamma$ containing $F$, an assignment $A : Q \ra \mbox{\emph{MALG}}_\mu$, and a natural number $M\in \N$ such that
\[
C_\mu ( \Phi ^{a,A} ) + \sum _{\gamma \in F} \mu (\{ x \csuchthat d_{\Phi ^{a,A}} (x,\gamma ^a x ) > M \} ) < r .
\]
\end{enumerate}
\end{lemma}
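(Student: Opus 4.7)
The plan is to establish the cycle (2)$\Ra$(1)$\Ra$(3)$\Ra$(2). The first implication is immediate since $E_{a\resto\langle Q\rangle}\subseteq E_a$. When $r=\infty$ the remaining implications are trivial: take $E=E_{a\resto\langle F\rangle}$ (L-graphed by $\{\gamma^a:\gamma\in F\}$, cost $\le|F|$) in (1), and $Q=F$, $A_\gamma=\emptyset$, $M=0$ in (3). So assume $r<\infty$ below.

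For (3)$\Ra$(2), I would augment $\Phi^{a,A}$ by the ``missing edges'' on $F$. For each $\gamma\in F$ set $B_\gamma=\{x:d_{\Phi^{a,A}}(x,\gamma^ax)>M\}$, which is Borel since for each $k$ the set $\{(x,y):d_{\Phi^{a,A}}(x,y)\le k\}$ is a countable union of Borel graphs of finite compositions. Define the L-graphing
\[
\Psi \;=\; \Phi^{a,A}\;\cup\;\{\gamma^a\resto B_\gamma \,:\, \gamma\in F\}.
\]
Its cost is precisely the quantity in (3), hence $<r$. For $x\in B_\gamma$, $\Psi$ contains a direct edge from $x$ to $\gamma^ax$; for $x\notin B_\gamma$, $d_{\Phi^{a,A}}(x,\gamma^ax)\le M$ already puts $x$ and $\gamma^ax$ in the same $E_{\Phi^{a,A}}$-class. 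Hence $(x,\gamma^ax)\in E_\Psi$ for every $\gamma\in F$ and a.e.\ $x$, so $E_{a\resto\langle F\rangle}\subseteq E_\Psi$. Every map in $\Psi$ is a restriction of $\delta^a$ for some $\delta\in Q\cup F=Q$, so $E_\Psi\subseteq E_{a\resto\langle Q\rangle}$, and $E=E_\Psi$ witnesses (2).

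For (1)$\Ra$(3), fix an L-graphing $\Phi$ of $E$ with $C_\mu(\Phi)<r$ and apply Definition \ref{def:Lgraph}(vi) to get $A^{a,\Phi}:\Gamma\ra\mbox{MALG}_\mu$ with $\mc{G}_{\Phi^{a,A^{a,\Phi}}}=\mc{G}_\Phi$ and $C_\mu(\Phi^{a,A^{a,\Phi}})\le C_\mu(\Phi)$. Write $\Psi=\Phi^{a,A^{a,\Phi}}$, $c=C_\mu(\Psi)$, $\epsilon=(r-c)/2>0$. Enumerate $\Gamma$ and let $F\subseteq Q_1\subseteq Q_2\subseteq\cdots$ exhaust $\Gamma$ by finite sets; set $\Psi_n=\Phi^{a,A^{a,\Phi}\resto Q_n}$, a sub-L-graphing of $\Psi$ of cost $\le c$. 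The key observation is that $d_{\Psi_n}(x,y)$ decreases to $d_\Psi(x,y)$ pointwise: any $\mc{G}_\Psi$-path of finite length $k$ uses only finitely many $\delta\in\Gamma$, all of which eventually lie in $Q_n$. Since $E_{a\resto\langle F\rangle}\subseteq E=E_\Psi$, for each $\gamma\in F$ we have $d_\Psi(x,\gamma^ax)<\infty$ for a.e.\ $x$; pick $M$ with $\sum_{\gamma\in F}\mu(\{x:d_\Psi(x,\gamma^ax)>M\})<\epsilon/2$. Continuity of $\mu$ from above, applied to each decreasing sequence $\{x:d_{\Psi_n}(x,\gamma^ax)>M\}$ whose intersection is $\{x:d_\Psi(x,\gamma^ax)>M\}$, then yields an $n$ with $\sum_{\gamma\in F}\mu(\{x:d_{\Psi_n}(x,\gamma^ax)>M\})<\epsilon$, so taking $Q=Q_n$ and $A=A^{a,\Phi}\resto Q_n$ gives
\[
C_\mu(\Phi^{a,A})+\sum_{\gamma\in F}\mu(\{x:d_{\Phi^{a,A}}(x,\gamma^ax)>M\}) \;\le\; c+\epsilon \;<\;r.
\]

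The only real obstacle is the pointwise convergence $d_{\Psi_n}\downarrow d_\Psi$, which reduces to the fact that any finite $\mc{G}_\Psi$-path touches only finitely many of the countably many generators $\delta\in\Gamma$; the rest is a quantitative choice of $M$ first and then $n$ to absorb both error terms into $\epsilon$.
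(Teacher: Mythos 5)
Your proof is correct and follows essentially the same route as the paper: the same cycle of implications, with (3)$\Ra$(2) handled by adjoining the partial maps $\gamma^a\resto\{x: d_{\Phi^{a,A}}(x,\gamma^ax)>M\}$ to the graphing, and (1)$\Ra$(3) by passing to $\Phi^{a,B\resto Q}$ for a large finite $Q$ and controlling the measure of points at large $d$-distance. The only differences are cosmetic — you choose $M$ before $Q$ (justified by continuity from above along the exhaustion $Q_n$), whereas the paper chooses $Q$ first and then $M$, and you spell out the pointwise convergence $d_{\Psi_n}\downarrow d_\Psi$ that the paper leaves implicit.
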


\begin{proof}[Proof of Lemma \ref{lem:open}]
We begin with the implication (3)$\Ra$(2). If such an $A : Q \ra \mbox{MALG}_\mu$ and $M\in \N$ exist then define $B : Q \ra \Gamma$ by taking $B\resto Q\setminus F = A\resto Q\setminus F$ and for $\gamma \in F$ taking
\[
B_\gamma = A_\gamma \cup \{ x \csuchthat d_{\Phi ^{a,A}} (x, \gamma ^a x ) > M \} .
\]
Let $E= E_{\Phi ^{a,B}}$. Then $C_\mu (E) \leq C_\mu (\Phi ^{a,B}) < r$ and $E_{\Phi ^{a , B}}\subseteq E_{a\resto \langle Q \rangle}$. In addition we have $E_{a\resto \langle F \rangle} \subseteq E_{\Phi ^{a , B}}$ since for each $\gamma \in F$ and $x\in X$, either $d_{\Phi ^{a,A\resto Q}} (x, \gamma ^a x ) \leq M$ so that $(x,\gamma ^a x) \in E_{\Phi ^{a,A}}\subseteq E_{\Phi ^{a,B}}$, or $d_{\Phi ^{a,A\resto Q}}(x, \gamma ^a x) > M$, in which case $x\in \mbox{dom}(\varphi ^{a,B}_\gamma )$ and so $(x, \gamma ^a x ) \in E_{\Phi ^{a,B}}$.

(2)$\Ra$(1) is obvious, and it remains to show (1)$\Ra$(3). Let $E$ be as in (1) and let $\Phi$ be an L-graphing of $E$ with $C_\mu (\Phi ) =s < r$. Since $E\subseteq E_a$ we may by \ref{def:Lgraph}.(vi) assume without loss of generality that $\Phi = \Phi ^{a,B}$ for some $B:\Gamma \ra \mbox{MALG}_\mu$, $\gamma \mapsto B_\gamma$. Let $\epsilon >0$ be such that  $s + \epsilon < r$.

We have $E_{a\resto \langle F \rangle} \subseteq E = E_{\Phi ^{a,B}}$ so, as $F$ is finite, if we take a large enough finite set $Q\subseteq \Gamma$ containing $F$, we can ensure that
\[
\sum _{\gamma \in F} \mu ( \{ x \csuchthat d_{\Phi ^{a,B\resto Q}}(x, \gamma ^a x)  = \infty  \} ) < \epsilon.
\]
So if we take $M\in \N$ large enough then
\[
\sum _{\gamma \in F} \mu (\{ x \csuchthat d_{\Phi ^{a,B\resto Q}} (x, \gamma ^a x ) > M \} ) <\epsilon .
\]
It follows that $A=B\resto Q$ and $M$ satisfy the desired properties. \qedhere[Lemma \ref{lem:open}]
\end{proof}

\begin{definition}
For each finite $F\subseteq \Gamma$ and $r\in \R\cup \{ \infty \}$ let $A_{F,r}= A_{F,r}(\Gamma ,X,\mu )$ denote the set of $\bm{a}\in A(\Gamma ,X,\mu )$ that satisfy any -- and therefore all -- of the equivalent properties (1)-(3) of Lemma \ref{lem:open}.
\end{definition}

It is clear that the set $A_{F,r}(\Gamma ,X,\mu )$ is an isomorphism-invariant (and in fact, orbit-equivalence-invariant) subset of $A(\Gamma ,X,\mu )$.  In what follows, we let $\mbox{FR}(\Gamma ,X,\mu )$ denote the subset of $A(\Gamma ,X,\mu )$ consisting of all free actions.

\begin{theorem}\label{thm:KecOpen}
Let $\Gamma$ be an infinite countable group. For each finite $F\subseteq \Gamma$ and $r\in \R \cup\{ \infty \}$ the set $A_{F,r}(\Gamma ,X,\mu )\cap \mbox{\emph{FR}}(\Gamma ,X,\mu )$ is contained in the interior of $A_{F,r}(\Gamma ,X,\mu )$. In particular, $A_{F,r}(\Gamma ,X,\mu )\cap \mbox{\emph{FR}}(\Gamma ,X,\mu )$ is open in $\mbox{\emph{FR}}(\Gamma ,X,\mu )$.
\end{theorem}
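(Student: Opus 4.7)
The plan is to use characterization (3) of Lemma~\ref{lem:open}. The case $r=\infty$ is trivial since for any action the quantity in (3) is always bounded by $|Q|+|F|<\infty$. So assume $r<\infty$, fix $\bm{a}\in A_{F,r}\cap\mbox{FR}(\Gamma,X,\mu)$, and choose witnesses: a finite $Q\supseteq F$, an assignment $A:Q\to\mbox{MALG}_\mu$, and $M\in\N$ with
\[
s_{\bm{a}}:=C_\mu(\Phi^{a,A})+\sum_{\gamma\in F}\mu(\{x\csuchthat d_{\Phi^{a,A}}(x,\gamma^a x)>M\})<r.
\]
I will show the same $Q,A,M$ witness $\bm{b}\in A_{F,r}$ for all $\bm{b}$ in a sufficiently small neighborhood of $\bm{a}$. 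The first observation is that $C_\mu(\Phi^{b,A})=\sum_{\delta\in Q}\mu(A_\delta)$ does not depend on the action, so only the distance sum needs to be controlled.

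Let $W$ be the finite set of words of length $\leq M$ in the alphabet $Q\cup Q^{-1}$. For each $w=\delta_{k-1}^{\epsilon_{k-1}}\cdots\delta_0^{\epsilon_0}\in W$ and each $\bm{b}\in A(\Gamma,X,\mu)$, let $D_w^b\subseteq X$ be the set of $x$ on which the composition $(\varphi^{b,A}_{\delta_{k-1}})^{\epsilon_{k-1}}\circ\cdots\circ(\varphi^{b,A}_{\delta_0})^{\epsilon_0}$ is defined. A straightforward induction on word length shows that $D_w^b$ is a finite Boolean combination of sets of the form $\alpha^b(A_\eta)$ for $\eta\in Q$ and $\alpha$ a subword of $w$; since each such set varies continuously in $\bm{b}$ in the measure-algebra metric, and $W$ is finite, the maps $\bm{b}\mapsto D_w^b$ are jointly continuous.

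For each $\gamma\in F$, set $W_\gamma=\{w\in W\csuchthat w=\gamma\text{ in }\Gamma\}$ and $\widetilde{G}_\gamma^b=\bigcup_{w\in W_\gamma}D_w^b$. Whenever $x\in D_w^b$ with $w\in W_\gamma$, we have $w^b(x)=\gamma^b x$ (since $w$ and $\gamma$ are equal as group elements, hence act identically in any action), so $d_{\Phi^{b,A}}(x,\gamma^b x)\leq|w|\leq M$. Thus $\widetilde{G}_\gamma^b\subseteq\{x\csuchthat d_{\Phi^{b,A}}(x,\gamma^b x)\leq M\}$ for every $\bm{b}$. This is where freeness of $\bm{a}$ enters: if $d_{\Phi^{a,A}}(x,\gamma^a x)\leq M$ then $x\in D_w^a$ for some $w\in W$ with $w^a(x)=\gamma^a x$, and freeness forces $w=\gamma$ in $\Gamma$, i.e.\ $w\in W_\gamma$. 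Hence $\widetilde{G}_\gamma^a=\{x\csuchthat d_{\Phi^{a,A}}(x,\gamma^a x)\leq M\}$ modulo null sets.

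To conclude, pick $\epsilon>0$ with $s_{\bm{a}}+|F|\epsilon<r$, and choose an open neighborhood $U$ of $\bm{a}$ in $A(\Gamma,X,\mu)$ such that $\mu(\widetilde{G}_\gamma^a\bigtriangleup\widetilde{G}_\gamma^b)<\epsilon$ for all $\gamma\in F$ and all $\bm{b}\in U$; this is possible by the continuity established above and the finiteness of $F$. For such $\bm{b}$,
\[
\sum_{\gamma\in F}\mu(\{x\csuchthat d_{\Phi^{b,A}}(x,\gamma^b x)>M\})\leq\sum_{\gamma\in F}\mu(X\setminus\widetilde{G}_\gamma^b)\leq\sum_{\gamma\in F}\mu(X\setminus\widetilde{G}_\gamma^a)+|F|\epsilon,
\]
and combined with the action-invariance of the cost term this yields a total strictly less than $s_{\bm{a}}+|F|\epsilon<r$, so $\bm{b}\in A_{F,r}$ by criterion (3). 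The only delicate step is the joint continuity of $\bm{b}\mapsto D_w^b$ for $w\in W$, which is routine; freeness of $\bm{a}$ is invoked in exactly one place, to prevent a word $w\neq\gamma$ in $\Gamma$ from conspiring with $\bm{a}$ to send $x$ to $\gamma^a x$ on a non-null set.
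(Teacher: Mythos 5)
Your proposal is correct and follows essentially the same route as the paper's proof: both reduce to criterion (3) of Lemma \ref{lem:open}, observe that the cost term $C_\mu(\Phi^{b,A})$ is action-independent, express the set $\{x : d_{\Phi^{b,A}}(x,\gamma^b x)\leq M\}$ via domains of word-compositions lying in a finite Boolean algebra of translates $\alpha^b A_\delta$, invoke freeness of $\bm{a}$ at exactly the same point (to upgrade the containment to an equality for $\bm{a}$), and conclude by continuity. Your use of words of length at most $M$ and a union of domains, versus the paper's padded words of length exactly $M$ and an intersection of complements, is only a cosmetic (De Morgan) difference.
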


\begin{proof}
Let $\bm{a} \in A_{F,r}$ be free and let $Q \subseteq \Gamma$, $A : Q \ra \mbox{MALG}_\mu$ and $M\in \N$ be given by Lemma \ref{lem:open}.(3). For each $\gamma \in F$ let $s^a_\gamma = \mu (\{ x \csuchthat d_{\Phi ^{a,A}} (x, \gamma ^a x ) > M \} )$.  Let $s = C_\mu (\Phi ^{a,A}) + \sum _{\gamma \in F} s^a_\gamma$. By hypothesis we have $s<r$. Let $\epsilon >0$ be small enough so that $s + |F|\epsilon < r$. Since the number $C_\mu (\Phi ^{a,A} ) = \sum _{\delta \in Q} \mu (A_\delta )$ is independent of $\bm{a}$, if we can show for each $\gamma \in F$ that the set
\begin{equation}\label{eqn:nonameset}
\{ \bm{b} \in A(\Gamma ,X,\mu )\csuchthat \mu (\{ x \csuchthat d_{\Phi ^{b,A}} (x, \gamma ^b x ) > M \} ) < s^a_\gamma + \epsilon \}
\end{equation}
contains an open neighborhood of $\bm{a}$, then the intersection of these sets as $\gamma$ ranges over $F$ will by Lemma \ref{lem:open} be a subset of $A_{F,r}$ containing an open neighborhood of $\bm{a}$ and we will be done.

Fix then $\gamma \in F$, let $Q^* = Q\cup \{ e \}$ and let $\Sigma$ be the collection
\[
\Sigma = \{ ((\delta _{M-1},\dots ,\delta _0), (\epsilon _{M-1},\dots ,\epsilon _0 ) ) \csuchthat \delta _j \in Q^* \mbox{ and } \epsilon _j\in \{ -1, 1 \} \mbox{ for }j=0,\dots ,M-1 \} .
\]
For each $\bm{b} \in A(\Gamma ,X,\mu )$ and $\sigma \in \Sigma$, writing $\sigma$ as
\begin{equation}\label{eqn:sigma0}
\sigma = ((\delta _{M-1},\dots ,\delta _0), (\epsilon _{M-1},\dots ,\epsilon _0 ) )
\end{equation}
(where $\delta _j \in Q^*$ and $\epsilon _j \in \{ -1,1 \}$ for $j=0,\dots ,M-1$), we define
\begin{align*}
\varphi ^b _{\sigma} &:= (\varphi ^{b,A}_{\delta _{M-1}})^{\epsilon _{M-1}}\circ \cdots \circ (\varphi ^{b,A}_{\delta _0})^{\epsilon _0} .
\end{align*}
Let $\Sigma (\gamma )$ denote the set of all $\sigma \in \Sigma$ with the property that $\delta _{M-1}^{ \epsilon _{M-1}}\cdots \delta _0 ^{\epsilon _0} = \gamma$. Observe that for $\sigma\in \Sigma (\gamma )$ and $\bm{b}\in A(\Gamma ,X,\mu )$, if $x\in \mbox{dom}(\varphi ^b _\sigma )$ then $\varphi ^b _\sigma (x) = \gamma ^b x$ and so $d( x, \gamma ^b x)\leq M$. It follows that
\begin{equation}\label{eqn:Sigma1}
\{ x\csuchthat d_{\Phi ^{b,A}}(x,\gamma ^b x) > M \} \subseteq \bigcap _{\sigma \in \Sigma (\gamma )} X\setminus \mbox{dom}(\varphi ^b_\sigma ) .
\end{equation}
If we assume further that $\bm{b}$ is (essentially) free then, ignoring a null set, the set containment (\ref{eqn:Sigma1}) becomes an equality. Indeed, restricting to a co-null set $X_0$ on which $b$ is free we have, for $x\in X_0$, if $d_{\Phi ^{b,A}}(x,\gamma ^b x) \leq M$ then there exists some $\sigma \in \Sigma$ such that $x\in \mbox{dom}(\varphi ^b _\sigma )$ and $\varphi ^b_\sigma (x) = \gamma ^b x$. Writing $\sigma$ as in (\ref{eqn:sigma0}), this means that $(\delta _{M-1}^{\epsilon _{M-1}}\cdots \delta _0^{\epsilon _{0}})^b x = \gamma ^b x$. Since $\bm{b}$ is free on $X_0$ this implies $\delta _{M-1}^{\epsilon _{M-1}}\cdots \delta _0 ^{\epsilon _0} = \gamma$ and therefore $\sigma \in \Sigma (\gamma )$. 

Now, for each $\sigma \in \Sigma$ and $\bm{b} \in A(\Gamma ,X,\mu )$ we see from the definition of $\varphi ^b_\sigma$ that the set $\mbox{dom}(\varphi ^b_\sigma )$ is an element of the Boolean algebra $\mc{A}^b$ generated by
\[
\{ \alpha ^b A_\delta \csuchthat \delta \in Q \mbox{ and }\alpha \in (Q^*\cup Q^{-1})^M \}
\]
where $(Q^*\cup Q^{-1})^M = \{ \delta _{M-1}\cdots \delta _1 \delta _0 \csuchthat \delta _j \in Q^* \cup Q^{-1} \mbox{ for }j=0,\dots ,M-1 \}$. The algebra $\mc{A}^b$ is finite since $Q$ is finite. The Boolean operations are continuous on $\mbox{MALG}_\mu$, so if $\eta >0$ is small enough (depending on $\epsilon$, $Q$, and $A$) then every $\bm{b}$ in the open neighborhood $U_\eta$ of $\bm{a}$ given by
\[
U_\eta = \{ \bm{b} \in A(\Gamma ,X,\mu ) \csuchthat \forall \alpha \in (Q^*\cup Q^{-1})^M \, \forall \delta \in Q  \ (\mu (\alpha ^b A_\delta \Delta \alpha ^a A_\delta ) < \eta ) \}
\]
satisfies
\begin{align*}
\mu \big( \bigcap _{\sigma \in \Sigma (\gamma )} X\setminus \mbox{dom}(\varphi ^b _\sigma ) \big) &<\mu \big( \bigcap _{\sigma \in \Sigma (\gamma )} X\setminus \mbox{dom}(\varphi ^a _\sigma ) \big) + \epsilon = s^a_\gamma + \epsilon
\end{align*}
where the equality follows from the paragraph following (\ref{eqn:Sigma1}) since $\bm{a}$ is free. By (\ref{eqn:Sigma1}) we then have for such $\eta$ and $\bm{b}\in U_\eta$ that
\[
\mu ( \{ x\csuchthat d_{\Phi ^{b,A}}(x,\gamma ^b x) > M \}  ) < s^a_\gamma +\epsilon
\]
which shows that the open neighborhood $U_\eta$ of $\bm{a}$ is contained in the set (\ref{eqn:nonameset}).
\end{proof}

Note that if $\bm{a}\in A(\Gamma , X,\mu )$ and $C_\mu (E_a ) < r$, then $E= E_a$ witnesses that $\bm{a}$ satisfies property (1) of Lemma \ref{lem:open} and therefore $\bm{a}\in A_{F,r}(\Gamma ,X,\mu )$ for all finite $F\subseteq \Gamma$. It is immediate that if $\Gamma$ is generated by a finite set $F_0$ then $A_{F_0,r}(\Gamma ,X,\mu ) = \{ \bm{a}\in A(\Gamma ,X,\mu ) \csuchthat C(\bm{a})<r \}$, so we recover (a slightly stronger formulation of) \cite[Theorem 10.13]{Ke10} in the following Corollary.

\begin{corollary}[Kechris, \cite{Ke10}]
Let $\Gamma$ be an infinite, finitely generated group. Then the cost function $C: A(\Gamma ,X,\mu ) \ra \R$ is upper semicontinuous at each $\bm{a}\in \mbox{\emph{FR}}(\Gamma ,X,\mu )$, i.e.,
\[
\limsup _{\bm{b}\ra \bm{a}}C (\bm{b}) \leq C(\bm{a}) .
\]
\end{corollary}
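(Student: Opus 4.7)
The plan is to deduce this corollary as a quick specialization of Theorem \ref{thm:KecOpen}, once the role of a finite generating set is made explicit. Fix a finite generating set $F_0$ for $\Gamma$. The key observation is that for any $\bm{b}\in A(\Gamma ,X,\mu )$ we have $E_{b\resto \langle F_0\rangle} = E_b$, so the only sub-equivalence relation $E$ of $E_b$ sandwiched between $E_{b\resto \langle F_0\rangle}$ and $E_b$ is $E_b$ itself. Reading off property (1) of Lemma \ref{lem:open}, this gives
\[
A_{F_0,r}(\Gamma ,X,\mu ) \;=\; \{ \bm{b}\in A(\Gamma ,X,\mu )\csuchthat C_\mu (E_b)<r \} \;=\; \{ \bm{b}\csuchthat C(\bm{b})<r \} .
\]

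Now fix a free action $\bm{a}\in \mbox{FR}(\Gamma ,X,\mu )$ and let $r > C(\bm{a})$ be arbitrary. Since $\bm{a}\in A_{F_0,r}\cap \mbox{FR}(\Gamma ,X,\mu )$, Theorem \ref{thm:KecOpen} places $\bm{a}$ in the interior of $A_{F_0,r}(\Gamma ,X,\mu )$, so there is an open neighborhood $U$ of $\bm{a}$ in $A(\Gamma ,X,\mu )$ with $U\subseteq A_{F_0,r}$. By the identification above, every $\bm{b}\in U$ satisfies $C(\bm{b})<r$. Hence $\limsup _{\bm{b}\ra \bm{a}}C(\bm{b})\leq r$, and since $r>C(\bm{a})$ was arbitrary we conclude $\limsup _{\bm{b}\ra \bm{a}}C(\bm{b})\leq C(\bm{a})$.

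There is no real obstacle at this stage: the entire analytic content — the approximation argument that lets one trade an L-graphing on $\bm{a}$ for a nearby one on $\bm{b}$, using freeness of $\bm{a}$ to control the fibers of the word map $\sigma \mapsto \varphi ^b_\sigma$ — has already been absorbed into Theorem \ref{thm:KecOpen}. The only point worth double-checking in writing up is the trivial identification $A_{F_0,r}=\{ C<r\}$, which requires nothing beyond $\langle F_0\rangle =\Gamma$.
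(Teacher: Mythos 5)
Your proposal is correct and is essentially identical to the paper's own argument: the paper makes the same observation that finite generation forces $A_{F_0,r}(\Gamma ,X,\mu ) = \{ \bm{b}\csuchthat C(\bm{b})<r \}$ and then reads off upper semicontinuity at free actions from Theorem \ref{thm:KecOpen}. Nothing is missing.
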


For general groups, Theorem \ref{thm:KecOpen} has several consequences for cost and weak containment. It will be helpful to introduce the following notation and definitions.

\begin{definition}\label{def:pseudo}
Let $E_0,E_1,E_2,\dots$, and $E$ be m.p.\ countable Borel equivalence relations on $(X,\mu )$. The sequence $(E_n)_{n\in \N}$ is called an \emph{exhaustion of $E$}, denoted $(E_n)_{n\in \N}\nnearrow E$, if $E_0\subseteq E_1\subseteq \cdots$, and $E=\bigcup _n E_n$. The \emph{pseudocost} of $E$, denoted $PC_\mu (E)$, is defined by
\[
PC_\mu (E) = \inf \{ \liminf _n C_\mu (E_n) \csuchthat (E_n)_{n\in \N}\nnearrow E \} .
\]
If $\bm{a}= \Gamma \cc ^a (X,\mu )$ is a m.p.\ action of a countable group $\Gamma$ then define the pseudocost of $\bm{a}$ by $PC(\bm{a}):=PC_\mu (E_a)$. Finally, define the pseudocost of $\Gamma$ by $PC(\Gamma ):= \inf \{ PC(\bm{a})\csuchthat \bm{a}\mbox{ is a free m.p. action of }\Gamma \}$.
\end{definition}

It is shown in Corollary \ref{cor:attain} below that the infimum in the definition of $PC_\mu (E)$ is always attained. If $E$ is aperiodic then $PC _\mu (E)\geq 1$ by \cite[20.1 and 21.3]{KM05}. We have $PC_\mu (E)\leq C_\mu (E)$ as witnessed by the constant sequence $(E_n)_{n\in \N}$ given by $E_n=E$ for all $n$. In many cases we actually have the equality $PC_\mu (E)=C_\mu (E)$ as we now show. Recall that a countable Borel equivalence relation $E$ on a standard Borel space $X$ is called \emph{treeable} if there exists an acyclic Borel graph $\mc{T}\subseteq X\times X$ whose connected components are the equivalence classes of $E$. Such a $\mc{T}$ is called a \emph{treeing} of $E$, and we say that $E$ is \emph{treed} by $\mc{T}$ to mean that $\mc{T}$ is a treeing of $E$. A theorem of Gaboriau (Theorem 1 of \cite{Ga00}) states that if $\mu$ is an $E$-invariant measure on $X$ and if $\mc{T}$ is a treeing of $E$ then $C_\mu (E)=C_\mu (\mc{T})= \frac{1}{2}\int _x \mbox{deg}_{\mc{T}}(x)\, d\mu$. This will be used implicitly below.

\begin{proposition}\label{prop:unions}
Let $E$ be a m.p.\ countable Borel equivalence relation on $(X,\mu )$ and let $(E_n)_{n\in \N}$ be an exhaustion of $E$. 
\begin{enumerate}
\item Suppose that $C_\mu (E) <\infty$. Then $C_\mu (E)\leq \liminf _n C_\mu (E_n)$.
\item Suppose that $E$ is treeable. Then $C_\mu (E)\leq \liminf _n C_\mu (E_n)$.
\item (Gaboriau \cite{Ga00}) Suppose that $\lim _n C_\mu (E_n)=1$. Then $C_\mu (E)=1$.
\end{enumerate}
\end{proposition}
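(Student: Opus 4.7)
The three parts share a common strategy: build an L-graphing of $E$ out of near-optimal L-graphings of the approximants $E_n$, with cost close to $\liminf_n C_\mu(E_n)$. Parts (1) and (3) use a telescoping-sum construction, while (2) exploits the rigidity of treeings to avoid the telescoping altogether.

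For (2), fix a treeing $\mathcal T$ of $E$; Gaboriau's formula gives $C_\mu(E)=C_\mu(\mathcal T)$. Let $\mathcal T_n \subseteq \mathcal T$ consist of those edges of $\mathcal T$ whose endpoints are $E_n$-equivalent. Each $\mathcal T_n$ is a subforest of $\mathcal T$ contained in $E_n$, and since any edge $(x,y)\in\mathcal T$ has $x\mathbin{E}y$ and hence $x\mathbin{E_n}y$ for all sufficiently large $n$, the forests $\mathcal T_n$ increase to $\mathcal T$. Monotone convergence on $\tfrac12\int\deg_{\mathcal T_n}\,d\mu$ yields $C_\mu(\mathcal T_n)\nearrow C_\mu(\mathcal T)=C_\mu(E)$. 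Since $\mathcal T_n$ is acyclic, it treeings the sub-equivalence relation $F_n\subseteq E_n$ it generates, so Gaboriau's formula again gives $C_\mu(F_n)=C_\mu(\mathcal T_n)$. Finally, since sub-equivalence relations of a treeable equivalence relation are themselves treeable of no greater cost (Gaboriau), $C_\mu(E_n)\geq C_\mu(F_n)=C_\mu(\mathcal T_n)$, and taking limits finishes (2).

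For (3), after reducing to the aperiodic case (so that $C_\mu(E)\geq 1$ is automatic), fix $\epsilon>0$ and, for each $n$, pick an L-graphing $\Phi_n$ of $E_n$ with $C_\mu(\Phi_n)<C_\mu(E_n)+\epsilon 2^{-n-1}$. I would inductively build $\Psi_1\subseteq\Psi_2\subseteq\cdots$ such that $\Psi_n$ L-graphs $E_n$. The inductive step rests on the following \emph{completion lemma} from cost theory: given an L-graphing $\Psi_{n-1}$ of $E_{n-1}$ close to optimal, we may arrange that $\Phi_n$ contains a sub-L-graphing of $E_{n-1}$ of cost $\geq C_\mu(E_{n-1})-\epsilon 2^{-n-1}$ (apply the lemma with $F=E_{n-1}$, $E=E_n$). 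Set $\Psi_n:=\Psi_{n-1}\cup\Delta_n$ where $\Delta_n$ consists of the restrictions of each $\varphi\in\Phi_n$ to $\{x:(x,\varphi(x))\notin E_{n-1}\}$. A path-pruning argument (replace each intra-$E_{n-1}$-class edge of a $\Phi_n$-path by a $\Psi_{n-1}$-path) shows $\Psi_n$ L-graphs $E_n$, and
\[
C_\mu(\Delta_n)\;\leq\;C_\mu(\Phi_n)-C_\mu(E_{n-1})+\epsilon 2^{-n-1}\;\leq\;\bigl(C_\mu(E_n)-C_\mu(E_{n-1})\bigr)+\epsilon 2^{-n}.
\]
Since $C_\mu(E_n)\to 1$, the series $\sum_n C_\mu(\Delta_n)$ telescopes, yielding an L-graphing $\Psi_\infty:=\bigcup_n\Psi_n$ of $E$ with $C_\mu(\Psi_\infty)\leq 1+2\epsilon$. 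Letting $\epsilon\to 0$ gives $C_\mu(E)\leq 1$, and aperiodicity supplies the reverse inequality.

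Part (1) follows from the same construction applied along a subsequence $(E_{n_k})$ with $C_\mu(E_{n_k})\to\alpha:=\liminf_n C_\mu(E_n)$: the completion lemma legitimately subtracts finite costs because of the hypothesis $C_\mu(E)<\infty$ (equivalently, $\alpha<\infty$), and the telescoping estimate becomes $C_\mu(\Psi_\infty)\leq\alpha+2\epsilon$. The main obstacle in (1) and (3) is the completion lemma, whose standard proof requires careful handling of which edges in $\Phi_n$ are redundant modulo what is already generated; part (2) sidesteps this altogether, at the price of invoking both the treeability of subrelations and the treeing formula for cost as black boxes.
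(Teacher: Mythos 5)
Your proposal has genuine gaps in all three parts, and in (1) and (3) the engine of the argument cannot work as described. The ``completion lemma'' you invoke --- that one may choose a near-optimal L-graphing $\Phi_n$ of $E_n$ containing a sub-L-graphing of $E_{n-1}$ of cost at least $C_\mu(E_{n-1})-\epsilon$ --- is false in general. Take $E$ generated by a free m.p.\ action of the free group $F_2$ and $F\subseteq E$ generated by the restriction to an index-$2$ subgroup $H\leq F_2$, which is free of rank $3$ by Nielsen--Schreier: then $C_\mu(F)=3>2=C_\mu(E)$, so no L-graphing of $E$ of cost $<2+\epsilon$ can contain a sub-L-graphing of $F$ of cost $\geq 3-\epsilon$. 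Your lemma amounts to an additivity statement $C_\mu(E_n)\geq C_\mu(E_{n-1})+(\mbox{relative cost})$, and the failure of exactly this kind of additivity is the obstruction the proposition has to work around. The paper's proof of (1) is instead elementary and uses $C_\mu(E)<\infty$ directly: fix an L-graphing $\{\varphi_i\}$ of $E$ of finite total cost, discard the tail $i>N$ (cost $<\epsilon$), observe that for $n$ large the finitely many remaining generators carry all but $\epsilon$ of their mass inside $E_n$, and adjoin these two cheap pieces to a near-optimal L-graphing of a suitable $E_n$; this gives $C_\mu(E)<\liminf_nC_\mu(E_n)+3\epsilon$. For (3) the paper notes that $C_\mu(E_n)\to1$ forces aperiodicity, passes to an aperiodic exhaustion via the pseudocost machinery, and cites \cite[Proposition 23.5]{KM05}, whose proof generates $E_{n+1}$ over $E_n$ through a small complete section of $E_n$ --- the added cost is controlled by $C_\mu(E_{n+1})-1$, which is small because the costs tend to $1$, not because differences $C_\mu(E_n)-C_\mu(E_{n-1})$ telescope. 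Note also that your parenthetical ``$C_\mu(E)<\infty$, equivalently $\liminf_nC_\mu(E_n)<\infty$'' asserts an equivalence whose nontrivial direction is precisely the paper's open Question \ref{Q:unions}.

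In (2) your setup agrees with the paper's ($\mathcal{T}_n=\mathcal{T}\cap E_n$ increases to $\mathcal{T}$, so $C_\mu(\mathcal{T}_n)\to C_\mu(\mathcal{T})=C_\mu(E)$), but the final inequality $C_\mu(E_n)\geq C_\mu(\mathcal{T}_n)$ does not follow from ``subrelations of a treeable relation have no greater cost'': that statement is false, by the same index-$2$ example above ($E_H\subseteq E_{F_2}$ with $3>2$). What is true, and what the paper uses, is Conley's Lemma \ref{lem:Clinton} (via \cite[Proposition 3.3(iii)]{JKL02}): since $E_n$ is sandwiched between the relation generated by $\mathcal{T}_n$ and $E$, it admits a treeing $\mathcal{T}_n'$ \emph{containing} $\mathcal{T}_n$, whence $C_\mu(E_n)=C_\mu(\mathcal{T}_n')\geq C_\mu(\mathcal{T}_n)$ by Gaboriau's formula. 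It is the containment of treeings, not the mere containment of relations, that makes the cost comparison go in the needed direction.
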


\noindent In terms of pseudocost vs.\ cost this implies

\begin{corollary}\label{cor:PC=C}
Let $E$ be a m.p.\ countable Borel equivalence relation on $(X,\mu )$.
\begin{enumerate}
\item If $C_\mu (E) <\infty$ then $PC_\mu (E)=C_\mu (E)$.
\item If $E$ is treeable then $PC_\mu (E)=C_\mu (E)$.
\item $PC_\mu (E)=1$ if and only if $C_\mu (E) =1$.
\end{enumerate}
\end{corollary}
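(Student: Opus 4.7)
The plan is to deduce all three parts from the corresponding parts of Proposition \ref{prop:unions}, using the trivial inequality $PC_\mu(E) \leq C_\mu(E)$ -- witnessed by the constant exhaustion $E_n = E$ for every $n$ -- as one direction in each case.

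For (1), I would apply Proposition \ref{prop:unions}(1) to an arbitrary exhaustion $(E_n)_{n\in \N} \nnearrow E$: since $C_\mu(E) < \infty$, it yields $C_\mu(E) \leq \liminf_n C_\mu(E_n)$. Taking the infimum of the right-hand side over all exhaustions gives $C_\mu(E) \leq PC_\mu(E)$, and combining this with the trivial inequality produces equality. The proof of (2) is identical, with Proposition \ref{prop:unions}(2) replacing (1).

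For (3), the forward direction $C_\mu(E) = 1 \Rightarrow PC_\mu(E) = 1$ is immediate from (1), since $1 < \infty$. For the converse, suppose $PC_\mu(E) = 1$. I would invoke Corollary \ref{cor:attain} (announced in the same subsection, guaranteeing that the infimum defining $PC_\mu(E)$ is attained) to obtain an exhaustion $(E_n)_{n\in \N} \nnearrow E$ with $\liminf_n C_\mu(E_n) = 1$. Passing to a subsequence $(E_{n_k})_{k\in \N}$ along which the cost actually converges to this liminf produces an exhaustion of $E$ with $\lim_k C_\mu(E_{n_k}) = 1$, whereupon Proposition \ref{prop:unions}(3) delivers $C_\mu(E) = 1$.

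The one subtle point is the converse of (3): Proposition \ref{prop:unions}(3) requires the genuine limit, not merely the liminf, so the subsequential passage is essential and relies on the attained-infimum property of Corollary \ref{cor:attain}. Once that is in hand, the rest of the corollary reduces almost verbatim to the corresponding clauses of Proposition \ref{prop:unions}, making the real work lie in Proposition \ref{prop:unions} rather than here.
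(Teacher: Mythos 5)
Your proposal is correct and is essentially the paper's intended argument: the corollary is presented there as an immediate consequence of Proposition \ref{prop:unions} together with the trivial inequality $PC_\mu(E)\leq C_\mu(E)$, and the paper's own proof of Proposition \ref{prop:unions}(3) likewise routes through the attainment result of Corollary \ref{cor:attain} (a forward reference the paper announces explicitly, and one that is non-circular since \ref{cor:attain} rests only on Lemma \ref{lem:exhaust}). Your care over the $\liminf$-versus-$\lim$ point in part (3) is exactly the right subtlety to flag, though Corollary \ref{cor:attain} already delivers a genuine limit, so the subsequence step is not strictly needed.
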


\begin{proof}[Proof of Proposition \ref{prop:unions}]
(1): Let $r= \liminf _n C_\mu (E_n)$ and fix $\epsilon >0$. We may assume that $r<\infty$. Let $\Phi = \{ \varphi _i \} _{i=0}^\infty$ be an L-graphing of $E$ with $C_\mu (\Phi ) =\sum _{i\geq 0} \mu (\mbox{dom}(\varphi _i)) <\infty$. Let $N$ be so large that $\sum _{i>N} \mu (\mbox{dom}(\varphi _i)) <\epsilon$. 
If $M_0\in \N$ is large enough then for any $n>M_0$ we have $\sum _{i\leq N} \mu (\{ x\in \dom (\varphi _i )\csuchthat (x,\varphi _i(x))\not\in E_n \} )<\epsilon$. Since $r=\liminf _n C_\mu (E_n)$ we can find some $n>M_0$ with $C_\mu (E_n )< r+ \epsilon$. Let $\Psi$ be an L-graphing of $E_n$ with $C_\mu (\Psi )<r+\epsilon$. Then
\[
\Psi \sqcup \{ \varphi _i \}_{i>N} \sqcup \{ \varphi _i \resto \{ x\in \dom (\varphi _i )\csuchthat (x,\varphi _i(x))\not\in E_n \} \} _{i\leq N}
\]
is an $L$-graphing of $E$ with cost strictly less than $r+3\epsilon$.

(2): Let $\mc{T}$ be a treeing of $E$ and let $\mc{T}_n = \mc{T}\cap E_n$. Then $\mc{T} _n\subseteq \mc{T}_{n+1}$ and $\mc{T} = \bigcup _n \mc{T}_n$ so $\lim _n C_\mu (\mc{T}_n) = C_\mu (\mc{T})$.  Let $R_n$ be the equivalence relation generated by $\mc{T}_n$. Then $R_n\subseteq E_n$ and $R_n\cap \mc{T}= \mc{T}_n$. We need the following lemma which is due to Clinton Conley.

\begin{lemma}[C. Conley] \label{lem:Clinton}
Let $F$ be a countable Borel equivalence relation treed by $\mc{T}_F$ and let $R\subseteq F$ be a subequivalence relation treed by $\mc{T}_R \subseteq \mc{T}_F$ (so that $\mc{T}_R= R\cap \mc{T}_F$). Then any equivalence relation $R'$ with $R\subseteq R'\subseteq F$ has a treeing $\mc{T}_{R'}$ with $\mc{T}_R\subseteq \mc{T}_{R'}$.
\end{lemma}

\begin{proof}
Proposition 3.3.(iii) of \cite{JKL02} shows how to obtain a treeing $\mc{T}_{R'}$ of $R'$ from the given treeing $\mc{T}_F$ of $F$. It is clear from their construction that if an edge of $\mc{T}_F$ connects two $R'$-equivalent points, then that edge remains in $\mc{T}_{R'}$. Hence, every edge in $\mc{T}_R$ remains in $\mc{T}_{R'}$.\qedhere[Lemma \ref{lem:Clinton}]
\end{proof}

Apply Lemma \ref{lem:Clinton} to $F=E$, $R=R_n$, and $R'=E_n$, along with $\mc{T}_F=\mc{T}$ and $\mc{T}_R= \mc{T}_n$, to obtain a treeing $\mc{T}_n'$ of $E_n$ with $\mc{T}_n\subseteq \mc{T}_n'$. Then $\liminf _n C_\mu (E_n) = \liminf _n C_\mu (\mc{T}_n' )\geq \liminf _n C_\mu (\mc{T}_n) = C_\mu (\mc{T})$.

(3): Since the $E_n$ are increasing and $\lim _n C_\mu (E_n)=1$ we have $|[x]_{E_n}|\ra \infty$ almost surely (see \cite[22.1]{KM05}), and so $E$ is aperiodic. It follows that $PC_\mu (E)=1$, so by Corollary \ref{cor:attain} there is an exhaustion $(E_n')_{n\in \N}$ of $E$ with $C_\mu (E_n')\ra 1$ such that $E_n'$ is aperiodic for all $n$. It follows from \cite[Proposition 23.5]{KM05} that $C_\mu (E)=1$.
\end{proof}

\begin{remark}
One may also deduce (2) of Proposition \ref{prop:unions} by using the equality $C_\mu (E) -1 = \beta _1(E) -\beta _0(E)$ for treeable $E$ \cite[Corollary 3.23]{Ga02} along with \cite[Corollary 5.13]{Ga02}.
\end{remark}

\begin{corollary}\label{cor:infty}
If $E$ is a m.p.\ treeable equivalence relation on $(X,\mu )$ of infinite cost then any increasing sequence $E_0\subseteq E_1\subseteq \cdots$, with $E=\bigcup _n E_n$ satisfies $C_\mu (E_n) \ra \infty$.
\end{corollary}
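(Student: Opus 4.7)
The plan is to derive this immediately from part (2) of Proposition \ref{prop:unions}. Since $E$ is treeable and $(E_n)_{n \in \N}$ is an exhaustion of $E$, Proposition \ref{prop:unions}(2) gives
\[
C_\mu(E) \leq \liminf_n C_\mu(E_n).
\]
By hypothesis $C_\mu(E) = \infty$, so $\liminf_n C_\mu(E_n) = \infty$, which is equivalent to $\lim_n C_\mu(E_n) = \infty$.

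There is essentially no obstacle here beyond quoting Proposition \ref{prop:unions}(2); the only subtlety is recognizing that $\liminf_n a_n = \infty$ forces $\lim_n a_n = \infty$ for any real sequence, which handles the case without any monotonicity assumption on the sequence $C_\mu(E_n)$. Thus the entire content of the corollary is packaged in the treeable case of Proposition \ref{prop:unions}, which itself relied on Conley's Lemma \ref{lem:Clinton} to extend the treeing $\mc{T}_n = \mc{T} \cap E_n$ of the subequivalence relation generated to a treeing of $E_n$ of no smaller cost.
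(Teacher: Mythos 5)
Your proof is correct and is exactly the paper's argument: the corollary is stated there as an immediate consequence of Proposition \ref{prop:unions}(2), just as you do. Your remark that $\liminf_n C_\mu(E_n)=\infty$ forces $\lim_n C_\mu(E_n)=\infty$ is the only (trivial) extra step, and it is handled correctly.
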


\begin{proof}
Immediate from (2) of Proposition \ref{prop:unions}.
\end{proof}
%
%

\begin{remark} Corollary \ref{cor:infty} may be seen as a generalization of a theorem of Takahasi.

\begin{corollary}[Takahasi \cite{Ta50}]
Suppose $H_0\subseteq H_1\subseteq \cdots$ is an ascending chain of subgroups of a free group $F$, and assume that the $H_n$ have rank uniformly bounded by some natural number $r <\infty$. Then all $H_n$ coincide for $n$ sufficiently large.
\end{corollary}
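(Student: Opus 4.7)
The plan is to derive Takahasi's theorem from Corollary \ref{cor:infty} by viewing the chain of subgroups dynamically. Fix any free measure preserving action $\bm{a} = F \cc^a (X,\mu)$ of $F$ on a standard non-atomic probability space (e.g., the Bernoulli shift $\bm{s}_F$), and set $H = \bigcup_n H_n$. Each restriction $\bm{a}|H_n$ and $\bm{a}|H$ is then a free m.p.\ action, and the orbit equivalence relations satisfy $E_{a|H_0} \subseteq E_{a|H_1} \subseteq \cdots$ with $\bigcup_n E_{a|H_n} = E_{a|H}$, since a pair $(x,h^a x)$ lies in $E_{a|H}$ iff $h \in H_n$ for some $n$.

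By the Nielsen--Schreier theorem every subgroup of $F$ is free; in particular, each $H_n$ and $H$ itself is free. For any subgroup $K \leq F$ with free generating set $S$, the collection $\{k^a : k \in S\}$ is a treeing of $E_{a|K}$: a closed reduced loop at a point $x$ would produce a non-trivial reduced word in $S^{\pm 1}$ fixing $x$, which is ruled out by freeness of $K$ as an abstract group together with freeness of the action $\bm{a}$. Thus each $E_{a|H_n}$ is treeable with $C_\mu(E_{a|H_n}) \leq \mathrm{rank}(H_n) \leq r$, and $E_{a|H}$ is also treeable.

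Now split into cases on $H$. If $H$ is finitely generated, of free rank $s$, fix a free generating set $\{h_1, \dots, h_s\}$ of $H$; each $h_i$ lies in some $H_{n_i}$, so $H \subseteq H_N$ for $N = \max_i n_i$, forcing $H_n = H$ for all $n \geq N$, which is the desired conclusion. Otherwise $H$ has infinite free rank, and any treeing of $E_{a|H}$ arising from an infinite free generating set has infinite cost, so $C_\mu(E_{a|H}) = \infty$. Corollary \ref{cor:infty} then forces $C_\mu(E_{a|H_n}) \to \infty$, contradicting the uniform bound $C_\mu(E_{a|H_n}) \leq r$. I do not anticipate any real obstacle; the one substantive point is the identification of $\{k^a : k \in S\}$ as a treeing, and this is routine given freeness of both the group and the action.
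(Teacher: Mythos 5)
Your proof is correct and follows essentially the same route as the paper: both arguments reduce to the observation that if infinitely many $H_n$ are distinct then $H=\bigcup_n H_n$ has infinite rank (via the finitely-generated case you spell out), and then apply Corollary \ref{cor:infty} to the treeable relation $E_{a\restriction H}$ of infinite cost to contradict the uniform bound $C_\mu(E_{a\restriction H_n})\leq \mathrm{rank}(H_n)\leq r$. The only difference is that you supply the routine details (the treeing coming from a free generating set, Gaboriau's cost-of-a-treeing theorem) that the paper leaves implicit.
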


\begin{proof}
Suppose that infinitely many $H_n$ are distinct. Then $H=\bigcup _n H_n$ has infinite rank, so 
Corollary \ref{cor:infty} implies that for any free m.p.\ action $H\cc ^a (X,\mu )$ we have 
$C_\mu (E_{a\resto H_n}) \ra \infty$, contradicting that $\sup _n C_\mu (E_{a\resto H_n})\leq \sup _n \mbox{rank}(H_n) \leq r$.
\end{proof}
\end{remark}

We will use another characterization of pseudocost in order to show that it respects weak containment. In what follows, a sequence $(Q_n)_{n\in \N}$ of subsets of a countable group $\Gamma$ is called an \emph{exhaustion of $\Gamma$} if $Q_0\subseteq Q_1\subseteq \cdots$ and $\bigcup _n Q_n = \Gamma$. A sequence $(Q_n)_{n\in \N}$ is called a \emph{finite exhaustion of $\Gamma$} if $(Q_n)_{n\in \N}$ is an exhaustion of $\Gamma$ and $Q_n$ is finite for all $n\in \N$.

\begin{lemma}\label{lem:exhaust}
Let $E$ be a m.p.\ countable Borel equivalence relation on $(X,\mu )$ and let $r\in \R \cup \{ \infty \}$. Then the following are equivalent:
\begin{enumerate}
\item[(1)] There exists an exhaustion $(E_n)_{n\in \N}$ of $E$ with $\limsup _n C_\mu (E_n)\leq r$.

\item[(2)] For any countable group $\Gamma$ and any m.p.\ action $\bm{b}=\Gamma \cc ^b (X,\mu )$ with $E=E_b$, and any sequence $(F_n)_{n\in \N}$ of finite subsets of $\Gamma$, there exists a finite exhaustion $(Q_n )_{n\in \N}$ of $\Gamma$ along with an exhaustion $(E_n)_{n\in \N}$ of $E$ such that $F_n\subseteq Q_n$ and $E_{b\resto \langle Q_n\rangle}\subseteq E_n\subseteq E_{b\resto\langle Q_{n+1}\rangle }$ for all $n\in \N$, and $\limsup _n C_\mu (E_n)\leq r$.

\item[(3)] For any countable group $\Gamma$, any m.p.\ action $\Gamma \cc ^b(X,\mu )$ with $E=E_b$, and any sequence $(F_n)_{n\in \N}$ of finite subsets of $\Gamma$, there exists an exhaustion $(E_n)_{n\in \N}$ of $E$ satisfying $E_{b\resto \langle F_n\rangle}\subseteq E_n$ for all $n$ and $\limsup _n C_\mu (E_n)\leq r$.

\item[(4)] For any countable group $\Gamma$ and any m.p.\ action $\bm{b}=\Gamma \cc ^b (X,\mu )$ with $E=E_b$, we have $\bm{b}\in A_{F,r+\epsilon}$ for all finite $F\subseteq \Gamma$ and all $\epsilon >0$.

\item[(5)] There exists a countable group $\Gamma$ and a m.p.\ action $\bm{b}=\Gamma \cc ^b(X,\mu )$ with $E=E_b$ such that $\bm{b}\in A_{F,r+\epsilon}$ for all finite $F\subseteq \Gamma$ and all $\epsilon >0$.

\item[(6)] There exists a countable group $\Gamma$ and a m.p.\ action $\bm{b}=\Gamma \cc ^b(X,\mu )$ with $E=E_b$, along with an exhaustion $(Q_n)_{n\in \N}$ of $\Gamma$ and a (not necessarily increasing) sequence $(E_n)_{n\in \N}$ of subequivalence relations of $E$ such that $E_{b\resto \langle Q_n\rangle}\subseteq E_n$ and $\limsup _n C_\mu (E_n)\leq r$.
\end{enumerate}
\end{lemma}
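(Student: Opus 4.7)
The plan is to prove everything through the central cycle $(1) \Rightarrow (4) \Rightarrow (2) \Rightarrow (1)$, which closes the equivalence of $(1)$, $(2)$, $(4)$, and then clip on $(3)$, $(5)$, $(6)$ via the short implications $(2) \Rightarrow (3) \Rightarrow (4)$, $(4) \Rightarrow (5) \Rightarrow (6) \Rightarrow (5) \Rightarrow (1)$. All the substantive work is concentrated in the two arrows $(1) \Rightarrow (4)$ and $(4) \Rightarrow (2)$; every other implication is either definitional or a direct application of Lemma \ref{lem:open}.

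For $(1) \Rightarrow (4)$, fix an exhaustion $(E_n') \nnearrow E$ with $\limsup_n C_\mu(E_n') \le r$, together with $\Gamma$ and $\bm{b}$ realising $E = E_b$, a finite $F \subseteq \Gamma$, and $\epsilon > 0$. I would work with the join $R_n := E_n' \vee E_{b\resto\langle F\rangle}$, L-graphed by combining an L-graphing of $E_n'$ of cost within $\epsilon/3$ of $C_\mu(E_n')$ with the filler maps $\gamma^b \resto \{x : (x,\gamma^b x) \notin E_n'\}$, one for each $\gamma \in F$. Since $E_n' \nnearrow E_b$, each filler set has measure tending to $0$, and finiteness of $F$ gives
\[
C_\mu(R_n) \le C_\mu(E_n') + \tfrac{\epsilon}{3} + \sum_{\gamma \in F}\mu(\{x : (x,\gamma^b x) \notin E_n'\}) < r+\epsilon
\]
for all sufficiently large $n$. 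Lemma \ref{lem:open}.(1) then places $\bm{b}$ in $A_{F,r+\epsilon}$.

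The main obstacle is $(4) \Rightarrow (2)$, where the sandwich $E_{b\resto\langle Q_n\rangle} \subseteq E_n \subseteq E_{b\resto\langle Q_{n+1}\rangle}$ must be maintained together with monotonicity and exhaustion of $(E_n)$. I would run one synchronised induction. Enumerate $\Gamma = \{\gamma_0,\gamma_1,\dots\}$, put $Q_0 = F_0 \cup \{\gamma_0\}$, and at stage $n$ apply hypothesis $(4)$ with the finite set $Q_n$ and error $1/n$, then invoke Lemma \ref{lem:open}.(2) to extract a finite $Q_n'' \supseteq Q_n$ and a subequivalence $E_n$ with
\[
E_{b\resto\langle Q_n\rangle} \subseteq E_n \subseteq E_{b\resto\langle Q_n''\rangle}, \qquad C_\mu(E_n) < r+\tfrac{1}{n}.
\]
Then set $Q_{n+1} := Q_n'' \cup F_{n+1} \cup \{\gamma_{n+1}\}$. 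The crucial gain is that monotonicity of $(E_n)$ comes for free from the sandwich itself: $E_{n-1} \subseteq E_{b\resto\langle Q_{n-1}''\rangle} \subseteq E_{b\resto\langle Q_n\rangle} \subseteq E_n$. Everything else in $(2)$ is direct: $(Q_n)$ is increasing, finite at each step, contains $F_n$, and exhausts $\Gamma$ since $\gamma_n \in Q_n$; $\bigcup_n E_n \supseteq \bigcup_n E_{b\resto\langle Q_n\rangle} = E_{b\resto\Gamma} = E$; and $\limsup_n C_\mu(E_n) \le r$.

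The clip-on arrows are routine. $(2)\Rightarrow(1)$ and $(2)\Rightarrow(3)$ just forget data, noting $E_{b\resto\langle F_n\rangle} \subseteq E_{b\resto\langle Q_n\rangle} \subseteq E_n$ for the latter; $(3)\Rightarrow(4)$ applies $(3)$ with the constant sequence $F_n \equiv F$ and uses Lemma \ref{lem:open}.(1) on any $E_n$ with $C_\mu(E_n) < r+\epsilon$; $(4)\Rightarrow(5)$ is trivial; and $(5) \Leftrightarrow (6)$ is handled in both directions by picking $n$ with $F \subseteq Q_n$ and $C_\mu(E_n) < r+\epsilon$, again via Lemma \ref{lem:open}.(1). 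Finally, to close the loop, $(5) \Rightarrow (1)$ is obtained by rerunning the induction of $(4) \Rightarrow (2)$ with the particular pair $(\Gamma^*,\bm{b}^*)$ supplied by $(5)$: that induction only ever invokes the property $A_{F,r+1/n}$ for the single action in play, so it produces an increasing exhaustion of $E$ with $\limsup_n C_\mu(E_n) \le r$, which is exactly $(1)$.
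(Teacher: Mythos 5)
Your proof is correct and follows essentially the same route as the paper: the two substantive implications $(1)\Rightarrow(4)$ (join $E_n'$ with the filler maps $\gamma^b\resto\{x:(x,\gamma^b x)\notin E_n'\}$ and invoke Lemma \ref{lem:open}) and $(4)\Rightarrow(2)$ (the synchronised induction via Lemma \ref{lem:open}.(2), with monotonicity falling out of the sandwich) are exactly the paper's arguments, and the remaining arrows differ only in how the easy implications among $(3)$, $(5)$, $(6)$ are arranged into a cycle.
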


\begin{remark}
It is clear that each of the conditions (1), (2), (3), and (6) of Lemma \ref{lem:exhaust} are equivalent to their counterparts in which "$\limsup$" is replaced with "$\liminf$" or with "$\lim$."
\end{remark}

\begin{proof}[Proof of \ref{lem:exhaust}]
(1)$\Ra$(4): Assume that $(E_n)_{n\in \N}$ is a sequence as in (1). Let $\Gamma$ and $\bm{b} =\Gamma \cc ^b(X,\mu )$ with $E=E_b$ be given. Fix a finite $F\subseteq \Gamma$ and $\epsilon >0$. Let $n\in \N$ be large enough so that $C_\mu (E_n)< r+\epsilon /2$ and $\sum _{\gamma \in F}\mu (\{ x \csuchthat \gamma ^b x\not\in [x]_{E_n} \} )<\epsilon /2$. Let $\Phi = \big\{ \gamma ^b \resto \{ x\csuchthat \gamma ^b x\not\in [x]_{E_{n}} \} \big\} _{\gamma \in F}$. Then $R:=E_n\vee E_\Phi$ is a subequivalence relation of $E$ containing $E_{b\resto \langle F \rangle}$ with $C_\mu (R)\leq C_\mu (E_n )+ C_\mu (\Phi ) <r+ \epsilon /2 + \epsilon /2 = r+ \epsilon$. Then $R$ witnesses that $\bm{b}\in A_{F,r+\epsilon }(\Gamma ,X,\mu )$. This shows that (4) holds.

(4)$\Ra$(2): Assume (4) holds. Let $\Gamma$ and $\bm{b} =\Gamma \cc ^b(X,\mu )$ with $E=E_b$ be given along with a sequence $(F_n)_{n\in \N}$ of finite subsets of $\Gamma$. We may assume without loss of generality that $(F_n)_{n\in \N}$ is a finite exhaustion of $\Gamma$. Fix some sequence of real numbers $\epsilon _n >0$ with $\epsilon _n\ra 0$. We proceed by induction to construct sequences $(Q_n)_{n\in \N}$ and $(E_n)_{n\in \N}$ as in (2). Define $Q_0= F_0$. Suppose for induction that we have constructed finite subsets $Q_0\subseteq Q_1\subseteq \cdots Q_k$ of $\Gamma$ and equivalence relations $E_0,\dots ,E_{k-1}$ with $F_i\subseteq Q_i$ for all $i\leq k$ and $E_{b\resto\langle Q_i\rangle}\subseteq E_i\subseteq E_{b\resto \langle Q_{i+1}\rangle }$ for all $i<k$. By (4) we have $\bm{b}\in A_{Q_k \cup F_{k+1},r+\epsilon _k}$, so by Lemma \ref{lem:open} there exists a finite $Q_{k+1}\subseteq \Gamma$ containing $Q_k\cup F_{k+1}$ and a subequivalence relation $E_k$ of $E_b$ with $E_{b\resto \langle Q_k\rangle}\subseteq E_k \subseteq E_{b\resto \langle Q_{k+1}\rangle}$ and $C_\mu (E_k)<r+\epsilon _k$. Then $Q_{k+1}$ and $E_k$ extend the induction to the next stage. We obtain from this inductive procedure sequences $(Q_n)$ and $(E_n)$ which satisfy (2) by construction.

(2)$\Ra$(3) is clear. (3)$\Ra$(6) holds since there always exists some countable group $\Gamma$ and some m.p.\ action $\bm{b}=\Gamma \cc ^b (X,\mu )$ with $E=E_b$ (see \cite{FM77}). (6)$\Ra$(5) is routine. Finally, the proof of $(4)\Ra (2)$ shows that $(5)\Ra (1)$.
\end{proof}

\begin{remark}\label{rem:aper}
If the the equivalence relation $E$ in Lemma \ref{lem:exhaust} is aperiodic then condition (1) implies the stronger statement (1${}^*$) in which the equivalence relations $E_n$ are additionally required to be aperiodic. Indeed, assume that $E$ is aperiodic and that (1) holds. Then (3) holds as well. By \cite[3.5]{Ke10} there is an aperiodic $T\in [E]$. Take any countable subgroup $\Gamma \leq [E]$ that generates $E$ and with $T\in \Gamma$. Then $\Gamma$ naturally acts on $(X,\mu )$ as a subgroup of $[E]$. Take some finite exhaustion $\{ F_n\} _{n\in \N}$ of $\Gamma$ with $T\in F_0$. Now apply (3) of Lemma \ref{lem:exhaust} to this sequence $\{ F_n \} _{n\in \N}$ to obtain the desired aperiodic sequence satisfying (1${}^*$).

Similarly, if $E$ is aperiodic then (3), and (6) of Lemma \ref{lem:exhaust} are each equivalent to their counterparts (3${}^*$), and (6${}^*$), in which the equivalence relations $E_n$ are each required to be aperiodic.
\end{remark}

\begin{corollary}\label{cor:attain}
Let $E$ be a m.p.\ countable Borel equivalence relation on $(X,\mu )$. There exists an exhaustion $(E_n)_{n\in \N}\nnearrow E$ with $\lim _n C_\mu (E_n)=PC_\mu (E)$. In other words, the infimum in the definition of pseudocost is always attained. In addition, if $E$ is aperiodic then such an exhaustion $(E_n)_{n\in \N}$ exists with $E_n$ aperiodic for all $n$.
\end{corollary}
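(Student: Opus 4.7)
The plan is to derive the corollary from Lemma \ref{lem:exhaust} together with the bare definition of $PC_\mu$. Set $r = PC_\mu(E)$. The case $r = \infty$ is trivial: let $\bm{b} = \Gamma \cc^b (X,\mu)$ be any m.p.\ action with $E = E_b$ and $(F_n)_n$ any finite exhaustion of $\Gamma$; then $E_n := E_{b \resto \langle F_n\rangle}$ exhausts $E$, and the definition of $PC_\mu$ forces $\liminf_n C_\mu(E_n) \geq r = \infty$, so $\lim_n C_\mu(E_n) = \infty$.

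Assume from now on that $r < \infty$. For each $k \geq 1$, the infimum in the definition of $PC_\mu(E)$ supplies an exhaustion $(E_n^{(k)})_n \nnearrow E$ with $\liminf_n C_\mu(E_n^{(k)}) < r + 1/k$; passing to a subsequence (still an exhaustion of $E$), we may as well assume $\limsup_n C_\mu(E_n^{(k)}) < r + 1/k$. This is condition (1) of Lemma \ref{lem:exhaust} for the value $r + 1/k$. Fix any m.p.\ action $\bm{b} = \Gamma \cc^b (X,\mu)$ of a countable group $\Gamma$ with $E = E_b$. The implication (1)$\Rightarrow$(4) of Lemma \ref{lem:exhaust} applied to $(E_n^{(k)})_n$ then yields $\bm{b} \in A_{F, r + 1/k + \epsilon'}(\Gamma ,X,\mu )$ for every finite $F \subseteq \Gamma$ and every $\epsilon' > 0$. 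Given an arbitrary finite $F \subseteq \Gamma$ and $\epsilon > 0$, choose $k$ with $1/k < \epsilon$ and set $\epsilon' := \epsilon - 1/k$ to obtain $\bm{b} \in A_{F, r + \epsilon}(\Gamma ,X,\mu )$. Thus condition (5) of Lemma \ref{lem:exhaust} holds for the value $r$.

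By the implication (5)$\Rightarrow$(1) of Lemma \ref{lem:exhaust}, there exists an exhaustion $(E_n)_n \nnearrow E$ with $\limsup_n C_\mu(E_n) \leq r$; on the other hand, $\liminf_n C_\mu(E_n) \geq PC_\mu(E) = r$ follows directly from the definition of pseudocost, so in fact $\lim_n C_\mu(E_n) = r$. For the aperiodic strengthening, one replaces (5)$\Rightarrow$(1) with its aperiodic version (5)$\Rightarrow$(1${}^*$) granted by Remark \ref{rem:aper}, producing an exhaustion with each $E_n$ aperiodic. The only subtle point in this argument is that ``$\lim = r$'' is secured not by an explicit construction of a convergent sequence, but by sandwiching the upper bound $\limsup \leq r$ supplied by Lemma \ref{lem:exhaust} against the lower bound $\liminf \geq r$ built into the definition of $PC_\mu$; the real content has already been deposited into Lemma \ref{lem:exhaust}.
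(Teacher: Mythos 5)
Your argument is correct and follows essentially the same route as the paper's proof: extract from the definition of $PC_\mu(E)$ exhaustions witnessing condition (1) of Lemma \ref{lem:exhaust} at level $r+\delta$, push through (1)$\Rightarrow$(4) to get $\bm{b}\in A_{F,r+\epsilon}$ for all finite $F$ and $\epsilon>0$ (i.e., condition (5) at level $r$), then apply (5)$\Rightarrow$(1) and sandwich against the definitional lower bound $\liminf_n C_\mu(E_n)\geq PC_\mu(E)$, with Remark \ref{rem:aper} supplying the aperiodic refinement. The separate treatment of $r=\infty$ and the subsequence extraction converting $\liminf$ to $\limsup$ are harmless cosmetic differences.
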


\begin{proof}
Let $s= PC_\mu (E)$. By definition of $PC_\mu (E)$, for any $\delta >0$ there exists a sequence $(E_n^\delta )_{n\in \N}\nnearrow E$ with $\limsup _n C_\mu (E_n^\delta ) < s+\delta /2$. By \cite{FM77} there is a countable group $\Gamma$ and some action $\bm{b}=\Gamma \cc ^b (X,\mu )$ of $\Gamma$ such that $E=E_b$. Now, $E$ satisfies (1) of Lemma \ref{lem:exhaust} with respect to the parameter $r=s+\delta /2$, so by (1)$\Ra$(4) of Lemma \ref{lem:exhaust} we have $\bm{b}\in A_{F,s+\delta /2 +\epsilon}$ for all finite $F\subseteq \Gamma$ and $\epsilon >0$. Taking $\epsilon = \delta /2$ shows that $\bm{b}\in A_{F,r+\delta}$ for all finite $F\subseteq \Gamma$. Since $\delta >0$ was arbitrary this shows that $\bm{b}$ satisfies (5) of Lemma \ref{lem:exhaust} with respect to the parameter $s$, so by (5)$\Ra$(1) Lemma \ref{lem:exhaust} there exists a sequence $(E_n)_{n\in \N}\nnearrow E$ with $\limsup _n C_\mu (E_n)\leq s$. Since $s = PC_\mu (E) \leq \liminf _n C_\mu (E_n)$ this shows that in fact $\lim _n C_\mu (E_n) = PC_\mu (E)$. By remark \ref{rem:aper} if $E$ is aperiodic then we can choose such a sequence $(E_n)_{n\in \N}$ with $E_n$ aperiodic for all $n$.
\end{proof}

\begin{corollary}
Let $E$ be an aperiodic m.p.\ countable Borel equivalence relation on $(X,\mu )$. Assume that $E$ is ergodic. Then for any exhaustion $(R_n )_{n\in \N}$ of $E$ satisfying $C_\mu (R_n)<\infty$ for all $n\in \N$, there exists an exhaustion $(E_n)_{n\in \N}$ of $E$ with $R_n\subseteq E_n$ for all $n\in \N$ and $\lim _n C_\mu (E_n) = PC_\mu (E)$.
\end{corollary}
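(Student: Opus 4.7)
The plan is to build the exhaustion $(E_n)$ by joining the given $(R_n)$ with an exhaustion that achieves the pseudocost. First, invoke Corollary \ref{cor:attain} to fix an exhaustion $(S_m)_{m\in \N} \nnearrow E$ with $\lim_m C_\mu(S_m) = PC_\mu(E) =: s$. The candidate will be $E_n := R_n \vee S_{m(n)}$ for a rapidly growing choice of $m(n)$.

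The main technical step is the estimate: for every fixed $n$ and every $\epsilon > 0$ one has $C_\mu(R_n \vee S_m) \leq C_\mu(S_m) + \epsilon$ for all sufficiently large $m$. To prove this, fix an L-graphing $\Phi = \{\varphi_i\}_{i\in \N}$ of $R_n$ with $C_\mu(\Phi) = \sum_i \mu(\mbox{dom}(\varphi_i)) < \infty$, using the finite-cost hypothesis on $R_n$. Since $(S_m) \nnearrow E$ and $R_n \subseteq E$, the sets
\[
B_{i,m} := \{ x \in \mbox{dom}(\varphi_i) \csuchthat (x,\varphi_i(x)) \notin S_m \}
\]
decrease ($\mu$-a.e.) to $\emptyset$ as $m \to \infty$, and summability of $\mu(\mbox{dom}(\varphi_i))$ together with dominated convergence give $\sum_i \mu(B_{i,m}) \to 0$. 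The restricted L-graphing $\Phi_m := \{ \varphi_i \resto B_{i,m} \}_{i\in \N}$ has cost $\sum_i \mu(B_{i,m})$, and a short chain-chasing argument shows $R_n \subseteq S_m \vee E_{\Phi_m}$: any $\Phi$-edge $(x,\varphi_i(x))$ either already lies in $S_m$, or else $x \in B_{i,m}$ and the edge lies in $E_{\Phi_m}$. Combining $\Phi_m$ with an L-graphing of $S_m$ of near-optimal cost yields $C_\mu(R_n \vee S_m) \leq C_\mu(S_m) + \sum_i \mu(B_{i,m})$, which establishes the estimate.

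With this estimate in hand, pick $m(n)$ strictly increasing with $m(n) \geq n$ and with corresponding error less than $1/n$, and set $E_n := R_n \vee S_{m(n)}$. Monotonicity of $(R_n)$ and of $(S_{m(n)})$ gives $E_n \subseteq E_{n+1}$, while $\bigcup_n E_n \supseteq \bigcup_n R_n = E$ shows that $(E_n) \nnearrow E$ with $R_n \subseteq E_n$. Now $C_\mu(E_n) \leq C_\mu(S_{m(n)}) + 1/n$ forces $\limsup_n C_\mu(E_n) \leq s$, while applying the definition of pseudocost to the exhaustion $(E_n)$ forces $\liminf_n C_\mu(E_n) \geq PC_\mu(E) = s$ (this inequality also handles the case $s = \infty$ trivially). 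Hence $\lim_n C_\mu(E_n) = s$, as desired. The main obstacle is the approximation in the second paragraph, where the finite-cost hypothesis on each $R_n$ is precisely what makes the dominated convergence step go through; ergodicity and aperiodicity play no essential role in this approach.
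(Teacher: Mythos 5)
Your proof is correct, but it takes a genuinely different route from the one in the paper. The paper's argument is group-theoretic: it uses ergodicity of $E$ to invoke \cite[Lemma 27.7]{KM05} and realize each finite-cost $R_n$ as the orbit equivalence relation of an action $\bm{b}_n$ of a finitely generated group $\Gamma _n$, assembles these into an action $\bm{b}$ of the free product $\Bast _n \Gamma _n$ with $E_b=E$, and then feeds the finite generating sets $F_n$ of the $\Gamma _n$ into the implication (1)$\Ra$(3) of Lemma \ref{lem:exhaust} to produce the exhaustion $(E_n)$ with $R_n = E_{b\resto \Gamma _n}\subseteq E_n$. You bypass the group realization entirely and work directly with L-graphings: your key estimate $C_\mu (R_n\vee S_m)\leq C_\mu (S_m)+\epsilon$ for large $m$ is in effect an infinitary version of the "add the cheap missing edges" step from the proof of (1)$\Ra$(4) of Lemma \ref{lem:exhaust}, with the finite set $F$ replaced by a countable graphing of finite total cost and the dominated convergence argument playing the role that truncation plays in the proof of Proposition \ref{prop:unions}.(1). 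One small point worth making explicit is that the union of $\Phi _m$ with a near-optimal graphing of $S_m$ generates exactly $R_n\vee S_m$ (both inclusions $R_n\vee S_m\subseteq S_m\vee E_{\Phi _m}$ and $S_m\vee E_{\Phi _m}\subseteq S_m\vee R_n$ are needed), so the cost bound is legitimate; this is immediate but should be said. What your approach buys is a self-contained proof that avoids the external citation and, as you observe, drops the ergodicity and aperiodicity hypotheses altogether, so your argument actually proves a slightly more general statement; what the paper's approach buys is that the conclusion is extracted with no new estimates, as a formal consequence of the already-established equivalences in Lemma \ref{lem:exhaust}.
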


\begin{proof}
Let $(R_n ) _{n\in \N}$ be an exhaustion of $E$ with $C_\mu (R_n)<\infty$ for all $n$. Since $E$ is ergodic we many apply \cite[Lemma 27.7]{KM05} to obtain, for each $n\in \N$, a finitely generated group $\Gamma _n$ and a m.p.\ action $\bm{b}_n = \Gamma _n \cc ^{b_n}(X,\mu )$ with $R_n = R_{b_n}$. There is a unique action $\bm{b}=\Gamma \cc ^a (X,\mu )$ of the free product $\Gamma = \Bast _{n\in \N}\Gamma _n$ satisfying $\bm{b}\resto \Gamma _n = \bm{b}_n$ for all $n\in \N$. For each $n\in \N$ let $F_n$ be a finite generating set for $\Gamma _n$. By Corollary \ref{cor:attain} there exists an exhaustion $(E_n' ) _{n\in \N}$ of $E$ with $\lim _n C_\mu (E_n') = r$ where $r=PC_\mu (E)$. This shows that $E$ satisfies (1) of Lemma \ref{lem:exhaust}, so, by applying (3) of Lemma \ref{lem:exhaust} to the action $\bm{b}$ and the sequence $(F_n)_{n\in \N}$, we obtain an exhaustion $(E_n)_{n\in \N}$ of $E$ with $R_n = E_{b\resto \Gamma _n} \subseteq E_n$ and $\limsup _n C_\mu (E_n) \leq r$. Since $r=PC_\mu (E)$ it follows that $\lim _n C_\mu (E_n)=PC_\mu (E)$.
\end{proof}

\begin{corollary}\label{cor:PCdef}
Let $\bm{a}=\Gamma \cc ^a (X,\mu )$ be a m.p.\ action of $\Gamma$. Then $PC(\bm{a})\leq r$ if and only if $\bm{a}\in A_{F,r+\epsilon}$ for every finite $F\subseteq \Gamma$ and $\epsilon >0$.
\end{corollary}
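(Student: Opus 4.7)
The plan is to deduce Corollary \ref{cor:PCdef} directly from Lemma \ref{lem:exhaust} applied to the orbit equivalence relation $E = E_a$, together with Corollary \ref{cor:attain} (which says that the infimum defining $PC_\mu(E)$ is attained). So the work is essentially bookkeeping: I need to match condition (1) of Lemma \ref{lem:exhaust} to the inequality $PC(\bm{a})\leq r$, and condition (4) (or (5)) to the membership assertion $\bm{a}\in A_{F,r+\epsilon}$.

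For the forward direction, I assume $PC(\bm{a})\leq r$. By Corollary \ref{cor:attain}, applied to $E=E_a$, there is an exhaustion $(E_n)_{n\in\N}\nnearrow E_a$ with $\lim_n C_\mu(E_n)=PC(\bm{a})\leq r$; in particular $\limsup_n C_\mu(E_n)\leq r$, which is exactly condition (1) of Lemma \ref{lem:exhaust} with parameter $r$. Invoking the implication (1)$\Rightarrow$(4) of Lemma \ref{lem:exhaust} with the countable group $\Gamma$ and action $\bm{b}=\bm{a}$, we conclude $\bm{a}\in A_{F,r+\epsilon}(\Gamma,X,\mu)$ for every finite $F\subseteq\Gamma$ and every $\epsilon>0$.

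For the reverse direction, I assume $\bm{a}\in A_{F,r+\epsilon}(\Gamma,X,\mu)$ for every finite $F\subseteq\Gamma$ and every $\epsilon>0$. Taking $\Gamma'=\Gamma$ and $\bm{b}=\bm{a}$ witnesses condition (5) of Lemma \ref{lem:exhaust} with parameter $r$, so by (5)$\Rightarrow$(1) of that lemma there is an exhaustion $(E_n)_{n\in\N}\nnearrow E_a$ with $\limsup_n C_\mu(E_n)\leq r$. Then by the very definition of pseudocost, $PC(\bm{a})=PC_\mu(E_a)\leq\liminf_n C_\mu(E_n)\leq\limsup_n C_\mu(E_n)\leq r$.

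There is no real obstacle here; the corollary is a reformulation of the equivalences already established in Lemma \ref{lem:exhaust}, and the only subtlety is invoking Corollary \ref{cor:attain} in the forward direction to ensure that we can realize the value $PC(\bm{a})$ (and not merely approach it from above) by a single exhaustion, which then feeds directly into condition (1) with the desired parameter $r$.
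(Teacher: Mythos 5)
Your proof is correct and follows essentially the same route as the paper, which simply cites the equivalence (1)$\Leftrightarrow$(4) of Lemma \ref{lem:exhaust}; you have just made explicit the (correct) use of Corollary \ref{cor:attain} to convert $PC(\bm{a})\leq r$ into an exhaustion witnessing condition (1), and the use of (5)$\Rightarrow$(1) for the converse.
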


\begin{proof}
This follows from the equivalence (1)$\IFF$(4) from Lemma \ref{lem:exhaust}.
\end{proof}

%
\begin{corollary}\label{cor:AFrPC}
Let $\bm{a} = \Gamma \cc ^a (X,\mu )$ and $\bm{b} =\Gamma \cc ^b(Y,\nu )$ be measure preserving actions of a countable group $\Gamma$. Assume that $\bm{a}$ is free. If $\bm{a}\prec \bm{b}$ then $PC(\bm{b})\leq PC(\bm{a})$.
\end{corollary}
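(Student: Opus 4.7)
The plan is to string together three things already in hand: the characterization of pseudocost through the sets $A_{F,r}$ given by Corollary \ref{cor:PCdef}, the openness statement of Theorem \ref{thm:KecOpen}, and the fact (Proposition 10.1 of \cite{Ke10}) that $\bm{a}\prec\bm{b}$ means $\bm{a}$ lies in the $\Aut(X,\mu)$-orbit closure of $\bm{b}$ once $\bm{b}$ is realized on the same non-atomic base space. If $PC(\bm{a})=\infty$ the conclusion is vacuous, so set $r:=PC(\bm{a})<\infty$. By Corollary \ref{cor:PCdef} it is enough to show $\bm{b}\in A_{F,r+\epsilon}$ for every finite $F\subseteq\Gamma$ and every $\epsilon>0$.

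Fix such $F$ and $\epsilon$. Applying Corollary \ref{cor:PCdef} to $\bm{a}$ with parameter $\epsilon/2$ gives $\bm{a}\in A_{F,r+\epsilon/2}(\Gamma,X,\mu)$. Since $\bm{a}$ is free, Theorem \ref{thm:KecOpen} places $\bm{a}$ in the interior of $A_{F,r+\epsilon/2}(\Gamma,X,\mu)$, so there is an open neighbourhood $U$ of $\bm{a}$ in $A(\Gamma,X,\mu)$ with $U\subseteq A_{F,r+\epsilon/2}(\Gamma,X,\mu)$.

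We may assume $(Y,\nu)$ is non-atomic and standard (if $\Gamma$ is infinite this follows from $\bm{a}\prec\bm{b}$ with $\bm{a}$ free, and the finite-$\Gamma$ case is trivial). Transporting $\bm{b}$ along a measure isomorphism $(Y,\nu)\cong(X,\mu)$, we regard $\bm{b}$ as an element of $A(\Gamma,X,\mu)$; this is legitimate because $A_{F,r+\epsilon/2}(\Gamma,\cdot,\cdot)$ is orbit-equivalence invariant, hence in particular isomorphism invariant. By \cite[Proposition 10.1]{Ke10} the $\Aut(X,\mu)$-orbit of $\bm{b}$ is dense in $\{\bm{c}\in A(\Gamma,X,\mu):\bm{c}\prec\bm{b}\}$, and $\bm{a}$ belongs to that set by hypothesis. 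Hence some conjugate $T\cdot\bm{b}$ with $T\in\Aut(X,\mu)$ lies in $U$, so $T\cdot\bm{b}\in A_{F,r+\epsilon/2}$; isomorphism invariance then yields $\bm{b}\in A_{F,r+\epsilon/2}\subseteq A_{F,r+\epsilon}$.

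Since $F$ and $\epsilon$ were arbitrary, Corollary \ref{cor:PCdef} gives $PC(\bm{b})\leq r=PC(\bm{a})$. The only delicate point in the plan is the choice of slack: Theorem \ref{thm:KecOpen} gives openness of $A_{F,r+\epsilon/2}\cap\mathrm{FR}$ inside $A(\Gamma,X,\mu)$ (not merely inside the free actions), which is what allows a genuine orbit element—rather than a mere limit point—of $\bm{b}$ to be captured in a set witnessing the bound. Freeness of $\bm{a}$ is used precisely at this step; one should check that no analogous openness is needed for $\bm{b}$, and indeed it is not, because membership in $A_{F,r+\epsilon/2}$ is isomorphism-invariant and so transfers back to $\bm{b}$ without any regularity hypothesis on $\bm{b}$.
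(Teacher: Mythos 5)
Your proof is correct and follows essentially the same route as the paper's: both pass through Corollary \ref{cor:PCdef}, use freeness of $\bm{a}$ together with Theorem \ref{thm:KecOpen} to place $\bm{a}$ in the interior of $A_{F,r+\epsilon}$, and then invoke \cite[Proposition 10.1]{Ke10} to capture an isomorphic copy of $\bm{b}$ inside that open set, transferring membership back to $\bm{b}$ by isomorphism invariance. The $\epsilon/2$ slack and the explicit remarks about realizing $\bm{b}$ on a non-atomic base space are harmless refinements of the same argument.
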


\begin{proof}
Let $r=PC(\bm{a})$. Fix $F\subseteq \Gamma$ finite and $\epsilon >0$. 
Since $PC(\bm{a})=r$ we have $\bm{a}\in A_{F,r+\epsilon}(\Gamma ,X,\mu )$ by Corollary \ref{cor:PCdef}. Since $\bm{a}$ is free, Theorem \ref{thm:KecOpen} implies that $\bm{a}$ is contained in the interior of $A_{F,r+\epsilon}(\Gamma ,X,\mu )$, so by \cite[Proposition 10.1]{Ke10} there exists some $\bm{c}\in A_{F,r+\epsilon}(\Gamma ,X,\mu )$ which is isomorphic to $\bm{b}$. Hence $\bm{b}\in A_{F,r+\epsilon}(\Gamma ,Y,\nu )$ and therefore $PC(\bm{b})\leq r$ by Corollary \ref{cor:PCdef}.
\end{proof}

\begin{corollary}\label{cor:incr}
Let $\bm{a} =\Gamma \cc ^a (X,\mu )$ and $\bm{b} = \Gamma \cc ^b (Y,\nu )$ be measure preserving actions of a countably infinite group $\Gamma$. Assume that $\bm{a}$ is free and is weakly contained in $\bm{b}$. Then there exists an exhaustion $(E_n )_{n\in \N}$ of $E$ with $\lim _n C_\mu (E_n)\leq C(\bm{a})$ and $E_n$ aperiodic for all $n\in \N$.
\end{corollary}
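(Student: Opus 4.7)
The plan is to combine Corollary \ref{cor:AFrPC} with Corollary \ref{cor:attain}. First I would invoke Corollary \ref{cor:AFrPC}: since $\bm{a}$ is free and $\bm{a}\prec\bm{b}$, we have $PC(\bm{b})\leq PC(\bm{a})$. The constant exhaustion $E_n=E_a$ witnesses the trivial inequality $PC(\bm{a})\leq C(\bm{a})$, so altogether $PC_\nu(E_b)=PC(\bm{b})\leq C(\bm{a})$. This gives the required numerical bound; what remains is producing an exhaustion realizing it with each $E_n$ aperiodic.

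To apply the aperiodic clause of Corollary \ref{cor:attain}, I must verify that $E_b$ itself is aperiodic. Freeness of $\bm{a}$ together with $\Gamma$ infinite yields $\bm{s}_\Gamma\prec\bm{a}$ by Theorem \ref{thm:AW}, and hence $\bm{s}_\Gamma\prec\bm{b}$ by transitivity of weak containment. I would then show $\bm{b}$ must itself be free by the following standard computation: applying weak containment to the $k$-piece partition $A_i=\{x\in\{0,\ldots,k-1\}^\Gamma : x(e)=i\}$ of the Bernoulli shift, which satisfies $\mu_{\mathrm{Bern}}(\gamma^s A_i\cap A_j)=1/k^2$ exactly for every $\gamma\neq e$ and all $i,j$, produces, for any given $\gamma\neq e$ and $\varepsilon>0$, a partition $(B_i)_{i<k}$ of $Y$ with $\nu(\gamma^b B_i\cap B_j)$ within $\varepsilon$ of $1/k^2$. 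Since $\mathrm{Fix}^b(\gamma)\cap B_i\subseteq \gamma^b B_i\cap B_i$, summing over $i$ bounds
\[
\nu(\mathrm{Fix}^b(\gamma))\leq \sum_{i<k}\nu(\gamma^b B_i\cap B_i)\leq 1/k + k\varepsilon.
\]
Letting $\varepsilon\to 0$ and then $k\to\infty$ forces $\nu(\mathrm{Fix}^b(\gamma))=0$ for every $\gamma\neq e$. Thus $\bm{b}$ is free, and since $\Gamma$ is infinite, $E_b$ is aperiodic.

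Finally, applying Corollary \ref{cor:attain} to the aperiodic relation $E_b$ at the value $s:=PC_\nu(E_b)\leq C(\bm{a})$ produces an exhaustion $(E_n)_{n\in\N}$ of $E_b$ with each $E_n$ aperiodic and $\lim_n C_\nu(E_n)=PC_\nu(E_b)\leq C(\bm{a})$, which is the desired conclusion. The pseudocost inequality is formal, so the main obstacle is the aperiodicity of $E_b$; although the partition argument above is elementary, I suspect the paper may instead invoke a pre-established converse to Ab\'ert--Weiss (namely, that $\bm{s}_\Gamma\prec\bm{b}$ forces $\bm{b}$ to be free when $\Gamma$ is infinite), which would replace the ad hoc computation by a citation.
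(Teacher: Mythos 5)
Your proposal is correct and follows essentially the same route as the paper: Corollary \ref{cor:AFrPC} gives $PC(\bm{b})\leq PC(\bm{a})\leq C(\bm{a})$, and Corollary \ref{cor:attain} then produces the exhaustion with aperiodic terms. The only difference is that you explicitly verify the aperiodicity hypothesis of \ref{cor:attain} (via $\bm{s}_\Gamma\prec\bm{a}\prec\bm{b}$ forcing $\bm{b}$ to be free, using the standard fact recalled in the paper's introduction that an action weakly containing a free action is free), a point the paper's proof leaves implicit.
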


\begin{proof}
Corollary \ref{cor:AFrPC} tells us that $PC(\bm{b})\leq PC(\bm{a})$, so by \ref{cor:attain} we can find an exhaustion $(E_n )_{n\in \N}$ of $E$, with $\lim _n C_\mu (E_n)\leq PC(\bm{a})$ and $E_n$ aperiodic for all $n\in \N$. Since $PC(\bm{a})\leq C(\bm{a})$ we are done. \qedhere
\end{proof}

\begin{corollary}\label{cor:weakcon}
Let $\bm{a}$ and $\bm{b}$ be m.p.\ actions of a countably infinite group $\Gamma$. Assume that $\bm{a}$ is free and $\bm{a}\prec \bm{b}$.
\begin{enumerate}
\item If $C (\bm{b})<\infty$ then $C (\bm{b})\leq C (\bm{a})$.
\item If $E_b$ is treeable then $C(\bm{b})\leq C(\bm{a})$.
\item If $C(\bm{a})=1$ then $C(\bm{b})=1$.
\end{enumerate}
\end{corollary}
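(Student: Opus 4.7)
The plan is to deduce all three parts uniformly from Corollary \ref{cor:AFrPC}, which already asserts $PC(\bm{b})\leq PC(\bm{a})$ under the standing hypotheses. Since $PC(\bm{a})\leq C(\bm{a})$ trivially, we obtain the master inequality $PC(\bm{b})\leq C(\bm{a})$, and the three items then reduce to knowing when $PC(\bm{b})$ equals $C(\bm{b})$.

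For (1) and (2) this conversion is immediate from Corollary \ref{cor:PC=C}: part (1) of that corollary gives $C(\bm{b})=PC(\bm{b})$ when $C(\bm{b})<\infty$, and part (2) gives the same equality when $E_b$ is treeable. Chaining with the master inequality yields $C(\bm{b})=PC(\bm{b})\leq PC(\bm{a})\leq C(\bm{a})$ in both cases. So there is really nothing to prove beyond citing the two previous corollaries.

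For (3) I would not just quote Corollary \ref{cor:PC=C}.(3), but rather go through the exhaustion directly, to make the role of aperiodicity transparent. Apply Corollary \ref{cor:incr} to produce an exhaustion $(E_n)_{n\in\N}$ of $E_b$ with each $E_n$ aperiodic and $\lim_n C_\mu(E_n)\leq C(\bm{a})=1$. Aperiodicity of each $E_n$, together with the standard bound $C_\mu(E_n)\geq 1$ of \cite[20.1, 21.3]{KM05}, forces $\lim_n C_\mu(E_n)=1$. Then Gaboriau's theorem (Proposition \ref{prop:unions}.(3)) gives $C(\bm{b})=C_\mu(E_b)=1$, as desired. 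Equivalently, one could argue $PC(\bm{b})\leq 1$ from the master inequality and then invoke Corollary \ref{cor:PC=C}.(3) directly, but the exhaustion version also tells us that $E_b$ is automatically aperiodic (being a union of aperiodic relations), which is conceptually clarifying.

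There is no serious obstacle here; the real content was absorbed into Corollary \ref{cor:AFrPC} (continuity of pseudocost under weak containment of a free action) and Corollary \ref{cor:PC=C} (coincidence of pseudocost and cost in the relevant regimes). This corollary is the final packaging step, translating a pseudocost continuity statement into the three standard cost-theoretic consequences.
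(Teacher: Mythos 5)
Your proof is correct and follows essentially the same route as the paper: parts (1) and (2) are obtained exactly as in the text by chaining Corollary \ref{cor:PC=C} with Corollary \ref{cor:AFrPC}, and your treatment of (3) merely unfolds the paper's one-line argument ($PC(\bm{b})\leq PC(\bm{a})\leq 1$, hence $PC(\bm{b})=1$, hence $C(\bm{b})=1$ by Corollary \ref{cor:PC=C}.(3)) into its constituent steps via Corollary \ref{cor:incr} and Proposition \ref{prop:unions}.(3). Nothing is missing.
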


\begin{proof}
(1) and (2): Suppose $C(\bm{b})<\infty$ or $E_b$ is treeable. Then by Corollary \ref{cor:PC=C} and Corollary \ref{cor:AFrPC} we have $C(\bm{b})=PC(\bm{b})\leq PC(\bm{a})\leq C(\bm{a})$.

Similarly, if $C(\bm{a})=1$ then by Corollary \ref{cor:AFrPC} we have $PC(\bm{b}) \leq PC(\bm{a})\leq C(\bm{a})=1$, so $PC(\bm{b})=1$ and thus $C(\bm{b}) =1$ by Corollary \ref{cor:PC=C}.
\end{proof}

\begin{definition}
A group $\Gamma$ is said to have \emph{fixed price $1$} if $C(\bm{a}) = 1$ for every free measure preserving action $\bm{a}$ of $\Gamma$.
\end{definition}

In \cite{AW11}, Ab\'{e}rt and Weiss combine their theorem on free actions (stated above in Theorem \ref{thm:AW}) with \cite[Theorem 10.13]{Ke10} to characterize finitely generated groups $\Gamma$ with fixed price 1 in terms of the Bernoulli shift $\bm{s}_\Gamma$. We can now remove the hypothesis that $\Gamma$ is finitely generated.

\begin{corollary}\label{cor:FP1}
Let $\Gamma$ be a countable group. Then the following are equivalent:
\begin{enumerate}
\item[(1)] $\Gamma$ has fixed price $1$
\item[(2)] $C(\bm{s}_\Gamma ) = 1$
\item[(3)] $C(\bm{a}) = 1$ for some m.p.\ action $\bm{a}$ weakly equivalent to $\bm{s}_\Gamma$.
\item[(4)] $PC(\bm{a})=1$ for some m.p.\ action $\bm{a}$ weakly equivalent to $\bm{s}_\Gamma$.
\item[(5)] $\Gamma$ is infinite and $C(\bm{a})\leq 1$ for some non-trivial m.p.\ action $\bm{a}$ weakly contained in $\bm{s}_\Gamma$.
\end{enumerate}
\end{corollary}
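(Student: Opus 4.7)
Plan: I plan to prove the equivalence via the cycle $(1)\Rightarrow(2)\Rightarrow(3)\Rightarrow(4)\Rightarrow(5)\Rightarrow(1)$. Throughout I may assume $\Gamma$ is infinite: if $\Gamma$ is finite then all five conditions fail (a free action of a nontrivial finite group has cost $1-1/|\Gamma|<1$; the only free action of $\{e\}$ on a non-atomic space has cost $0$), so there is nothing to prove. The first four implications are short: $(1)\Rightarrow(2)$ uses that $\bm{s}_\Gamma$ is free, so fixed price $1$ forces $C(\bm{s}_\Gamma)=1$; $(2)\Rightarrow(3)$ takes $\bm{a}=\bm{s}_\Gamma$; for $(3)\Rightarrow(4)$ the weak equivalence $\bm{a}\sim\bm{s}_\Gamma$ gives $\bm{s}_\Gamma\prec\bm{a}$, hence $\bm{a}$ is free (weak containment of a free action implies freeness), so $C(\bm{a})=1<\infty$ and Corollary~\ref{cor:PC=C}(1) give $PC(\bm{a})=1$; finally $(4)\Rightarrow(5)$ uses Corollary~\ref{cor:PC=C}(3) to convert $PC(\bm{a})=1$ to $C(\bm{a})=1$ and inherits non-triviality and $\bm{a}\prec\bm{s}_\Gamma$ from $\bm{a}\sim\bm{s}_\Gamma$.

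The substance is in $(5)\Rightarrow(1)$. Given $\bm{a}$ on $(X,\mu)$ witnessing (5), I will form the diagonal action $\bm{a}':=\bm{a}\times\bm{s}_\Gamma$ on $X\times[0,1]^\Gamma$. Since $\bm{s}_\Gamma$ is free, so is $\bm{a}'$, and since weak containment is preserved under products, $\bm{a}'\prec\bm{s}_\Gamma\times\bm{s}_\Gamma\cong\bm{s}_\Gamma$. If I can establish $C(\bm{a}')\leq 1$, then $(1)$ will follow: for any free m.p.\ action $\bm{b}$ of $\Gamma$, Theorem~\ref{thm:AW} gives $\bm{s}_\Gamma\prec\bm{b}$, so $\bm{a}'\prec\bm{b}$, and Corollary~\ref{cor:weakcon}(3) applied to the free action $\bm{a}'$ with $C(\bm{a}')=1$ yields $C(\bm{b})=1$. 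Since $\bm{a}'$ is aperiodic, Corollary~\ref{cor:PC=C}(3) reduces the task to proving $PC(\bm{a}')\leq 1$. My plan is to use $PC(\bm{a})=C(\bm{a})\leq 1$ (Corollary~\ref{cor:PC=C}(1)) and, for each finite $F\subseteq\Gamma$ and $\epsilon>0$, transfer a near-optimal certificate $(Q,A,M)$ witnessing $\bm{a}\in A_{F,1+\epsilon/2}$ (via Lemma~\ref{lem:open}) to a certificate for $\bm{a}'$ by the lift $A'_\delta:=A_\delta\times Y$; the L-graphing cost is unchanged, and by freeness of $\bm{s}_\Gamma$ on the second factor the short-path set for $\bm{a}'$ reduces modulo null to $\bigcup_{\sigma\in\Sigma(\gamma)}\mbox{dom}(\varphi^a_\sigma)\times Y$.

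The main obstacle I foresee is the non-freeness of $\bm{a}$: the $\bm{a}$-side certificate may have achieved its cost bound partly via $\mbox{Fix}^a(\gamma^{-1}\eta(\sigma))$ contributions for $\sigma\notin\Sigma(\gamma)$, contributions that vanish on the $\bm{a}'$ side (where $\bm{s}_\Gamma$ is free), so the naive lift can overshoot $1+\epsilon$. To close this gap I plan to exploit amenability of the stabilizers of $\bm{a}$: $\bm{a}$ is NA-ergodic by Proposition~\ref{prop:NAerg} (applied to Example~\ref{ex:NAerg} and $\bm{a}\prec\bm{s}_\Gamma$), so Lemma~\ref{lem:am}(ii) ensures $\Gamma_x^a$ is amenable for $\mu$-a.e.\ $x$. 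Each such stabilizer acts freely on $Y$ through $\bm{s}_\Gamma$ and, being amenable, admits arbitrarily cheap L-graphings of its restricted orbit relation on $Y$; adjoining these vertical correction edges to the lifted L-graphing (coherently in $x$ via a Borel selection on $X$) should reconnect the sheets of $E_{\bm{a}'}$ that the stabilizer degeneracy creates, at an additional cost that can be absorbed into $\epsilon$ by careful bookkeeping against the $\mbox{Fix}^a$ savings of the original certificate. This should place $\bm{a}'\in A_{F,1+\epsilon}$ for every finite $F$ and $\epsilon>0$, giving $PC(\bm{a}')\leq 1$ as required. In the amenable-$\Gamma$ case the implication $(5)\Rightarrow(1)$ is immediate from the Ornstein--Weiss theorem, bypassing the cost-lifting argument entirely.
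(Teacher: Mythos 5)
Your outer structure is fine: the chain $(1)\Rightarrow(2)\Rightarrow(3)\Rightarrow(4)\Rightarrow(5)$ is routine and matches the paper, and your reduction of $(5)\Rightarrow(1)$ to the single claim $C(\bm{a}\times\bm{s}_\Gamma)=1$ is exactly where the paper also puts the weight (it proves $(5)\Rightarrow(3)$ and then $(3)\Rightarrow(1)$ via Ab\'{e}rt--Weiss and Corollary \ref{cor:weakcon}, just as you do). The gap is in how you propose to get $C(\bm{a}\times\bm{s}_\Gamma)\leq 1$. Your key claim that the a.e.\ amenable stabilizer $\Gamma_x$ ``admits arbitrarily cheap L-graphings of its restricted orbit relation on $Y$'' is false: if $\Gamma_x$ is infinite its free orbit relation on $Y$ is aperiodic and therefore has cost exactly $1$, and if $\Gamma_x$ is finite of order $n>1$ it has cost $1-1/n$. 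In neither case can the vertical correction relation be ``absorbed into $\epsilon$''; adding it to the lifted certificate of cost $\approx 1$ gives $\approx 2$, not $1+\epsilon$. The device that rescues this is not bookkeeping but normality: the fiberwise stabilizer relation $E=\{((x,y),(x,h^sy))\csuchthat h\in\Gamma_x\}$ is a \emph{normal} subequivalence relation of $E_{\bm{a}\times\bm{s}_\Gamma}$, and when it is aperiodic with $C(E)=1$, \cite[24.10]{KM05} yields $C(E_{\bm{a}\times\bm{s}_\Gamma})\leq C(E)=1$ outright, with no transfer of the $\bm{a}$-side certificate at all. This is precisely the paper's Lemma \ref{lem:cost1} (applied there to $\bm{\theta}_{\bm{a}}\times\bm{s}_\Gamma$, which is likewise weakly equivalent to $\bm{s}_\Gamma$); without invoking normality your argument does not close.

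The second problem is that the normality shortcut requires the stabilizers to be a.e.\ \emph{infinite}, and your proposal never rules out, nor separately treats, the other alternative. Since $\Gamma$ may be assumed non-amenable, $\bm{s}_\Gamma$ is strongly ergodic, so $\bm{\theta}_{\bm{a}}$ is weakly mixing and $\theta_{\bm{a}}$ is either concentrated on infinite subgroups (the case above) or is a point mass $\updelta_N$ at a finite normal subgroup $N$, possibly non-trivial. In that second case the correction relation has finite classes, is not aperiodic, and costs $1-1/|N|$; neither your absorption argument nor \cite[24.10]{KM05} applies. The paper handles this case by a genuinely different argument: it replaces $\bm{a}$ by an extension $\bm{b}$ weakly equivalent to $\bm{s}_\Gamma$ factoring onto $\bm{a}$, passes to a transversal for the $N$-orbits, descends to free actions of $\Gamma/N$ where Corollary \ref{cor:weakcon} applies, and lifts the cost-$1$ conclusion back through the complete section via \cite[Theorem 25.1]{KM05}. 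You would need to add both the dichotomy and this second case to have a complete proof.
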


\begin{proof}
(1)$\Ra$(2) holds since $\bm{s}_\Gamma$ is free. (2)$\Ra$(3) is clear.  $(3)\IFF (4)$ follows from Corollary \ref{cor:PC=C}. Suppose that (3) holds and we will prove (1). Let $\bm{a}$ be weakly equivalent to $\bm{s}_\Gamma$ with $C(\bm{a})=1$. This implies $\bm{a}$ is free. If $\bm{b}$ is another free measure preserving action of $\Gamma$ then $\bm{a}\prec \bm{b}$ by Theorem \ref{thm:AW}, so Corollary \ref{cor:weakcon} shows that $C(\bm{b}) =1$. Thus $\Gamma$ has fixed price $1$. This shows that properties (1), (2), and (3) are equivalent. The implication (3)$\Ra$(5) is clear.

The proof of the remaining implication (5)$\Ra$(3) uses Lemma \ref{lem:cost1}, proved in \S\ref{sec:fp1sm} below. Assume that (5) holds. Let $\bm{a}=\Gamma \cc ^a (X,\mu )$ be a non-trivial action weakly contained in $\bm{s}_\Gamma$ with $C(\bm{a})\leq 1$. Let $\theta = \theta _{\bm{a}}$. If $\Gamma$ is amenable then (1) holds, so we may assume that $\Gamma$ is non-amenable. Then $\bm{s}_\Gamma$ is strongly ergodic, hence both $\bm{a}$ and $\bm{\theta}$ are weakly mixing. It follows that $\theta$ is either a point mass at some finite normal subgroup $N$ of $\Gamma$, or $\theta$ concentrates on the infinite subgroups of $\Gamma$.

{\bf Case 1:} $\theta$ is a point mass at some finite normal subgroup $N\leq \Gamma$. Then $C(\bm{a})=1$ since $E_a$ is aperiodic. By \cite[Proposition 4.7]{CKT-D12} there is some $\bm{b}=\Gamma \cc ^b (Y,\nu )$ weakly equivalent to $\bm{s}_\Gamma$ such that $\bm{a}$ is a factor of $\bm{b}$, say via the factor map $\pi :Y\ra X$. Let $Y_0$ be a Borel transversal for the orbits of $N\cc ^b (Y,\nu )$ and let $\sigma :Y\ra Y_0$ be the corresponding selector. Let $\nu _0$ denote the normalized restriction of $\nu$ to $Y_0$ and let $\bm{b}_0$ be the action of $\Gamma$ on $(Y_0, \nu _0)$ given by $\gamma ^{b_0}y = \sigma (\gamma ^by)$. Then $\pi$ factors $\bm{b}_0$ onto $\bm{a}$. Since $\theta _{\bm{a}}=\theta _{\bm{b}_0}=\updelta _N$, the actions $\bm{a}$ and $\bm{b}_0$ descend to free actions $\tilde{\bm{a}}$ and $\tilde{\bm{b}}_0$ respectively of $\Gamma /N$, and $\pi$ factors $\tilde{\bm{b}}_0$ onto $\tilde{\bm{a}}$. Then $C(\tilde{\bm{a}})= C(\bm{a})=1$, so $C(\bm{\tilde{b}_0})=1$ by Corollary \ref{cor:weakcon}. Since $E_{b_0}=E_b\resto Y_0$ we have $C_{\nu _0}(E_b\resto Y_0) = 1$, so $C(\bm{b})=C_\nu (E_b)=1$ by \cite[Theorem 25.1]{KM05} (\cite[Theorem 21.1]{KM05} also works). This shows that (3) holds.
%

{\bf Case 2:} $\theta$ is infinite. We have $\bm{a}\prec \bm{s}_\Gamma$, so $\bm{a}$ is NA-ergodic and therefore $\theta$ is amenable by Theorem \ref{thm:NAerg}. Then $C(\bm{\theta}_{\bm{a}}\times \bm{s}_\Gamma )=1$ by Lemma \ref{lem:cost1}, and $\bm{\theta} _{\bm{a}}\times \bm{s}_\Gamma$ is weakly equivalent to $\bm{s}_\Gamma$, so (3) holds.
\end{proof}

\begin{note}\label{note:OE} Similar to \cite[Corollary 10.14]{Ke10}, one may strengthen Corollaries \ref{cor:AFrPC}, \ref{cor:incr}, and \ref{cor:weakcon} by replacing the hypothesis $\bm{a}\prec \bm{b}$ their statements with the weaker hypothesis that
\begin{equation}\label{eqn:OE}
\bm{a} \in \ol{\{ \bm{c}\in A(\Gamma ,X,\mu ) \csuchthat E_c\mbox{ is orbit equivalent to }E_b\}}
\end{equation}
where $(X,\mu )$ is the underlying space of $\bm{a}$. The proofs remain the same. Note that (\ref{eqn:OE}) is actually slightly weaker than the hypothesis $\bm{a}\preceq \bm{b}$ from \cite[Corollary 10.14]{Ke10}, since the action $\bm{c}$ from (\ref{eqn:OE}) ranges over all of $A(\Gamma ,X,\mu )$ and not just $\mbox{FR}(\Gamma ,X,\mu )$. Specializing to the case where $\Gamma$ is finitely generated, we recover a somewhat strengthened version of the first statement of \cite[Corollary 10.14]{Ke10}.
\end{note}

\subsection{The cost of a generic action}\label{sec:generic} The results of the previous section have consequences for generic properties (with respect to the weak topology) in $\mbox{FR}(\Gamma ,X,\mu )$ related to cost. We begin by proving analogues of Corollaries \ref{cor:attain} and \ref{cor:PC=C} for groups. Recall that a countable group $\Gamma$ is called \emph{treeable} if it admits a free measure preserving action $\bm{a}$ such that $E_a$ is treeable.

\begin{proposition}\label{prop:analogue}
Let $\Gamma$ be a countably infinite group.
\begin{enumerate}
\item[(1)] Suppose that $C(\Gamma )<\infty$. Then for any free m.p.\ action $\bm{b}=\Gamma \cc ^b (X,\mu )$ of $\Gamma$, and any exhaustion $(E_n)_{n\in \N}$ of $E_b$, we have $\liminf _{n\ra\infty} C_\mu (E_n) \geq C(\Gamma )$. Hence $PC(\Gamma )=C(\Gamma )$.
\item[(2)] Suppose that $\Gamma$ is treable. Then $PC(\Gamma )=C(\Gamma )$.
\item[(3)] $PC(\Gamma ) =1$ if and only if $C(\Gamma ) =1$.
\item[(4)] $PC(\Gamma )$ is attained by some free m.p.\ action of $\Gamma$. In fact, if $\bm{a}\in \mbox{FR}(\Gamma ,X,\mu )$ has dense conjugacy class in $(\mbox{FR}(\Gamma ,X,\mu ), w)$ then $PC(\bm{a} ) = PC(\Gamma )$.
\end{enumerate}
\end{proposition}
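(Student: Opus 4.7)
The unifying device for parts (1) and (2) will be a product construction. Given any free action $\bm{b}=\Gamma\cc^{b}(X,\mu)$ and any free ``reference'' action $\bm{a}_0=\Gamma\cc^{a_0}(X_0,\mu_0)$, the diagonal product $\bm{a}_0\times\bm{b}$ is free, and the second-coordinate projection exhibits $\bm{b}$ as a factor of $\bm{a}_0\times\bm{b}$. Hence $\bm{b}\prec \bm{a}_0\times\bm{b}$, and since $\bm{b}$ is free, Corollary~\ref{cor:AFrPC} yields $PC(\bm{a}_0\times\bm{b})\leq PC(\bm{b})$. If $\bm{a}_0$ can be chosen so that Corollary~\ref{cor:PC=C} applies to $E_{a_0\times b}$, then $PC(\bm{a}_0\times\bm{b})=C(\bm{a}_0\times\bm{b})\geq C(\Gamma)$ (the last inequality because $\bm{a}_0\times\bm{b}$ is free), which combined yields $PC(\bm{b})\geq C(\Gamma)$, and for any exhaustion $(E_n)$ of $E_b$, $\liminf_n C_\mu(E_n)\geq PC_\mu(E_b)=PC(\bm{b})\geq C(\Gamma)$.

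For (1), take any free $\bm{a}_0$ with $C(\bm{a}_0)<\infty$ (which exists by hypothesis). For an L-graphing $\Phi_0=\{\varphi_i\}$ of $E_{a_0}$ with each $\varphi_i=\gamma_i^{a_0}\resto A_i$, the lifted family $\widetilde{\Phi}=\{\widetilde{\varphi}_i\}$ with $\widetilde{\varphi}_i\colon A_i\times X\to B_i\times X$, $\widetilde{\varphi}_i(x,y)=(\gamma_i^{a_0}x,\gamma_i^b y)$, has cost $C(\widetilde{\Phi})=C(\Phi_0)$; freeness of $\bm{a}_0$ lets a $\mc{G}_{\Phi_0}$-path from $x$ to $x'$ lift to a $\widetilde{\Phi}$-path from $(x,y)$ to $(x',\gamma^b y)$ for the unique $\gamma$ with $\gamma^{a_0}x=x'$, so $\widetilde{\Phi}$ L-graphs $E_{a_0\times b}$. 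Thus $C(\bm{a}_0\times\bm{b})\leq C(\bm{a}_0)<\infty$, Corollary~\ref{cor:PC=C}(1) applies, and the common argument yields $PC(\bm{b})\geq C(\Gamma)$; taking the infimum over $\bm{b}$ gives $PC(\Gamma)\geq C(\Gamma)$, and the reverse is trivial.

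For (2), take a free $\bm{a}_0$ with $E_{a_0}$ treed by $\mc{T}_0$ (existing by treeability of $\Gamma$) and lift $\mc{T}_0$ to a Borel graph $\widetilde{\mc{T}}$ on $X_0\times X$ by declaring $((x,y),(x',y'))\in\widetilde{\mc{T}}$ whenever $(x,x')\in\mc{T}_0$ and $y'=\gamma^b y$, where $\gamma$ is the unique element with $\gamma^{a_0}x=x'$. Acyclicity of $\widetilde{\mc{T}}$ descends from that of $\mc{T}_0$ by projection, and the path-lifting trick from (1) shows $\widetilde{\mc{T}}$ generates $E_{a_0\times b}$; Corollary~\ref{cor:PC=C}(2) then applies, and the common argument concludes. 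For (3), the implication $C(\Gamma)=1\Rightarrow PC(\Gamma)=1$ follows from $PC(\Gamma)\leq C(\Gamma)$ together with the lower bound $PC(\bm{a})\geq 1$ valid for free actions of infinite groups; conversely, (4) produces a free $\bm{a}$ with $PC(\bm{a})=PC(\Gamma)=1$, and Corollary~\ref{cor:PC=C}(3) upgrades this to $C(\bm{a})=1$, whence $C(\Gamma)=1$.

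For (4), the substantive claim is the ``in fact'' part. If $\bm{a}\in\mbox{FR}(\Gamma,X,\mu)$ has dense conjugacy class, then \cite[Proposition~10.1]{Ke10} identifies its $\mbox{Aut}(X,\mu)$-orbit closure in $A(\Gamma,X,\mu)$ with $\{\bm{c}:\bm{c}\prec\bm{a}\}$, so every $\bm{b}\in\mbox{FR}(\Gamma,X,\mu)$ satisfies $\bm{b}\prec\bm{a}$. Corollary~\ref{cor:AFrPC} then gives $PC(\bm{a})\leq PC(\bm{b})$ for every free $\bm{b}$, hence $PC(\bm{a})\leq PC(\Gamma)$, and the reverse is by definition. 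Existence of such an $\bm{a}$ follows from the standard Polish-dynamics fact (cf.\ \cite[\S10]{Ke10}) that the set of actions with dense conjugacy class in $\mbox{FR}(\Gamma,X,\mu)$ is a dense $G_\delta$, or concretely by taking the diagonal product of a countable weakly-dense family of free actions on $(X,\mu)$ and using the weak-closedness of $\{\bm{c}\prec\bm{a}\}$. I expect the main technical obstacle to lie in (2), namely the careful verification that $\widetilde{\mc{T}}$ is both acyclic and generates $E_{a_0\times b}$: both rely on the uniqueness of the group element labelling each edge of $\mc{T}_0$, which in turn requires freeness of $\bm{a}_0$.
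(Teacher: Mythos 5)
Your proof is correct and follows essentially the same route as the paper's: form the product $\bm{c}=\bm{a}_0\times\bm{b}$ with a reference free action of finite cost (resp.\ with treeable orbit equivalence relation), combine Corollary~\ref{cor:AFrPC} applied to $\bm{b}\prec\bm{c}$ with Corollary~\ref{cor:PC=C} applied to $E_c$, and for (4) use density of the conjugacy class together with Corollary~\ref{cor:AFrPC} and \cite[Proposition 10.1]{Ke10}. The only differences are cosmetic: you prove directly, by lifting L-graphings and treeings along the projection, the two facts the paper simply cites (that $C(\bm{a}_0\times\bm{b})\leq C(\bm{a}_0)$, from the remark on p.~78 of \cite{Ke10}, and that treeability passes to the product, from \cite[Proposition 30.5]{KM05}), and your treatment of (3) -- routing the implication $PC(\Gamma)=1\Rightarrow C(\Gamma)=1$ through the attainment statement in (4) -- is if anything slightly more careful than the paper's one-line appeal to Corollary~\ref{cor:PC=C}.
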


\begin{proof}
(1): Let $\bm{b}$ be a free m.p.\ action of $\Gamma$. It suffices to show that $PC(\bm{b})\geq C(\Gamma )$. Let $\bm{a}$ be a free m.p.\ action of $\Gamma$ with $C(\bm{a})=C(\Gamma )<\infty$ and let $\bm{c}=\bm{a}\times \bm{b}$. Then by the remark at the bottom of p.\ 78 in \cite{Ke10} we have $C(\bm{c})\leq C(\bm{a}) = C(\Gamma )$, hence $C(\bm{c})=C(\Gamma )<\infty$. Since $C(\bm{c})<\infty$ we have $PC(\bm{c})=C(\bm{c})$ by (1) of Corollary \ref{cor:PC=C}. In addition, $\bm{b}\prec \bm{c}$ and $\bm{b}$ is free, so Corollary \ref{cor:AFrPC} implies $PC(\bm{b})\geq PC(\bm{c})=C(\bm{c})=C(\Gamma )$.

(2): Let $\bm{b}$ be a free m.p.\ action of $\Gamma$. Once again it suffices to show $PC(\bm{b})\geq C(\Gamma )$. Let $\bm{a}$ be a free m.p.\ action of $\Gamma$ with $E_a$ treeable and let $\bm{c}=\bm{a}\times \bm{b}$. By \cite[Proposition 30.5]{KM05} $E_c$ is treeable and $C(\bm{c})=C(\bm{a})=C(\Gamma )$. Then (2) of Corollary \ref{cor:PC=C} implies that $PC(\bm{c})=C(\bm{c})$, so, as $\bm{b}\prec \bm{c}$, Corollary \ref{cor:AFrPC} implies that $PC(\bm{b})\geq PC(\bm{c})=C(\bm{c})=C(\Gamma )$.

(3): This is immediate from (3) of Corollary \ref{cor:PC=C}.

(4): If $\bm{a}\in \mbox{FR}(\Gamma ,X,\mu )$ has dense conjugacy class this means that $\bm{b}\prec \bm{a}$ for every m.p.\ action $\bm{b}$ of $\Gamma$ \cite[Proposition 10.1]{Ke10} (also note that such an $\bm{a}$ exists by \cite[Theorem 10.7]{Ke10}). Corollary \ref{cor:AFrPC} then shows that $PC(\bm{a})\leq \inf \{ PC(\bm{b}) \csuchthat \bm{b}\in \mbox{FR}(\Gamma ,X,\mu ) \} = PC(\Gamma )$, hence $PC(\bm{a})=PC(\Gamma )$.
\end{proof}

By \cite[Proposition 10.10]{Ke10} the cost function $\bm{a}\mapsto C(\bm{a} )$ is constant on a dense $G_\delta$ subset of $\mbox{FR}(\Gamma ,X,\mu )$. Let $C_{\mbox{\tiny{gen}}}(\Gamma )\in [0,\infty ]$ denote this constant value. Similarly, the pseudocost function $\bm{a}\mapsto PC(\bm{a} )$ is constant on a dense $G_\delta$ subset of $\mbox{FR}(\Gamma ,X,\mu )$. Denote this constant value by $PC_{\mbox{\tiny{gen}}}(\Gamma )$. Problem 10.11 of \cite{Ke10} asks whether $C_{\mbox{\tiny{gen}}}(\Gamma )= C(\Gamma )$ holds for every countably infinite group $\Gamma$, and \cite[Corollary 10.14]{Ke10} shows that the equality holds whenever $\Gamma$ is finitely generated.

\begin{corollary}\label{cor:Cgen}
Let $\Gamma$ be a countably infinite group. Then
\begin{enumerate}
\item The set $\mbox{\emph{MINPCOST}}(\Gamma ,X,\mu ) = \{ \bm{a}\in \mbox{\emph{FR}}(\Gamma ,X,\mu )\csuchthat PC(\bm{a})=PC(\Gamma ) \}$ is dense $G_\delta$ in $A(\Gamma ,X,\mu )$. In particular, $PC_{\mbox{\tiny{gen}}}(\Gamma )=PC(\Gamma )$.
\item Either $C_{\mbox{\tiny{gen}}}(\Gamma ) = C(\Gamma )$ or $C_{\mbox{\tiny{gen}}}(\Gamma )= \infty$.
\item If $PC(\Gamma ) =1$ then $C_{\mbox{\tiny{gen}}}(\Gamma ) = C(\Gamma ) =1$.
\end{enumerate}
\end{corollary}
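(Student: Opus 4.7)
The plan is to treat the three parts in sequence, using Corollary \ref{cor:PCdef} to translate the inequality $PC(\bm{a}) \leq r$ into membership in the sets $A_{F,r+\epsilon}$, and then applying Theorem \ref{thm:KecOpen} to recognize that these conditions cut out open sets within $\mbox{FR}$. Set $r = PC(\Gamma)$.

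For (1), I observe that by the definition of $PC(\Gamma)$ as an infimum over free actions, every $\bm{a} \in \mbox{FR}(\Gamma, X, \mu)$ satisfies $PC(\bm{a}) \geq r$, and so $\mbox{MINPCOST}(\Gamma, X, \mu) = \{\bm{a} \in \mbox{FR} : PC(\bm{a}) \leq r\}$. Corollary \ref{cor:PCdef} rewrites this as
\[
\mbox{MINPCOST}(\Gamma, X, \mu) \;=\; \mbox{FR}(\Gamma, X, \mu) \,\cap\, \bigcap_{F,\,n} A_{F,\,r+1/n}(\Gamma, X, \mu),
\]
where $F$ ranges over the (countably many) finite subsets of $\Gamma$ and $n$ over positive integers. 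By Theorem \ref{thm:KecOpen}, each $A_{F, r+1/n} \cap \mbox{FR}$ lies in the interior of $A_{F, r+1/n}$, so this intersection is $G_\delta$ in $A(\Gamma, X, \mu)$ when combined with the standard fact that $\mbox{FR}$ is $G_\delta$ there. For density, I would invoke the existence of an action $\bm{a}_0 \in \mbox{FR}(\Gamma, X, \mu)$ with dense conjugacy class in $\mbox{FR}$, as in \cite[Theorem 10.7]{Ke10}; by Proposition \ref{prop:analogue}(4), $PC(\bm{a}_0) = r$, so the conjugacy class of $\bm{a}_0$ — which is conjugation-invariant and hence contained in the conjugation-invariant set $\mbox{MINPCOST}$ — is dense in $\mbox{FR}$, and therefore dense in $A(\Gamma, X, \mu)$ because $\mbox{FR}$ is. The assertion $PC_{\mbox{\tiny{gen}}}(\Gamma) = PC(\Gamma)$ is then immediate.

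For (2), assume $C_{\mbox{\tiny{gen}}}(\Gamma) < \infty$. Intersect $\mbox{MINPCOST}$ with the dense $G_\delta$ on which $C(\bm{a}) = C_{\mbox{\tiny{gen}}}(\Gamma)$; on the resulting comeager subset of $\mbox{FR}$ we have both $PC(\bm{a}) = r$ and $C(\bm{a}) = C_{\mbox{\tiny{gen}}}(\Gamma) < \infty$. Corollary \ref{cor:PC=C}(1) forces $PC(\bm{a}) = C(\bm{a})$ for such $\bm{a}$, hence $r = C_{\mbox{\tiny{gen}}}(\Gamma)$. Combining with the standing inequalities $r = PC(\Gamma) \leq C(\Gamma) \leq C_{\mbox{\tiny{gen}}}(\Gamma)$ collapses the chain and yields $C(\Gamma) = C_{\mbox{\tiny{gen}}}(\Gamma)$. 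Part (3) is then a brief consequence: assuming $PC(\Gamma) = 1$, part (1) supplies a dense $G_\delta$ of free actions with $PC(\bm{a}) = 1$, and Corollary \ref{cor:PC=C}(3) upgrades this to $C(\bm{a}) = 1$, whence $C_{\mbox{\tiny{gen}}}(\Gamma) = 1$, while $C(\Gamma) = 1$ is just Proposition \ref{prop:analogue}(3).

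The principal obstacle I anticipate lies entirely in part (1): one must be careful to combine Theorem \ref{thm:KecOpen}'s statement (which concerns openness within $\mbox{FR}$, not within $A(\Gamma, X, \mu)$) with the $G_\delta$-ness of $\mbox{FR}$ itself, and to argue density via a single dense conjugacy class rather than attempting to show that each $A_{F, r+1/n} \cap \mbox{FR}$ is already dense on its own. Once (1) is secure, (2) and (3) are essentially bookkeeping built on top of the pseudocost-to-cost correspondence supplied by Corollary \ref{cor:PC=C}.
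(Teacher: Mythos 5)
Your proof is correct and follows essentially the same route as the paper's: both reduce part (1) via Corollary \ref{cor:PCdef} to the sets $A_{F,r+1/n}\cap \mbox{FR}(\Gamma ,X,\mu )$, use Theorem \ref{thm:KecOpen} for relative openness and a dense conjugacy class together with Proposition \ref{prop:analogue} for density, and then derive (2) and (3) from Corollary \ref{cor:PC=C}. The only cosmetic difference is that you argue density from a single action with dense conjugacy class rather than from the dense $G_\delta$ set of all such actions, which is the same idea.
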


\begin{proof}
(1): Let $r= PC(\Gamma )$. Corollary \ref{cor:PCdef} shows that
\[
\mbox{MINPCOST}(\Gamma ,X,\mu ) = \bigcap \{ A_{F,r+1/n}(\Gamma ,X,\mu ) \cap \mbox{FR}(\Gamma ,X,\mu ) \csuchthat F\subseteq \Gamma \mbox{ is finite and }n\in \N \} .
\]
To show this set is dense $G_\delta$ in $A(\Gamma ,X,\mu )$ it therefore suffices to show that $A_{F,r+\epsilon}(\Gamma ,X,\mu )\cap \mbox{FR}(\Gamma ,X,\mu )$ is dense $G_\delta$ for each $F\subseteq \Gamma$ finite and $\epsilon >0$. By \cite[Theorem 10.8]{Ke10}, the set $\mbox{FR}(\Gamma ,X,\mu )$ is dense $G_\delta$ in $A(\Gamma ,X,\mu )$. Theorem \ref{thm:KecOpen} shows that $A_{F,r+\epsilon}$ is relatively open in $\mbox{FR}(\Gamma ,X,\mu )$, so it only remains to show that it is dense. By Proposition \ref{prop:analogue} we have $PC(\bm{a})=PC(\Gamma )$ whenever $\bm{a}\in \mbox{FR}(\Gamma ,X,\mu )$ has a dense conjugacy class. Since the set of actions with dense conjugacy class is dense $G_\delta$ in $\mbox{FR}(\Gamma ,X,\mu )$ the result follows. 

(2): Suppose that $C_{\mbox{\tiny{gen}}}(\Gamma )=r<\infty$. This means the generic $\bm{a}\in \mbox{FR}(\Gamma ,X,\mu )$ has $C(\bm{a})=r$. Since $r<\infty$ it follows from Corollary \ref{cor:PC=C} that $C(\bm{a})=r \Ra C(\bm{a})=PC(\bm{a})$. Thus the generic free action $\bm{a}$ satisfies $PC(\bm{a})= r= C(\bm{a})$ and by part (1) we therefore have $C(\Gamma )\geq PC(\Gamma ) = PC_{\mbox{\tiny{gen}}}=C_{\mbox{\tiny{gen}}}(\Gamma )\geq C(\Gamma )$, which shows that $C_{\mbox{\tiny{gen}}}(\Gamma ) =C(\Gamma )$.

(3) follows from (1) along with Corollary \ref{cor:PC=C}.
\end{proof}

%
Let $\mbox{MINCOST}(\Gamma ,X,\mu ) = \{ \bm{a} \in \mbox{FR}(\Gamma ,X,\mu ) \csuchthat C(\bm{a})=C(\Gamma )\}$. 

\begin{corollary}\label{cor:Gdelta}
Let $\Gamma$ be a countably infinite group. Then the set
\begin{align*}
D=\big\{\bm{b}\in \mbox{\emph{FR}}(\Gamma ,X,\mu ) \csuchthat &\exists\mbox{aperiodic subequivalence relations }\\
&E_0\subseteq E_1\subseteq E_2\subseteq \cdots \mbox{ of } E_b, \mbox{ with } E_b = \bigcup _n E_n \mbox{ and }\lim _n C_\mu (E_n)= C(\Gamma ) \big\}
\end{align*}
is dense $G_\delta$ in $A(\Gamma ,X,\mu )$. Additionally, if $C(\Gamma )<\infty$ then we have the equality of sets
\begin{equation}\label{eqn:mincost}
\mbox{\emph{MINCOST}}(\Gamma ,X,\mu ) = D\cap \{ \bm{b}\in \mbox{\emph{FR}}(\Gamma ,X,\mu )\csuchthat C (\bm{b})<\infty \} .
\end{equation}
In particular, if all free actions of $\Gamma$ have finite cost then $\mbox{\emph{MINCOST}}(\Gamma ,X,\mu )=D$ is dense $G_\delta$.
\end{corollary}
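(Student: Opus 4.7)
The plan is to establish dense $G_\delta$-ness of $D$ in $A(\Gamma,X,\mu)$ first, then deduce the identification (\ref{eqn:mincost}) and the ``in particular'' consequence. The argument splits naturally on whether $C(\Gamma)$ is finite.

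When $C(\Gamma)=\infty$, every free action $\bm{b}$ of $\Gamma$ satisfies $C(\bm{b})\geq C(\Gamma)=\infty$, so the constant exhaustion $E_n:=E_b$ (aperiodic because $\bm{b}$ is free and $\Gamma$ is infinite) witnesses $\bm{b}\in D$. Hence $D=\mbox{FR}(\Gamma,X,\mu)$, which is dense $G_\delta$ in $A(\Gamma,X,\mu)$ by \cite[Theorem 10.8]{Ke10}. When $C(\Gamma)<\infty$, Proposition~\ref{prop:analogue}.(1) gives $PC(\Gamma)=C(\Gamma)$, and I would show the equality $D=\mbox{MINPCOST}(\Gamma,X,\mu)$. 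The inclusion $D\subseteq\mbox{MINPCOST}$ is immediate: any witnessing exhaustion forces $PC(\bm{b})\leq C(\Gamma)=PC(\Gamma)$, while freeness of $\bm{b}$ yields $PC(\bm{b})\geq PC(\Gamma)$. Conversely, if $\bm{b}\in\mbox{MINPCOST}$ then $PC(\bm{b})=C(\Gamma)$, and Corollary~\ref{cor:attain} applied to the aperiodic relation $E_b$ produces an aperiodic exhaustion attaining this value, placing $\bm{b}\in D$. Density and $G_\delta$-ness then come from Corollary~\ref{cor:Cgen}.(1).

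For the identification of $\mbox{MINCOST}$ under $C(\Gamma)<\infty$, I would combine the equality $D=\mbox{MINPCOST}$ just obtained with Corollary~\ref{cor:PC=C}.(1), which gives $PC(\bm{b})=C(\bm{b})$ whenever $C(\bm{b})<\infty$. For $\bm{b}\in\mbox{MINCOST}$ we have $C(\bm{b})=C(\Gamma)<\infty$, hence $PC(\bm{b})=C(\bm{b})=PC(\Gamma)$, so $\bm{b}\in D\cap\{C(\bm{b})<\infty\}$. Conversely, if $\bm{b}\in D\cap\{C(\bm{b})<\infty\}$ then $PC(\bm{b})=PC(\Gamma)=C(\Gamma)$ and $C(\bm{b})=PC(\bm{b})$, so $C(\bm{b})=C(\Gamma)$, placing $\bm{b}\in\mbox{MINCOST}$. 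The ``in particular'' clause then follows at once: if every free action of $\Gamma$ has finite cost then $C(\Gamma)<\infty$ and the intersection with $\{C(\bm{b})<\infty\}$ is redundant, so $\mbox{MINCOST}=D$ is dense $G_\delta$.

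I do not expect any real obstacle; the argument is essentially an assembly of Proposition~\ref{prop:analogue}, Corollary~\ref{cor:attain}, Corollary~\ref{cor:PC=C}, and Corollary~\ref{cor:Cgen}. The small pleasant surprise is that the case $C(\Gamma)=\infty$ collapses entirely: $D$ simply becomes $\mbox{FR}(\Gamma,X,\mu)$, which spares us from having to characterize $D$ by a single uniform $PC$-condition across both regimes.
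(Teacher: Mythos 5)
Your proof is correct and follows essentially the same route as the paper: dispose of the case $C(\Gamma)=\infty$ by noting $D=\mbox{FR}(\Gamma,X,\mu)$, and otherwise use Proposition \ref{prop:analogue} to get $PC(\Gamma)=C(\Gamma)$, identify $D$ with $\mbox{MINPCOST}(\Gamma,X,\mu)$ via Corollary \ref{cor:attain}, and invoke Corollary \ref{cor:Cgen} for density, with Corollary \ref{cor:PC=C}.(1) handling the $\mbox{MINCOST}$ identification. The only difference is that you spell out a few steps the paper leaves implicit (e.g., the constant exhaustion in the infinite-cost case and both inclusions of $D=\mbox{MINPCOST}$), which is fine.
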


\begin{proof}
We begin by showing $D$ is dense $G_\delta$. By \cite[Theorem 10.8]{Ke10}, $\mbox{FR}(\Gamma ,X,\mu )$ is dense $G_\delta$ in $A(\Gamma ,X,\mu )$. If $C(\Gamma ) = \infty$ then $D=\mbox{FR}(\Gamma ,X,\mu )$ and we are done, so we may assume that $C(\Gamma )<\infty$. Then $C(\Gamma )=PC(\Gamma )$ by Proposition \ref{prop:analogue}, so it follows from Corollary \ref{cor:attain} that $D= \{ \bm{a}\in \mbox{FR}(\Gamma ,X,\mu )\csuchthat PC(\bm{a}) = PC(\Gamma ) \} = \mbox{MINPCOST}(\Gamma ,X,\mu )$, and therefore $D$ is dense $G_\delta$ by Corollary \ref{cor:Cgen}.

For the second statement of the theorem, suppose that $C(\Gamma )<\infty$. Then $C(\Gamma ) =PC(\Gamma )$ by Proposition \ref{prop:analogue}. The inclusion from left to right in (\ref{eqn:mincost}) is clear. If $\bm{b}$ has finite cost and $\bm{b}\in D$ then, $PC(\bm{b})\leq C(\Gamma )=PC(\Gamma )$, hence $PC(\bm{b})=PC(\Gamma ) = C(\Gamma )$, i.e., $\bm{b}\in \mbox{MINCOST}(\Gamma ,X,\mu )$.
\end{proof}

\subsection{Cost and invariant random subgroups}\label{sec:costIRS}

Equip each of the spaces $\Gamma ^\Gamma$ and $2^\Gamma$ with the pointwise convergence topology.
\begin{lemma}\label{lem:transv}
There exists a continuous assignment $\mbox{\emph{Sub}}_\Gamma \ra \Gamma ^\Gamma$, $H\mapsto \sigma _H$, with the following properties:
\begin{itemize}
\item[(i)] For each $H\in \mbox{\emph{Sub}}_\Gamma$, $\sigma _H : \Gamma \ra \Gamma$ is a selector for the right cosets of $H$ in $\Gamma$, i.e., $\sigma _H (\delta ) \in H\delta$ for all $\delta \in \Gamma$, and $\sigma _H$ is constant on each right coset of $H$.
\item[(ii)] $\sigma _H (h) =e$ whenever $h\in H$.
\item[(iii)] The corresponding assignment of transversals $\mbox{\emph{Sub}}_\Gamma \ra 2^\Gamma$, $H\mapsto T_H:=\sigma _H(\Gamma )$, is continuous.
\end{itemize}
\end{lemma}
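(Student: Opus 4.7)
The plan is to construct $\sigma_H$ by fixing an enumeration of $\Gamma$ once and for all and letting $\sigma_H(\delta)$ be the coset representative of $H\delta$ that comes earliest in the enumeration. Concretely, I would fix a bijection $\N \to \Gamma$, $n \mapsto \gamma_n$, with $\gamma_0 = e$, and set
\[
\sigma_H(\delta) = \gamma_{n(H,\delta)}, \qquad \text{where } n(H,\delta) = \min\{n \in \N : \gamma_n \delta^{-1} \in H\}.
\]
The minimum is always attained (take $n$ with $\gamma_n = \delta$, giving $\gamma_n\delta^{-1} = e \in H$), so $\sigma_H$ is well-defined on all of $\Gamma$.

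Verifying (i) and (ii) is then immediate from the definition: $\gamma_{n(H,\delta)}\delta^{-1}\in H$ gives $\sigma_H(\delta)\in H\delta$; if $\delta' \in H\delta$ then $H\delta' = H\delta$ so the condition $\gamma_n\delta'^{-1}\in H$ is the same as $\gamma_n\delta^{-1}\in H$, forcing $n(H,\delta') = n(H,\delta)$; and for $h \in H$ one has $Hh = H$, so $n(H,h) = 0$ and $\sigma_H(h) = \gamma_0 = e$.

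The main (and only nontrivial) point is continuity. I would prove that $H \mapsto \sigma_H$ is continuous from $\mathrm{Sub}_\Gamma$ to $\Gamma^\Gamma$ by fixing $\delta \in \Gamma$, setting $n = n(H,\delta)$, and observing that the conditions
\[
\gamma_0\delta^{-1}\notin H,\ \gamma_1\delta^{-1}\notin H,\ \ldots,\ \gamma_{n-1}\delta^{-1}\notin H,\ \gamma_n\delta^{-1}\in H
\]
define a clopen neighborhood of $H$ in $\mathrm{Sub}_\Gamma \subseteq 2^\Gamma$ on which $\sigma_{H'}(\delta) = \gamma_n = \sigma_H(\delta)$. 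This is a routine application of the definition of the product topology on $2^\Gamma$ restricted to the closed subspace $\mathrm{Sub}_\Gamma$, so I do not anticipate any obstacle here.

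Finally, for (iii), I would use the characterization
\[
T_H = \{\gamma \in \Gamma : \sigma_H(\gamma) = \gamma\},
\]
which follows from (i): if $\gamma = \sigma_H(\delta)$ then $\gamma \in H\delta$, so $H\gamma = H\delta$ and hence $\sigma_H(\gamma) = \sigma_H(\delta) = \gamma$; conversely, $\sigma_H(\gamma) = \gamma$ trivially gives $\gamma \in T_H$. Continuity of $H \mapsto T_H$ in $2^\Gamma$ then amounts to showing that for each fixed $\gamma \in \Gamma$ the set $\{H : \gamma \in T_H\} = \{H : \sigma_H(\gamma) = \gamma\}$ is clopen, which is immediate from the just-established continuity of the evaluation $H \mapsto \sigma_H(\gamma)$ into the discrete space $\Gamma$.
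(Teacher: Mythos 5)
Your construction is exactly the paper's: the paper also fixes an enumeration $\Gamma=\{\gamma_m\}$ with $\gamma_0=e$ and defines $\sigma_H(\gamma_m)=\gamma_i$ for the least $i$ with $\gamma_m\gamma_i^{-1}\in H$ (equivalent to your condition since $H$ is a subgroup), and it likewise obtains (iii) from the observation that $T_H$ is the fixed-point set of $\sigma_H$. Your write-up just supplies the routine verifications the paper leaves implicit; the argument is correct.
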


\begin{proof}
Fix a bijective enumeration $\Gamma = \{ \gamma _m \} _{m\in \N}$ of $\Gamma$ with $\gamma _0=e$, and define $\sigma _H(\gamma _m)=\gamma _i$ where $i$ is least such that $\gamma _m \gamma _i^{-1} \in H$. This is continuous and (i) and (ii) are clearly satisfied, and (iii) follows from continuity of $H\mapsto \sigma _H$, since the map $\Gamma ^\Gamma \ra 2^\Gamma$ sending $f:\Gamma \ra \Gamma$ to its set of fixed points is continuous.
\end{proof}

Define the set
\[
A(\mbox{Sub}_\Gamma ,X,\mu ) := \{ (H, \bm{a} ) \csuchthat H\in \mbox{Sub}_\Gamma , \mbox{ and }\bm{a}\in A(H ,X,\mu ) \} .
\]
This set has a natural Polish topology in which $(H_n, \bm{a}_n)\ra (H,\bm{a})$ if and only if $H_n\ra H$ and $\bm{a}_n\ra \bm{a}$ pointwise. We make this precise by taking $*$ to be some point isolated from $\mbox{Aut}(X,\mu )$ and then defining $\gamma ^b = *$ whenever $H\leq \Gamma$, $\bm{b}\in A(H,X,\mu )$, and $\gamma \not\in H$. Then $(H_n,\bm{a}_n)\ra (H,\bm{a})$ means that $\gamma ^{a_n} \ra \gamma ^a$ for every $\gamma \in \Gamma$. 
%

\begin{lemma}\label{lem:meas}
For any $r\in \R$ the sets
\begin{align*}
S_r &= \{ H\in \mbox{\emph{Sub}}_\Gamma \csuchthat C(H)< r \} \\
A_r &= \{ (H,\bm{a})\in A(\mbox{\emph{Sub}}_\Gamma ,X,\mu ) \csuchthat \bm{a}\mbox{\emph{ is free and }}C(\bm{a}) < r \}
\end{align*}
are analytic. In particular, the map $H\mapsto C(H)$ is universally measurable.
\end{lemma}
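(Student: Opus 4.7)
The plan is to exhibit both $A_r$ and $S_r$ as projections of Borel sets in Polish spaces, and then to conclude universal measurability of $H\mapsto C(H)$ from the analyticity of $\{ H\csuchthat C(H)<r\}=S_r$ for every $r\in \R$.

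I would first handle $A_r$. By Definition \ref{def:Lgraph}(vi), every L-graphing of $E_a$ may be replaced, without increasing cost, by one of the form $\Phi ^{a,A}$ for some $A:H\ra \mbox{MALG}_\mu$; extending $A$ to $\Gamma$ by $A_\delta =\emptyset$ for $\delta \not\in H$ yields an element of the Polish space $\mbox{MALG}_\mu ^\Gamma$. Consider then the Borel subset
\[
\Omega := \{ (H,\bm{a},A) \in A(\mbox{Sub}_\Gamma ,X,\mu )\times \mbox{MALG}_\mu ^\Gamma \csuchthat A_\delta =\emptyset \mbox{ for all } \delta \not\in H \} .
\]
Let $B_r\subseteq \Omega$ consist of those $(H,\bm{a},A)$ such that (i) $\bm{a}$ is free, (ii) $\sum _{\delta \in \Gamma} \mu (A_\delta )<r$, and (iii) $\Phi ^{a,A}$ L-graphs $E_a$. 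Conditions (i) and (ii) are routine Borel conditions. For (iii), the inclusion $E_{\Phi ^{a,A}}\subseteq E_a$ holds automatically from Definition \ref{def:Lgraph}(v), so the substantive requirement is that for every $\gamma \in H$,
\[
\mu \big( \{ x\csuchthat d_{\Phi ^{a,A}}(x,\gamma ^a x)<\infty \} \big)=1.
\]
Writing this set as $\bigcup _{M\in \N}\{x\csuchthat d_{\Phi ^{a,A}}(x,\gamma ^a x)\leq M\}$ and further expanding over the countable collection of words $\sigma$ of length $\leq M$ in $(H\cup \{ e \} )^{\pm 1}$, as in the proof of Theorem \ref{thm:KecOpen}, expresses this as a Borel condition on $(H,\bm{a},A)$. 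Hence $B_r$ is Borel, and $A_r$, being its continuous image under projection to $A(\mbox{Sub}_\Gamma ,X,\mu )$, is analytic.

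For $S_r$, every $H\in \mbox{Sub}_\Gamma$ admits a free m.p.\ action on the non-atomic standard space $(X,\mu )$, so by definition $C(H)<r$ if and only if some free action $\bm{a}$ of $H$ satisfies $C(\bm{a})<r$, i.e., $(H,\bm{a}) \in A_r$. Thus $S_r$ is the further projection of $A_r$ under $(H,\bm{a})\mapsto H$, and is therefore analytic. Universal measurability of $H\mapsto C(H)$ then follows because $\{ H\csuchthat C(H)<r\} = S_r$ is analytic for every $r\in \R$ and analytic sets are universally measurable.

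The main technical obstacle is verifying (iii), namely that "$\Phi ^{a,A}$ L-graphs $E_a$" is a Borel condition on $(H,\bm{a},A)$. This reduces to showing that the word maps $\varphi ^{a,A}_\sigma$ — finite compositions of partial automorphisms drawn from the assignment $A$ and from the action $\bm{a}$ — depend Borel-ly on the data, and that taking their domains and equality sets with translates $\gamma ^a x$ preserves Borelness. Given the Polish structure on $A(\mbox{Sub}_\Gamma ,X,\mu )$ and on $\mbox{MALG}_\mu$, and since the indexing by words is countable, this is routine but slightly tedious to spell out in full.
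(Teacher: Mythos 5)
Your argument is essentially correct, but it takes a genuinely different route from the paper's. The paper fixes $X=2^\N$ and associates to each pair $(H,\bm{a})$ the shift-invariant measure $\mu _{H,\bm{a}}=(\Phi _{H,\bm{a}})_*\mu$ on $X^\Gamma$ via an explicit equivariant embedding built from the transversal selectors of Lemma \ref{lem:transv}; after checking that $(H,\bm{a})\mapsto \mu _{H,\bm{a}}$ is continuous, it pulls $A_r$ back from the set $\{ (H,\nu )\csuchthat C_\nu (E_H)<r\}$, whose analyticity is quoted from the proof of \cite[Proposition 18.1]{KM05}. You instead unwind the definition of cost directly, parametrizing L-graphings by assignments $A\in \mbox{MALG}_\mu ^\Gamma$ supported on $H$ (legitimate by Definition \ref{def:Lgraph}(vi), which also gives the equivalence of $C(\bm{a})<r$ with the existence of such an $A$ of total mass less than $r$ generating $E_a$), and exhibit $A_r$ as the projection of a Borel set. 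The one point in your version that genuinely needs the freeness hypothesis is the Borelness of condition (iii): after expanding $\{ x\csuchthat d_{\Phi ^{a,A}}(x,\gamma ^ax)\leq M\}$ over words $\sigma$, the sets involved are $\mbox{dom}(\varphi ^a_\sigma )\cap \mbox{Fix}^a(\delta _\sigma ^{-1}\gamma )$, and it is freeness (already imposed by (i)) that lets you discard, up to null sets, all words except those evaluating to $\gamma$ in the group, leaving explicit Boolean combinations of the sets $\alpha ^aA_\delta$ whose measures depend Borel-measurably (indeed continuously) on $(H,\bm{a},A)$ --- exactly the mechanism of the proof of Theorem \ref{thm:KecOpen}. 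With that observation supplied, your proof goes through. The trade-off is that the paper's route is shorter on the page because it outsources the hard measurability statement to Kechris--Miller, whereas yours is self-contained and reuses machinery already developed for Theorem \ref{thm:KecOpen}; both approaches yield only analyticity (not Borelness), and both obtain $S_r$ as the further continuous projection onto $\mbox{Sub}_\Gamma$, so the conclusions agree.
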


\begin{proof}
It suffices to show that $A_r$ is analytic since $S_r$ is the image of $A_r$ under projection onto $\mbox{Sub}_\Gamma$ which is continuous. We may assume that $X=2^\N$ and that $\mu$ is the uniform product measure.

Let $\Gamma \cc ^s X^\Gamma$ denote the left shift action given by $(\gamma^s\cdot f)(\delta ) = f(\gamma^{-1}\delta )$ for $f\in X^\Gamma$. Let $H\mapsto \sigma _H$ and $H\mapsto T_H\subseteq \Gamma$ be a continuous assignment of selectors and transversals given by Lemma \ref{lem:transv}. For $(H,\bm{a})\in A(\mbox{Sub}_\Gamma ,X,\mu )$ define the map $\Phi _{H,\bm{a}}:X \ra X^\Gamma$ by $\Phi _{H,\bm{a}}(x)(ht) = (h^{-1})^a x$ for $h\in H$, $t\in T_H$, $x\in X$. Then $\Phi _{H,\bm{a}}$ is injective and equivariant from $H\cc ^a X$ to the shift action $H\cc ^s X^\Gamma$ and so the measure $\mu _{H,\bm{a}}:= (\Phi _{H,\bm{a}})_*\mu$ is $H\cc ^s X^\Gamma$ invariant, and the systems $H\cc ^a (X,\mu )$ and $H\cc ^s (X^\Gamma ,\mu _{H,\bm{a}})$ are isomorphic. 
Let $P$ denote the space of Borel probability measures on $X^\Gamma$ equipped with the weak${}^*$-topology.
\begin{claim}
The map $A(\mbox{\emph{Sub}}_\Gamma ,X,\mu ) \ra P$, $(H,\bm{a})\mapsto \mu _{H,\bm{a}}$ is continuous. \end{claim}

\begin{proof}[Proof of claim]
Suppose that $(H_n, \bm{a}_n)\ra (H_\infty,\bm{a}_\infty)$ in $A(\mbox{Sub}_\Gamma ,X,\mu )$. Letting $\mu _n=\mu _{H_n,\bm{a}_n}$, it suffices to check that $\mu _n (A)\ra \mu _\infty (A)$ whenever $A\subseteq X^\Gamma$ is of the form $A=\{ f\in X^\Gamma \csuchthat \forall \gamma \in F \ (f(\gamma ) \in A_\gamma )\}$ where $F\subseteq \Gamma$ is finite and $A_\gamma \subseteq X$ is Borel. For $\gamma \in F$ write $\gamma = h_\gamma t_\gamma$ where $t_\gamma \in T_{H_\infty}$ and $h_\gamma \in H_\infty$. By continuity of $H\mapsto \sigma _H$ and $H\mapsto T_H$, for all large enough $n$, $h_\gamma \in H_n$ and $t_\gamma \in T_{H_n}$ for all $\gamma \in F$. Then $\mu _n(A) = \mu (\bigcap _{\gamma \in F}h_\gamma ^{a_n}(A_\gamma )) \ra \mu (\bigcap _{\gamma \in F}h_\gamma ^{a}(A_\gamma ))=\mu _\infty (A)$ since $\bm{a}_n\ra \bm{a}$.
\qedhere[Claim]
\end{proof}

\noindent Now let $E_H$ denote the orbit equivalence relation on $X^\Gamma$ generated by $H\cc ^s X^\Gamma$. The set
\[
B = \{ (H,\nu ) \in \mbox{Sub}_\Gamma \times P \csuchthat \nu \mbox{ is }E_H\mbox{-invariant} \mbox{ and } H\cc ^s (X^\Gamma ,\nu ) \mbox{ is essentially free} \}
\]
is Borel so by the proof of \cite[Proposition 18.1]{KM05} the set $D = \{ (H,\nu ) \in B \csuchthat C_\nu (E_H) <r \}$ is analytic. We have $(H,\bm{a})\in A_r$ if and only if $(H, \mu _{H,\bm{a}})\in D$, which shows that $A_r$ is analytic. 
\end{proof}

It follows that for any ergodic invariant random subgroup $\theta$ of $\Gamma$ there is an $r\in \R \cup \{\infty \}$ such that $C(H) = r$ for almost all $H\leq \Gamma$.  The following is an analogue of \cite[\S 5]{BG05} for cost. I would like to thank Lewis Bowen for a helpful discussion related to this.

\begin{theorem}\label{thm:gencost1}$\ $
Let $\theta$ be an invariant random subgroup of $\Gamma$ and suppose that $\theta$ concentrates on the infinite subgroups of $\Gamma$ which have infinite index in $\Gamma$. If $\theta (\{ H \csuchthat C(H) < \infty \} )\neq 0$ then $C(\Gamma ) =1$.

Thus, if $C(\Gamma ) >1$ then for any ergodic non-atomic m.p.\ action $\Gamma \cc ^a (X,\mu )$, either $\Gamma _x$ is finite almost surely, or $C(\Gamma _x) = \infty$ almost surely.
\end{theorem}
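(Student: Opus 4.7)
The second statement follows from the first by contradiction. Given $C(\Gamma)>1$ and an ergodic action $\bm{a}=\Gamma\cc(X,\mu)$ on non-atomic $(X,\mu)$, the sets $\{x:\Gamma_x\text{ is finite}\}$ and $\{x:C(\Gamma_x)<\infty\}$ are $\Gamma$-invariant, so by ergodicity each is $\mu$-null or $\mu$-conull. If neither (1) nor (2) holds then the former is null while the latter is conull, and by Lemma \ref{lem:meas} together with ergodicity $C(\Gamma_x)$ is $\mu$-a.s.\ a finite constant. Non-atomicity of $\mu$ forces $\mu$-a.e.\ orbit to be infinite, so $[\Gamma:\Gamma_x]=\infty$ a.s., and $\theta_{\bm{a}}$ then satisfies the hypotheses of the first statement, yielding $C(\Gamma)=1$, a contradiction. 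The ``in particular'' clause is similar: $C(\Gamma)>1$ implies $\Gamma$ is non-amenable, so if $\mbox{AR}_\Gamma$ were infinite it would be an amenable subgroup of infinite index of cost~$1$ (Ornstein--Weiss), whence $\updelta_{\mbox{AR}_\Gamma}$ would meet the IRS hypotheses; and any non-trivial amenable IRS of $\Gamma/\mbox{AR}_\Gamma$ lifts to an amenable IRS of $\Gamma$ on infinite, infinite-index subgroups.

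For the IRS statement itself I begin with standard reductions. Using the ergodic decomposition of $\theta$, the universal measurability of $H\mapsto C(H)$ (Lemma \ref{lem:meas}), and the conjugation-invariance of the hypotheses, I may assume $\theta$ is ergodic and that $\theta$-a.e.\ $H$ is infinite, of infinite index, and of cost equal to a fixed constant $r<\infty$. Realize $\theta$ as the stabilizer distribution of an ergodic m.p.\ action $\bm{a}=\Gamma\cc(Y,\nu)$ (\cite{AGV12}). Since $\Gamma$ is infinite we have $C(\Gamma)\geq 1$, so it suffices to produce a free m.p.\ $\Gamma$-action of cost at most~$1$.

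The candidate is the free, ergodic product $\bm{c}=\bm{a}\times\bm{s}_\Gamma$ on $(Y\times[0,1]^\Gamma,\nu\times\lambda^\Gamma)$. Let $R\subseteq E_c$ be the Borel subequivalence relation whose class at $(y,f)$ is the fiberwise $\Gamma_y$-orbit $\{(y,hf):h\in\Gamma_y\}$. Two observations drive the argument. First, as a $\Gamma_y$-system the fiberwise action $\Gamma_y\cc^s[0,1]^\Gamma$ is a Bernoulli shift of $\Gamma_y$ over the atomless alphabet $[0,1]^{\Gamma_y\backslash\Gamma}$; by Theorem \ref{thm:AW} combined with Corollary \ref{cor:AFrPC} and Corollary \ref{cor:PC=C}, this action has cost exactly $C(\Gamma_y)=r$, and integrating over fibers gives $C_{\nu\times\lambda^\Gamma}(R)=r$. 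Second, the $\bm{c}$-automorphisms $\phi_\gamma:(y,f)\mapsto(\gamma y,\gamma f)$ normalize $R$ (they carry the $\Gamma_y$-fiber class at $(y,f)$ onto the $\Gamma_{\gamma y}$-fiber class at $(\gamma y,\gamma f)$), together with $R$ they generate $E_c$, and since each $E_c$-class contains $[\Gamma:\Gamma_y]=\infty$ many $R$-classes their induced action on $E_c/R$ is aperiodic. I then propose to derive $C(\bm{c})\leq 1$ from an analogue of Gaboriau's bound $C(S)-1\leq(C(R)-1)/[S:R]$: when a finite-cost subequivalence relation is normalized by a countable group of m.p.\ automorphisms whose induced action on the quotient is aperiodic, the ambient relation has cost at most~$1$; in our setting $[E_c:R]=\infty$ and $C(R)=r<\infty$, which would force $C(\bm{c})\leq 1$.

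The principal obstacle is precisely this weak Gaboriau inequality. The classical formula (\cite{Ga00}) requires the strong normality condition that every element of the full pseudogroup $[[E_c]]$ preserve $R$, which generally fails in a random-stabilizer setting (it would force $\bm{a}$ to be essentially free). The weaker hypothesis sketched above is nevertheless sufficient, and its proof proceeds via a Rokhlin-tower argument applied to the aperiodic quotient action of $\Gamma/\Gamma_y$ on $E_c/R$: one builds an exhaustion $(E_n)$ of $E_c$ by Borel subequivalence relations containing~$R$ with $C(E_n)\to 1$. Combined with Corollary \ref{cor:PC=C}(3), this yields $PC(\bm{c})=C(\bm{c})=1$, and hence $C(\Gamma)=1$, completing the proof.
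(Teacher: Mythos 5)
Your overall architecture---reduce to an ergodic $\theta$ of a.s.\ finite cost, sit a finite-cost, aperiodic, normalized subequivalence relation of infinite index inside a free action of $\Gamma$, then drive the ambient cost down to $1$ with a cheap complete section---is the same as the paper's. But there is a genuine gap at the step where you compute the cost of $R$. The fiberwise action of $\Gamma_y$ on $([0,1]^\Gamma,\lambda^\Gamma)$ is indeed isomorphic to a Bernoulli shift of $\Gamma_y$ over an atomless base, but the Bernoulli shift has the \emph{maximal} (pseudo)cost among free actions of $\Gamma_y$, not the minimal one. Theorem \ref{thm:AW} together with Corollary \ref{cor:AFrPC} gives only $PC(\bm{s}_{\Gamma_y})\leq PC(\Gamma_y)\leq C(\Gamma_y)=r$; to upgrade this to $C(\bm{s}_{\Gamma_y})\leq r$ via Corollary \ref{cor:PC=C} you would first need to know $C(\bm{s}_{\Gamma_y})<\infty$, and whether a group of finite cost can have a Bernoulli shift of infinite cost is precisely Question \ref{Q:fincost}---open. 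So the claim $C_{\nu\times\lambda^\Gamma}(R)=r$ is unjustified, and without finiteness of $C(R)$ the Gaboriau-type step has nothing to work with. The paper avoids this by \emph{not} putting the Bernoulli shift in the fibers: it uses Lemma \ref{lem:meas} and measurable uniformization to select, for each $H$, a free action $\bm{a}_H$ of $H$ with $C(\bm{a}_H)<r$, co-induces these to a single free action $\bm{b}$ of $\Gamma$, and observes (Lemma \ref{lem:free}) that the fiberwise relation is the orbit relation of an $H$-action that \emph{factors onto} $\bm{a}_H$, hence has cost $<r$ by \cite[p.\ 78]{Ke10}; integrating then gives $C_\eta(E)<r$ as in Lemma \ref{lem:normsub-eq}.

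The second soft spot is the ``weak Gaboriau inequality,'' which you correctly identify as the crux and then assert. The paper does not invoke any such general statement: it adjoins an independent coordinate carrying the $\theta$-random Bernoulli shift $\bm{s}$, whose orbit relation $E_s$ on the base $W$ is aperiodic because $H$ has infinite index a.s.; an aperiodic $T\in[E_s]$ yields a cost-$1$ graphing $\Phi^{a,A}$ of the ``horizontal'' direction together with an $E$-invariant complete section $Y=p^{-1}(Y_0)$ of arbitrarily small measure, and the inclusion $E\subseteq E\resto Y\vee E_{\Phi^{a,A}}$ (Claim \ref{claim:vee}) combined with normality and aperiodicity of $E$ and \cite[24.10]{KM05} gives $C_\eta(E_a)\leq 1+\epsilon$. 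Your Rokhlin-tower sketch is in the same spirit, but as written it states what must be proved rather than proving it, and to run it you must still manufacture an aperiodic ``transverse'' relation that is invariant under $R$---which is exactly what the auxiliary random Bernoulli factor is for. Your derivation of the second statement of the theorem from the first is correct.
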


\begin{proof}
To see that the second statement follows from the first observe that an ergodic non-atomic m.p.\ action cannot have stabilizers which are finite index. We now prove the first statement. By decomposing $\theta$ into its ergodic components we may assume without loss of generality that $\theta$ is ergodic and there is an $r\in \R$ such that $C(H) < r$ almost surely.
%

By Lemma \ref{lem:meas} the set $A_r= \{ (H, \bm{a} ) \in A(\mbox{Sub}_\Gamma ,X,\mu )\csuchthat \bm{a}\mbox{ is free and }C(\bm{a})< r \}$ is an analytic subset of $A(\mbox{Sub}_\Gamma ,X,\mu )$. Since $C(H)<r$ almost surely, we may measurably select for each $H\in \mbox{Sub}_\Gamma$ a free action $\bm{a}_H \in \mbox{FR}(H ,X,\mu )\subseteq \mbox{A}(H , X,\mu )$ of $H$ such that almost surely $C(\bm{a}_H) < r$ (we are applying \cite[18.1]{Ke95} to the flip of the graph of the projection function $A_r\ra \mbox{Sub}_\Gamma$, $(H,\bm{a})\mapsto H$). A co-inducing process can now be used to obtain an action $\bm{b}$ of $\Gamma$ from the selection $H\mapsto \bm{a}_H \in A(H ,X,\mu )$ as follows.

Let $H\mapsto \sigma _H$ be as in Lemma \ref{lem:transv}. Let $\mbox{COS}_\Gamma \subseteq 2^\Gamma$ denote the closed subspace of all right cosets of subgroups of $\Gamma$, on which $\Gamma$ acts continuously by left translation $\gamma ^\ell \cdot H\delta = \gamma H \delta$. The function $\rho : \Gamma \times \mbox{COS}_\Gamma  \ra \Gamma$ defined by
\[
\rho (\gamma ,H\delta ) = (\sigma _{\gamma H\gamma ^{-1}}(\gamma \delta ))^{-1}\gamma \sigma _H(\delta )
\]
is a continuous cocycle of this action with values in $\Gamma$. 
It is clear that $\rho (\gamma ,H\delta )\in \delta ^{-1}H\delta$, so the map $(\gamma ,H\delta )\mapsto \rho (\gamma ,H\delta )^{a_{\delta ^{-1}H\delta}}$ is a well-defined measurable cocycle with values in $\mbox{Aut}(X,\mu )$. We therefore obtain an action $b$ of $\Gamma$ on the space $W = \{ (H,f)\csuchthat H\leq \Gamma \mbox{ and }f:H\backslash \Gamma \ra X \}$ given by $\gamma ^b(H,f ) = (\gamma H\gamma ^{-1}, \gamma ^{b_H}f)$ where $\gamma ^{b_H}f : \gamma H \gamma ^{-1}\backslash \Gamma \ra X$ is given by
\[
(\gamma ^{b_H}f)(\gamma H\delta ) = \rho (\gamma ,H\delta )^{a_{\delta ^{-1}H\delta}}(f(H\delta )) .
\]
This action preserves the measure $\kappa = \int _H (\updelta _H \times \mu ^{H\backslash \Gamma}) \, d\theta (H)$ since
\begin{align*}
\gamma ^b _* \kappa &= \int _H (\updelta _{\gamma H \gamma ^{-1}}\times \gamma ^{b_H}_*\mu ^{H\backslash \Gamma} ) \, d\theta
= \int _H \Big( \updelta _{\gamma H \gamma ^{-1}} \times \prod _{\gamma H\delta \in \gamma H \gamma ^{-1}\backslash \Gamma} (\rho (\gamma ,H\delta )^{a_{\delta ^{-1}H\delta}})_*\mu \Big) \, d\theta \\
&= \int _H \Big(\updelta _{\gamma H\gamma ^{-1}}\times \prod _{\gamma H\delta \in \gamma H \gamma ^{-1}\backslash \Gamma} \mu \Big) \, d\theta
=\int _H\big(\updelta _{\gamma H\gamma ^{-1}}\times \mu ^{\gamma H\gamma ^{-1}\backslash \Gamma}\big) \, d\theta = \int _H \updelta _H \times \mu ^{H\backslash \Gamma}d\theta = \kappa .
\end{align*}

\begin{lemma}\label{lem:free}$\ $
\begin{enumerate}
\item For each $(H, f)\in W$, and $h\in H$ we have $(h^{b_H}f)(H) = h ^{a_H}(f(H))$ and thus the map $X^{H\backslash \Gamma} \ra X$, $f\mapsto f(H)$ factors
    \[
    \bm{b}_H = H \cc ^{b_H}(X^{H\backslash \Gamma},\mu ^{H\backslash \Gamma} )
    \]
    onto $\bm{a}_H$.

\item (Analogue of \cite[Lemma 2.1]{Io11}) For almost all $H\leq \Gamma$ and every $\gamma \in \Gamma \setminus \{ e \}$ the sets
\[
W^H_\gamma = \{ f \in X^{H\backslash \Gamma}\csuchthat \gamma H\gamma ^{-1} = H \mbox{ and }(\gamma ^{b_H}f)(H) = f(H) \}
\]
are $\mu ^{H\backslash \Gamma}$-null. In particular, $\bm{b}$ is essentially free.
\end{enumerate}
\end{lemma}

\begin{proof}
(1) is clear from the definition of $b_H$. For (2), If $f\in W^H_\gamma$ then $\rho (\gamma ,H)^{a_H} (f(H\gamma ^{-1})) = f(H)$ by definition of $b_H$. So for each $H$ with $\bm{a}_H$ essentially free, if $\gamma \in H\setminus \{ e \}$ then $f\in W^H_\gamma$ if and only if $\gamma ^{a_H}(f(H))=f(H)$, so that $W^H_\gamma$ is null, while if $\gamma \in \Gamma \setminus H$ then $W^H_\gamma \subseteq \{ f \in X^{H\backslash \Gamma}\csuchthat \rho (\gamma ,H)^{a_H} (f(H\gamma ^{-1})) = f(H) \}$, which is null since $H\gamma ^{-1}\neq H$ and $\mu$ is non-atomic. Since almost all $\bm{a}_H$ are essentially free we are done. \qedhere[Lemma \ref{lem:free}]
\end{proof}

We now apply a randomized version of an argument due to Gaboriau (see \cite[Theorem 35.5]{KM05}). There is another measure preserving action $\bm{s}= \Gamma \cc ^s (W,\kappa )$ of $\Gamma$ on $(W,\kappa )$ given by $\gamma ^s (H,f) = (\gamma H \gamma ^{-1} , \gamma ^{s_H} f)$ where $(\gamma ^{s_H}f)(\gamma H\delta ) = f(H\delta )$ (this is the random Bernoulli shift determined by $\theta$ \cite[\S 5.3]{T-D12a}). The projection map $W \ra \mbox{Sub}_\Gamma$, $(H,f) \mapsto H$ factors both $\bm{b}$ and $\bm{s}$ onto $\bm{\theta}$. We let $\bm{a}$ denote the corresponding relatively independent joining of $\bm{b}$ and $\bm{s}$ over $\bm{\theta}$, i.e., $\bm{a}$ is the measure preserving action of $\Gamma$ on
\[
(Z,\eta) = \big( \big\{ (H,f,g) \csuchthat f,g\in X ^{H\backslash \Gamma} \big\} , \ \int _H (\updelta _H \times \mu ^{\Gamma /H}\times \mu ^{\Gamma /H}) \, d\theta \big)
\]
given by $\gamma ^a (H,f,g) = (\gamma H \gamma ^{-1}, \gamma ^{b_H}f, \gamma ^{s_H}g)$ where $(\gamma ^{s_H}g)(\gamma H\delta )=g(H\delta )$. This action is free since it factors onto $\bm{b}$.

Let $p : Z \ra W$ denote the projection map $p((H,f,g))= (H,g)$. For each $(H,g)\in W$ the set $p^{-1}((H,g))$ is $a\resto H$-invariant, and we let $E_{(H,g)}$ denote the orbit equivalence relation on $p^{-1} ((H,g))$ generated by $\bm{a}\resto H$, i.e., $(H,f_1,g)E_{(H,g)}(H,f_2,g)$ if and only if there is some $h\in H$ such that $h^{b_H}f_1 = f_2$. Define the equivalence relation $E$ on $Z$ by $E = \bigsqcup _{(H,g)\in W} E_{(H,g)}$, i.e.,
\[
(H_1,f_1,g_1)E(H_2,f_2,g_2) \IFF (H_1 , g_1) = (H_2, g_2) \mbox{ and } \exists h\in H_1 \, (h^{b_H}f_1= f_2 ).
\]
Recall that if $F \subseteq R$ are countable Borel equivalence relations on a standard Borel space $Y$, then $F$ is said to be \emph{normal} in $R$ if there exists some countable group $\Delta$ of Borel automorphisms of $Y$ which generates $R$ and satisfies $xFy \Ra \delta (x) F\delta (y)$ for all $\delta \in \Delta$.
\begin{lemma}\label{lem:normsub-eq}
$E$ is a normal subequivalence relation of $E_a$ that is almost everywhere aperiodic and with $C_\eta (E) < r$.
\end{lemma}

\begin{proof}[Proof of Lemma \ref{lem:normsub-eq}]
It is clear that $E$ is an equivalence relation and that $E$ is contained in $E_{a}$. Also, $E$ is almost everywhere aperiodic since $\theta$ concentrates on the infinite subgroups of $\Gamma$ by hypothesis. Let $\gamma \in \Gamma$ and let $(H,f,g), (H,f',g)\in \mbox{Sub}_\Gamma \times X$ be $E$-related so that $h^{b_H}f = f'$ for some $h\in H$. To show $E$ is normal in $E_a$ we must show that $\gamma ^{a} (H,f,g)$ and $\gamma ^a(H,h^{b_H}f,g)$ are $E$ related as well, i.e., we must find some $k\in \gamma H\gamma ^{-1}$ such that $(k\gamma )^{b_H} f_1 = \gamma ^{b_H}(h^{b_H}f_1)$. The element $k = \gamma h \gamma ^{-1}$ works.

If we disintegrate $\eta$ via the $E$-invariant map $p:Z\ra W$, then for each $(H,g)\leq \Gamma$, the equivalence relation $E_{(H,g)}$ on $(p^{-1}((H,g)),\eta _{(H,g)})$  is isomorphic to the orbit equivalence relation generated by $b_H\resto H$ on $(X^{H\backslash \Gamma} ,\mu ^{H\backslash \Gamma})$. By Lemma \ref{lem:free}.(1), $\bm{b}_H$ factors onto $\bm{a}_H$, so for $\theta$-almost every $H$ we have $r\leq C_{\eta _{(H,g)}}(E_{(H,g)}) = C(\bm{b}_H ) \leq C(\bm{a}_H) < r$ by \cite[bottom of p. 78]{Ke10}. Then by \cite[Proposition 18.4]{KM05} we have
\[
C_{\eta} (E) = \int _{H,g} C _{\eta _{(H,g)}} ( E_{(H,g)}) \, d\theta (H) < r .
\qedhere[\mbox{Lemma } \ref{lem:normsub-eq}]
\]
\end{proof}

Since $H$ is almost surely infinite index, the equivalence relation $E_s$ on $W$ generated by $\bm{s}$ is aperiodic. By \cite{Ke10} the full group $[E_s]$ contains an aperiodic transformation $T: W\ra W$. Let $B: \Gamma \ra \mbox{MALG}_\kappa$, $\gamma \mapsto B_\gamma$, be a partition of $W$ such that $T\resto B_\gamma = \gamma ^s \resto B_\gamma$. Then $A: \Gamma \ra \mbox{MALG}_\kappa$ given by $A_\gamma = p^{-1} (B_\gamma )$ is a partition of $Z$, and determines the L-graphing $\Phi ^{a,A} = \{ \varphi ^{a,A}_\gamma \} _{\gamma \in \Gamma}$ where $\varphi ^{a,A}_\gamma \resto A_\gamma = \gamma ^a \resto A_\gamma$.

Fix $\epsilon >0$ and find by Lemma \ref{lem:normsub-eq} a graphing $\{ \varphi _i \}_{i\in \N}$ of $E\subseteq Z$ of finite cost $\sum _i C_\eta (\varphi _i) <\infty$. Let $M$ be so large that $\sum _{i>M} C_\eta ( \varphi _i) < \epsilon /2$. Let $Y_0\subseteq W$ be a Borel complete section for $E_T$ with $\kappa (Y_0)<\epsilon /(2M)$, and let $Y= p^{-1}(Y_0)$. Then $\eta (Y)=\kappa (Y_0)<\epsilon /M$, and $Y$ is $E$-invariant so that $\{ \varphi _i\resto Y \} _{i\in \N}$ is an L-graphing of $E\resto Y$. It follows that
\[
C_{\eta\resto Y}(E\resto Y) \leq \sum _{i\in \N} C_{\eta} (\{ \varphi _i \resto Y\} )
\leq M\cdot \eta (Y) + \sum _{i\geq M}C_\eta ( \{ \varphi _i \} ) < \epsilon.
\]
\begin{claim}\label{claim:vee}
$E\subseteq E\resto Y \vee E_{\Phi ^{a,A}}$.
\end{claim}

\begin{proof}
Suppose $(H,f,g)E(H,f',g)$. Since $Y_0$ is a complete section for $E_T$ there exists $\gamma _1,\dots ,\gamma _k$ and $\epsilon _1,\dots ,\epsilon _k \in \{ -1,1 \}$ such that $(\varphi ^{s,B}_{\gamma _k})^{\epsilon _k}\circ \cdots (\circ \varphi ^{s,B}_{\gamma _1})^{\epsilon _1} ((H,g)) \in Y_0$. Let $\gamma = \gamma _k ^{\epsilon _k}\cdots \gamma _1 ^{\epsilon _1}$ and let $(H_0,g_0)=\gamma ^s ((H,g))\in Y_0$. It follows that
\begin{align*}
\gamma ^a (H,f,g) &= (\gamma _k ^{\epsilon _k})^a\cdots (\gamma _1 ^{\epsilon _1})^a(H,f,g) = (\varphi ^{a,A}_{\gamma _k})^{\epsilon _k}\circ\cdots \circ (\varphi ^{a,A}_{\gamma _1})^{\epsilon _1} (H,f,g)  \\
\gamma ^a (H,f',g) &= (\gamma _k ^{\epsilon _k})^a\cdots (\gamma _1 ^{\epsilon _1})^a(H,f',g) = (\varphi ^{a,A}_{\gamma _k})^{\epsilon _k}\circ\cdots \circ (\varphi ^{a,A}_{\gamma _1})^{\epsilon _1} (H,f',g).
\end{align*}
This shows that $(H,f,g)E_{\Phi ^{a,A}}\gamma ^a (H,f,g)$ and $\gamma ^a (H,f',g) E_{\Phi ^{a,A}}(H,f',g)$. As $\gamma ^a(H,f,g) = (H_0,\gamma ^{b_H}f,g_0)\in Y$ and $\gamma ^a(H,f',g)= (H_0,\gamma ^{b_H}f',g_0)\in Y$ we will be done if we can show these two points are $E$-related. Let $h\in H$ be such that $h^{b_H}f = f'$ and let $k=\gamma h\gamma ^{-1}$.  Then $k\in \gamma H \gamma ^{-1} = H_0$ and
\[
k^a(H_0, \gamma ^{b_H}f,g_0) = (k\gamma )^a(H, f, g)= (\gamma h)^a(H,f,g) = \gamma ^a (H,f',g) = (H_0 ,\gamma ^{b_H}f',g_0)
\]
which shows that $(H_0,\gamma ^{b_H}f,g_0)E_{(H_0,g_0)}(H_0,\gamma ^{b_H}f',g_0)$. \qedhere[Claim \ref{claim:vee}]
\end{proof}

We have $C_\eta (E\resto Y \vee E_{\Phi ^{a,A}} ) \leq 1 + \epsilon$. Since we have shown that $E \subseteq E\resto Y \vee E_{\Phi ^{a,A}}$ and that $E$ is an aperiodic normal subequivalence relation of $E_a$, it follows from \cite[24.10]{KM05} that $C_\eta (E_a)\leq C_\eta (E\resto Y\vee E_{\Phi ^{a,A}}) \leq 1+\epsilon$. As $\epsilon >0$ was arbitrary it follows that $C_\eta (E_a)=1$ and therefore $C(\Gamma ) =1$.
\end{proof}

\subsection{Fixed price 1 and shift-minimality}\label{sec:fp1sm}

The following lemma will be needed for Theorem \ref{thm:smfp1}.

\begin{lemma}\label{lem:cost1}
Let $\theta$ be an invariant random subgroup of a countable group $\Gamma$ that concentrates on the infinite amenable subgroups of $\Gamma$. Let $\bm{a} = \Gamma \cc ^a (X,\mu )$ be a free measure preserving action of $\Gamma$ and let
\[
\bm{\theta \times a} = \Gamma \cc ^{c\times a} (\mbox{\emph{Sub}}_\Gamma \times X, \theta \times \mu )
\]
be the product $\Gamma$-system. Then $C_{\theta \times \mu}(E_{c\times a}) = 1$.
\end{lemma}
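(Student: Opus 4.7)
The plan is to exhibit a normal, aperiodic, hyperfinite subequivalence relation $E'\subseteq E_{c\times a}$ of cost $1$ and then mimic Gaboriau's transversal argument for fixed price $1$ of groups containing an infinite amenable normal subgroup. Define $E'$ on $(Z,\eta):=(\mbox{Sub}_\Gamma\times X,\theta\times\mu)$ by $(H,x)\,E'\,(H',x')$ iff $H=H'$ and $x'=h^ax$ for some $h\in H$, so that the fiber restriction of $E'$ over each $H$ is $E_{a\resto H}$. I would first verify: (i) $E'$ is normal in $E_{c\times a}$, as witnessed by the identity $\gamma\cdot(H,h^ax)=(\gamma h\gamma^{-1})^{c\times a}\gamma\cdot(H,x)$ together with $\gamma h\gamma^{-1}\in\gamma H\gamma^{-1}$; (ii) $E'$ is aperiodic, since $\theta$-a.e.\ $H$ is infinite and $\bm{a}\resto H$ is free; (iii) $E'$ is $\eta$-hyperfinite, since $E'$ is fiberwise the orbit equivalence of a free m.p.\ action of an amenable group and amenable countable Borel equivalence relations are hyperfinite by Connes--Feldman--Weiss; and (iv) $C_\eta(E')=\int C_\mu(E_{a\resto H})\,d\theta(H)=\int 1\,d\theta=1$, by the disintegration formula for cost together with the Ornstein--Weiss/Levitt theorem that free m.p.\ actions of infinite amenable groups have cost $1$.

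I will then treat the case where $\Gamma$ is finitely generated, with finite symmetric generating set $S$. Using (iii), write $E'=\bigcup_n F_n$ as an increasing exhaustion by finite Borel subequivalence relations, fix Borel selectors $\sigma_n:Z\to Z$ for the $F_n$, and set $Y_n=\sigma_n(Z)$; since the $F_n$-classes are finite while $\theta$-a.e.\ $E'$-class is infinite, $\eta(Y_n)\to 0$ and $C_\eta(F_n)=1-\eta(Y_n)$. For each $\gamma\in S$ define the induced partial transformation $\gamma_n^{c\times a}:Y_n\to Y_n$ by $y\mapsto \sigma_n(\gamma^{c\times a}y)$. The restriction $E'\resto Y_n$ is still aperiodic (each infinite $E'$-class meets $Y_n$ in infinitely many points) and amenable, so Ornstein--Weiss/Levitt gives an L-graphing $\Psi_n$ of $E'\resto Y_n$ of cost $o(1)$ as $n\to\infty$. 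A routine routing argument, using normality of $E'$, shows that $\mc{T}_n\cup\Psi_n\cup\{\gamma_n^{c\times a}:\gamma\in S\}$ (where $\mc{T}_n$ is a spanning forest of $F_n$) L-graphs all of $E_{c\times a}$; its cost is at most $(1-\eta(Y_n))+o(1)+|S|\eta(Y_n)\to 1$ as $n\to\infty$, giving $C_\eta(E_{c\times a})\leq 1$, and the reverse inequality is automatic from aperiodicity.

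For general (possibly infinitely generated) $\Gamma$, write $\Gamma=\bigcup_k\Gamma_k$ as an increasing union of finitely generated subgroups and set $R_k:=E'\vee E_{(c\times a)\resto\Gamma_k}$. Since $E'$ remains a normal, aperiodic, hyperfinite, cost-$1$ subequivalence relation of each $R_k$, the argument of the previous paragraph applied to $\Gamma_k$ in place of $\Gamma$ gives $C_\eta(R_k)=1$ for every $k$. As $(R_k)_k\nnearrow E_{c\times a}$ with $\lim_k C_\eta(R_k)=1$, Proposition \ref{prop:unions}(3) yields $C_\eta(E_{c\times a})=1$.

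The main technical point I expect will be the verification of (iii), i.e., producing a Borel exhaustion of $E'$ by finite subequivalence relations. Although $E'$ is fiberwise hyperfinite, assembling the fiberwise exhaustions into a Borel exhaustion on $Z$ requires a measurable version of Connes--Feldman--Weiss, which I would handle via Borel selection of F\o{}lner sequences for the amenable subgroups $H\in\mbox{Sub}_\Gamma$ (or by appealing to the result that a fiberwise-hyperfinite Borel equivalence relation over a standard Borel space is hyperfinite).
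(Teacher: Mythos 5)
Your proof is correct, and its core is exactly the paper's: you introduce the same subequivalence relation (the paper's $E$, your $E'$, given by $(H,x)\sim(H,y)$ iff $y=h^ax$ for some $h\in H$), observe that it is an aperiodic normal subrelation of $E_{c\times a}$, and compute its cost to be $1$ by disintegrating over $\mbox{Sub}_\Gamma$ and using that free m.p.\ actions of infinite amenable groups have cost $1$. The only divergence is how the key inequality $C_{\theta\times\mu}(E_{c\times a})\leq C_{\theta\times\mu}(E')$ is obtained: the paper simply cites \cite[Proposition 24.10]{KM05} (an aperiodic normal subrelation bounds the cost of the ambient relation from above), whereas you reprove this via the induced-transformation routing argument for finitely generated $\Gamma$ and then handle general $\Gamma$ through the exhaustion $R_k=E'\vee E_{(c\times a)\resto \Gamma_k}$ together with Proposition \ref{prop:unions}(3). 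Your detour through finite generation is avoidable: in the routing argument one can induce the $k$-th generator on a complete section $Y_k$ of $E'$ chosen with $\sum_k\eta(Y_k)<\epsilon$, which treats countably many generators at once and is essentially the proof of \cite[24.10]{KM05}. Likewise, the hyperfiniteness of $E'$ that you flag as the delicate point is not actually needed (though it does hold, by fiberwise amenability): the argument only requires a cost-$(1+\epsilon)$ L-graphing of $E'$, which the disintegration formula \cite[Proposition 18.4]{KM05} plus \cite[Corollary 31.2]{KM05} already provide, together with arbitrarily small complete sections for $E'$, which follow from aperiodicity alone.
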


\begin{remark}
The proof shows that the hypothesis that $\theta$ is amenable can be weakened to the hypothesis that $\theta$ concentrates on groups of fixed price $1$.
\end{remark}

\begin{proof}
The proof is similar to that of Lemma \ref{lem:normsub-eq}. 
Since $E_{c\times a}$ is aperiodic it suffices to show that $C_{\theta \times \mu}(E_{c\times a})\leq 1$. For each $H\in \mbox{Sub}_\Gamma$ let $E_{a\resto H}$ denote the orbit equivalence relation on $X$ generated by $\bm{a}\resto H = H\cc ^a (X,\mu )$. Define the subrelation $E \subseteq E_{c\times a}$ on $\mbox{Sub}_\Gamma \times X$ by $E = \{ ((H,x),(H,y))\csuchthat xE_{a\resto H} y\}$, i.e., 
\[
(H, x) E (L,y) \ \IFF \ H=L \mbox{ and }(\exists h\in H )\, (h^a \cdot x = y ) .
\]
Then $E$ is a normal sub-equivalence relation of $E_{c\times a}$. Since $\theta$ concentrates on the infinite subgroups of $\Gamma$, $E$ is aperiodic on a $(\theta\times \mu )$-conull set. By \cite[24.10]{KM05} and then \cite[Proposition 18.4]{KM05} we therefore have
\[
C_{\theta \times \mu} (E_{c\times a}) \leq C_{\theta \times \mu} (E)= \int _{H} C_\mu (E_{a\resto H} ) \, d\theta (H) = 1
\]
where the last equality follows from \cite[Corollary 31.2]{KM05} since $\theta$-almost every $H$ is infinite amenable.
\end{proof}

\begin{theorem}\label{thm:smfp1}
Let $\Gamma$ be a countably infinite group that contains no non-trivial finite normal subgroup. If $\Gamma$ is not shift-minimal then $\Gamma$ has fixed price $1$.
\end{theorem}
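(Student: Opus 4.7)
The plan is to assemble three earlier results: Theorem \ref{thm:smequiv} (the stochastic characterization of shift-minimality), Lemma \ref{lem:cost1} (the product of an infinite amenable IRS with a free action has cost $1$), and Corollary \ref{cor:FP1} (the characterization of fixed price $1$ through non-trivial low-cost actions weakly contained in $\bm{s}_\Gamma$).

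First I would apply the equivalence (1)$\Leftrightarrow$(3) of Theorem \ref{thm:smequiv}: since $\Gamma$ is not shift-minimal, either $\mbox{AR}_\Gamma$ is finite and non-trivial, or there is an infinite amenable invariant random subgroup $\theta$ of $\Gamma$ with $\bm{\theta} \prec \bm{s}_\Gamma$. The first alternative is ruled out by hypothesis, since $\mbox{AR}_\Gamma$ is normal in $\Gamma$ and a finite non-trivial $\mbox{AR}_\Gamma$ would be exactly a non-trivial finite normal subgroup. Hence we obtain a $\theta$ which concentrates on the infinite amenable subgroups of $\Gamma$ with $\bm{\theta} \prec \bm{s}_\Gamma$.

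Second, $\bm{s}_\Gamma$ is a free m.p.\ action, so Lemma \ref{lem:cost1} applied to $\theta$ and $\bm{a} = \bm{s}_\Gamma$ yields
\[
C(\bm{\theta} \times \bm{s}_\Gamma) = 1.
\]
Weak containment is preserved under products and $\bm{s}_\Gamma \times \bm{s}_\Gamma \cong \bm{s}_\Gamma$, so
\[
\bm{\theta} \times \bm{s}_\Gamma \prec \bm{s}_\Gamma \times \bm{s}_\Gamma \cong \bm{s}_\Gamma.
\]
The product $\bm{\theta} \times \bm{s}_\Gamma$ is non-trivial (as $\Gamma$ is infinite, so $\bm{s}_\Gamma$ is non-trivial), so it witnesses condition (5) of Corollary \ref{cor:FP1}. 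Thus $\Gamma$ has fixed price $1$.

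The proof is essentially a direct assembly of prior results; there is no real obstacle beyond verifying that the hypotheses of Lemma \ref{lem:cost1} and Corollary \ref{cor:FP1} line up. The one place where the hypothesis on $\Gamma$ is genuinely used is in eliminating the $\mbox{AR}_\Gamma$-finite-and-non-trivial alternative in Theorem \ref{thm:smequiv}(3), which is exactly where the assumption of no non-trivial finite normal subgroups enters.
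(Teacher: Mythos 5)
Your proof is correct and follows essentially the same route as the paper's: extract an infinite amenable IRS $\theta$ weakly contained in $\bm{s}_\Gamma$ (the paper quotes Corollary \ref{cor:notsm} directly rather than Theorem \ref{thm:smequiv}, but these amount to the same dichotomy), apply Lemma \ref{lem:cost1} to $\bm{\theta}\times\bm{s}_\Gamma$, and conclude via Corollary \ref{cor:FP1}. The only cosmetic difference is that you invoke condition (5) of Corollary \ref{cor:FP1} where the paper notes that $\bm{\theta}\times\bm{s}_\Gamma$ is in fact weakly \emph{equivalent} to $\bm{s}_\Gamma$ and uses (3)$\Rightarrow$(1); both are valid and non-circular.
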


\begin{proof}
Suppose that $\Gamma$ is not shift-minimal. 
By Corollary \ref{cor:notsm} either $\Gamma$ has a non-trivial normal amenable subgroup $N$ that is necessarily infinite by our hypothesis on $\Gamma$, or there is an infinitely generated amenable invariant random subgroup $\theta$ of $\Gamma$ that is weakly contained in $\bm{s}_\Gamma$. In the first case define $\theta = \updelta _N$, so that in either case $\theta$ concentrates on the infinite amenable subgroups of $\Gamma$, and $\bm{\theta}\prec \bm{s}_\Gamma$.

Let $(X,\mu )$ denote the underlying measure space of $\bm{s}_\Gamma$ and consider the product $\Gamma$-system
\[
\bm{\theta} \times \bm{s}_\Gamma = \Gamma \cc ^{c\times s} (\mbox{Sub}_\Gamma \times X, \theta \times \mu ) .
\]
By Lemma \ref{lem:cost1} we have $C(\bm{\theta}\times \bm{s}_\Gamma ) =1$ . The action $\bm{\theta}$ is weakly contained in $\bm{s}_\Gamma$, so $\bm{\theta}\times \bm{s}_\Gamma$ is weakly equivalent to $\bm{s}_\Gamma$.  This implies that $\Gamma$ has fixed price $1$ by (3)$\Ra$(1) of Corollary \ref{cor:FP1}.
\end{proof}

\begin{corollary}\label{cor:equivFP1}
Suppose that $\Gamma$ does not have fixed price $1$. Then the following are equivalence
\begin{enumerate}
\item $\Gamma$ is shift-minimal.
\item $\Gamma$ contains no non-trivial finite normal subgroups.
\item $\mbox{\emph{AR}}_\Gamma$ is trivial.
\end{enumerate}
\end{corollary}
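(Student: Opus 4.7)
The plan is to prove the corollary as a short cyclic chain of implications $(1)\Rightarrow(3)\Rightarrow(2)\Rightarrow(1)$, each step invoking a previously established result in the paper. Two of the three implications are essentially immediate; the real substance was already carried out in Theorem \ref{thm:smfp1}.

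First I would handle $(1)\Rightarrow(3)$: if $\Gamma$ is shift-minimal, then by Proposition \ref{prop:noamensub} $\Gamma$ has no non-trivial normal amenable subgroups, and since $\mathrm{AR}_\Gamma$ is itself a normal amenable subgroup, this gives $\mathrm{AR}_\Gamma = \{e\}$. Next, $(3)\Rightarrow(2)$ is trivial: any finite normal subgroup of $\Gamma$ is amenable and hence contained in $\mathrm{AR}_\Gamma$, so triviality of $\mathrm{AR}_\Gamma$ forces triviality of every finite normal subgroup.

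The main step is $(2)\Rightarrow(1)$, and here I would simply quote Theorem \ref{thm:smfp1} in contrapositive form. Because $\Gamma$ does not have fixed price $1$, in particular $\Gamma$ is countably infinite (finite groups vacuously fall outside the context where fixed price $1$ is meaningful, and in any case Gaboriau's theorem quoted just before Theorem \ref{thm:fpcost>1} ensures we are in the setting of an infinite group with $\mathrm{AR}_\Gamma$ finite). Then Theorem \ref{thm:smfp1} says: an infinite group with no non-trivial finite normal subgroup which is not shift-minimal must have fixed price $1$. Contrapositively, under hypothesis (2) together with the standing assumption on $\Gamma$, shift-minimality follows.

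The only potential obstacle is the boundary case of whether a non-shift-minimal finite-or-trivial group could sneak in, but the trivial group is shift-minimal by definition and non-trivial finite groups automatically violate (2) (they are finite normal subgroups of themselves), so the cyclic argument closes without any additional work. Thus the corollary follows purely by combining Proposition \ref{prop:noamensub}, the definition of $\mathrm{AR}_\Gamma$, and Theorem \ref{thm:smfp1}.
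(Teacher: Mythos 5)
Your proof is correct and follows essentially the same route as the paper: the paper also closes the cycle by citing Proposition \ref{prop:noamensub} for $(1)\Rightarrow(3)$, observing $(3)\Rightarrow(2)$ is obvious, and deducing $(2)\Rightarrow(1)$ from Theorem \ref{thm:smfp1} under the standing hypothesis that $\Gamma$ does not have fixed price $1$. Your extra care about the finite/trivial boundary case is harmless but not needed beyond what you already note.
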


\begin{proof}
%
(3)$\Ra$(2) is obvious. (2)$\Ra$(1) is immediate from Theorem \ref{thm:smfp1} by our assumption that $\Gamma$ does not have fixed price $1$. (1)$\Ra$(3) holds in general with no assumptions on $\Gamma$.
\end{proof}

\begin{corollary}\label{cor:modARsm}
Let $\Gamma$ be any group that does not have fixed price $1$. Then $\mbox{\emph{AR}}_\Gamma$ is finite and $\Gamma /\mbox{\emph{AR}}_\Gamma$ is shift-minimal.
\end{corollary}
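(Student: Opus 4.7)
The plan is to apply Corollary \ref{cor:equivFP1} to the quotient group $\bar{\Gamma} := \Gamma/\mathrm{AR}_\Gamma$, after verifying the two hypotheses it requires: that $\bar{\Gamma}$ does not have fixed price $1$ and that $\mathrm{AR}_{\bar\Gamma}$ is trivial.

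First, since $\Gamma$ does not have fixed price $1$, Gaboriau's theorem (\cite[Theorem 3]{Ga00}, cited just before Theorem \ref{thm:fpcost>1}) gives that $N := \mathrm{AR}_\Gamma$ is finite, which is the first half of the conclusion. The case where $\Gamma$ is finite is vacuous (the trivial group is shift-minimal by convention), so I assume $\Gamma$ is infinite, and hence so is $\bar\Gamma$. The fact that $\mathrm{AR}_{\bar\Gamma} = \{\bar{e}\}$ is a standard property of the amenable radical of a quotient by its amenable radical (recorded in Appendix \ref{app:ar}); this handles one hypothesis of Corollary \ref{cor:equivFP1}.

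The main step is to show that $\bar{\Gamma}$ does not have fixed price $1$. Suppose toward a contradiction that it does. By the equivalence (1)$\iff$(2) of Corollary \ref{cor:FP1}, $C(\bm{s}_{\bar\Gamma}) = 1$. Consider the generalized Bernoulli shift $\bm{s}_{\Gamma, \Gamma/N}$ of $\Gamma$ on $([0,1]^{\Gamma/N}, \lambda^{\Gamma/N})$. This action is precisely the pullback of $\bm{s}_{\bar\Gamma}$ along the quotient map $\Gamma \to \bar\Gamma$: the kernel contains $N$, and the $\Gamma$-orbits coincide with the $\bar\Gamma$-orbits on $[0,1]^{\Gamma/N}$, so
\[
C(\bm{s}_{\Gamma, \Gamma/N}) \;=\; C(\bm{s}_{\bar\Gamma}) \;=\; 1.
\]
Moreover, as already used in the proof of Proposition \ref{prop:noamensub}, amenability of $N$ gives $\bm{\iota}_N \prec \bm{s}_N$, and coinduction (which preserves weak containment) yields $\bm{s}_{\Gamma, \Gamma/N} \prec \bm{s}_\Gamma$. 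Thus $\bm{s}_{\Gamma, \Gamma/N}$ is a non-trivial m.p.\ action of the infinite group $\Gamma$, weakly contained in $\bm{s}_\Gamma$, with cost $\leq 1$. The implication (5)$\Rightarrow$(1) of Corollary \ref{cor:FP1} then forces $\Gamma$ to have fixed price $1$, contradicting our hypothesis. Hence $\bar\Gamma$ does not have fixed price $1$.

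Having verified both hypotheses, Corollary \ref{cor:equivFP1} applied to $\bar\Gamma$ — which does not have fixed price $1$ and has trivial amenable radical — yields that $\bar\Gamma$ is shift-minimal, completing the proof. The only real content beyond invoking earlier results is the identification $\bm{s}_{\Gamma, \Gamma/N} \cong (\bm{s}_{\bar\Gamma})$ pulled back, which is what lets fixed price $1$ propagate upward from $\bar\Gamma$ to $\Gamma$ through the finite normal quotient; this is the step one must be careful about, but it is routine once one writes out the actions.
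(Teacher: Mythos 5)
Your proof is correct, and its skeleton matches the paper's: establish that $N=\mbox{AR}_\Gamma$ is finite, show that $\Gamma/N$ does not have fixed price $1$, note that $\mbox{AR}_{\Gamma/N}$ is trivial (Proposition \ref{prop:ARbasic}), and invoke Corollary \ref{cor:equivFP1}. The one step you handle genuinely differently is the middle one. The paper argues directly: it takes a free action $\bm{a}$ of $\Gamma$ with $C(\bm{a})>1$, passes to the induced free action $\bm{b}$ of $\Gamma/N$ on the space of ergodic components of $\bm{a}\resto N$, and uses finiteness of $N$ to get $C(\bm{b})\geq C(\bm{a})>1$. You instead argue by contradiction through Bernoulli shifts: if $\Gamma/N$ had fixed price $1$ then $C(\bm{s}_{\Gamma/N})=1$, the pullback $\bm{s}_{\Gamma,\Gamma/N}$ has the same orbit equivalence relation and hence cost $1$, it is weakly contained in $\bm{s}_\Gamma$ by the coinduction computation from Proposition \ref{prop:noamensub} (using amenability of $N$), and then (5)$\Rightarrow$(1) of Corollary \ref{cor:FP1} forces $\Gamma$ to have fixed price $1$. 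Your identification of $\bm{s}_{\Gamma,\Gamma/N}$ with the pullback of $\bm{s}_{\Gamma/N}$ is correct ($N$ acts trivially on $\Gamma/N$ by normality, and equal orbit equivalence relations have equal cost), and there is no circularity since Corollary \ref{cor:FP1} precedes this statement. The trade-off is that the paper's route is more elementary and transfers the cost inequality upward directly, whereas yours leans on the heavier implication (5)$\Rightarrow$(1) of Corollary \ref{cor:FP1} — whose proof, in the relevant Case 1, secretly performs essentially the same transversal construction the paper uses here — but in exchange only needs the soft fact $\bm{s}_{\Gamma,\Gamma/N}\prec\bm{s}_\Gamma$ rather than an explicit cost comparison between $\bm{a}$ and $\bm{b}$. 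Both are valid; yours is a legitimate alternative derivation from the already-established corollaries.
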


\begin{proof}
Any group containing an infinite normal amenable subgroup has fixed price $1$ \cite[Proposition 35.2]{KM05}. Therefore $N=\mbox{AR}_\Gamma$ is finite. Let $\bm{a}=\Gamma \cc ^a (X,\mu )$ be a free measure preserving action of $\Gamma$ of cost $C_\mu (E_{\bm{a}})>1$. The measure preserving action $\bm{b}$ of $\Gamma /N$ on the ergodic components of $\bm{a}\resto N$ is free, and since $N$ is finite we have $C(\bm{b}) \geq C(\bm{a})>1$. Thus, $\Gamma /N$ does not have fixed price 1, and $\mbox{AR}_{\Gamma /N} = \{ e \}$ by Proposition \ref{prop:ARbasic}. Corollary \ref{cor:equivFP1} now shows that $\Gamma /N$ is shift-minimal.
\end{proof}

\section{Questions}\label{sec:Questions}

\subsection{General implications} A countable group $\Gamma$ is called \emph{$C^*$-simple} if the reduced $C^*$-algebra of $\Gamma$ is simple, i.e., $C^*_r(\Gamma )$ has no non-trivial closed two-sided ideals. As observed in the introduction, there is a strong parallel between shift-minimality and $C^*$-simplicity. The following characterization of $C^*$-simplicity of a countable group $\Gamma$ may be found in \cite{Ha07}. Let $\lambda _\Gamma$ denote the left-regular representation of $\Gamma$ on $\ell ^2 (\Gamma )$.

\begin{proposition}\label{prop:CS}
Let $\Gamma$ be a countable group. Then $\Gamma$ is $C^*$-simple if and only if $\pi \prec \lambda _\Gamma$ implies $\pi \sim \lambda _\Gamma$ for all nonzero unitary representations $\pi$ of $\Gamma$.
\end{proposition}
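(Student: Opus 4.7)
The plan is to prove both implications by exploiting the fact that $\pi \prec \lambda_\Gamma$ is equivalent to $\pi$ extending to a $C^*$-representation of $C^*_r(\Gamma)$, and that weak equivalence $\pi \sim \lambda_\Gamma$ is equivalent to this extension being isometric (hence faithful). This puts the statement in bijective correspondence with ``every non-zero $*$-homomorphism $C^*_r(\Gamma) \to \mathcal{B}(H)$ is injective,'' which is the standard reformulation of simplicity for a unital $C^*$-algebra.

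For the forward direction, assume $\Gamma$ is $C^*$-simple and let $\pi$ be a non-zero unitary representation of $\Gamma$ with $\pi \prec \lambda_\Gamma$. By definition of weak containment, the map $\lambda_\Gamma(f) \mapsto \pi(f)$ (for $f \in \mathbb{C}[\Gamma]$) is norm-decreasing, so it extends uniquely to a $*$-homomorphism $\tilde\pi : C^*_r(\Gamma) \to \mathcal{B}(H_\pi)$. The kernel $\ker(\tilde\pi)$ is a closed two-sided ideal of $C^*_r(\Gamma)$, and it is proper since $\tilde\pi(1) = I_{H_\pi} \neq 0$. $C^*$-simplicity forces $\ker(\tilde\pi) = 0$, so $\tilde\pi$ is isometric, giving $\|\pi(f)\| = \|\lambda_\Gamma(f)\|$ for every $f \in \mathbb{C}[\Gamma]$, which is exactly $\lambda_\Gamma \prec \pi$.

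For the converse, assume every non-zero $\pi \prec \lambda_\Gamma$ is weakly equivalent to $\lambda_\Gamma$, and let $I \triangleleft C^*_r(\Gamma)$ be a proper closed two-sided ideal. The quotient $A = C^*_r(\Gamma)/I$ is a non-zero $C^*$-algebra, so it admits a faithful (non-degenerate) representation $\sigma : A \to \mathcal{B}(H)$. Composing with the quotient map and the canonical embedding $\Gamma \hookrightarrow C^*_r(\Gamma)$ gives a unitary representation $\pi$ of $\Gamma$ on $H$, which is non-zero and satisfies $\pi \prec \lambda_\Gamma$ (it factors through $C^*_r(\Gamma)$). By hypothesis, $\lambda_\Gamma \prec \pi$, so $\|\lambda_\Gamma(f)\| = \|\pi(f)\|$ for all $f \in \mathbb{C}[\Gamma]$. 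Since $\pi(f) = \sigma(q(\lambda_\Gamma(f)))$ with $\sigma$ isometric, this means $\|q(\lambda_\Gamma(f))\| = \|\lambda_\Gamma(f)\|$ on the dense subalgebra $\mathbb{C}[\Gamma] \subseteq C^*_r(\Gamma)$, and hence $q$ is isometric on all of $C^*_r(\Gamma)$, forcing $I = 0$.

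Neither direction should present a serious obstacle; the only mild subtlety is to record cleanly the well-known equivalence between $\pi \prec \lambda_\Gamma$ and the existence of a (necessarily unique) $C^*$-algebraic extension $\tilde\pi$ of $\pi$ to $C^*_r(\Gamma)$, and the corresponding equivalence between $\lambda_\Gamma \prec \pi$ and the isometricity (equivalently: faithfulness) of that extension. Once these standard translations are in place, the proposition reduces to the elementary fact that a unital $C^*$-algebra is simple if and only if every non-zero $*$-homomorphism from it is injective.
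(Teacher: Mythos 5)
Your proof is correct. The paper itself does not prove Proposition \ref{prop:CS}; it simply cites de la Harpe's survey \cite{Ha07} for this characterization. Your argument is the standard one underlying that reference: translating $\pi\prec\lambda_\Gamma$ into the existence of an extension $\tilde\pi$ to $C^*_r(\Gamma)$, translating $\lambda_\Gamma\prec\pi$ into faithfulness (equivalently, isometry) of that extension, and then invoking the fact that a unital $C^*$-algebra is simple if and only if every nonzero representation of it is faithful. Both directions are carried out cleanly, including the density argument showing that isometry of the quotient map on $\lambda_\Gamma(\mathbb{C}[\Gamma])$ propagates to all of $C^*_r(\Gamma)$.
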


In this characterization of $C^*$-simplicity we may actually restrict our attention to irreducible representations of $\Gamma$. That is, $\Gamma$ is $C^*$-simple if and only if every irreducible unitary representation $\pi$ of $\Gamma$ that is weakly contained in $\lambda _\Gamma$ is actually weakly equivalent to $\lambda _\Gamma$. See \cite{BH00}. See also \cite[Appendix F]{BHV08} and \cite{Di77} for more on weak containment of unitary representations.

Characterization (6) of shift-minimality from Proposition \ref{prop:minimal} also has an analogue for $C^*$-simplicity. Let $\Es{H}$ be an infinite dimensional separable Hilbert space and let $\mbox{Irr}_\lambda (\Gamma , \Es{H})$ denote the Polish space of irreducible representation of $\Gamma$ on $\Es{H}$ that are weakly contained in $\lambda _\Gamma$ (see \cite{Di77}). Let $\Es{U}(\Es{H})$ be the Polish group of all unitary operators on $\Es{H}$. Then $\Gamma$ is $C^*$-simple if and only if $\Gamma$ is ICC and the conjugation action of $\Es{U}(\Es{H})$ on $\mbox{Irr}_\lambda (\Gamma ,\Es{H})$ is minimal (i.e., every orbit is dense). See \cite[Appendix H.{\bf (C)}]{Ke10}.

Consider now the following properties of a countable group $\Gamma$:
\begin{enumerate}
\item[(UT)] $\Gamma$ has the unique trace property.
\item[(CS)] $\Gamma$ is $C^*$-simple.
\item[(SM)] $\Gamma$ is shift-minimal.
\item[(UIRS${}_0$)] $\Gamma$ has no non-trivial amenable invariant random subgroup that is weakly contained in $\bm{s}_\Gamma$. 
\item[(UIRS)] $\Gamma$ has no non-trivial amenable invariant random subgroups.
\item[($\mbox{AR}_e$)] $\Gamma$ has no non-trivial amenable normal subgroups, i.e., the amenable radical $\mbox{AR}_\Gamma$ of $\Gamma$ is trivial.
\end{enumerate}

All of the known implications (besides (SM)$\IFF$(UIRS${}_0$)) are depicted in Figure \ref{fig:rel} in the introduction. It is known that (UT) and (CS) imply ($\mbox{AR}_e$) (\cite{PS79}, see also \cite[Proposition 3]{BH00}), though it is an open question whether there are any other implications among the properties (UT), (CS), and ($\mbox{AR}_e$) in general \cite{BH00}. The following questions concern some of the remaining implications.

The implication (UT)$\Ra$(SM) was shown in Theorem \ref{thm:UTsm}. One of the most pressing questions is:

\begin{question}\label{Q:CS->SM}
Does (CS) imply (SM)? That is, are $C^*$-simple groups shift-minimal?
\end{question}

For a positive answer to Question \ref{Q:CS->SM} it would suffices by Corollary \ref{cor:notsm} to show that if $\theta$ is a non-atomic self-normalizing amenable IRS of a countable group $\Gamma$ that is weakly contained in $\bm{s}_\Gamma$ then the tracial state on $C^*_r(\Gamma )$ extending $\varphi _\theta$ from the proof of Theorem \ref{thm:UTUIRS} is not faithful.

The implication from (UT) to (UIRS) is quite direct. The converse would mean that a tracial state on $C^*_r(\Gamma )$ different from $\tau _\Gamma$ somehow gives rise to a non-trivial amenable invariant random subgroup of $\Gamma$. This is addressed by the following question:

\begin{question}\label{Q:UIRS->UT}
Does (UIRS) imply (UT)? That is, if $\Gamma$ does not have any non-trivial amenable invariant random subgroups then does $C^*_r(\Gamma )$ have a unique tracial state?
\end{question}

We know from Theorem \ref{thm:smequiv} that (SM) and (UIRS${}_0$) are equivalent. The equivalence of (SM) and (UIRS) is open however (clearly though (UIRS)$\Ra$(UIRS${}_0$))

\begin{question}\label{question:UIRS}
Does (UIRS${}_0$) imply (UIRS)?
\end{question}

To obtain a positive answer to Question \ref{question:UIRS} it would be enough to show the following: ($\star$) Every ergodic amenable invariant random subgroup of a countable group $\Gamma$ that is not almost ascendant is weakly contained in $\bm{s}_\Gamma$.

Indeed, assume that ($\star$) holds and suppose that $\Gamma$ does not have (UIRS), i.e., there is an amenable invariant random subgroup $\theta$ of $\Gamma$ other than $\updelta _{\langle e\rangle}$. By moving to an ergodic component of $\theta$ we may assume without loss of generality that $\theta$ is ergodic. If $\theta$ is not almost ascendant then ($\star$) implies that $\bm{\theta}$ is weakly contained in $\bm{s}_\Gamma$, which shows that $\Gamma$ does not have (UIRS${}_0$). On the other hand, if $\theta$ is almost ascendant then, by Corollary \ref{cor:aaAR}, $\theta$ concentrates on the subgroups of $\mbox{AR}_\Gamma$, and in particular $\mbox{AR}_\Gamma$ is non-trivial, so $\updelta _{\mbox{\scriptsize{AR}}_\Gamma}$ witnesses that $\Gamma$ does not have (UIRS${}_0$).

The implication (SM)$\Ra$($\mbox{AR}_e$) is shown in Proposition \ref{prop:noamensub} above. The converse is a tantalizing question:

\begin{question}\label{question:AReSM}
Does ($\mbox{AR}_e$) imply (SM)? That is, if $\Gamma$ has no non-trivial amenable normal subgroup then is every non-trivial m.p.\ action that is weakly contained in $\bm{s}_\Gamma$ free?
\end{question}

To obtain a positive answer to Question \ref{question:AReSM} by Corollary \ref{cor:notsm} it would be enough to show that if $\theta$ is a non-atomic self-normalizing invariant random subgroup weakly contained in $\bm{s}_\Gamma$ then $\theta$ concentrates on subgroups of the amenable radical of $\Gamma$. (Note that $\theta$ does indeed concentrate on the amenable subgroups of $\Gamma$ by NA-ergodicity.)

\subsection{Cost and pseudocost} In the infinitely generated setting it appears that pseudocost, rather than cost, may be a more useful way to define an invariant. In addition to the properties exhibited in \S\ref{sec:infgen}, pseudocost enjoys many of the nice properties already known to hold for cost. For instance, pseudocost respects ergodic decomposition, and $PC(\Gamma )\leq PC(N)$ whenever $N$ is an infinite normal subgroup of $\Gamma$. (The proofs are routine: for the first statement one uses the corresponding fact about cost along with basic properties of pseudocost, and the proof of the second is nearly identical to the corresponding proof for cost.)

\begin{question}\label{Q:unions}
Is there an example of a m.p.\ countable Borel equivalence relation $E$ such that $PC_\mu (E) < C_\mu (E)$?
\end{question}

By Corollary \ref{cor:PC=C}.(1) the equality $PC_\mu (E)=C_\mu (E)$ holds whenever $C_\mu (E)<\infty$, so the question is whether it is possible to have $PC_\mu (E)<\infty$ and $C_\mu (E)=\infty$. Equivalently: does there exist an increasing sequence $E_0\subseteq E_1\subseteq \cdots$, of m.p.\ countable Borel equivalence relations on $(X,\mu )$ with $\sup _n C_\mu (E_n) <\infty$ and $C_\mu (\bigcup _n E_n ) =\infty$? If such a sequence $(E_n)_{n\in \N}$ exists then, letting $E=\bigcup _n E_n$, Corollary \ref{cor:PC=C}.(2) implies that $E$ could not be treeable. In addition, $E$ would provide an example of strict inequality $\beta _1(E) +1 < C_\mu (E)$. This follows from \cite[5.13, 3.23]{Ga02}. Gaboriau has shown that any aperiodic m.p.\ countable Borel equivalence $R$ satisfies $\beta _1 (R)+1 \leq C_\mu (R)$  \cite{Ga02}, although it is open whether this inequality can ever be strict. Note that a positive answer to \ref{Q:unions} would not necessarily provide a counterexample to the fixed price conjecture, even if the equivalence relation $E$ comes from a free action of some group $\Gamma$; at this time there is no way to rule out the possibility that such a $\Gamma$ has fixed cost $\infty$ while at the same time admitting various free actions with finite pseudocost.

\begin{question}\label{Q:maxcost} 
Suppose that a countable group $\Gamma$ has some free action $\bm{a}$ with $C_\mu (\bm{a} )=\infty$. Does it follow that $C_\mu (\bm{s}_\Gamma )=\infty$?
\end{question}

By Corollary \ref{cor:AFrPC}, $\bm{s}_\Gamma$ attains the maximum pseudocost among free actions of $\Gamma$. Corollary \ref{cor:weakcon} implies that 
\[
C(\bm{s}_\Gamma )\geq \sup \{ C(\bm{b})\csuchthat \bm{b}\in \mbox{FR}(\Gamma ,X,\mu ) \mbox{ and either }C(\bm{b})<\infty \mbox{ or }E_b \mbox{ is treeable}\} .
\]
This is not enough to conclude that $\bm{s}_\Gamma$ always attains the maximum cost among free actions of $\Gamma$. A positive answer to Question \ref{Q:maxcost} would imply that $\bm{s}_\Gamma$ always attains this maximum cost.

It would be just as interesting if $\bm{s}_\Gamma$ could detect whether $C(\Gamma )<\infty$.

\begin{question}\label{Q:fincost}
Suppose that a countable group $\Gamma$ has some free action $\bm{a}$ with $C_\mu (\bm{a})<\infty$. Does it follow that $C_\mu (\bm{s}_\Gamma )<\infty$?
\end{question}

At this time it appears that one cannot rule out any combination of answers to Questions \ref{Q:maxcost} and \ref{Q:fincost}. A positive answer to both questions would amount to showing that no group has both free actions of infinite cost and free actions of finite cost -- this would essentially affirm a special case of the fixed price conjecture!

\subsection{Other questions}  It is shown in \cite{T-D12b} that the natural analogue of Question \ref{question:AReSM}, where "amenable" is replaced by "finite" and "weakly contained in" is replaced by "is a factor of," has a positive answer:

\begin{theorem}[Corollary 1.6 of \cite{T-D12b}]\label{thm:Cor1.6}
Let $\Gamma$ be a countable group. If $\Gamma$ has no non-trivial finite normal subgroups then every non-trivial totally ergodic action of $\Gamma$ is free.

In particular, if $\Gamma$ has no non-trivial finite normal subgroups then every non-trivial factor of $\bm{s}_\Gamma$ is free.
\end{theorem}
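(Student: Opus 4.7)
The plan is to derive the ``in particular'' clause from the first statement and then to prove the main statement by analyzing the stabilizer IRS $\theta_{\bm{a}}$.

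For the ``in particular'' clause, I would first note that $\bm{s}_\Gamma$ is totally ergodic: for any infinite $H\leq\Gamma$ the restriction $\bm{s}_\Gamma\restriction H$ is a generalized Bernoulli shift of $H$ on $([0,1]^\Gamma,\lambda^\Gamma)$ whose $H$-orbits are the infinite right cosets $Ht$, and such actions are mixing, hence ergodic. Since total ergodicity is inherited by factors (the $\pi$-pullback of an $H$-invariant set downstairs is $H$-invariant upstairs), every non-trivial factor of $\bm{s}_\Gamma$ is totally ergodic, and the claim reduces to the first statement.

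For the first statement, let $\bm{a}=\Gamma\cc^a(X,\mu)$ be non-trivial and totally ergodic. First I would show that $K:=\mbox{ker}(\bm{a})$ is trivial: $\bm{a}\restriction K$ acts trivially on $(X,\mu)$ and is not ergodic (as $\mu$ is not a point mass), so total ergodicity forces $K$ to be finite; by the hypothesis on $\Gamma$ then $K=\{e\}$, i.e., $\bm{a}$ is faithful. Next, for any finite $C\subseteq\Gamma$ with $\langle C\rangle$ infinite, $\mbox{Fix}^a(C)=\mbox{Fix}^a(\langle C\rangle)$ is $\langle C\rangle$-invariant, so total ergodicity and faithfulness give $\mu(\mbox{Fix}^a(C))=0$. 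Consequently the stabilizer $\Gamma_x$ is almost surely a locally finite torsion subgroup of $\Gamma$, and $\theta:=\theta_{\bm{a}}$ concentrates on such subgroups.

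Suppose now, toward a contradiction, that $\bm{a}$ is non-free, so there exists a torsion $\gamma\neq e$ with $p:=\mu(\mbox{Fix}^a(\gamma))>0$. Let $N:=\langle[\gamma]^\Gamma\rangle$ be the normal closure of $\gamma$. If $N$ is finite, it is a non-trivial finite normal subgroup of $\Gamma$, directly contradicting the hypothesis; so the task reduces to ruling out the case where $N$ is infinite. In this case $\bm{a}\restriction N$ is ergodic by total ergodicity, hence $\mbox{Fix}^a(N)$ has measure $0$ by faithfulness. My plan from here is to run a Poincar\'{e}-recurrence argument on the family $\{\mbox{Fix}^a(g):g\in[\gamma]^\Gamma\}$---each of measure $p$---in the spirit of the proofs of Proposition~\ref{prop:norm} and Theorem~\ref{thm:BPirs}, and combine it with the normalizer iteration $\theta^\alpha$ of Section~\ref{sec:NAerg} to force a contradiction.

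The main obstacle will be carrying out this final extraction cleanly. When $[\gamma]^\Gamma$ is infinite, a Borel--Cantelli-type argument on $\{\mbox{Fix}^a(g)\}$ produces a positive-measure set on which $\Gamma_x\cap[\gamma]^\Gamma$ is infinite; however, the subgroup these conjugates generate inside $\Gamma_x$ is only a priori locally finite, not finite, so extracting a genuine \emph{finite} normal subgroup of $\Gamma$ (rather than merely a locally finite one) requires an additional structural argument. When $[\gamma]^\Gamma$ is finite, $N$ is finitely generated but may still be infinite torsion, and ruling out a finitely generated, infinite, locally finite normal subgroup of $\Gamma$ is genuinely delicate---it is here that techniques beyond those developed in the present paper seem to be required, and I expect the proof in [T-D12b] to proceed via a careful ergodic-theoretic analysis of $\bm\theta^\alpha$ and its self-normalizing limit, ultimately forcing $\theta$ itself to be trivial.
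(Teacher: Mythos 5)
There is a genuine gap here, and it is worth noting at the outset that the paper itself does not prove this statement: Theorem \ref{thm:Cor1.6} is quoted verbatim from \cite{T-D12b} (Corollary 1.6 there), so there is no in-paper argument to measure your proposal against. Your preliminary reductions are all correct and would certainly be part of any proof: the restriction of $\bm{s}_\Gamma$ to an infinite subgroup $H$ is a generalized Bernoulli shift whose orbits on the index set $\Gamma$ are the infinite cosets of $H$, hence ergodic, so $\bm{s}_\Gamma$ is totally ergodic and the property passes to factors; the kernel is a finite (hence trivial) normal subgroup; and since $\mu(\mbox{Fix}^a(\langle C\rangle))\in\{0,1\}$ for every finite $C$ generating an infinite group, with value $1$ forcing $\langle C\rangle\subseteq\mbox{ker}(\bm{a})=\{e\}$, almost every stabilizer is locally finite. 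But the theorem's entire content lies in the step you leave as a plan, and your plan does not close.

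Two concrete remarks on where it breaks. First, the case you flag as "genuinely delicate" --- $[\gamma]^\Gamma$ finite --- is in fact the easy case and needs none of the machinery you invoke: by Dietzmann's lemma, a torsion element with finitely many conjugates has \emph{finite} normal closure, so $N=\langle[\gamma]^\Gamma\rangle$ is a non-trivial finite normal subgroup and you are done immediately; there is no issue of finitely generated infinite torsion groups. Second, the case $[\gamma]^\Gamma$ infinite is where the real obstruction sits, and your sketch does not overcome it. Reverse Fatou applied to the sets $\mbox{Fix}^a(g)$, $g\in[\gamma]^\Gamma$, each of measure $p$, gives a set of measure at least $p$ on which $\Gamma_x\cap[\gamma]^\Gamma$ is infinite, so $\Gamma_x$ is infinite and locally finite there. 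Total ergodicity plus faithfulness does give $\mu(\mbox{Fix}^a(H))=0$ for each \emph{fixed} infinite $H$, but the infinite subgroup $\langle\Gamma_x\cap[\gamma]^\Gamma\rangle$ varies with $x$ over uncountably many candidates, so this cannot be summed to conclude that the positive-measure set above is null; this is exactly the uncountable-union obstruction that the normalizer tower $\theta^\alpha$ of \S\ref{sec:NAerg} is built to circumvent, and that machinery only delivers amenability-type conclusions about the stabilizer IRS (Theorem \ref{thm:NAerg}), not its triviality. Ruling out a non-trivial, a.s.\ infinite, locally finite stabilizer IRS for a totally ergodic action is precisely the work done in \cite{T-D12b}, and no argument for it appears in your proposal or in the present paper.
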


Here, a measure preserving action of $\Gamma$ is called \emph{totally ergodic} if all infinite subgroups of $\Gamma$ act ergodically. Theorem \ref{thm:Cor1.6} motivates the following question concerning strong NA-ergodicity.

\begin{question}\label{question:strongNA}
Let $\Gamma \cc ^ a (X,\mu )$ be a non-trivial measure preserving action of a countable group $\Gamma$. Suppose that for each non-amenable subgroup $\Delta \leq \Gamma$ the action $\Delta \cc ^a (X,\mu )$ is strongly ergodic. Does it follow that the stabilizer of almost every point is contained in the amenable radical of $\Gamma$?
\end{question}

A positive answer to \ref{question:strongNA} would imply a positive answer to \ref{question:AReSM} by Proposition \ref{prop:NAerg}.

\medskip

The following question concerns the converse of Proposition \ref{prop:findex}:

\begin{question}\label{Q:fi}
Suppose $\Gamma$ is shift-minimal. Is it true that every finite index subgroup of $\Gamma$ is shift-minimal?
\end{question}

Question \ref{Q:fi} is equivalent to the question of whether every finite index \emph{normal} subgroup $N$ of a shift-minimal group $\Gamma$ is shift-minimal. Indeed, suppose the answer is positive for normal subgroups and let $K$ be a finite index subgroup of a shift-minimal group $\Gamma$. Then $K$ is ICC, since the ICC property passes to finite index subgroups. Since the group $N=\bigcap _{\gamma \in \Gamma}\gamma K \gamma ^{-1}$ is finite index and normal in $\Gamma$, it is shift-minimal by our assumption. Proposition \ref{prop:findex} then implies that $K$ is shift-minimal.

Corollary \ref{cor:torfree} provides a positive answer to Question \ref{Q:fi} for finite index subgroups which are torsion-free. Theorem \ref{thm:nolocfin} gives a positive answer for finite index normal subgroups $N$ of $\Gamma$ for which there is no infinite locally finite invariant random subgroup that is weakly contained in $\bm{s}_N$. Note that a positive answer to the analogue of Question \ref{Q:fi} for $C^*$-simplicity was demonstrated in \cite{BH00} (and likewise for the unique trace property).

The results from \S\ref{sec:infgen} and \S\ref{sec:fp1sm} suggest that the following may have a positive answer:

\begin{question}
If an infinite group $\Gamma$ has positive first $\ell ^2$-Betti number then is it true that $C^*_r(\Gamma / \mbox{AR}_\Gamma)$ is simple and has a unique tracial state?
\end{question}

There are already partial results in this direction: Peterson and Thom \cite{PT11} have shown a positive answer under the additional assumptions that $\Gamma$ is torsion free and that every non-trivial element of $\Z \Gamma$ acts without kernel on $\ell ^2\Gamma$.

Finally, we record here a question raised earlier in this paper.

\begin{question*}[Question \ref{Q:CompactAut}] Let $\Gamma$ be a countable group acting by automorphisms on a compact Polish group $G$ and assume the action is tempered. Does it follow that the action is weakly contained in $\bm{s}_\Gamma$? As a special case, is it true that the action $\mbox{SL}_2(\Z ) \cc (\T ^2 ,\lambda ^2 )$ is weakly contained in $\bm{s}_{\mbox{\scriptsize{SL}}_2(\Z )}$?
\end{question*}
\begin{appendices}
\section{Invariant random subgroups as subequivalence relations}  \label{sec:subequiv}

This first appendix studies \emph{invariant random partitions} of $\Gamma$ which are a natural generalization of invariant random subgroups. In \S\ref{subsec:IRP} it is shown that every invariant random partition of $\Gamma$ comes from a pair $(\bm{a},F)$ where $\bm{a}$ is a free m.p.\ action of $\Gamma$ and $F$ is a (Borel) subequivalence relation of $E_a$. It is shown in \S\ref{subsec:normsubeq} that for an invariant random subgroup any such pair $(\bm{a},F)$ will have the property that $F$ is \emph{normalized} by $\bm{a}$, i.e., $\gamma ^a$ is in the normalizer of the full group of $F$ for every $\gamma \in \Gamma$.

Many of the ideas here are inspired by (and closely related to) the notion of a \emph{measurable subgroup} developed by Bowen-Nevo \cite{BoNe09} and Bowen \cite{Bo11b}. See also Remark \ref{rem:BoNe}.

\subsection{Invariant random partitions}\label{subsec:IRP} By a \emph{partition} of $\Gamma$ we mean an equivalence relation on $\Gamma$. The set $\Es{P}_\Gamma$ of all partitions of $\Gamma$ is a closed subset of $2^{\Gamma \times \Gamma}$ and $\Gamma$ acts continuously on $\Es{P}_\Gamma$ by left translation $\Gamma \cc ^{\ell} \Es{P}_\Gamma$, i.e.,
\[
(\alpha ,\beta )\in \gamma P \ \IFF \ (\gamma ^{-1}\alpha ,\gamma ^{-1}\beta )\in P
\]
for each $\gamma ,\alpha ,\beta \in \Gamma$ and $P\in \Es{P}_\Gamma$. For $P\in \Es{P}_\Gamma$ and $\alpha \in \Gamma$ let $[\alpha ]_P= \{ \beta \csuchthat (\alpha ,\beta )\in P \}$ denote the $P$-class of $\alpha$. Then it is easy to check that $\gamma [\alpha ]_P = [\gamma \alpha ]_{\gamma P}$ for all $\gamma \in \Gamma$.

\begin{definition}\label{def:IRP}
An \emph{invariant random partition} of $\Gamma$ is a translation-invariant Borel probability measure on $\Es{P}_\Gamma$.
\end{definition}

\begin{remark}\label{rem:IRSIRP}
Let $\mbox{IRP}_\Gamma$ denote the space of all invariant random partitions of $\Gamma$. This is a convex set that is compact and metrizable in the weak${}^*$-topology. Similarly, let $\mbox{IRS}_\Gamma$ denote the compact convex set of all invariant random subgroups of $\Gamma$. There is a natural embedding $\Phi : \mbox{Sub}_\Gamma \hookrightarrow \Es{P}_\Gamma$ that assigns to each $H\in \mbox{Sub}_\Gamma$ the partition of $\Gamma$ determined by the right cosets of $H$, i.e., $[\delta ]_{\Phi (H)}= H\delta$ for $\delta \in \Gamma$. Observe that this embedding is $\Gamma$-equivariant between the conjugation action $\Gamma \cc ^c \mbox{Sub}_\Gamma$ and the translation action $\Gamma \cc ^\ell \Es{P}_\Gamma$. 
We thus obtain an embedding $\Phi _* : \mbox{IRS}_\Gamma \hookrightarrow  \mbox{IRP}_\Gamma$, $\theta \mapsto \Phi _*\theta$. 
\end{remark}

Suppose now that $F\subseteq X\times X$ is a measure preserving countable Borel equivalence relation on $(X,\mu )$ and $\bm{a}=\Gamma \cc ^a (X,\mu )$ is a m.p.\ action of $\Gamma$. Each point $x\in X$ determines a partition $P^a_F(x)$ of $\Gamma$ given by
\[
P^a_F(x) = \{ (\alpha , \beta )\in \Gamma \csuchthat \beta ^{-1}x F \alpha ^{-1}x \} .
\]
Note that $P^a_F(x) = P^a_{F\cap E_a}(x)$ for all $x\in X$, so if we are only concerned with properties of $P^a_F$ then we might as well assume that $F\subseteq E_a$.

\begin{proposition}\label{prop:IRPdist}
The map $x\mapsto P^a_F(x)$ is equivariant and therefore $(P^a_F)_*\mu$ is an invariant random partition of $\Gamma$.
\end{proposition}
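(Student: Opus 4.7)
My plan is to carry out a direct unpacking of the definitions. First I would check that $x \mapsto P^a_F(x)$ is Borel as a map $X \to \Es{P}_\Gamma \subseteq 2^{\Gamma \times \Gamma}$. This amounts to observing that for each fixed pair $(\alpha,\beta) \in \Gamma \times \Gamma$, the set $\{x \in X : (\alpha,\beta) \in P^a_F(x)\} = \{x \in X : (\beta^{-1})^a x \mathrel{F} (\alpha^{-1})^a x\}$ is the preimage of the Borel set $F \subseteq X \times X$ under the Borel map $x \mapsto ((\beta^{-1})^a x, (\alpha^{-1})^a x)$, hence is Borel; since the Borel $\sigma$-algebra on $2^{\Gamma \times \Gamma}$ is generated by the subbasic cylinders $\{P : (\alpha,\beta) \in P\}$, measurability follows.

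Next I would verify equivariance by a single line of computation. Fix $\gamma \in \Gamma$ and $x \in X$. By definition of the translation action on $\Es{P}_\Gamma$,
\[
(\alpha,\beta) \in \gamma \cdot P^a_F(x) \iff (\gamma^{-1}\alpha, \gamma^{-1}\beta) \in P^a_F(x) \iff (\gamma^{-1}\beta)^a{}^{-1} x \mathrel{F} (\gamma^{-1}\alpha)^a{}^{-1} x.
\]
Rewriting $(\gamma^{-1}\beta)^{-1} = \beta^{-1}\gamma$ and similarly for $\alpha$, and using that $a$ is an action, this is equivalent to $(\beta^{-1})^a (\gamma^a x) \mathrel{F} (\alpha^{-1})^a (\gamma^a x)$, which is precisely the condition $(\alpha,\beta) \in P^a_F(\gamma^a x)$. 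Hence $P^a_F(\gamma^a x) = \gamma \cdot P^a_F(x)$ for every $\gamma \in \Gamma$ and $x \in X$.

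Finally, I would conclude the second assertion formally: because $a$ preserves $\mu$ and $P^a_F$ is equivariant, for any Borel $B \subseteq \Es{P}_\Gamma$ and any $\gamma \in \Gamma$ we have
\[
(P^a_F)_*\mu(\gamma \cdot B) = \mu\bigl((P^a_F)^{-1}(\gamma \cdot B)\bigr) = \mu\bigl(\gamma^a \cdot (P^a_F)^{-1}(B)\bigr) = \mu\bigl((P^a_F)^{-1}(B)\bigr) = (P^a_F)_*\mu(B),
\]
so $(P^a_F)_*\mu \in \mathrm{IRP}_\Gamma$. There is no real obstacle here; the statement is essentially a tautology once one checks that the action on $\Es{P}_\Gamma$ has been set up compatibly with the inversions appearing in the definition of $P^a_F(x)$. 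The only mildly delicate point is keeping the $\alpha^{-1}$ and $\beta^{-1}$ conventions straight when comparing to the translation action, which the computation above handles.
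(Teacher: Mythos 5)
Your proof is correct and follows essentially the same route as the paper's: a direct one-line computation showing $(\alpha,\beta)\in\gamma\cdot P^a_F(x)\iff(\gamma^{-1}\alpha,\gamma^{-1}\beta)\in P^a_F(x)\iff(\alpha,\beta)\in P^a_F(\gamma^a x)$. The added checks of Borel measurability and of the pushforward invariance are fine but are left implicit in the paper.
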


\begin{proof}
For any $\gamma \in \Gamma$ and $x\in X$ we have
\[
(\alpha ,\beta )\in P^a_F(\gamma x) \IFF \alpha ^{-1}\gamma xF\beta ^{-1}\gamma x \IFF (\gamma ^{-1}\alpha ,\gamma ^{-1}\beta )\in P^a_F(x) \IFF (\alpha ,\beta )\in \gamma ^{\ell}\cdot P^a_F(x). \qedhere
\]
\end{proof}

Proposition \ref{prop:IRPdist} has a converse in a strong sense: given an invariant random partition $\rho$ of $\Gamma$ there is a \emph{free} m.p. action $\bm{b}= \Gamma \cc ^b (Y, \nu )$ of $\Gamma$ and a subequivalence relation $\Es{F}$ of $E_b$ with $(P^b_{\Es{F}})_*\nu = \rho$.  In fact, $\Es{F}$ and $b$ can be chosen independently of $\rho$, with only $\nu$ depending on $\rho$, as we now show. Let $\bm{\rho}$ denote the m.p.\ action $\Gamma \cc ^{\ell} (\Es{P}_\Gamma ,\rho )$ and let $\bm{b}=\bm{\rho}\times \bm{s}_\Gamma$ (any free action of $\Gamma$ will work in place of $\bm{s}_\Gamma$) so that $(Y,\nu ) = (\Es{P}_\Gamma \times [0,1]^\Gamma , \rho \times \lambda ^\Gamma )$. Define $\Es{F}\subseteq Y\times Y$ by
\begin{equation}\label{eqn:defF}
(P,x)\Es{F} (Q,y) \ \IFF \ \exists \gamma \in \Gamma \ (\gamma ^{-1} \in [e]_P \mbox{ and } (\gamma P ,\gamma x )=(Q, y) ) .
\end{equation}

\begin{theorem}\label{thm:IRP} Let $\rho$ be an invariant random partition of $\Gamma$ and write $\bm{b}=\Gamma \cc ^b (Y,\nu )$ for the action $\bm{\rho}\times \bm{s}_\Gamma$. Let $\Es{F}$ be given by \emph{(\ref{eqn:defF})}. Then $\Es{F}$ is an equivalence relation contained in the equivalence relation $E_{b}$ generated by the $b$, 
and $P^{b}_{\Es{F}}((P,x)) = P$ for $\nu$-almost every $(P,x) \in Y$. In particular, $(P^{a}_{\Es{F}})_*\nu = \rho$.
\end{theorem}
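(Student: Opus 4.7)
The plan is to verify the three claims in order: first that $\Es{F}$ is an equivalence relation, then that $\Es{F}\subseteq E_{b}$, and finally to carry out the computation $P^{b}_{\Es{F}}((P,x))=P$ almost surely, from which the pushforward claim is immediate.

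For the first two claims I would unpack the definition (\ref{eqn:defF}) directly. Reflexivity of $\Es{F}$ uses $\gamma=e$ (noting $e\in [e]_{P}$ trivially). For symmetry, if $(P,x)\Es{F}(Q,y)$ via $\gamma$, then $\delta=\gamma^{-1}$ witnesses $(Q,y)\Es{F}(P,x)$: the condition $\delta^{-1}\in[e]_Q$ amounts to $\gamma\in[e]_{\gamma P}$, which follows from $\gamma^{-1}\in[e]_P$ by translating the pair $(e,\gamma^{-1})\in P$ by $\gamma$ to obtain $(\gamma,e)\in \gamma P=Q$. For transitivity, composing witnesses $\gamma$ and $\delta$ for two $\Es{F}$-relations produces the witness $\delta\gamma$, and the required condition $(\delta\gamma)^{-1}\in[e]_P$ follows from a short translation argument analogous to symmetry. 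Containment $\Es{F}\subseteq E_{b}$ is immediate from (\ref{eqn:defF}) because a witness $\gamma$ satisfies $(\gamma P,\gamma x)=\gamma^{b}(P,x)$.

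The main content is the third claim. Fix $(P,x)\in Y$ and $(\alpha,\beta)\in\Gamma\times\Gamma$. By definition,
\[
(\alpha,\beta)\in P^{b}_{\Es{F}}((P,x))\iff (\beta^{-1}P,\beta^{-1}x)\,\Es{F}\,(\alpha^{-1}P,\alpha^{-1}x),
\]
which by (\ref{eqn:defF}) requires a $\gamma\in\Gamma$ with $\gamma^{-1}\in[e]_{\beta^{-1}P}$ and $(\gamma\beta^{-1}P,\gamma\beta^{-1}x)=(\alpha^{-1}P,\alpha^{-1}x)$. Projecting to the second coordinate gives $(\alpha\gamma\beta^{-1})^{s}x=x$. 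Since $\bm{s}_\Gamma$ is free, for $\lambda^\Gamma$-almost every $x$ this forces $\gamma=\alpha^{-1}\beta$. Substituting, the remaining requirement becomes $\beta^{-1}\alpha\in[e]_{\beta^{-1}P}$. Translating the partition, $[e]_{\beta^{-1}P}=\beta^{-1}[\beta]_P$, so this last condition is equivalent to $\alpha\in[\beta]_P$, i.e.\ $(\alpha,\beta)\in P$. Hence for $\nu$-almost every $(P,x)$ we have $P^{b}_{\Es{F}}((P,x))=P$, and pushing forward immediately gives $(P^{b}_{\Es{F}})_*\nu=\rho$ since the first marginal of $\nu=\rho\times\lambda^\Gamma$ is $\rho$.

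The only mildly delicate step is handling the quantifier over $\gamma$ in the presence of the exceptional null set where $\bm{s}_\Gamma$ is not free: a priori there could be several witnesses $\gamma$ to $(P,x)\Es{F}(Q,y)$, but freeness of $\bm{s}_\Gamma$ collapses the choice to $\gamma=\alpha^{-1}\beta$ on a conull set, after which everything is purely algebraic manipulation of the partition. I expect no real obstacle; the argument is essentially a direct unwinding of the definitions once one exploits freeness of the Bernoulli factor.
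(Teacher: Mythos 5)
Your proof is correct and follows essentially the same route as the paper's: the equivalence-relation and containment checks are identical, and the key computation likewise uses freeness of the Bernoulli factor to pin down the witness $\gamma=\alpha^{-1}\beta$ and then reduces the membership condition to a purely algebraic statement about $P$ via the identity $[e]_{\beta^{-1}P}=\beta^{-1}[\beta]_P$. The only cosmetic difference is that you phrase the equivalence with the roles of $\alpha$ and $\beta$ swapped, which is immaterial since both $P$ and $\Es{F}$ are symmetric.
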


\begin{proof}[Proof of Theorem \ref{thm:IRP}]
It is clear that $\Es{F}\subseteq E_{b}$. We show that $\Es{F}$ is an equivalence relation: It is clear that ${\Es{F}}$ is reflexive. To see ${\Es{F}}$ is symmetric, suppose $(P,x){\Es{F}}(Q,y)$, as witnessed by $\gamma ^{-1}\in [e]_P$ with $\gamma  P = Q$ and $\gamma x = y$. Then $\gamma \in [e]_{\gamma  P} = [e]_Q$ and $(\gamma ^{-1} Q ,\gamma ^{-1}y ) = (P,x)$, so $(Q,y){\Es{F}}(P,x)$. For transitivity, if $(P,x){\Es{F}}(Q,y){\Es{F}}(R,z)$ as witnessed by $\gamma ^{-1}\in [e]_P$ with $(\gamma P, \gamma x)= (Q,y)$ and $\delta ^{-1}\in [e]_Q$ with $(\delta Q,\delta y)=(R,z)$ then $\gamma ^{-1}\in [e]_{P}$ and $\gamma  P = Q$ implies $[e]_{Q}= [e]_{\gamma  P}= \gamma  [e]_P$. Therefore $\delta ^{-1}\in \gamma  [e]_P$, i.e., $(\delta \gamma )^{-1} \in [e]_P$ and $(\delta \gamma P ,\delta \gamma x ) (\delta Q , \delta y) = (R,z)$.

Fix now $(P,x) \in Y$. We show that $P^b_{\Es{F}}((P,x))= P$. For each $\alpha ,\beta \in \Gamma$ we have by definition
\begin{align}
(\alpha ,\beta ) \in P^a_{\Es{F}}((P,x)) \ &\IFF (\alpha ^{-1}P, \alpha ^{-1} x) F (\beta ^{-1} P,\beta ^{-1}x)  \nonumber \\
\label{eqn:gamma} &\IFF \ \exists \gamma \in \Gamma \, \big( \gamma ^{-1}\in [e]_{\alpha ^{-1}P} \mbox{ and } (\gamma \alpha ^{-1}P , \gamma \alpha ^{-1} x) = (\beta ^{-1}Q, \beta ^{-1}x) \big) .
\end{align}
Therefore, if $(\alpha , \beta ) \in P^b_{\Es{F}}((P,x))$ as witnessed by some $\gamma$ as in (\ref{eqn:gamma}) then $\gamma \alpha ^{-1}x = \beta ^{-1}x$ so freeness of $\bm{a}$ implies $\gamma = \beta ^{-1}\alpha$. Then $\alpha ^{-1}\beta = \gamma ^{-1} \in [e]_{\alpha ^{-1}P}$, i.e., $(\alpha ^{-1}\beta ,e)\in \alpha ^{-1}P$, which is equivalent to $(\beta ,\alpha ) \in P$. This shows that $P^b_{\Es{F}}((P,x))\subseteq P$. For the reverse inclusion, if $(\alpha ,\beta )\in P$ then $\gamma = \beta ^{-1}\alpha$ satisfies (\ref{eqn:gamma}) and thus $(\alpha ,\beta )\in P^b_{\Es{F}}((P,x))$.
\end{proof}

\begin{definition}
Let $\bm{a}=\Gamma \cc ^a (X,\mu )$ be a m.p.\ action of $\Gamma$ and let $F$ be a subequivalence relation of $E_a$. If $\rho$ is an invariant random partition of $\Gamma$ then the pair $(\bm{a} ,F)$ is called a \emph{realization} of $\rho$ if $(P^a_F)_*\mu = \rho$. If $\theta$ is an invariant random subgroup of $\Gamma$ then $(\bm{a},F)$ is called a \emph{realization} of $\theta$ if it is a realization of $\Phi _*\theta$, where $\Phi _*: \mbox{IRS}_\Gamma \ra \mbox{IRP}_\Gamma$ is the embedding defined in Remark \ref{rem:IRSIRP}. A realization $(\bm{a},F)$ is called \emph{free} if $\bm{a}$ is free.
\end{definition}

The following is a straightforward consequence of Theorem \ref{thm:IRP} and the definitions.

\begin{corollary}\label{cor:realize}
Every invariant random partition admits a free realization. 
\end{corollary}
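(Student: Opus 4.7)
The plan is to observe that Corollary \ref{cor:realize} is essentially an immediate consequence of Theorem \ref{thm:IRP}, modulo checking that the action constructed there is free. There is no real content to add beyond this observation, so the proof will be very short.

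Concretely, given an invariant random partition $\rho$ of $\Gamma$, I would take the action $\bm{b} = \bm{\rho} \times \bm{s}_\Gamma$ on $(Y,\nu) = (\Es{P}_\Gamma \times [0,1]^\Gamma, \rho \times \lambda^\Gamma)$ constructed in Theorem \ref{thm:IRP}, together with the subequivalence relation $\Es{F} \subseteq E_b$ defined by \eqref{eqn:defF}. The first step is to note that $\bm{b}$ is free: the Bernoulli shift $\bm{s}_\Gamma$ is a free m.p.\ action of $\Gamma$, and any product of a measure preserving action with a free measure preserving action is again free (since $\mbox{Fix}^b(\gamma) \subseteq \Es{P}_\Gamma \times \mbox{Fix}^{s_\Gamma}(\gamma)$, which is $\nu$-null for every $\gamma \neq e$). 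The second step is to invoke Theorem \ref{thm:IRP} directly to conclude that $P^b_{\Es{F}}((P,x)) = P$ for $\nu$-a.e.\ $(P,x)$, and therefore $(P^b_{\Es{F}})_*\nu = \rho$. This makes $(\bm{b},\Es{F})$ a free realization of $\rho$ by definition.

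For the corresponding statement about invariant random subgroups $\theta$, one simply applies the above to the invariant random partition $\rho = \Phi_*\theta$ described in Remark \ref{rem:IRSIRP}, where $\Phi(H)$ is the right-coset partition of $H$.

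There is no real obstacle: the hard work was already done in setting up the definitions and proving Theorem \ref{thm:IRP}. The only thing to verify explicitly is the freeness of $\bm{b}$, which follows from the freeness of the Bernoulli factor.
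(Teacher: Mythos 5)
Your proposal is correct and matches the paper's intent exactly: the paper states Corollary \ref{cor:realize} as "a straightforward consequence of Theorem \ref{thm:IRP} and the definitions," with the free realization being precisely $(\bm{\rho}\times\bm{s}_\Gamma,\Es{F})$, and freeness of the product following from freeness of the Bernoulli factor just as you argue. Nothing further is needed.
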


The remainder of this subsection works toward a characterization of the set $\Phi _*(\mbox{IRS}_\Gamma )$. Let $K$ be a metrizable compact space and consider the set $\Es{P}_\Gamma \otimes K$ of all pairs $(P,f )$ where $f:P^*\ra K$ is a function with $\mbox{dom}(f) = P^*= \{ [\alpha ]_P\csuchthat \alpha \in \Gamma \}$ and taking values in $K$. The set $\Es{P}_\Gamma \otimes K$ has a natural compact metrizable topology coming from its identification with the closed set
\[
\widetilde{\Es{P}_\Gamma \otimes K} = \{ (P,g)\in \Es{P}_\Gamma \times K^\Gamma \csuchthat g\mbox{ is constant on each }P\mbox{-class} \} \subseteq \Es{P}_\Gamma \times K^\Gamma
\]
via the injection $(P,f)\mapsto (P,\tilde{f})$ where $\tilde{f}(\alpha ) = f([\alpha ]_P)$ for $\alpha \in \Gamma$.  Observe that $\widetilde{\Es{P}_\Gamma \otimes K}$ is invariant in $\Es{P}_\Gamma \times K^\Gamma$ with respect to the product action $\ell \times s$ of $\Gamma$ (where $s$ denotes the shift action $\Gamma \cc ^s K^\Gamma$), so we obtain a continuous action $\Gamma \cc ^{\ell\otimes s} \Es{P}_\Gamma \otimes K$.  Explicitly, this action is given by $\gamma \cdot (P,f)  = (\gamma P , \gamma ^{s_P}f)$ where $\gamma ^{s_P}f : (\gamma P)^* \ra K$ is the function
\begin{align*}
(\gamma ^{s_P}f)([\alpha ]_{\gamma P}) &= f(\gamma ^{-1}[\alpha ]_{\gamma P}) = f([\gamma ^{-1}\alpha ]_P ) .
\end{align*}
There is a natural equivalence relation $\Es{R}=\Es{R}_K$ on $\Es{P}_\Gamma \otimes K$ given by
\[
(P,f)\Es{R} (Q,g) \ \IFF \ \exists \gamma \in [e]_P \ (\gamma ^{-1}(P,f)=(Q,g) ) .
\]
It is clear that $\Es{R}$ is an equivalence relation that is contained in $E_{\ell \otimes s}$.

\begin{lemma}\label{lem:Psubset}
$P\subseteq P^{\ell\otimes s}_{\Es{R}} ((P,f))$ for every $(P,f ) \in \Es{P}_\Gamma \otimes K$.
\end{lemma}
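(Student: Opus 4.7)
My strategy is to unpack the definition of $P^{\ell\otimes s}_{\Es{R}}((P,f))$ and exhibit, for each pair $(\alpha,\beta)\in P$, an explicit element of $\Gamma$ that witnesses the required $\Es{R}$-relation between $\beta^{-1}(P,f)$ and $\alpha^{-1}(P,f)$. By the definition of $P^{\,\cdot}_{\,\cdot}(\cdot)$ given just before Proposition \ref{prop:IRPdist} (applied to the action $\ell\otimes s$ and the subequivalence relation $\Es{R}$), membership $(\alpha,\beta)\in P^{\ell\otimes s}_{\Es{R}}((P,f))$ reduces to the assertion
\[
\beta^{-1}(P,f)\,\Es{R}\,\alpha^{-1}(P,f),
\]
which in turn, by the definition of $\Es{R}$, amounts to producing some $\gamma\in[e]_{\beta^{-1}P}$ satisfying $\gamma^{-1}\beta^{-1}(P,f)=\alpha^{-1}(P,f)$.

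The natural choice is $\gamma:=\beta^{-1}\alpha$. The equality $\gamma^{-1}\beta^{-1}(P,f)=\alpha^{-1}(P,f)$ is then automatic since $\gamma^{-1}\beta^{-1}=\alpha^{-1}$ as group elements and $\ell\otimes s$ is an action. The only real content is the verification that this $\gamma$ lies in $[e]_{\beta^{-1}P}$. Unfolding the definition of translation of partitions, $\gamma\in[e]_{\beta^{-1}P}$ is equivalent to $(\gamma,e)\in\beta^{-1}P$, which is equivalent to $(\beta\gamma,\beta)=(\alpha,\beta)\in P$ — precisely the hypothesis.

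There is no genuine obstacle here: the argument is a one-line identification of the witness $\gamma=\beta^{-1}\alpha$, and the whole lemma is really just the observation that the partition $P$ is refined by the ``same $[e]_P$-coset'' relation encoded in $\Es{R}$. I will present it as a short direct computation. (As a sanity check, specializing to $P=\Phi(H)$ for $H\in\mathrm{Sub}_\Gamma$ recovers the fact that $H\subseteq N(H)$, which is consistent with the remark that $P^{\ell\otimes s}_{\Es{R}}$ extends the normalizer map.)
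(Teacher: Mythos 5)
Your proposal is correct and is essentially the paper's own proof: both reduce $(\alpha,\beta)\in P^{\ell\otimes s}_{\Es{R}}((P,f))$ to the relation $\beta^{-1}(P,f)\,\Es{R}\,\alpha^{-1}(P,f)$ and verify it with the same witness $\gamma=\beta^{-1}\alpha\in[e]_{\beta^{-1}P}$. No gaps; the extra sanity check about the normalizer is a nice touch but not needed.
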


\begin{proof}
Suppose that $(\alpha ,\beta )\in P$. Then $\beta ^{-1}\alpha \in [e]_{\beta ^{-1}P}$ so for any $f\in K^{P^*}$, from the definition of $\Es{R}$ we have
\[
(\beta ^{-1}P , \beta ^{-1}f ) \, \Es{R} \, (\beta ^{-1}\alpha )^{-1}(\beta ^{-1}P, \beta ^{-1}f ) = (\alpha ^{-1}P, \alpha ^{-1}f ) ,
\]
i.e., $\beta ^{-1}(P,f) \Es{R} \alpha ^{-1}(P,f)$. This means that $(\alpha ,\beta )\in P^{\ell\otimes s}_{\Es{R}}((P,f))$ by definition.
\end{proof}

If $\rho$ is an invariant random partition and $\mu$ is a Borel probability measure on $K$ then the measure $\rho \otimes \mu$ on $\Es{P}_\Gamma \otimes K$ given by
\[
\rho \otimes \mu = \int _P (\updelta _P \times \mu ^{P^*} )\, d\rho
\]
is $\ell\otimes s$-invariant. 

\begin{theorem}
Let $\rho$ be an invariant random partition of $\Gamma$, let $\mu$ be any atomless measure on $K$, and let $\Es{R}= \Es{R}_K$. Then the following are equivalent:
\begin{enumerate}
\item $\rho \in \Phi _*(\mbox{\emph{IRS}}_\Gamma )$
\item $(\rho\otimes \mu )$-almost every $\Es{R}$-class is trivial.
\end{enumerate}
\end{theorem}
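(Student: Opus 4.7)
The plan is to prove each direction separately, using the fact that $P \in \Phi(\mbox{Sub}_\Gamma)$ if and only if $[e]_P$ is a subgroup of $\Gamma$ and $[\alpha]_P = [e]_P \cdot \alpha$ for all $\alpha \in \Gamma$.

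For (1) $\Rightarrow$ (2): I would observe that when $P = \Phi(H)$, one has $[e]_P = H$, and for every $\gamma \in H$ the identity $\gamma H \gamma^{-1} = H$ gives $\gamma P = P$, while the induced permutation $T_\gamma$ of $P^*$ defined by $T_\gamma([\alpha]_P) = [\gamma \alpha]_P$ is the identity since $\gamma H \alpha = H\alpha$. Hence $\gamma^{-1}(P,f) = (P,f)$ for every $f \in K^{P^*}$ and every $\gamma \in [e]_P$, so the $\Es{R}$-class of $(P,f)$ is a singleton; integrating yields (2).

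For (2) $\Rightarrow$ (1), I would first apply Fubini: for $\rho$-a.e.\ $P$ there is a $\mu^{P^*}$-conull set of $f$ with $\gamma^{-1}(P,f) = (P,f)$ for all $\gamma \in [e]_P$. The condition $\gamma^{-1}(P,f) = (P,f)$ splits as $\gamma P = P$ (independent of $f$) together with $f \circ T_\gamma = f$. Since $\mu$ is atomless, if $T_\gamma$ were to move some class $C$ to $C' \neq C$ the set $\{f : f(C) = f(C')\}$ would be $\mu^{P^*}$-null, so for $\rho$-a.e.\ $P$ every $\gamma \in H_P := [e]_P$ must have $T_\gamma = \mathrm{id}$, equivalently $\gamma \alpha \in [\alpha]_P$ for all $\alpha \in \Gamma$, which gives $H_P \cdot \alpha \subseteq [\alpha]_P$. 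Elementary manipulations (specializing $\alpha = \gamma_2$ and $\alpha = \gamma^{-1}$) then show that $H_P$ is closed under products and inverses, hence is a subgroup.

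The hard part is establishing the reverse inclusion $[\alpha]_P \subseteq H_P \cdot \alpha$, and this is where the translation invariance of $\rho$ must enter. Letting $A \subseteq \Es{P}_\Gamma$ denote the $\rho$-conull set of $P$ satisfying the property established above, translation invariance of $\rho$ together with countability of $\Gamma$ makes the set $A' := A \cap \bigcap_{\alpha \in \Gamma}\{P : \alpha^{-1}P \in A\}$ also $\rho$-conull. For $P \in A'$ and arbitrary $\alpha \in \Gamma$, applying the property to $\alpha^{-1}P$ at the index $\alpha^{-1}$ yields $K_\alpha \cdot \alpha^{-1} \subseteq [\alpha^{-1}]_{\alpha^{-1}P}$, where $K_\alpha := [e]_{\alpha^{-1}P} = \alpha^{-1}[\alpha]_P$. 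A direct computation from the definition of $\alpha^{-1}P$ gives $[\alpha^{-1}]_{\alpha^{-1}P} = \alpha^{-1}H_P$, so substituting and rearranging yields $[\alpha]_P \subseteq H_P \alpha$. Combined with the forward inclusion this gives $[\alpha]_P = H_P \alpha$ for all $\alpha$, i.e., $P = \Phi(H_P)$. Since $\Phi$ is an injective $\Gamma$-equivariant map, pulling back shows $\rho = \Phi_* \theta$ for some conjugation-invariant Borel probability measure $\theta$ on $\mbox{Sub}_\Gamma$, and therefore $\rho \in \Phi_*(\mbox{IRS}_\Gamma)$.
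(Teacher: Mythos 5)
Your proof is correct, and while the direction (1)$\Rightarrow$(2) is essentially the paper's argument, your treatment of (2)$\Rightarrow$(1) takes a genuinely different route. The paper exploits atomlessness of $\mu$ by fixing, for $\rho$-almost every $P$, a single \emph{injective} $f:P^*\to K$ (generic injectivity under $\mu^{P^*}$) and using it as a labeling that separates classes: combined with Lemma \ref{lem:Psubset} and triviality of the $\Es{R}$-classes on a $\Gamma$-invariant conull set, injectivity of $f$ delivers in one stroke the equivalence $(\alpha,\beta)\in P\Leftrightarrow\beta\alpha^{-1}\in[e]_P$, which is precisely the statement that $P$ is the right-coset partition of the subgroup $[e]_P$. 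You instead integrate over $f$ and use independence of the coordinates of $f$ under the product measure to force the induced permutation $T_\gamma$ of $P^*$ to be the identity for every $\gamma\in[e]_P$; this yields only the inclusion $[e]_P\cdot\alpha\subseteq[\alpha]_P$ (the ``$\Leftarrow$'' half of the paper's claim) together with the subgroup property of $[e]_P$, and you then recover the reverse inclusion $[\alpha]_P\subseteq[e]_P\cdot\alpha$ by a separate appeal to translation invariance of $\rho$, applying the first inclusion to the translated partitions $\alpha^{-1}P$ at the index $\alpha^{-1}$. Both arguments draw on the same two resources --- atomlessness of the base measure and invariance of the relevant conull set under the $\Gamma$-action (your set $A'$ plays exactly the role of the paper's invariant set $Y$) --- so neither is more general. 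The paper's version is slightly more economical in that the injective $f$ gives both inclusions simultaneously, whereas yours dispenses with the auxiliary function $P^{\ell\otimes s}_{\Es{R}}$ and Lemma \ref{lem:Psubset} entirely and makes the group-theoretic content (closure under products and inverses, then the coset identification) more explicit.
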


\begin{proof}
(1)$\Ra$(2): Suppose that (1) holds. 
It follows that $(\rho\otimes \mu )$ concentrates on pairs $(\Phi (H) , f) \in \Es{P}_\Gamma \otimes K$ with $H \in \mbox{Sub}_\Gamma$. It therefore suffices to show that the $\Es{R}$-class of such a pair $(\Phi (H),f)$ is trivial.  If $(\Phi (H),f)\Es{R}(Q,g)$ then there is some $\gamma \in [e]_{\Phi (H)}= H$ with $\gamma ^{-1}\Phi (H) =Q$ and $\gamma ^{-1} f = Q,g$. But $\gamma ^{-1}\Phi (H) = \Phi (\gamma ^{-1}H\gamma ) = \Phi (H)$ (since $\gamma \in H$) so that $Q=\Phi (H)$. In addition, for each $\delta \in \Gamma$ we have $\gamma [\delta ]_{\Phi (H)} = \gamma H\delta = H\delta = [\delta ]_{\Phi (H)}$ since $\gamma \in H$. Therefore $g([\delta ]_{\Phi (H)}) = (\gamma ^{-1}f)([\delta ]_{\Phi (H)}) = f(\gamma [\delta ]_{\Phi (H)}) = f([\delta ]_{\Phi (H)})$, showing that $g=f$.

(2)$\Ra$(1): Suppose that (2) holds. Since $\mu$ is non-atomic, for each $P\in \Es{P}_\Gamma$ the set $\{ f\in K^{P^*} \csuchthat f\mbox{ is injective}\}$ is $\mu ^{P^*}$-conull. This along with (2) implies that there is a $\Gamma$-invariant $(\rho\otimes \mu )$-conull set $Y\subseteq \Es{P}_\Gamma \otimes K$ on which $\Es{R}$ is trivial and such that $f:P ^* \ra K$ is injective whenever $(P,f)\in Y$. The projection $Y_0 = \{ P \in \Es{P}_\Gamma \csuchthat \exists f \ (P,f)\in Y \}$ is then $\rho$-conull so it suffices to show that $Y_0\subseteq \Phi (\mbox{Sub}_\Gamma )$. Fix $P\in Y_0$ and an $f:P^*\ra K$ with $(P,f)\in Y$.
\begin{claim}
Let $\alpha ,\beta \in \Gamma$. Then $(\alpha ,\beta ) \in P$ if and only if $\beta \alpha ^{-1} \in [e]_P$.
\end{claim}

\begin{proof}[Proof of Claim]
Suppose $(\alpha ,\beta )\in P$. Lemma \ref{lem:Psubset} implies $(\alpha ,\beta )\in P^a_{\Es{R}}{(P,f)}$ so as the relevant $\Es{R}$-classes are trivial this implies $\alpha ^{-1}( P,f) = \beta ^{-1}(P,f)$ and thus $\alpha \beta ^{-1}P=P$ and $\alpha \beta ^{-1}f=f$. Then $f([e]_P)=(\alpha \beta ^{-1}f)([e]_P) = f([\beta \alpha ^{-1}]_P)$ so injectivity of $f$ shows that $[\beta\alpha ^{-1}]_P = [e]_P$, i.e., $\beta\alpha ^{-1} \in [e]_P$.

Conversely, suppose $\beta\alpha ^{-1}\in [e]_P$. Then $(\beta \alpha )^{-1}(P,f)\Es{R}(P,f)$ by definition of $\Es{R}$, and since the $\Es{R}$-classes are trivial this implies $(\beta \alpha )^{-1}(P,f)=(P,f)$ and thus $\beta ^{-1}(P,f) =\alpha ^{-1}(P,f)$. Therefore $f([\beta ]_P) = (\beta ^{-1}f)([e]_{\beta ^{-1}P}) = (\alpha ^{-1}f)([e]_{\alpha ^{-1}P}) = f([\alpha ]_P)$. Since $f$ is injective we conclude that $[\beta ]_P =[\alpha ]_P$, i.e., $(\alpha ,\beta )\in P$.\qedhere[Claim]
\end{proof}

\noindent It is immediate from the claim that $[e]_P$ is a subgroup of $\Gamma$ and that $P$ is the partition determined by the right cosets of $[e]_P$, i.e., $P= \Phi ([e]_P)$.\qedhere
\end{proof}
%

\subsection{Normalized subequivalence relations}\label{subsec:normsubeq}

As in the previous section let $F\subseteq X\times X$ be a m.p.\ countable Borel equivalence relation on $(X,\mu )$ and let $\bm{a}=\Gamma \cc ^a (X,\mu )$ be a m.p.\ action of $\Gamma$.

\begin{definition}
$F$ is said to be \emph{normalized} by $\bm{a}=\Gamma \cc ^a (X,\mu )$ if there is a conull set $X_0\subseteq X$ such that
\[
xFy \ \Ra \ \gamma x F \gamma y
\]
for all $\gamma \in \Gamma$ and $x,y\in X_0$.  Equivalently, $F$ is normalized by $\bm{a}$ if the image of $\Gamma$ in $\Aut (X,\mu )$ is contained in the normalizer of the full group of $F$. A realization $(\bm{a},F)$ of an invariant random partition $\rho$ of $\Gamma$ is called \emph{normal} if $F$ is normalized by $\bm{a}$.
\end{definition}

Note that if $F$ is normalized by $\bm{a}$ then $F\cap E_a$ is normalized by $\bm{a}$ and $P^a_{F\cap E_a}(x) = P^a_F (x)$ so it makes sense once again to restrict our attention to the case where $F\subseteq E_a$. 
%
Define now
\[
\Gamma ^a _F(x) = \{ \gamma \in \Gamma \csuchthat \gamma ^{-1}x Fx \}
\]
It follows from the definitions that $\Gamma ^a _F(x) = [e]_{P^a_F(x)}$.

\begin{proposition}\label{prop:IRSIRP}
Let $F$ be a subequivalence relation of $E_a$. Then the following are equivalent
\begin{enumerate}
\item $F$ is normalized by $\bm{a}$.
\item For almost all $x$, $\Gamma ^a_F(x)$ is a subgroup of $\Gamma$ and $P^a _F (x)$ is the partition of $\Gamma$ determined by the right cosets of $\Gamma ^a _F(x)$, i.e.,
    \[
    (\alpha ,\beta )\in P^a_F(x) \ \IFF \ \Gamma ^a_F(x)\alpha = \Gamma ^a_F(x)\beta .
    \]
    for all $\alpha ,\beta \in \Gamma$.
\item $\Gamma ^a_F(\gamma x) = \gamma \Gamma ^a_F(x)\gamma ^{-1}$ for almost all $x\in X$ and all $\gamma \in \Gamma$.
\item The set $[e]_P$ is a subgroup of $\Gamma$ for $(P^a_F)_*\mu$-almost every $P\in \Es{P}_\Gamma$ and the map $P\mapsto [e]_P$ is an isomorphism from $\Gamma \cc ^\ell (\Es{P}_\Gamma , (P ^a_F)_*\mu )$ to $\Gamma \cc ^c (\mbox{\emph{Sub}}_\Gamma , (\Gamma ^a_F)_*\mu )$.
\end{enumerate}
\end{proposition}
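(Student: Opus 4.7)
The engine of the argument is the identity $\Gamma^a_F(x) = [e]_{P^a_F(x)}$ together with the equivariance of the partition map (Proposition \ref{prop:IRPdist}), which gives $P^a_F(\gamma x) = \gamma^\ell P^a_F(x)$ almost surely. My plan is to show $(1)\Rightarrow(2)\Rightarrow(3)\Rightarrow(1)$ and $(2)\iff(4)$, relying on these two facts to translate between set-theoretic statements about $\Gamma^a_F(x)$ and measure-theoretic statements about the partition distribution. Throughout, I will choose $\Gamma$-invariant conull sets on which pointwise identities hold (possible since $\Gamma$ is countable).

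For (1)$\Rightarrow$(2), fix $x$ in a $\Gamma$-invariant conull set $X_0$ witnessing normality. Closure of $\Gamma^a_F(x)$ under inverses follows from applying $\gamma$ to $\gamma^{-1}x F x$ to get $xF\gamma x$; closure under products follows by applying $\gamma^{-1}$ to $\delta^{-1}x F x$, yielding $\gamma^{-1}\delta^{-1}x F \gamma^{-1}x F x$. For the coset description, applying $\alpha$ to $\alpha^{-1}xF\beta^{-1}x$ gives $x F \alpha\beta^{-1}x$, so $(\alpha,\beta)\in P^a_F(x) \iff \beta\alpha^{-1}\in\Gamma^a_F(x) \iff \Gamma^a_F(x)\alpha = \Gamma^a_F(x)\beta$. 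For (2)$\Rightarrow$(3), if $P=\Phi(H)$ then a direct computation with the coset structure shows $\gamma^\ell\Phi(H)=\Phi(\gamma H\gamma^{-1})$, so applying this to $P^a_F(\gamma x) = \gamma^\ell P^a_F(x)$ and taking $[e]$-classes gives $\Gamma^a_F(\gamma x) = \gamma \Gamma^a_F(x)\gamma^{-1}$.

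For (3)$\Rightarrow$(1), choose a $\Gamma$-invariant conull $X_0$ on which the conjugation identity of (3) holds for all $\gamma \in \Gamma$. Suppose $x,y\in X_0$ with $xFy$. Since $F\subseteq E_a$, write $y=\delta^{-1}x$ for some $\delta\in\Gamma$; then $\delta^{-1}xFx$ means $\delta\in\Gamma^a_F(x)$. By (3), $\gamma\delta\gamma^{-1}\in\Gamma^a_F(\gamma x)$ for any $\gamma$, which unwraps to $(\gamma\delta\gamma^{-1})^{-1}(\gamma x)\,F\,\gamma x$, i.e., $\gamma y\,F\,\gamma x$. This establishes normality.

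Finally, for (2)$\iff$(4), observe that $\Phi$ is a $\Gamma$-equivariant homeomorphism from $\mbox{Sub}_\Gamma$ onto its image, with inverse given on $\Phi(\mbox{Sub}_\Gamma)$ by $P\mapsto [e]_P$. Under (2), $(P^a_F)_*\mu$ concentrates on $\Phi(\mbox{Sub}_\Gamma)$ and equals $\Phi_*((\Gamma^a_F)_*\mu)$, so $P\mapsto [e]_P$ is a measure isomorphism intertwining $\ell$ with $c$, which is (4). Conversely, (4) forces $(P^a_F)_*\mu$ to concentrate on $\Phi(\mbox{Sub}_\Gamma)$, which pulled back via $x\mapsto P^a_F(x)$ yields (2). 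The main obstacle is really just careful bookkeeping of conull sets in the (3)$\Rightarrow$(1) direction — everything else is a routine translation through the dictionary provided by $\Phi$ and the equivariance of $P^a_F$.
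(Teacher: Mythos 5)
Your proof is correct and follows essentially the same route as the paper: the cycle $(1)\Rightarrow(2)\Rightarrow(3)\Rightarrow(1)$ with the same pointwise computations, driven by the identity $\Gamma^a_F(x)=[e]_{P^a_F(x)}$ and the equivariance of $x\mapsto P^a_F(x)$. The only organizational difference is that you prove $(2)\Leftrightarrow(4)$ directly through the embedding $\Phi$ (your compressed claim that (4) forces $P=\Phi([e]_P)$ almost everywhere is correct, since the equivariance in (4) together with $[e]_{\gamma^{\ell}P}=\gamma[\gamma^{-1}]_P$ gives $[\gamma^{-1}]_P=[e]_P\gamma^{-1}$ for all $\gamma$), whereas the paper derives (4) from (2)+(3) and closes the loop with the immediate implication $(4)\Rightarrow(3)$; both organizations work.
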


\begin{proof}
(1)$\Ra$(2): Suppose (1) holds. By ignoring a null set we may assume without loss of generality that $xFy \, \Ra \, \gamma x F \gamma y$ for all $x,y\in X$ and $\gamma \in \Gamma$. We have that $e\in \Gamma ^a_F(x)$ for all $x$. If $\gamma \in \Gamma ^a_F(x)$ then $\gamma ^{-1}xFx$ so by normality we have $xF\gamma x$ and thus $\gamma ^{-1}\in \Gamma ^a_F(x)$. If in addition $\delta \in \Gamma ^a_F(x)$ then $\delta ^{-1}xFxF\gamma x$ so that $\delta ^{-1}xF\gamma x$ which by normality implies $\gamma ^{-1}\delta ^{-1} xFx$, i.e., $\delta \gamma \in \Gamma ^a_F(x)$. This shows that $\Gamma ^a_F(x)$ is a subgroup. It remains to show that $[\delta ]_{P^a_F(x)} = \Gamma ^a_F(x)\delta$. We have $\gamma \in [\delta ]_{P^a_F(x)}$ if and only if $\delta ^{-1}xF\gamma ^{-1}x$ which by normality is equivalent to $(\delta \gamma ^{-1})x Fx$, i.e., $\gamma \in \Gamma ^a_F(x)\delta$.

(2)$\Ra$(3): Suppose (2) holds. Then for almost all $x$ and all $\gamma ,\delta \in \Gamma$ we have
\begin{align*}
\delta \in \Gamma ^a_F(\gamma x) \ &\IFF \ \delta ^{-1}\gamma x F\gamma ^a x \ \IFF \ \gamma ^{-1}\delta ^{-1} \gamma xFx \ \IFF \ \delta \in \gamma \Gamma ^a_F(x)\gamma ^{-1}.
\end{align*}

(3)$\Ra$(1): Suppose that (3) holds. Let $X_0\subseteq X$ be an $E_a$-invariant conull set such that $\Gamma ^a_F(\gamma x) = \gamma \Gamma ^a_F(x)\gamma ^{-1}$ for all $x\in X_0$ and $\gamma \in \Gamma$. Then for any $x,y\in F$, if $xFy$ then $xE_ay$ so that $y=\delta x$ for some $\delta \in \Gamma$. This means that $\delta ^{-1} \in \Gamma ^a_F(x)$ and, so for all $\gamma \in \Gamma$ we have $\gamma \delta ^{-1}\gamma ^{-1} \in \Gamma ^a_F(\gamma x)$ and thus
\[
\gamma y = (\gamma \delta  ^{-1}\gamma ^{-1} )^{-1} (\gamma x) F \gamma x.
\]
This shows that $F$ is normalized by $\bm{a}$.

(2)+(3)$\Ra$(4): Assume (2) and (3) hold. Then the measure $(P ^a_F)_*\mu$ concentrates on $\Phi (\mbox{Sub}_\Gamma )$. It follows that $P\mapsto [e]_P$ is injective on a $(P ^a_F)_*\mu$-conull set. By (3) this map is equivariant on a conull set. Since the composition $x\mapsto P^a_F(x) \mapsto [e]_{P^a_F(x)}$ is the same as $x\mapsto \Gamma ^a_F(x)$ this map is measure preserving.

Finally, the implication (4)$\Ra$(3) is clear.
\end{proof}

The following corollary is immediate.

\begin{corollary}\label{cor:getIRS}
If $F$ is normalized by $\bm{a}$ then $(\Gamma ^a_F)_*\mu$ is an invariant random subgroup of $\Gamma$.
\end{corollary}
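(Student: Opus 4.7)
The plan is to deduce the corollary directly from the equivalences in Proposition \ref{prop:IRSIRP}. Assume $F$ is normalized by $\bm{a}$. Then conditions (2) and (3) of that proposition hold: for $\mu$-almost every $x$, the set $\Gamma^a_F(x)$ is a subgroup of $\Gamma$, and moreover $\Gamma^a_F(\gamma x) = \gamma\Gamma^a_F(x)\gamma^{-1}$ for all $\gamma \in \Gamma$ and almost every $x$.

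The first of these facts says that $(\Gamma^a_F)_*\mu$ concentrates on $\mbox{Sub}_\Gamma$, so it is a Borel probability measure on $\mbox{Sub}_\Gamma$. What remains is conjugation invariance. Fix $\gamma \in \Gamma$ and a Borel set $A \subseteq \mbox{Sub}_\Gamma$. Unwinding definitions and using (3) together with $\bm{a}$-invariance of $\mu$,
\[
((\Gamma^a_F)_*\mu)(\gamma^{-1}\cdot A)
= \mu(\{x : \gamma\Gamma^a_F(x)\gamma^{-1} \in A\})
= \mu(\{x : \Gamma^a_F(\gamma x) \in A\})
= \mu(\gamma^{-1}\{y : \Gamma^a_F(y)\in A\})
= ((\Gamma^a_F)_*\mu)(A),
\]
which exhibits the conjugation invariance. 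There is essentially no obstacle here, since all the work has been absorbed into Proposition \ref{prop:IRSIRP}; this corollary is simply the observation that (2) gives support in $\mbox{Sub}_\Gamma$ and (3) gives equivariance of the map $x \mapsto \Gamma^a_F(x)$ between $\bm{a}$ and the conjugation action $\Gamma \cc^c \mbox{Sub}_\Gamma$, whence the pushforward is an invariant random subgroup.
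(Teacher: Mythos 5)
Your proof is correct and is exactly the argument the paper intends: the paper simply declares the corollary ``immediate'' from Proposition \ref{prop:IRSIRP}, and your write-up fills in precisely the two points that make it so (condition (2) puts the pushforward measure on $\mbox{Sub}_\Gamma$, condition (3) makes $x\mapsto \Gamma^a_F(x)$ equivariant, whence conjugation invariance follows from $\bm{a}$-invariance of $\mu$). No discrepancy with the paper's approach.
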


Theorem \ref{thm:IRP} also implies a converse to Corollary \ref{cor:getIRS}. Let $\theta$ be an invariant random subgroup of $\Gamma$ and let $\rho = \Phi _*\theta$. Let $\bm{b}$ and $\Es{F}$ be defined as in Theorem \ref{thm:IRP}. Let $\bm{a} = \bm{\theta}\times \bm{s}_\Gamma$ so that $(X,\mu ) = (\mbox{Sub}_\Gamma \times [0,1]^\Gamma ,\theta \times \lambda )$. Then the map $\Psi : (H,x ) \mapsto (\Phi (H),x)$ is an isomorphism of $\bm{a}$ with $\bm{b}$. Letting $\Es{F}_0 = (\Psi \times \Psi )^{-1}(\Es{F})$, we have that
\begin{equation}\label{eqn:defF0}
(H,x)\Es{F}_0 (L,y) \ \IFF \ H=L \mbox{ and }(\exists h\in H )(h^a x =y ) .
\end{equation}

\begin{corollary}\label{cor:IRSsuber}
$\Es{F}_0$ is a subequivalence relation of $E_{a}$ on $X$ which is normalized by $\bm{a}$ and satisfies $\Gamma ^{a} _{\Es{F}_0}(H,x)= H$ for $\theta\times \mu$-almost-every $(H,x) \in X$. Thus $(P^a_{\Es{F}_0})_*\mu = \Phi _*\theta$. It follows that every invariant random subgroup of $\Gamma$ admits a normal, free realization.
\end{corollary}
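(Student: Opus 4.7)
The plan is to read off the four assertions from the explicit description of $\Es{F}_0$ in~(\ref{eqn:defF0}), with freeness of $\bm{s}_\Gamma$ entering only at the stabilizer step, and then to observe that $(\bm{a},\Es{F}_0)$ is itself the desired normal free realization. Since $\Psi$ is a $\Gamma$-equivariant measure-preserving bijection from $\bm{a}$ to $\bm{b}$ and $\Es{F}\subseteq E_b$ by Theorem~\ref{thm:IRP}, pulling back under $\Psi\times\Psi$ immediately gives $\Es{F}_0\subseteq E_a$. For normalization of $\Es{F}_0$ by $\bm{a}$, note that the product action reads $\gamma^a(H,x)=(\gamma H\gamma^{-1},\gamma\cdot x)$, where $\gamma\cdot x$ denotes the Bernoulli shift; so if $(H,x)\Es{F}_0(H,y)$ is witnessed by $h\in H$ with $h\cdot x=y$, then $\gamma^a(H,y)=(\gamma H\gamma^{-1},(\gamma h\gamma^{-1})\cdot(\gamma\cdot x))$, and the element $\gamma h\gamma^{-1}\in\gamma H\gamma^{-1}$ witnesses $\gamma^a(H,x)\,\Es{F}_0\,\gamma^a(H,y)$.

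The main step is the identification of $\Gamma^a_{\Es{F}_0}(H,x)$. By definition, $\gamma\in\Gamma^a_{\Es{F}_0}(H,x)$ iff $\gamma^{-1}(H,x)=(\gamma^{-1}H\gamma,\gamma^{-1}\cdot x)$ is $\Es{F}_0$-equivalent to $(H,x)$, which forces both $\gamma^{-1}H\gamma=H$ and the existence of some $h\in H$ with $(h\gamma^{-1})\cdot x=x$. Since $\bm{s}_\Gamma$ is free, on a conull set the second condition forces $h\gamma^{-1}=e$, so $\gamma=h\in H$; conversely, any $\gamma\in H$ trivially satisfies both conditions by taking $h=\gamma$. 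Thus $\Gamma^a_{\Es{F}_0}(H,x)=H$ almost surely. Combined with the normalization just established, Proposition~\ref{prop:IRSIRP}\,(2) yields $P^a_{\Es{F}_0}(H,x)=\Phi(H)$ almost surely, and hence $(P^a_{\Es{F}_0})_*\mu=\Phi_*\theta$.

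Finally, since $\bm{s}_\Gamma$ is free, so is $\bm{a}=\bm{\theta}\times\bm{s}_\Gamma$, so $(\bm{a},\Es{F}_0)$ is indeed a normal free realization of $\theta$. The only place requiring any real care is the stabilizer calculation, where freeness of the Bernoulli shift is needed to rule out spurious solutions $h\ne\gamma$; the remaining assertions amount to unwinding the definitions in~(\ref{eqn:defF0}) and invoking Proposition~\ref{prop:IRSIRP} and Theorem~\ref{thm:IRP}.
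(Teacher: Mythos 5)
Your proof is correct and follows essentially the same route as the paper: the normalization step is verified by exactly the same conjugation-witness $\gamma h\gamma^{-1}\in\gamma H\gamma^{-1}$, and freeness of the Bernoulli shift is what pins down the stabilizer. The only cosmetic difference is that you recompute $\Gamma^a_{\Es{F}_0}(H,x)=H$ directly from (\ref{eqn:defF0}), whereas the paper obtains it by transporting the conclusion $P^b_{\Es{F}}((P,x))=P$ of Theorem \ref{thm:IRP} through the isomorphism $\Psi$; your inlined computation is just the special case of that theorem's argument in the coordinates of $\Es{F}_0$.
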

\begin{proof}
All that needs to be checked is that $\Es{F}_0$ is normalized by $\bm{\theta}\times \bm{a}$. If $(H,x)\Es{F}_0 (L,y)$ then $H=L$ and $h^ax =y$ for some $h\in H$.  Then for any $\gamma \in \Gamma$ we must show that $\gamma \cdot (H,x) \, \Es{F}_0 \, \gamma \cdot (H,h^ax)$. Now, $\gamma \cdot (H, x) = (\gamma H \gamma ^{-1} , \gamma ^a x)$, so as $\gamma h \gamma ^{-1}\in \gamma H \gamma ^{-1}$ the definition (\ref{eqn:defF0}) of $\Es{F}_0$ shows that
\[
(\gamma H \gamma ^{-1} , \gamma ^a x) \, \Es{F}_0 \, \gamma h \gamma ^{-1}\cdot (\gamma H \gamma ^{-1} , \gamma ^a x) = \gamma \cdot (H , h^a x) \qedhere
\]
\end{proof}

\begin{remark}\label{rem:judic}
In Corollary \ref{cor:IRSsuber}, if $\theta$ concentrates on the amenable subgroups of $\Gamma$ then $\Es{F}_0$ will always be an amenable equivalence relation. For other properties of $\theta$, a judicious choice of free action $\bm{d}$ in place of $\bm{s}_\Gamma$ in the definition of $\bm{a}$ may ensure that properties of $\theta$ are reflected by the equivalence relation $F$. For example, if $\theta$ concentrates on subgroups of cost $r$ then the proof of Theorem \ref{thm:gencost1} above shows that $\bm{d}$ can be chosen so that the corresponding equivalence relation $\Es{F}_0$ has cost $r$. Similarly, if $\theta$ concentrates on treeable subgroups then $\Es{F}_0$ can be made a treeable equivalence relation.
\end{remark}

\begin{remark}\label{rem:BoNe}
Following \cite[\S 2.2]{BoNe09} let $2^\Gamma _e =\{ L\in 2^\Gamma \csuchthat e\in L \}$ and define the equivalence relation $\Es{R}_e \subseteq 2^\Gamma _e\times 2^\Gamma _e$ by
\[
(L,K) \in \Es{R}_e \IFF \exists \gamma \in L \ \, \gamma ^{-1}L = K .
\]
Then any $\Es{R}_e$-invariant Borel probability measure on $2^\Gamma _e$ is called a a \emph{measurable subgroup} of $\Gamma$ (see \cite{BoNe09} and \cite{Bo11b}).  If $\rho$ is any invariant random partition of $\Gamma$ then the image of $\rho$ under $P\mapsto [e]_P$ is a measurable subgroup of $\Gamma$. I do not know whether every measurable subgroup of $\Gamma$ comes from an invariant random partition in this way.
\end{remark}

Creutz and Peterson \cite{CP12} define the \emph{subgroup} partial order on $(\mbox{IRS}_\Gamma ,\leq )$ as follows: Let $\theta _1, \theta _2 \in \mbox{IRS}_\Gamma$. Then $\theta _1$ is called a \emph{subgroup} of $\theta _2$ (written $\theta _1\leq \theta _2$) if there exists a joining of $\theta _1$ and $\theta _2$ that concentrates on the set $\{ (H,L) \in \mbox{Sub}_\Gamma \csuchthat H\leq L \}$. It is shown in \cite{CP12} that this is a partial order on $\mbox{IRS}_\Gamma$. The same idea can be used to define a notion of refinement for invariant random partitions.

For partitions $P,Q \in \Es{P}_\Gamma$, $P$ is said to \emph{refine} $Q$, written $P\leq Q$, if $P$ is a subset of $Q$. Equivalently $P\leq Q$ means that $[\alpha ]_P\subseteq [\alpha ]_Q$ for every $\alpha \in \Gamma$. If $\rho _1$ and $\rho _2$ are invariant random partitions of $\Gamma$ then $\rho _1$ \emph{refines} $\rho _2$, written $\rho _1 \leq \rho _2$, if there exists a joining of $\rho _1$ and $\rho _2$ that concentrates on the set $\{ (P,Q)\in \Es{P}_\Gamma \times \Es{P}_\Gamma \csuchthat P\leq Q \}$. It is clear that the restriction of the refinement relation on $\Es{P}_\Gamma$ (respectively, $\mbox{IRP}_\Gamma$) to $\mbox{Sub}_\Gamma$ (respectively, $\mbox{IRS}_\Gamma$) is the subgroup relation.

The point of view developed in this section can be used to give a characterization of the partial orders $(\mbox{IRS}_\Gamma ,\leq )$ and $(\mbox{IRP}_\Gamma ,\leq )$ in terms of subequivalence relations of free actions of $\Gamma$.

\begin{theorem}\label{thm:IRPleq}
Let $\rho _1 , \rho _2 \in \mbox{\emph{IRP}}_\Gamma$. Then the following are equivalent
\begin{enumerate}
\item[(1)] $\rho _1 \leq \rho _2$
\item[(2)] There exists a free m.p. action $\Gamma \cc ^a (X,\mu )$ of $\Gamma$ and equivalence relations $F_1\subseteq F_2\subseteq E_a$ with $(P^a_{F_1})_*\mu = \rho _1$ and $(P^a_{F_2})_*\mu = \rho _2$.
\end{enumerate}
If $\theta _1, \theta _2 \in \mbox{\emph{IRS}}_\Gamma$ then then following are equivalent
\begin{enumerate}
\item[(1')] $\theta _1\leq \theta _2$.
\item[(2')] There exists a free m.p. action $\Gamma \cc ^a (X,\mu )$ of $\Gamma$ and normalized equivalence relations $F_1\subseteq F_2\subseteq E_a$ with $(\Gamma ^a_{F_1})_*\mu = \theta _1$ and $(\Gamma ^a_{F_2})_*\mu = \theta _2$.
\end{enumerate}
\end{theorem}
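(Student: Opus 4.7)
The implication (2)$\Rightarrow$(1) is the easy half: given $F_1\subseteq F_2\subseteq E_a$, for every $x\in X$ and $\alpha,\beta\in\Gamma$, having $\beta^{-1}x\,F_1\,\alpha^{-1}x$ implies $\beta^{-1}x\,F_2\,\alpha^{-1}x$, so $P^a_{F_1}(x)\subseteq P^a_{F_2}(x)$, i.e.\ $P^a_{F_1}(x)\leq P^a_{F_2}(x)$. The push-forward of $\mu$ under $x\mapsto(P^a_{F_1}(x),P^a_{F_2}(x))$ is therefore a joining of $\rho_1$ and $\rho_2$ concentrated on the refinement set $\{(P,Q):P\leq Q\}$. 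The same argument handles (2')$\Rightarrow$(1') after invoking Proposition \ref{prop:IRSIRP} to identify $(\Gamma^a_{F_i})_*\mu$ with $[e]_{(\cdot)}$-pushforward of $(P^a_{F_i})_*\mu$.

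The main work is (1)$\Rightarrow$(2). I would mimic the construction of Theorem \ref{thm:IRP} but carry two partitions at once. Let $\eta$ be a joining of $\rho_1,\rho_2$ living on $\{(P,Q):P\leq Q\}\subseteq\Es{P}_\Gamma\times\Es{P}_\Gamma$. Set
\[
\bm{a}=\Gamma\cc^a(X,\mu):=\bigl(\Gamma\cc^{\ell\times\ell}(\Es{P}_\Gamma\times\Es{P}_\Gamma,\eta)\bigr)\times\bm{s}_\Gamma,
\]
so $\bm{a}$ is free (its Bernoulli factor is). Define, for $i=1,2$, an analogue of the relation (\ref{eqn:defF}):
\[
((P,Q),x)\,F_i\,((P',Q'),y)\iff\exists\gamma\in\Gamma\ \bigl(\gamma^{-1}\in[e]_{P_i}\text{ and }\gamma\cdot((P,Q),x)=((P',Q'),y)\bigr),
\]
where $P_1=P$ and $P_2=Q$. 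The verification that each $F_i$ is an equivalence relation contained in $E_a$ is essentially verbatim the argument given for $\Es{F}$ in Theorem \ref{thm:IRP}, using only that $[e]_{P_i}$ is an equivalence class of the partition $P_i$ and that $\gamma\cdot P_i$'s equivalence class of $e$ is $\gamma[e]_{P_i}$. Since $P\leq Q$ holds $\eta$-almost surely, $[e]_P\subseteq[e]_Q$ almost surely, which gives $F_1\subseteq F_2$ modulo a null set. The computation $P^a_{F_i}(((P,Q),x))=P_i$ is the same as in Theorem \ref{thm:IRP}: the freeness of $\bm{s}_\Gamma$ forces the witness $\gamma$ in the definition of $F_i$ to equal $\beta^{-1}\alpha$, and the equivalence $(\alpha,\beta)\in P^a_{F_i}\iff(\alpha,\beta)\in P_i$ then drops out. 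Pushing forward $\mu$ yields $(P^a_{F_i})_*\mu=\rho_i$.

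For (1')$\Rightarrow$(2') I would not go through $\Phi_*$ (which might spoil normality) but instead argue directly, following the pattern of Corollary \ref{cor:IRSsuber}. Fix a subgroup-joining $\vartheta$ of $\theta_1,\theta_2$ supported on $\{(H,L):H\leq L\}\subseteq\mbox{Sub}_\Gamma\times\mbox{Sub}_\Gamma$, and set
\[
\bm{a}=\bigl(\Gamma\cc^{c\times c}(\mbox{Sub}_\Gamma\times\mbox{Sub}_\Gamma,\vartheta)\bigr)\times\bm{s}_\Gamma,
\]
which is free. Put $H_1=H$, $H_2=L$ and define
\[
((H,L),x)\,F_i\,((H',L'),y)\iff(H,L)=(H',L')\text{ and }\exists h\in H_i\ \, h^a x=y.
\]
Then $F_1\subseteq F_2$ (since $H\leq L$ a.s.) and each $F_i$ is a subequivalence relation of $E_a$. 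Normality of $F_i$ under $\bm{a}$ is the same computation as in Corollary \ref{cor:IRSsuber}: for $\gamma\in\Gamma$ and $h\in H_i$, the element $\gamma h\gamma^{-1}$ lies in $\gamma H_i\gamma^{-1}$, which is the $i$-th coordinate of $\gamma\cdot(H,L)$. A direct check gives $\Gamma^a_{F_i}((H,L),x)=H_i$ on a conull set, so $(\Gamma^a_{F_i})_*\mu=\theta_i$.

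The only step demanding any real care is verifying transitivity of $F_1,F_2$ and the identity $P^a_{F_i}(((P,Q),x))=P_i$; both reduce to the cocycle-like computation already worked out for Theorem \ref{thm:IRP}, and no new obstacle arises from handling two partitions simultaneously beyond noting that the witnesses for $F_1$ are automatically witnesses for $F_2$ because $[e]_P\subseteq[e]_Q$.
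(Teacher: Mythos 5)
Your proposal is correct and follows essentially the same route as the paper: the (2)$\Rightarrow$(1) direction via the pushforward of $x\mapsto(P^a_{F_1}(x),P^a_{F_2}(x))$, and the (1)$\Rightarrow$(2) direction via the action $\ell\times\ell\times s$ on $\Es{P}_\Gamma\times\Es{P}_\Gamma\times[0,1]^\Gamma$ with the two relations defined by the witness condition $\gamma^{-1}\in[e]_{P_i}$, are exactly the paper's construction. The only (cosmetic) divergence is in the IRS case: the paper deduces (1')$\Leftrightarrow$(2') from the partition case together with Proposition \ref{prop:IRSIRP} — which already guarantees that realizations of coset partitions are automatically normalized, so your worry about $\Phi_*$ "spoiling normality" is unfounded — whereas you rebuild the normalized relations directly as in Corollary \ref{cor:IRSsuber}; both are valid and amount to the same construction.
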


\begin{proof}
Suppose (2) holds and let $P^a_{F_1}\times P^a_{F_2} : X\ra \Es{P}_\Gamma \times \Es{P}_\Gamma$ be the map $x\mapsto (P^a_{F_1}(x), P^a_{F_2}(x))$. Then $(P^a_{F_1}\times P^a_{F_2})_*\mu$ is a joining of $\rho _1$ and $\rho _2$ with the desired property.

Assume that (1) holds and let $\nu$ be a joining of $\rho _1$ and $\rho _2$ witnessing that $\rho _1 \leq \rho _2$. Let $X = \Es{P}_\Gamma \times \Es{P}_\Gamma \times [0,1]^\Gamma$, let $\mu = \nu \times\lambda ^\Gamma$, and let $a= \ell \times \ell \times s$. Then we define the equivalence relations $F_1$ and $F_2$ on $X$ by
\begin{align*}
(P_1,P_2,x)F_1 (Q_1,Q_2,y) \ &\IFF \ \exists \gamma \in \Gamma (\gamma ^{-1}\in [e]_{P_1} \mbox{ and } \gamma ^a \cdot (P_1,P_2,x) = (Q_1,Q_2,y) ) \\
(P_1,P_2,x)F_2 (Q_1,Q_2,y) \ &\IFF \ \exists \gamma \in \Gamma (\gamma ^{-1}\in [e]_{P_2} \mbox{ and } \gamma ^a \cdot (P_1,P_2,x) = (Q_1,Q_2,y) ) .
\end{align*}
Then as in the proof of Theorem \ref{thm:IRP}, $F_1$ and $F_2$ are equivalence relations that are contained in $E_a$ and $(\bm{a},F_i )$ is a realization of $F_i$ for each $i\in \{ 1,2\}$. The defining property of $\nu$ also ensures that $F_1\subseteq F_2$.

The equivalence of (1') and (2') then follows from the equivalence of (1) and (2) along with Proposition \ref{prop:IRSIRP}.
\end{proof}

Finally, we note the following (observed by Vershik \cite{Ve11} in the case of invariant random subgroups), which is a consequence of \cite[\S 1]{IKT09}.

\begin{theorem}\label{thm:IKT}
Let $\rho$ be an invariant random partition of $\Gamma$. Then the function
\[
\varphi _\rho (\gamma ) = \rho ( \{ P\csuchthat \gamma \in [e]_P \} )
\]
is a positive definite function on $\Gamma$.
\end{theorem}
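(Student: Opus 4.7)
The plan is to mimic the construction in Proposition \ref{prop:measPD} (which handled the IRS case) and realize $\varphi _\rho$ as a diagonal matrix coefficient of a unitary representation built as a direct integral of quasi-regular-like representations over the partition space $(\Es{P}_\Gamma ,\rho )$.

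First I would assemble the field of Hilbert spaces $\{ \ell ^2 (P ^*) \} _{P\in \Es{P}_\Gamma}$, where $P^* = \{ [\alpha ]_P \csuchthat \alpha \in \Gamma \}$ is the countable set of $P$-classes. For each $\gamma \in \Gamma$ let $x^\gamma$ denote the vector field $x^\gamma _P = 1_{[\gamma ]_P} \in \ell ^2(P^*)$. These fields are Borel in $P$ (the map $P\mapsto [\gamma ]_P$ is Borel into the Effros Borel space of subsets of $\Gamma$) and for each fixed $P$ the vectors $\{ x^\gamma _P \} _{\gamma \in \Gamma}$ span a dense subspace of $\ell ^2(P^*)$. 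Hence $\{ x^\gamma \} _{\gamma \in \Gamma}$ generates a fundamental family and determines the direct-integral Hilbert space
\[
\Es{H}_\rho = \int ^\oplus _P \ell ^2 (P^*) \, d\rho (P) .
\]

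Next I would define a representation $\pi _\rho$ of $\Gamma$ on $\Es{H}_\rho$. For each $\gamma \in \Gamma$ and $P \in \Es{P}_\Gamma$ the left translation bijection $[\alpha ]_P \mapsto \gamma [\alpha ]_P= [\gamma \alpha ]_{\gamma P}$ from $P^*$ onto $(\gamma P)^*$ induces a unitary isomorphism $U_\gamma (P) : \ell ^2(P^*) \ra \ell ^2 ((\gamma P)^*)$. Setting $(\pi _\rho (\gamma )x)_Q = U_\gamma (\gamma ^{-1}Q)(x_{\gamma ^{-1}Q})$ and invoking translation-invariance of $\rho$ gives $\| \pi _\rho (\gamma ) x\| = \|x\|$; the cocycle identity $U_{\gamma \delta}(P) = U_\gamma (\delta P) \circ U_\delta (P)$ then yields $\pi _\rho (\gamma \delta ) = \pi _\rho (\gamma )\pi _\rho (\delta )$, so $\pi _\rho$ is a unitary representation. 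The map $\gamma \mapsto \pi _\rho (\gamma )x^e$ is weakly Borel, verifying continuity for the discrete group $\Gamma$.

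Finally, take the unit vector $\xi = x^e \in \Es{H}_\rho$ and compute, using $U_\gamma (\gamma ^{-1}P)(1_{[e]_{\gamma ^{-1}P}}) = 1_{\gamma [e]_{\gamma ^{-1}P}} = 1_{[\gamma ]_P}$,
\[
\langle \pi _\rho (\gamma )\xi , \xi \rangle
= \int _P \langle 1_{[\gamma ]_P}, 1_{[e]_P}\rangle _{\ell ^2(P^*)} \, d\rho (P)
= \rho ( \{ P \csuchthat [\gamma ]_P = [e]_P \} )
= \varphi _\rho (\gamma ) ,
\]
so $\varphi _\rho$ is positive definite as a diagonal matrix coefficient. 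The step requiring the most care is the measurability bookkeeping for the direct integral — in particular, checking that $P\mapsto U_\gamma (P)$ is a Borel field of unitaries with respect to $\{ x^\delta \}$, which reduces to the Borel-measurability of $P \mapsto [\delta ]_P$ as a $P^*$-valued function; once this is in hand, the verification that $\pi _\rho$ is unitary and that the matrix coefficient equals $\varphi _\rho$ is immediate from the invariance of $\rho$.
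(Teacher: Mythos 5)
Your proof is correct, but it takes a genuinely different route from the paper. The paper's argument is a two-line reduction: by Corollary \ref{cor:realize} (which rests on the explicit construction in Theorem \ref{thm:IRP}) every invariant random partition $\rho$ has a free realization $(\bm{b},F)$ with $F$ a subequivalence relation of $E_b$, so that $\varphi _\rho (\gamma ) = \nu (\{ y \csuchthat \gamma ^{-1}yFy \})$, and this last function is positive definite by the main result of \cite{IKT09}. You instead build the representation directly: you generalize the direct-integral construction of Proposition \ref{prop:measPD} from measures on $\mbox{Sub}_\Gamma$ to measures on $\Es{P}_\Gamma$, replacing the fiberwise quasi-regular representations $\lambda _{\Gamma /H}$ by a cocycle of unitaries $U_\gamma (P):\ell ^2(P^*)\ra \ell ^2((\gamma P)^*)$ coming from $\gamma [\alpha ]_P=[\gamma\alpha ]_{\gamma P}$; note that unlike in Proposition \ref{prop:measPD}, where each fiber is preserved and $\rho$ need not be invariant, here the fibers are permuted and the translation-invariance of $\rho$ is exactly what makes $\pi _\rho$ unitary. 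Your cocycle identity, the identification $(\pi _\rho (\gamma )x^e)_P=1_{[\gamma ]_P}$, and the observation that $[\gamma ]_P=[e]_P$ iff $\gamma \in [e]_P$ are all correct, and the measurability points you flag are harmless since $P\mapsto [\delta ]_P$ is clopen-determined in $2^{\Gamma\times\Gamma}$. What each approach buys: the paper's proof is shorter given the machinery already developed in the appendix and makes the link to subequivalence relations explicit, while yours is self-contained, avoids the citation to \cite{IKT09} entirely, and exhibits the GNS triple for $\varphi _\rho$ concretely (in the spirit of Proposition \ref{prop:measPD}), which could be useful if one wanted to analyze weak containment properties of this representation as in Theorem \ref{thm:UTUIRS}.
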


\begin{proof} By Corollary \ref{cor:realize} there is a free m.p. action $\bm{b}=\Gamma \cc ^b (Y,\nu )$ of $\Gamma$ and a subequivalence relation $F$ of $E_b$ such that $(P^b_F)_*\nu = \rho$. Thus
\[
\varphi _\rho (\gamma ) = \nu (\{ y \csuchthat \gamma ^{-1}yFy \} ) .
\]
This is a positive definite function by \cite{IKT09}.
\end{proof}

\section{The amenable radical of a countable group}\label{app:ar}

Every countable discrete group $\Gamma$ contains a largest normal amenable subgroup called the \emph{amenable radical} of $\Gamma$ (see, e.g., \cite[4.1.12]{Zi84}). We write $\mbox{AR}_\Gamma$ for the amenable radical of $\Gamma$. We present in this appendix some facts concerning $\mbox{AR}_\Gamma$ for countable $\Gamma$. 

\subsection{Basic properties of $\mbox{AR}_\Gamma$}\label{app:basic}
\begin{proposition}\label{prop:ARbasic}
Let $\Gamma$ be a countable group.
\begin{enumerate}
\item[(1)] $\mbox{\emph{AR}} _\Gamma $ is an amenable characteristic subgroup of $\Gamma$ which contains every normal amenable subgroup of $\Gamma$.
\item[(2)] Suppose $\varphi : \Gamma \ra \Delta$ is a group homomorphism and that $\mbox{ker}(\varphi )$ is amenable. Then $\varphi (\mbox{\emph{AR}}_\Gamma )=\mbox{\emph{AR}}_{\varphi (\Gamma )}$. In particular, the amenable radical of the quotient group $\Gamma / \mbox{\emph{AR}}_\Gamma$ is trivial.
\item[(3)] If $H$ is normal in $\Gamma$ then $\mbox{\emph{AR}}_H$ is a normal subgroup of $\mbox{\emph{AR}}_\Gamma$ with $\mbox{\emph{AR}}_H = \mbox{\emph{AR}}_\Gamma \cap H$.
\item[(4)] If $H$ is finite index in $\Gamma$ then $\mbox{\emph{AR}}_H$ is a finite index subgroup of $\mbox{\emph{AR}}_\Gamma$ with $\mbox{\emph{AR}}_H = \mbox{\emph{AR}}_\Gamma \cap H$.
\end{enumerate}
\end{proposition}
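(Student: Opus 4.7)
My plan is to establish the four claims in order, using the standard closure properties of amenable groups (under subgroups, quotients, extensions, and directed unions) and chaining the earlier parts into the later ones.

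For (1), I would define $\mbox{\emph{AR}}_\Gamma$ as the subgroup generated by all normal amenable subgroups of $\Gamma$. If $N_1, N_2 \triangleleft \Gamma$ are amenable, then $N_1 N_2$ is normal in $\Gamma$ and fits into a short exact sequence $1 \to N_1 \to N_1 N_2 \to N_2/(N_1\cap N_2) \to 1$ with amenable ends, so is itself normal and amenable; iterating, the collection of normal amenable subgroups is directed under inclusion, and its directed union $\mbox{\emph{AR}}_\Gamma$ is amenable. It contains every normal amenable subgroup by construction, and it is characteristic because automorphisms of $\Gamma$ permute normal amenable subgroups. For (2), the inclusion $\varphi(\mbox{\emph{AR}}_\Gamma)\subseteq \mbox{\emph{AR}}_{\varphi(\Gamma)}$ is immediate, as $\varphi(\mbox{\emph{AR}}_\Gamma)$ is normal in $\varphi(\Gamma)$ and amenable. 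For the reverse, I would set $L = \varphi^{-1}(\mbox{\emph{AR}}_{\varphi(\Gamma)})$, which is normal in $\Gamma$ and fits in a short exact sequence $1 \to \ker\varphi \to L \to \mbox{\emph{AR}}_{\varphi(\Gamma)} \to 1$ with amenable ends (the kernel by hypothesis), hence $L$ is amenable and $L\subseteq \mbox{\emph{AR}}_\Gamma$, giving $\mbox{\emph{AR}}_{\varphi(\Gamma)} = \varphi(L) \subseteq \varphi(\mbox{\emph{AR}}_\Gamma)$. The ``in particular'' statement follows by taking $\varphi$ to be the quotient map $\Gamma \to \Gamma/\mbox{\emph{AR}}_\Gamma$.

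For (3), both inclusions are direct. The intersection $\mbox{\emph{AR}}_\Gamma \cap H$ is normal in $\Gamma$ (intersection of two normal subgroups), in particular normal in $H$, and amenable as a subgroup of $\mbox{\emph{AR}}_\Gamma$, so is contained in $\mbox{\emph{AR}}_H$. Conversely $\mbox{\emph{AR}}_H$ is characteristic in $H$ (being intrinsic to $H$), and since $H\triangleleft \Gamma$ this forces $\mbox{\emph{AR}}_H \triangleleft \Gamma$; being amenable it then lies in $\mbox{\emph{AR}}_\Gamma$, hence in $\mbox{\emph{AR}}_\Gamma \cap H$. Normality of $\mbox{\emph{AR}}_H$ inside $\mbox{\emph{AR}}_\Gamma$ is automatic from normality in $\Gamma$.

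For (4), the forward inclusion $\mbox{\emph{AR}}_\Gamma \cap H \subseteq \mbox{\emph{AR}}_H$ goes exactly as in (3), and the finite-index claim $[\mbox{\emph{AR}}_\Gamma : \mbox{\emph{AR}}_H] < \infty$ follows formally from the equality via $[\mbox{\emph{AR}}_\Gamma : \mbox{\emph{AR}}_\Gamma \cap H] \leq [\Gamma : H]$ once the reverse inclusion is in hand. The reverse inclusion $\mbox{\emph{AR}}_H \subseteq \mbox{\emph{AR}}_\Gamma$ is the main obstacle, since the ``characteristic implies normal'' trick from (3) is no longer available. My plan is to pass through the normal core $K := \bigcap_{\gamma\in\Gamma}\gamma H\gamma^{-1}$, a normal finite-index subgroup of $\Gamma$ contained in $H$. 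Two applications of (3) yield $\mbox{\emph{AR}}_K = \mbox{\emph{AR}}_\Gamma \cap K = \mbox{\emph{AR}}_H \cap K$, so $\mbox{\emph{AR}}_H$ is a finite extension of the normal amenable subgroup $\mbox{\emph{AR}}_K\subseteq \mbox{\emph{AR}}_\Gamma$ (with $[\mbox{\emph{AR}}_H:\mbox{\emph{AR}}_K]\leq [H:K]<\infty$). I would then form the normal closure $N \triangleleft \Gamma$ of $\mbox{\emph{AR}}_H$, which is generated by the finitely many $\Gamma$-conjugates $\mbox{\emph{AR}}_{H_1}, \ldots, \mbox{\emph{AR}}_{H_n}$ of $\mbox{\emph{AR}}_H$ (one for each conjugate of $H$), each of which contains $\mbox{\emph{AR}}_K$ with finite quotient inside $H_i/K$. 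Since the image of $N$ in the finite group $\Gamma/K$ is generated by finitely many finite subgroups of $\Gamma/K$ and hence is finite, $N$ is a finite extension of $N\cap K$, so showing $N$ is amenable---whence $N\subseteq \mbox{\emph{AR}}_\Gamma$ by (1) and thus $\mbox{\emph{AR}}_H\subseteq \mbox{\emph{AR}}_\Gamma$---reduces to proving $N\cap K = \mbox{\emph{AR}}_K$. For this I would pass to $\Gamma/\mbox{\emph{AR}}_K$, where by (2) the subgroup $K/\mbox{\emph{AR}}_K$ has trivial amenable radical while each generator $\overline{\mbox{\emph{AR}}_{H_i}}$ of $N/\mbox{\emph{AR}}_K$ has finite image and intersects $K/\mbox{\emph{AR}}_K$ trivially; the subgroup $(N\cap K)/\mbox{\emph{AR}}_K$ is then a normal subgroup of $\Gamma/\mbox{\emph{AR}}_K$ contained in $K/\mbox{\emph{AR}}_K$, and once it is seen to be amenable it is forced to be trivial by the trivial-AR property of $K/\mbox{\emph{AR}}_K$. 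Carrying out this last step rigorously---extracting amenability of $(N\cap K)/\mbox{\emph{AR}}_K$ from the structure of the conjugates $\overline{\mbox{\emph{AR}}_{H_i}}$ acting on $K/\mbox{\emph{AR}}_K$---is where I expect the real work to lie, and is the only point in the proof that goes beyond the general amenability principles used in (1)--(3).
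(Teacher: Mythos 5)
Your arguments for (1)--(3) are correct and coincide with the paper's (the paper simply cites Zimmer for (1) and gives exactly your arguments for (2) and (3)). The problem is in (4), and it is precisely the step you flag yourself: you reduce everything to showing that $(N\cap K)/\mbox{AR}_K$ is amenable, where $N$ is the normal closure of $\mbox{AR}_H$, but you offer no mechanism for proving this, and "general amenability principles" cannot supply one. A group generated by finitely many finite subgroups need not be amenable (e.g.\ $\mathrm{PSL}_2(\mathbb{Z})$ is generated by two finite subgroups), so the mere fact that $N/\mbox{AR}_K$ is generated by the finite subgroups $\overline{\mbox{AR}_{H_i}}$, each meeting $K/\mbox{AR}_K$ trivially, does not by itself force amenability of $N/\mbox{AR}_K$ or of $(N\cap K)/\mbox{AR}_K$. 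As written, the proof of the containment $\mbox{AR}_H\leq \mbox{AR}_\Gamma$ is therefore incomplete.

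The missing ingredient is the one the paper isolates as Lemma~\ref{lem:anorm}: \emph{an amenable subgroup whose normalizer has finite index in $\Gamma$ is contained in $\mbox{AR}_\Gamma$} (applied directly to $K=\mbox{AR}_H$, whose normalizer contains $H$). The engine behind that lemma, and the fact you need, is the FC-centre: the set $\mbox{FC}_\Gamma$ of elements with finite conjugacy class is an amenable characteristic subgroup, hence lies in $\mbox{AR}_\Gamma$; consequently any \emph{finite} subgroup with only finitely many conjugates lies in $\mbox{AR}_\Gamma$. This is exactly what closes your gap: each $\overline{\mbox{AR}_{H_i}}$ is a finite subgroup of $\overline{\Gamma}=\Gamma/\mbox{AR}_K$ normalized by the finite-index subgroup $\overline{H_i}$, so every one of its elements has a finite conjugacy class in $\overline{\Gamma}$; hence $\overline{N}\leq \mbox{FC}_{\overline{\Gamma}}$ is amenable, $N$ is an extension of the amenable group $\mbox{AR}_K$ by the amenable group $\overline{N}$ and is therefore an amenable normal subgroup of $\Gamma$, giving $\mbox{AR}_H\leq N\leq \mbox{AR}_\Gamma$. (The paper's own route is slightly different but uses the same tool: it passes to the normal core of the normalizer, pushes the infinite "core part" of the subgroup into $\mbox{AR}_\Gamma$ via part (3), and then disposes of the remaining finite piece in $\Gamma/\mbox{AR}_\Gamma$ using the FC-centre together with the triviality of $\mbox{AR}_{\Gamma/\mbox{AR}_\Gamma}$ from part (2).) Once you import this FC-centre fact, your normal-core/normal-closure scheme does go through, but without it the decisive step is unsupported.
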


\begin{proof}
For (1) see \cite{Zi84}. For (2), let $N = \mbox{ker}(\varphi )$. It is clear that $\varphi (\mbox{AR}_\Gamma )$ is a normal amenable subgroup of $\varphi (\Gamma )$, so that $\varphi (\mbox{AR}_\Gamma ) \leq \mbox{AR}_{\varphi (\Gamma )}$ by (1). The group $K = \varphi ^{-1}(\mbox{AR}_{\varphi (\Gamma )})$ is normal in $\Gamma$ and $K$ is amenable since both $N$ and $K/N \cong \mbox{AR}_{\varphi (\Gamma )}$ are amenable. Hence $K\leq \mbox{AR}_\Gamma$ and so $\mbox{AR}_{\varphi (\Gamma )} \leq \varphi (K) \leq \varphi (\mbox{AR}_\Gamma )$.

We now prove (3). Suppose that $H$ is normal in $\Gamma$. It is clear that $\mbox{AR}_\Gamma \cap H$ is normal in $\mbox{AR}_\Gamma$, so it suffices to show that $\mbox{AR}_\Gamma \cap H = \mbox{AR}_H$. Conjugation by any element of $\Gamma$ is an automorphism of $H$, so fixes (setwise) the characteristic subgroup $\mbox{AR}_H$. This shows that $\mbox{AR}_H$ is normal in $\Gamma$, and since it is amenable it must be contained in $\mbox{AR}_\Gamma$. Thus $\mbox{AR}_H \leq \mbox{AR}_\Gamma \cap H$. In addition, $\mbox{AR}_\Gamma\cap H$ is a normal amenable subgroup of $H$, so $\mbox{AR}_\Gamma \cap H \leq \mbox{AR}_H$. This proves (3).

We need the following Lemma for (4):

\begin{lemma}\label{lem:anorm}
Suppose that $K$ is an amenable subgroup of $\Gamma$ whose normalizer $N_\Gamma (K)$ is finite index in $\Gamma$. Then $K\leq \mbox{\emph{AR}}_\Gamma$.
\end{lemma}

\begin{proof}[Proof of Lemma \ref{lem:anorm}]
Suppose first that $K$ is finite. $N_\Gamma (K)$ being finite index means $K$ has only finitely many conjugates in $\Gamma$, so as $K$ itself is finite this implies that every element of $K$ has a finite conjugacy class in $\Gamma$. Thus, $K\subseteq \mbox{FC}_\Gamma \subseteq \mbox{AR}_\Gamma$, where $\mbox{FC}_\Gamma$ is the amenable characteristic subgroup of $\Gamma$ consisting of all elements of $\Gamma$ with finite conjugacy classes (see e.g., \cite[Appendix J]{Ha07}).

Suppose now that $K$ is infinite. The normal core $N= \bigcap _{\gamma \in \Gamma} \gamma N_\Gamma (K)\gamma ^{-1}$ of $N_\Gamma (K)$ in $\Gamma$ is a normal finite index subgroup of $\Gamma$. Thus, letting $H = K\cap N$, we have $[K:H]=[KN:N]\leq [\Gamma :N]<\infty$, and so $H$ is finite index in $K$. It is clear that $H$ is normal in $N$, 
and $H$ is an amenable group since it is a subgroup of $K$. Thus $H\leq \mbox{AR}_N$. In addition, $\mbox{AR}_{N}$ is normal in $\Gamma$ since $\mbox{AR}_N$ is characteristic in $N$ and $N$ is normal in $\Gamma$. Therefore
\[
H\leq \mbox{AR}_N\leq \mbox{AR}_\Gamma .
\]
Now, $H$ is finite index in $K$, and $H\leq \mbox{AR}_\Gamma$, so the image $p(K)$ of $K$ in $\Gamma /\mbox{AR}_\Gamma$ under the quotient map $p$ is a finite subgroup of $\Gamma$. So if $p(K)$ were non-trivial then $\Gamma /\mbox{AR}_\Gamma$ would have non-trivial amenable radical, contrary to part (2).
\qedhere[Lemma \ref{lem:anorm}]
\end{proof}

We can now show (4). If $H$ is finite index in $\Gamma$, then $\mbox{AR}_H$ is an amenable subgroup of $\Gamma$ whose normalizer $N_\Gamma (\mbox{AR}_H)$ contains $H$. Therefore $N_\Gamma (\mbox{AR}_H)$ is finite index in $\Gamma$, so $\mbox{AR}_H \leq \mbox{AR}_\Gamma$ by Lemma \ref{lem:anorm}, and thus $\mbox{AR}_H\leq \mbox{AR}_\Gamma \cap H$. The group $\mbox{AR}_\Gamma$ is normal in $\Gamma$, so $\mbox{AR}_\Gamma \cap H$ is normal in $H$ and since it is an amenable group we have the other inclusion $\mbox{AR}_\Gamma \cap H \leq \mbox{AR}_H$.
\end{proof}

\begin{lemma}\label{lem:aaseries}
Let $\Gamma$ be a countable group and let $\{ H_\alpha \} _{\alpha \leq \lambda}$ be an almost ascendant series in $\Gamma$ (Definition \ref{def:almostasc}). Then $\{ \mbox{\emph{AR}}_{H_\alpha} \} _{\alpha \leq \lambda}$ is an almost ascendant series in $\mbox{\emph{AR}}_\Gamma$. The same holds if we replace "almost ascendant" by "ascendant." 
\end{lemma}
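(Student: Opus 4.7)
The plan is to prove the statement by transfinite induction on $\alpha \leq \lambda$, with induction hypothesis that $\{\mbox{AR}_{H_\gamma}\}_{\gamma \leq \alpha}$ is an almost ascendant series of subgroups of $\mbox{AR}_{H_\alpha}$, and in the ascendant case that all inclusions in this series are normal. The base case $\alpha = 0$ is trivial.

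For the successor step, given an $\alpha < \lambda$, property (ii) of Definition \ref{def:almostasc} gives two possibilities. If $H_\alpha$ is normal in $H_{\alpha+1}$, I will apply part (3) of Proposition \ref{prop:ARbasic} with $\Gamma = H_{\alpha+1}$, $H = H_\alpha$ to conclude that $\mbox{AR}_{H_\alpha} = \mbox{AR}_{H_{\alpha+1}} \cap H_\alpha$ and that $\mbox{AR}_{H_\alpha}$ is a normal subgroup of $\mbox{AR}_{H_{\alpha+1}}$. If $H_\alpha$ is finite index in $H_{\alpha+1}$, I will apply part (4) of Proposition \ref{prop:ARbasic} instead, which yields that $\mbox{AR}_{H_\alpha}$ is a finite index subgroup of $\mbox{AR}_{H_{\alpha+1}}$. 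In both cases, $\mbox{AR}_{H_\alpha} \subseteq \mbox{AR}_{H_{\alpha+1}}$, so the induction hypothesis extends. In the ascendant case only the first possibility can occur, which is why the resulting series is ascendant.

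The main (and only nontrivial) content is the limit step: for a limit ordinal $\beta \leq \lambda$, I need to verify $\mbox{AR}_{H_\beta} = \bigcup_{\alpha < \beta} \mbox{AR}_{H_\alpha}$. For the inclusion $\supseteq$, note that $U_\beta := \bigcup_{\alpha < \beta} \mbox{AR}_{H_\alpha}$ is an increasing union of subgroups (by the successor step), hence a subgroup. It is amenable as an increasing union of amenable groups. It is normal in $H_\beta = \bigcup_{\alpha < \beta} H_\alpha$: given $h \in H_\beta$ and $g \in U_\beta$, choose a common index $\alpha'' < \beta$ with $h \in H_{\alpha''}$ and $g \in \mbox{AR}_{H_{\alpha''}}$; then $hgh^{-1} \in \mbox{AR}_{H_{\alpha''}} \subseteq U_\beta$ since $\mbox{AR}_{H_{\alpha''}}$ is normal in $H_{\alpha''}$. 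Thus $U_\beta \subseteq \mbox{AR}_{H_\beta}$. For the reverse inclusion, let $N = \mbox{AR}_{H_\beta}$. For each $\alpha < \beta$ the group $N \cap H_\alpha$ is amenable and normal in $H_\alpha$ (using normality of $N$ in $H_\beta$), so $N \cap H_\alpha \subseteq \mbox{AR}_{H_\alpha}$. Then $N = N \cap \bigcup_{\alpha<\beta} H_\alpha = \bigcup_{\alpha<\beta}(N \cap H_\alpha) \subseteq U_\beta$, completing the proof.

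The main obstacle will be the limit-stage argument, and the key observation that makes it work is the symmetric role played by $N$ being normal in the whole $H_\beta$ (which lets $N \cap H_\alpha$ inherit normality in $H_\alpha$) together with the fact that the $\mbox{AR}_{H_\alpha}$ are automatically normal in $H_\alpha$ (which gives normality of $U_\beta$ in $H_\beta$ through the "common index" trick). No new ingredients beyond Proposition \ref{prop:ARbasic} and standard closure properties of amenable groups are required.
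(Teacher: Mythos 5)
Your proof is correct and follows essentially the same route as the paper: transfinite induction using parts (3) and (4) of Proposition \ref{prop:ARbasic} at successor stages, and at limit stages showing the union $U_\beta$ is an amenable normal subgroup of $H_\beta$ (via the common-index trick) for one inclusion, and that $\mbox{AR}_{H_\beta}\cap H_\alpha$ is amenable and normal in $H_\alpha$ for the other. No gaps.
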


\begin{proof}
We show by transfinite induction on ordinals $\alpha$ (with $\alpha \leq \lambda$) that $\{ \mbox{AR}_{H_\beta} \} _{\beta \leq \alpha}$ is an almost ascendant series in $\mbox{AR}_{H_\alpha}$.  If $\alpha =\beta +1$ is a successor ordinal then 
by hypothesis $H_\beta$ is either normal or finite index in $H_{\beta +1}$. Proposition \ref{prop:ARbasic} then implies that $\mbox{AR}_{H_{\beta}}$ is either normal or finite index in $\mbox{AR}_{H_{\beta + 1}}$.

Suppose now that $\alpha$ is a limit ordinal and let $K= \bigcup _{\beta <\alpha}\mbox{AR}_{H_\beta}$. We must show that $\mbox{AR}_{H_\alpha} = K$. By the induction hypothesis the groups $\mbox{AR}_{H_\beta}$, $\beta <\alpha$, are increasing with $\beta$, so $K$ is amenable, being an increasing union of amenable groups. Additionally, $K$ is normal in $H_\alpha$ as we now show. For each $h\in H_\alpha$ there is some $\beta _0 <\alpha$ such that $h\in H_{\beta _0}$. Therefore $h\in H_{\beta }$ for all $\beta _0\leq \beta < \alpha$.  Thus $h$ normalizes $\mbox{AR}_{H_{\beta }}$ for all $\beta _0\leq \beta < \alpha$, and since the $\mbox{AR}_{H_\beta}$ are increasing we have
\begin{align*}
hKh^{-1} &= \bigcup _{\beta _0\leq \beta <\alpha} h \mbox{AR}_{H_\beta}h^{-1} = \bigcup _{\beta _0 \leq \beta <\alpha} \mbox{AR}_{H_\beta} = K.
\end{align*}
It follows that $K\leq \mbox{AR}_{H_\alpha}$. We have the equality $K= \mbox{AR}_{H_\alpha}$ since 
$\mbox{AR}_{H_\alpha}= \bigcup _{\beta < \alpha}(\mbox{AR}_{H_\alpha}\cap H_\beta ) \leq \bigcup _{\beta <\alpha} \mbox{AR}_{H_\beta} = K$.
\end{proof}

\begin{corollary}\label{cor:aaAR}
Let $\Gamma$ be a countable group and let $H$ be an almost ascendant subgroup of $\Gamma$. Then
\[
\mbox{\emph{AR}}_H = \mbox{\emph{AR}}_\Gamma \cap H ,
\]
In particular, $\mbox{\emph{AR}}_H$ is contained in $\mbox{\emph{AR}}_\Gamma$, and $\mbox{\emph{AR}}_\Gamma$ contains every almost ascendant amenable subgroup of $\Gamma$.
\end{corollary}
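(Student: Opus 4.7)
The plan is to deduce the corollary almost immediately from Lemma \ref{lem:aaseries}, by verifying the two inclusions separately. Fix an almost ascendant series $\{ H_\alpha \}_{\alpha \leq \lambda}$ for $H$ in $\Gamma$, so that $H_0 = H$ and $H_\lambda = \Gamma$.

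For the inclusion $\mbox{AR}_\Gamma \cap H \subseteq \mbox{AR}_H$, I would simply observe that $\mbox{AR}_\Gamma$ is normal in $\Gamma$, so $\mbox{AR}_\Gamma \cap H$ is normal in $H$; it is amenable as a subgroup of the amenable group $\mbox{AR}_\Gamma$; so by the defining property of $\mbox{AR}_H$ it must be contained in $\mbox{AR}_H$. This is the routine direction and requires no use of the almost ascendant structure.

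For the reverse inclusion $\mbox{AR}_H \subseteq \mbox{AR}_\Gamma \cap H$, the containment $\mbox{AR}_H \subseteq H$ is trivial, so it suffices to show $\mbox{AR}_H \subseteq \mbox{AR}_\Gamma$. Here I would apply Lemma \ref{lem:aaseries} to the series $\{H_\alpha\}_{\alpha \leq \lambda}$ to obtain an almost ascendant series $\{ \mbox{AR}_{H_\alpha} \}_{\alpha \leq \lambda}$ in $\mbox{AR}_{H_\lambda} = \mbox{AR}_\Gamma$. Since any almost ascendant series is in particular an increasing chain of subgroups terminating at its top group, this forces $\mbox{AR}_H = \mbox{AR}_{H_0} \subseteq \mbox{AR}_{H_\lambda} = \mbox{AR}_\Gamma$, completing the proof.

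The final sentences of the corollary then follow formally: the containment $\mbox{AR}_H \subseteq \mbox{AR}_\Gamma$ is already part of the equality just proved, and taking a union of almost ascendant amenable subgroups $H$ of $\Gamma$ shows $\mbox{AR}_\Gamma$ contains all of them. Since Lemma \ref{lem:aaseries} is already in hand, there is no real obstacle here; the only point one might want to double-check is that the almost ascendant series produced by Lemma \ref{lem:aaseries} is genuinely a chain inside $\mbox{AR}_\Gamma$ (as opposed to merely inside $\Gamma$), but this is immediate from the statement of that lemma.
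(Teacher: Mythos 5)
Your proof is correct and follows essentially the same route as the paper: the paper likewise obtains $\mbox{AR}_H \leq \mbox{AR}_\Gamma \cap H$ directly from Lemma \ref{lem:aaseries} and the reverse inclusion from the observation that $\mbox{AR}_\Gamma \cap H$ is an amenable normal subgroup of $H$. Nothing to add.
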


\begin{proof}
The containment $\mbox{AR}_H\leq \mbox{AR}_\Gamma \cap H$ is immediate from Lemma \ref{lem:aaseries}. We have equality since $\mbox{AR}_\Gamma \cap H$ is an amenable normal subgroup of $H$.
\end{proof}

\begin{corollary}
Let $\Gamma$ be a countable group and let $\gamma \in \Gamma$. If the centralizer $C_\Gamma (\gamma )$ of $\gamma$ is almost ascendant in $\Gamma$ then $\gamma \in\mbox{\emph{AR}}_\Gamma$. Thus, if $\mbox{\emph{AR}}_\Gamma$ is trivial then the centralizer of any non-trivial element of $\Gamma$ is not almost ascendant.
\end{corollary}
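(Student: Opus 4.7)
The plan is to invoke Corollary \ref{cor:aaAR} applied to the almost ascendant subgroup $C_\Gamma(\gamma)$, together with the elementary observation that $\gamma$ lies in the center of its own centralizer.

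First I would note that $\langle \gamma \rangle$ is an abelian (hence amenable) subgroup of $C_\Gamma(\gamma)$, and moreover it is central in $C_\Gamma(\gamma)$ by the very definition of the centralizer. Being central, $\langle \gamma \rangle$ is normal in $C_\Gamma(\gamma)$, so $\langle \gamma \rangle \leq \mbox{AR}_{C_\Gamma(\gamma)}$ by Proposition \ref{prop:ARbasic}(1).

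Next, since $C_\Gamma(\gamma)$ is by hypothesis almost ascendant in $\Gamma$, Corollary \ref{cor:aaAR} gives
\[
\mbox{AR}_{C_\Gamma(\gamma)} = \mbox{AR}_\Gamma \cap C_\Gamma(\gamma) \leq \mbox{AR}_\Gamma.
\]
Combining the two inclusions yields $\gamma \in \langle \gamma \rangle \leq \mbox{AR}_{C_\Gamma(\gamma)} \leq \mbox{AR}_\Gamma$, which is the first statement. The second statement is then just its contrapositive: if $\mbox{AR}_\Gamma = \{e\}$ and $\gamma \neq e$, then $C_\Gamma(\gamma)$ cannot be almost ascendant, since otherwise $\gamma$ would have to lie in the trivial group.

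There is essentially no obstacle here: the real content is packaged in Corollary \ref{cor:aaAR} (whose proof uses the transfinite induction in Lemma \ref{lem:aaseries}), and the remainder is a one-line group-theoretic observation.
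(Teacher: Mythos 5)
Your proof is correct and follows exactly the paper's own argument: observe that $\langle \gamma \rangle$ is a normal (indeed central) amenable subgroup of $C_\Gamma(\gamma)$, hence contained in $\mbox{AR}_{C_\Gamma(\gamma)}$, and then apply Corollary \ref{cor:aaAR} to the almost ascendant subgroup $C_\Gamma(\gamma)$ to conclude $\gamma \in \mbox{AR}_\Gamma$. Nothing to add.
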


\begin{proof}
The group $\langle \gamma \rangle$ is a normal amenable subgroup of $C_\Gamma (\gamma )$, so if $C_\Gamma (\gamma )$ is almost ascendent then $\langle \gamma \rangle \leq \mbox{AR}_{C_\Gamma (\gamma )} \leq \mbox{AR}_\Gamma$ by \ref{cor:aaAR}.
\end{proof}

\subsection{Groups with trivial amenable radical}

\begin{lemma}\label{lem:normAR}
Let $N$ be a normal subgroup of $\Gamma$. Then $\mbox{\emph{AR}}_{\Gamma}$ is trivial if and only if both $\mbox{\emph{AR}}_N$ and $\mbox{\emph{AR}}_{C_\Gamma (N)}$ are trivial.
\end{lemma}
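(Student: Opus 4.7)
The plan is to prove both directions using Proposition \ref{prop:ARbasic}.(3), which identifies $\mbox{AR}_H = \mbox{AR}_\Gamma \cap H$ whenever $H \triangleleft \Gamma$, together with the elementary group-theoretic fact that two normal subgroups of $\Gamma$ which intersect trivially must commute.

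For the forward direction, I would assume $\mbox{AR}_\Gamma = \{e\}$ and observe that both $N$ and $C_\Gamma(N)$ are normal in $\Gamma$ (the latter because $N$ being normal makes $C_\Gamma(N)$ invariant under inner automorphisms of $\Gamma$). Then Proposition \ref{prop:ARbasic}.(3) applied to each gives $\mbox{AR}_N = \mbox{AR}_\Gamma \cap N = \{e\}$ and $\mbox{AR}_{C_\Gamma(N)} = \mbox{AR}_\Gamma \cap C_\Gamma(N) = \{e\}$.

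For the reverse direction, I would set $K = \mbox{AR}_\Gamma$ and show $K = \{e\}$. First, $K \cap N$ is a normal amenable subgroup of $N$ (it is normal in $\Gamma$, hence in $N$), so $K \cap N \leq \mbox{AR}_N = \{e\}$, giving $K \cap N = \{e\}$. Since $K$ and $N$ are both normal in $\Gamma$ and intersect trivially, the commutator $[K,N]$ lies in $K \cap N = \{e\}$, so every element of $K$ centralizes $N$, i.e.\ $K \leq C_\Gamma(N)$. But $K$ is also normal in $\Gamma$, hence normal in the subgroup $C_\Gamma(N)$, and amenable, so $K \leq \mbox{AR}_{C_\Gamma(N)} = \{e\}$.

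There is no real obstacle here; the proof is essentially bookkeeping with the results already established in Appendix \ref{app:ar}. The one point worth highlighting is the commutation trick $[K,N] \subseteq K \cap N$, which lets the triviality of $\mbox{AR}_{C_\Gamma(N)}$ (rather than $\mbox{AR}_N$) do the final work of killing $\mbox{AR}_\Gamma$.
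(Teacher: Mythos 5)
Your proof is correct and follows essentially the same route as the paper's: the forward direction via Proposition \ref{prop:ARbasic}.(3) applied to the normal subgroups $N$ and $C_\Gamma(N)$, and the reverse direction via the observation that $\mbox{AR}_\Gamma$ and $N$ are normal with trivial intersection, hence commute, forcing $\mbox{AR}_\Gamma \leq C_\Gamma(N)$ and then $\mbox{AR}_\Gamma \leq \mbox{AR}_{C_\Gamma(N)} = \{e\}$. No issues.
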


\begin{proof}
Since $N$ is normal in $\Gamma$, $C_\Gamma (N)$ is normal in $\Gamma$ as well. Thus, if $\mbox{AR}_\Gamma$ is trivial it follows from Proposition \ref{prop:ARbasic} that both $\mbox{AR}_N$ and $\mbox{AR}_{C_\Gamma (N)}$ are trivial.

Suppose now that $\mbox{AR}_N$ and $\mbox{AR}_{C_\Gamma (N)}$ are trivial. We have
\[
\mbox{AR}_\Gamma \cap N = \mbox{AR}_N = \{ e \}
\]
and thus $\mbox{AR}_\Gamma$ and $N$ must commute, being normal subgroups of $\Gamma$ with trivial intersection. This means that $\mbox{AR}_\Gamma \leq C_\Gamma (N)$ and so
\[
\mbox{AR}_\Gamma = \mbox{AR}_\Gamma \cap C_\Gamma (N) = \mbox{AR}_{C_\Gamma (N)} = \{ e \} . \qedhere
\]
\end{proof}

\begin{lemma}\label{lem:downward}
Suppose $\{ H_\alpha \} _{\alpha \leq \lambda }$ is an ascendant series of length $\lambda$ and suppose $\Gamma = H_{\lambda}$ has trivial amenable radical. Then $\mbox{\emph{AR}}_{C_\Gamma (H_\alpha )} = \{ e\}$ for all $\alpha \leq \lambda$.
\end{lemma}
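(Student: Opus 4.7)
The plan is to prove the lemma by transfinite induction on $\lambda$, strengthened to the statement that for every ascendant series $\{H_\alpha\}_{\alpha \leq \lambda}$ with $\mbox{AR}_{H_\lambda} = \{e\}$ and every pair $\alpha \leq \beta \leq \lambda$, one has $\mbox{AR}_{C_{H_\beta}(H_\alpha)} = \{e\}$; taking $\beta = \lambda$ recovers Lemma \ref{lem:downward}. Lemma \ref{lem:normAR} is the main tool at successor steps. The base case $\lambda = 0$ is immediate: $C_\Gamma(H_0) = Z(\Gamma)$ is a normal abelian subgroup of $\Gamma$, so $Z(\Gamma) \leq \mbox{AR}_\Gamma = \{e\}$. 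At a limit ordinal $\lambda$, I would use that $C_{H_\lambda}(H_\alpha) = \bigcup_{\gamma \in [\alpha,\lambda)} C_{H_\gamma}(H_\alpha)$ together with the limit-step part of Lemma \ref{lem:aaseries}, applied to the ascendant series $\{C_{H_\gamma}(H_\alpha)\}_{\gamma \in [\alpha,\lambda]}$ (whose ascendancy is verified in the proof of Proposition \ref{prop:ICCalmostasc}), to propagate triviality of $\mbox{AR}$ to the union.

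For the successor case $\lambda = \mu + 1$, I would apply Lemma \ref{lem:normAR} to $H_\mu \trianglelefteq \Gamma$ to obtain both $\mbox{AR}_{H_\mu} = \{e\}$ and $\mbox{AR}_{C_\Gamma(H_\mu)} = \{e\}$; the induction hypothesis on the series $\{H_\alpha\}_{\alpha \leq \mu}$ inside $H_\mu$ then gives $\mbox{AR}_{C_{H_\beta}(H_\alpha)} = \{e\}$ for all $\alpha \leq \beta \leq \mu$. Only the case $\beta = \mu+1$ remains, and the subcases $\alpha \in \{\mu, \mu+1\}$ are immediate from the observations above. For $\alpha < \mu$, set $G := C_\Gamma(H_\alpha)$ and $N := C_{H_\mu}(H_\alpha) = G \cap H_\mu$. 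Since $H_\mu \trianglelefteq \Gamma$, $N$ is normal in $G$, and $\mbox{AR}_N = \{e\}$ by the induction hypothesis, so by Lemma \ref{lem:normAR} it remains to show $\mbox{AR}_{C_G(N)} = \{e\}$.

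Here the triviality $Z(N) = \{e\}$ (itself a consequence of $\mbox{AR}_N = \{e\}$) yields $C_G(N) \cap H_\mu = \{e\}$, and $M := C_\Gamma(H_\mu)$ sits as a normal subgroup of $C_G(N)$ with $\mbox{AR}_M = \{e\}$. A further application of Lemma \ref{lem:normAR} reduces matters to showing $\mbox{AR}_{C_{C_G(N)}(M)} = \{e\}$, which I plan to attack by analyzing the direct-product structure $H_\mu \cdot M \cong H_\mu \times M$ (valid since $H_\mu \cap M = Z(H_\mu) = \{e\}$) and tracking the intersections of the innermost centralizer with $H_\mu$, $M$, and $H_\mu M$ to force it to be trivial. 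The primary obstacle will be that $H_\mu$ need not normalize $G$ when $\alpha < \mu - 1$, so the clean commutator argument available for $\alpha = \mu - 1$ fails; overcoming this appears to require invoking the strengthened induction hypothesis in full, using $\mbox{AR}_{C_{H_\gamma}(H_\alpha)} = \{e\}$ at every intermediate $\gamma \in [\alpha,\mu]$, together with the ascendant normalizer series $\{N_{H_\gamma}(H_\alpha)\}_{\gamma \in [\alpha, \mu+1]}$, which does normalize the relevant subgroups at each level and thereby allows the final commutator calculation to go through.
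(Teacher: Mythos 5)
Your induction scheme, base case, and limit case all match the paper's proof, and the reductions you make at the successor stage via Lemma \ref{lem:normAR} are correct as far as they go. But the heart of the lemma is exactly the step you leave unfinished -- showing $\mbox{AR}_{C_\Gamma (H_\alpha )}=\{ e\}$ for $\alpha <\mu$ when $\lambda =\mu +1$ -- and the mechanism you sketch for it does not deliver the conclusion. The obstacle you identify is real: only $H_{\alpha +1}$, not all of $H_\mu$, normalizes $C_\Gamma (H_\alpha )$. Passing to the normalizer series $\{ N_{H_\gamma}(H_\alpha )\}$ with $\alpha$ held fixed, however, only shows that $\mbox{AR}_{C_\Gamma (H_\alpha )}$ commutes with $N_{H_\mu}(H_\alpha )$ (for $\delta \in N_{H_\mu}(H_\alpha )$ and $\gamma \in \mbox{AR}_{C_\Gamma (H_\alpha )}$ the commutator $\delta\gamma\delta ^{-1}\gamma ^{-1}$ lies in $\mbox{AR}_{C_\Gamma (H_\alpha )}\cap H_\mu = \mbox{AR}_{C_{H_\mu}(H_\alpha )}=\{ e\}$), and since normality is not transitive $N_{H_\mu}(H_\alpha )$ can be a proper subgroup of $H_\mu$; so you do not obtain $\mbox{AR}_{C_\Gamma (H_\alpha )}\leq C_\Gamma (H_\mu )$ this way. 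Likewise your double application of Lemma \ref{lem:normAR} reduces matters to the group $C_{C_G(N)}(M)$, whose amenable radical you never actually control, and when $N=C_{H_\mu}(H_\alpha )$ happens to be trivial that reduction is circular ($C_G(N)=G$).

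What the paper does instead at the successor stage is an inner transfinite induction that climbs one level at a time: it proves the monotonicity claim $\mbox{AR}_{C_\Gamma (H_\xi )}\leq \mbox{AR}_{C_\Gamma (H_{\xi +1})}$ for all $\xi <\mu$. At a single step the obstacle disappears, because $H_{\xi +1}$ \emph{does} normalize $C_\Gamma (H_\xi )$ and hence its characteristic subgroup $\mbox{AR}_{C_\Gamma (H_\xi )}$; the commutator of $\delta \in H_{\xi +1}$ with $\gamma \in \mbox{AR}_{C_\Gamma (H_\xi )}$ then lies in $\mbox{AR}_{C_\Gamma (H_\xi )}\cap H_\mu$, which is trivial by the outer induction hypothesis applied inside $H_\mu$ (equation (\ref{eqn:triv})). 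This forces $\mbox{AR}_{C_\Gamma (H_\xi )}\leq C_\Gamma (H_{\xi +1})$, where it is normal and hence contained in $\mbox{AR}_{C_\Gamma (H_{\xi +1})}$; limit stages of the inner induction are handled by intersecting centralizers. The chain terminates at $\mbox{AR}_{C_\Gamma (H_\mu )}$, which is trivial by Lemma \ref{lem:normAR} since $H_\mu \triangleleft \Gamma$. Replacing your final paragraph with this ladder-climbing argument closes the proof; as written, the key calculation is missing.
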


\begin{proof}
We proceed by transfinite induction on $\lambda$.  By Corollary \ref{cor:aaAR} we know that $\mbox{AR}_{H_\alpha} = \{ e \}$ for all $\alpha \leq \lambda$.

{\bf Limit stages:} Suppose first that $\lambda$ is a limit ordinal. Fix $\alpha \leq \lambda$ and let $H= H_\alpha$. By intersecting each term of the ascendant series $\{ H_\beta \} _{\beta \leq \lambda}$ with $C_\Gamma (H)$ we obtain the series $\{ C_{H_\beta}(H) \} _{\beta \leq \lambda}$ which is ascendant in $C_\Gamma (H)$. Lemma \ref{lem:aaseries} implies that $\{ \mbox{AR}_{C_{H_\beta}(H)} \} _{\beta \leq \lambda}$ is an ascendant series in $\mbox{AR}_{C_\Gamma (H)}$ and so
\begin{equation}\label{lambdaunion}
\mbox{AR}_{C_\Gamma (H)} = \bigcup _{\alpha \leq \beta <\lambda}\mbox{AR}_{C_{H_\beta}(H)}
\end{equation}
where the union is increasing. For each $\beta$ with $\alpha \leq \beta <\lambda$ the series $\{ H_\xi \} _{\xi \leq \beta}$ has length strictly less than $\lambda$, so by the induction hypothesis we have
\[
\mbox{AR}_{C_{H_\beta} (H)} = \{ e\} .
\]
Since this holds for each $\beta$ with $\alpha\leq \beta <\lambda$, equation (\ref{lambdaunion}) shows that $\mbox{AR}_{C_\Gamma (H)} = \{ e \}$ as was to be shown.

{\bf Successor stages:} Suppose now that $\lambda = \mu +1$ is a successor ordinal. Fix for the moment some $\alpha < \lambda$ and let $H=H_\alpha$. Applying the induction hypothesis to the ascendant series $\{ H_\beta \} _{\beta \leq \mu}$ in $H_\mu$ we obtain that $\mbox{AR}_{C_{H_\mu}(H)} = \{ e \}$. Since $H_\mu$ is normal in $\Gamma$, $C_{H_\mu}(H)$ is normal in $C_\Gamma (H)$, so it follows from Proposition \ref{prop:ARbasic}.(3) that
\begin{equation}\label{eqn:triv}
\mbox{AR}_{C_{\Gamma}(H)}\cap H_\mu = \mbox{AR}_{C_\Gamma (H)}\cap C_{H_\mu}(H) = \mbox{AR}_{C_{H_\mu}(H)} = \{ e\} .
\end{equation}
Since $\alpha$ was an arbitrary ordinal satisfying $\alpha <\lambda$, (\ref{eqn:triv}) holds for all $\alpha < \lambda$. We use this to show the following.

\begin{claim}\label{claim:alpha+1}
Let $\xi$ and $\beta$ be ordinals with $\xi \leq \beta <\lambda$. Then
\[
\mbox{\emph{AR}}_{C_\Gamma (H_\xi )}\leq \mbox{\emph{AR}}_{C_\Gamma (H_\beta)}
\]
\end{claim}

\begin{proof}[Proof of Claim \ref{claim:alpha+1}]
We show by transfinite induction on $\beta < \lambda$ that $\{ \mbox{AR}_{C_\Gamma (H_\xi)}\} _{\xi \leq \beta}$ is increasing in $\xi$. If $\beta = 0$ this is trivial. If $\beta =\alpha +1$ is a successor ordinal then the induction hypothesis tells us that $\{ \mbox{AR}_{C_\Gamma (H_\xi )} \} _{\xi\leq \alpha}$ is increasing with $\xi$ and we must show that $\mbox{AR}_{C_\Gamma (H_\alpha )} \leq \mbox{AR}_{C_\Gamma (H_{\alpha +1})}$.

Since $H_\alpha$ is normal in $H_{\alpha +1}$, Proposition \ref{prop:ARbasic}.(2) shows that $H_{\alpha +1}$ normalizes $\mbox{AR}_{C_\Gamma (H_\alpha )}$.  Thus, for $\delta \in H_{\alpha +1}$ and $\gamma \in \mbox{AR}_{C_\Gamma (H_\alpha )}$ we have
\begin{align*}
(\delta \gamma \delta ^{-1})\gamma ^{-1} &\in \mbox{AR}_{C_\Gamma (H_\alpha )}  \\
\delta (\gamma \delta ^{-1}\gamma ^{-1}) &\in H_\mu (\gamma H_{\mu}\gamma ^{-1}) = H_\mu \\
\mbox{so that } \ \ \ \ \ \delta\gamma \delta ^{-1}\gamma ^{-1} &\in \mbox{AR}_{C_\Gamma (H_\alpha )} \cap H_\mu = \{ e \}
\end{align*}
by (\ref{eqn:triv}) (we use in the second line that $H_{\alpha +1}\leq H_\mu$ and $H_\mu \triangleleft \Gamma$). This shows that the groups $\mbox{AR}_{C_\Gamma (H_\alpha )}$ and $H_{\alpha +1}$ commute, and so $\mbox{AR}_{C_\Gamma (H_\alpha )}$ is a subgroup of $C_\Gamma (H_{\alpha +1})$. As $C_\Gamma (H_{\alpha +1})$ is contained in $C_\Gamma (H_\alpha )$ we conclude that $\mbox{AR}_{C_\Gamma (H_\alpha )}$ is normal in $C_\Gamma (H_{\alpha +1})$ and therefore $\mbox{AR}_{C_\Gamma (H_\alpha )} \leq \mbox{AR}_{C_\Gamma (H_{\alpha +1})}$.

Now suppose $\beta$ is a limit ordinal. The induction hypothesis tells us that $\{ \mbox{AR}_{C_\Gamma (H_\xi )} \} _{\xi <\beta}$ is increasing with $\xi <\beta$ and we must show that $\mbox{AR}_{C_\Gamma (H_\xi )} \leq \mbox{AR}_{C_\Gamma (H_{\beta})}$ for all $\xi < \beta$. Fix $\xi < \beta$. For each $\alpha$ with $\xi\leq \alpha < \beta$ we have that $\mbox{AR}_{C_\Gamma (H _\xi )} \leq \mbox{AR}_{C_\Gamma (H_\alpha)} \leq C_\Gamma (H_\alpha )$. Intersecting this over all such $\alpha$ shows
\begin{align*}
\mbox{AR}_{C_{\Gamma}(H_\xi )} &\leq \bigcap _{\xi \leq \alpha < \beta} C_\Gamma (H_\alpha ) = C_\Gamma \big( \bigcup _{\xi\leq \alpha < \beta}H_\alpha \big) = C_\Gamma (H_\beta ) .
\end{align*}
Since $C_\Gamma (H_\beta ) \leq C_\Gamma (H_\xi )$ we actually have $\mbox{AR}_{C_{\Gamma}(H_\xi )} \triangleleft C_\Gamma (H_\beta )$ and so $\mbox{AR}_{C_\Gamma (H_\xi )}  \leq \mbox{AR}_{C_\Gamma (H_\beta )} $, which finishes the proof of the claim.
\qedhere[Claim \ref{claim:alpha+1}]
\end{proof}

Given now any $\alpha < \lambda$ we have shown that $\mbox{AR}_{C_{\Gamma}(H_\alpha )} \leq \mbox{AR}_{C_\Gamma (H_\mu )}$. But $H_\mu$ is normal in $\Gamma$ and $\mbox{AR}_\Gamma = \{ e \}$, so Lemma \ref{lem:normAR} shows that $\mbox{AR}_{C_\Gamma (H_\mu )} = \{ e \}$ and therefore $\mbox{AR}_{C_\Gamma (H_\alpha )} = \{ e \}$ as was to be shown.
\qedhere[Lemma \ref{lem:downward}]
\end{proof}

\begin{lemma}\label{lem:upward}
Let $\{ H_\alpha \} _{\alpha \leq \lambda}$ be an ascendant series of length $\lambda$ with $H_0=H$ and $H_\lambda = \Gamma$. Suppose that $\mbox{\emph{AR}}_{C_\Gamma (H)}=\mbox{\emph{AR}}_{H} = \{ e \}$. Then $\mbox{\emph{AR}}_\Gamma = \{ e \}$.
\end{lemma}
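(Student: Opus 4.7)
The plan is transfinite induction on the length $\lambda$ of the series, with the hypotheses ``$\mbox{AR}_H = \{e\}$ and $\mbox{AR}_{C_\Gamma(H)} = \{e\}$'' preserved so that one may recurse on initial segments. The base case $\lambda = 0$ is trivial since $\Gamma = H$.

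The key preliminary step is to observe that $\{C_{H_\alpha}(H)\}_{\alpha \leq \lambda}$ is itself an ascendant series in $C_\Gamma(H)$. This follows from $C_{H_\alpha}(H) = H_\alpha \cap C_\Gamma(H)$: at a successor stage, $H_\alpha \triangleleft H_{\alpha+1}$ forces $C_{H_\alpha}(H) \triangleleft C_{H_{\alpha+1}}(H)$, and at a limit stage $\beta$ we have $C_{H_\beta}(H) = \bigcup_{\alpha < \beta} C_{H_\alpha}(H)$. Since $\mbox{AR}_{C_\Gamma(H)} = \{e\}$ by hypothesis, Corollary \ref{cor:aaAR} applied inside $C_\Gamma(H)$ delivers $\mbox{AR}_{C_{H_\alpha}(H)} = \mbox{AR}_{C_\Gamma(H)} \cap C_{H_\alpha}(H) = \{e\}$ for every $\alpha \leq \lambda$. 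This is the ingredient that lets us apply the inductive hypothesis to the sub-series $\{H_\xi\}_{\xi \leq \alpha}$ in $H_\alpha$, since both of the required conditions $\mbox{AR}_{H_0} = \{e\}$ and $\mbox{AR}_{C_{H_\alpha}(H)} = \{e\}$ hold.

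For the successor step $\lambda = \mu + 1$, the inductive hypothesis applied to $\{H_\xi\}_{\xi \leq \mu}$ in $H_\mu$ gives $\mbox{AR}_{H_\mu} = \{e\}$. Because $H_\mu \triangleleft \Gamma$, its centralizer $C_\Gamma(H_\mu)$ is also normal in $\Gamma$, and since $H \leq H_\mu$ we have $C_\Gamma(H_\mu) \leq C_\Gamma(H)$, so $C_\Gamma(H_\mu) \triangleleft C_\Gamma(H)$. Proposition \ref{prop:ARbasic}.(3) then yields $\mbox{AR}_{C_\Gamma(H_\mu)} = \mbox{AR}_{C_\Gamma(H)} \cap C_\Gamma(H_\mu) = \{e\}$, and Lemma \ref{lem:normAR} applied to $\Gamma$ with normal subgroup $H_\mu$ concludes that $\mbox{AR}_\Gamma = \{e\}$.

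For the limit step, the inductive hypothesis applied to $\{H_\xi\}_{\xi \leq \alpha}$ in $H_\alpha$ gives $\mbox{AR}_{H_\alpha} = \{e\}$ for each $\alpha < \lambda$. If $N \triangleleft \Gamma$ is amenable, then for each $\alpha < \lambda$ the group $N \cap H_\alpha$ is an amenable normal subgroup of $H_\alpha$, hence trivial; and as $\Gamma = \bigcup_{\alpha < \lambda} H_\alpha$ we conclude $N = \bigcup_\alpha (N \cap H_\alpha) = \{e\}$, so $\mbox{AR}_\Gamma = \{e\}$. The main obstacle is really just the bookkeeping at the successor step: making sure that when we ``step up'' from $H_\mu$ to $\Gamma$, we have the right centralizer-triviality condition in place, which is precisely what the preliminary observation plus Proposition \ref{prop:ARbasic}.(3) furnish using the normality $H_\mu \triangleleft \Gamma$.
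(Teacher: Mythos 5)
Your proof is correct and follows essentially the same route as the paper's: transfinite induction on $\lambda$, using the intersected ascendant series $\{C_{H_\alpha}(H)\}_{\alpha\leq\lambda}$ in $C_\Gamma(H)$ to feed the inductive hypothesis, and closing the successor case with Proposition \ref{prop:ARbasic}.(3) applied to $C_\Gamma(H_\mu)\triangleleft C_\Gamma(H)$ followed by Lemma \ref{lem:normAR}. The only (harmless) cosmetic differences are that you establish $\mbox{AR}_{C_{H_\alpha}(H)}=\{e\}$ for all $\alpha$ up front via Corollary \ref{cor:aaAR} rather than stage by stage, and at limit stages you argue directly that an amenable normal $N\triangleleft\Gamma$ meets each $H_\alpha$ trivially instead of invoking Lemma \ref{lem:aaseries} a second time.
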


\begin{proof}
We proceed by transfinite induction on the length $\lambda$ of the series.

{\bf Limit stages:} Suppose first that $\lambda$ is a limit ordinal. By intersecting each group in the series $\{ H_\alpha \} _{\alpha \leq \lambda}$ with $C_\Gamma (H)$ we obtain the series $\{ C_{H_\alpha}(H) \} _{\alpha \leq \lambda}$, which is ascendant in $C_\Gamma (H)$. 
%
%
%
%
Applying Lemma \ref{lem:aaseries} to the series $\{ C_{H_\alpha} (H) \} _{\alpha \leq \lambda}$ we obtain
\[
\bigcup _{\alpha <\lambda} \mbox{AR}_{C_{H_\alpha}(H)} = \mbox{AR}_{C_\Gamma (H)} .
\]
Since $\mbox{AR}_{C_\Gamma (H)} = \{ e \}$ we conclude that $\mbox{AR}_{C_{H_\alpha}(H)} = \{ e \}$ for all $\alpha < \lambda$. In addition we have $\mbox{AR}_{H} = \{ e \}$ so it follows from the induction hypothesis (applied to each series $\{ H_\xi \} _{\xi <\alpha}$ for $\alpha <\lambda$) that $\mbox{AR}_{H_\alpha} = \{ e \}$ for all $\alpha$. Another application of Lemma \ref{lem:aaseries} now shows that $\mbox{AR}_\Gamma = \bigcup _{\alpha <\lambda}\mbox{AR}_{H_\alpha} = \{ e \}$.

{\bf Successor stages:} Now assume that $\lambda = \mu +1$ is a successor ordinal. Since $H_\mu$ is normal in $H_{\mu +1} =\Gamma$ we have $C_{H_\mu}(H) \triangleleft C_\Gamma (H)$. It follows that $\mbox{AR}_{C_{H_\mu}(H)} \leq \mbox{AR}_{C_\Gamma (H)} = \{ e\}$ and so
\[
\mbox{AR}_{C_{H_\mu}(H)} = \{ e\} .
\]
By assumption $\mbox{AR}_{H} = \{ e \}$ so the induction hypothesis applied to $\{ H_\alpha \} _{\alpha \leq \mu}$ implies that
\begin{equation}\label{eqn:ARHmu}
\mbox{AR}_{H_\mu} = \{ e \} .
\end{equation}
Since $H_\mu$ is normal in $\Gamma$, $C_\Gamma (H_\mu )$ is normal in $\Gamma$ as well. In addition, $C_\Gamma (H_\mu )$ is contained in $C_\Gamma (H)$, so in fact $C_\Gamma (H_\mu ) \triangleleft C_\Gamma (H)$. It follows that
\begin{equation}\label{eqn:CARHmu}
\mbox{AR}_{C_\Gamma (H_\mu)} \leq \mbox{AR}_{C_\Gamma (H)} = \{ e \} .
\end{equation}
We see from (\ref{eqn:ARHmu}) and (\ref{eqn:CARHmu}) that the normal subgroup $H_\mu$ of $\Gamma$ satisfies the hypotheses of Lemma \ref{lem:normAR} and so $\mbox{AR}_\Gamma = \{ e \}$. This completes the induction.
\end{proof}

\begin{theorem}\label{thm:ascsubgrp}
Let $H$ be an ascendant subgroup of a countable group $\Gamma$. Then $\mbox{\emph{AR}}_\Gamma = \{ e \}$ if and only if $\mbox{\emph{AR}}_H = \{ e \}$ and $\mbox{\emph{AR}}_{C_\Gamma (H)} = \{ e \}$.
\end{theorem}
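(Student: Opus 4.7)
The plan is to deduce the theorem as a direct corollary of the two transfinite induction lemmas already proved above (Lemmas \ref{lem:downward} and \ref{lem:upward}), which have been phrased in terms of a fixed ascendant series of length $\lambda$ but are in fact exactly the two directions of the statement.

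For the forward direction, suppose $\mbox{AR}_\Gamma = \{ e \}$ and fix an ascendant series $\{ H_\alpha \} _{\alpha \leq \lambda}$ in $\Gamma$ with $H_0 = H$ and $H_\lambda = \Gamma$. An ascendant subgroup is in particular almost ascendant, so Corollary \ref{cor:aaAR} gives
\[
\mbox{AR}_H = \mbox{AR}_\Gamma \cap H = \{ e \} .
\]
Applying Lemma \ref{lem:downward} to this same series yields $\mbox{AR}_{C_\Gamma (H_\alpha )} = \{ e \}$ for every $\alpha \leq \lambda$, and specialising to $\alpha = 0$ gives $\mbox{AR}_{C_\Gamma (H)} = \{ e \}$, as required.

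For the reverse direction, suppose $\mbox{AR}_H = \mbox{AR}_{C_\Gamma (H)} = \{ e \}$. Choose again an ascendant series $\{ H_\alpha \} _{\alpha \leq \lambda}$ with $H_0 = H$ and $H_\lambda = \Gamma$. Lemma \ref{lem:upward} applies verbatim to this series and produces $\mbox{AR}_\Gamma = \{ e \}$.

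There is essentially no new obstacle to surmount at this stage; all the technical work was packaged into the two transfinite inductions in Lemmas \ref{lem:downward} and \ref{lem:upward}, where the hard step was handling the successor stages via Lemma \ref{lem:normAR} together with the commutation trick that forces $\mbox{AR}_{C_\Gamma (H_\alpha )}\leq C_\Gamma (H_{\alpha +1})$. The only subtlety worth flagging in writing the proof is that both directions require the \emph{same} ascendant series, and in particular the forward direction relies on Lemma \ref{lem:downward} applied to a series that terminates at $\Gamma$ rather than at an intermediate group, but this is automatic from the ascendance of $H$ in $\Gamma$.
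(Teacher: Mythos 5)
Your proof is correct and is exactly the argument the paper intends: the theorem is stated immediately after Lemmas \ref{lem:downward} and \ref{lem:upward} precisely because the forward direction is Corollary \ref{cor:aaAR} plus Lemma \ref{lem:downward} at $\alpha=0$, and the reverse direction is Lemma \ref{lem:upward} verbatim. (The remark that both directions need the \emph{same} series is harmless but unnecessary; each direction can use any ascendant series witnessing that $H$ is ascendant in $\Gamma$.)
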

\end{appendices}

\addcontentsline{toc}{section}{References}

$\ $\\
\noindent Department of Mathematics \\
California Institute of Technology \\
Pasadena, CA 91125 \\
\texttt{rtuckerd@caltech.edu}
\end{document}